\title{A Generic Branch-and-Bound Algorithm for $\ell_0$-Penalized Problems with Supplementary Material}
\author{Cl\'{e}ment Elvira \and Th\'{e}o Guyard \and C\'{e}dric Herzet}
\institute{
  Cl\'{e}ment Elvira \at
  CentraleSupelec, CNRS, IETR - UMR 6164, Rennes, France \\
  \email{clement.elvira@centralesupelec.fr}
  \and
  Th\'{e}o Guyard \at
  Scale-AI Chair, Polytechnique Montr\'{e}al, Canada \\
  \email{theo.guyard@polymtl.ca}
  \and
  C\'{e}dric Herzet \at
  Ensai, CNRS, CREST - UMR 9194, Rennes, France \\
  \email{cedric.herzet@ensai.fr}
}
\date{Received: date / Accepted: date}
\pgfplotsset{compat=newest}
\definecolor{cplexcolor}{RGB}{46,106,255}
\definecolor{mosekcolor}{RGB}{69,152,62}
\definecolor{gurobicolor}{RGB}{172,226,42}
\definecolor{oacolor}{RGB}{172,226,42}
\definecolor{l0bnbcolor}{RGB}{242,112,40}
\definecolor{el0pscolor}{RGB}{187,0,0}
\pgfplotsset{
    ylabel right/.style={
        after end axis/.append code={
            \node [rotate=270, anchor=north] at (rel axis cs:1.25,0.5) {#1};
        }   
    }
}
\spnewtheorem{assumption}{Assumption}{\bfseries}{\itshape}
\spnewtheorem{convention}{Convention}{\bfseries}{\itshape}
\newcommand{\fontset}[1]{\mathcal{#1}}
\newcommand{\extendedRealLine}{{\kR\cup\{\pm\infty\}}}
\newcommand{\rextendedRealLine}{{\kR\cup\{+\infty\}}}
\newcommand{\pobj}{p}
\newcommand{\robj}{r}
\newcommand{\dobj}{d}
\newcommand{\pvletter}{x}
\newcommand{\bvletter}{z}
\newcommand{\dvletter}{u}
\newcommand{\wvletter}{w}
\newcommand{\pv}{\mathbf{\pvletter}}
\newcommand{\bv}{\mathbf{\bvletter}}
\newcommand{\dv}{\mathbf{\dvletter}}
\newcommand{\wv}{\mathbf{\wvletter}}
\newcommand{\pvi}[1]{\pvletter_{#1}}
\newcommand{\bvi}[1]{\bvletter_{#1}}
\newcommand{\pdim}{n}
\newcommand{\ddim}{m}
\newcommand{\obs}{\mathbf{y}}
\newcommand{\dic}{\mathbf{A}}
\newcommand{\atom}[1]{\mathbf{a}_{#1}}
\newcommand{\reg}{\lambda}
\newcommand{\regmax}{\reg_{\max}}
\newcommand{\datafunc}{f}
\newcommand{\pertfunc}{h}
\newcommand{\regfunc}{g}
\newcommand{\setidx}{\fontset{S}}
\newcommand{\idxentry}{i}
\newcommand{\oneSymb}{1}
\newcommand{\zeroSymb}{0}
\newcommand{\setzero}{\setidx_{\zeroSymb}}
\newcommand{\setone}{\setidx_{\oneSymb}}
\newcommand{\subzero}[1]{#1_{\setzero}}
\newcommand{\subone}[1]{#1_{\setone}}
\newcommand{\nodeSymb}{\nu}
\newcommand{\node}[1]{#1^\nodeSymb}
\newcommand{\pset}{\fontset{X}}
\newcommand{\pertslope}{\tau}
\newcommand{\pertlimit}{\mu}
\newcommand{\pertrightlope}{\kappa}
\newcommand{\domb}{\beta}
\newcommand{\relaxregfunc}{\LB{\regfunc}}
\newcommand{\bigM}{M}
\newcommand{\regone}{\sigma}
\newcommand{\regtwo}{\sigma}
\newcommand{\groundtruth}{\pv^\dagger}
\newcommand{\groundtruthi}[1]{\pvi{#1}^\dagger}
\newcommand{\corrparam}{\rho}
\newcommand{\corrmat}{\mathbf{K}}
\newcommand{\corrmatel}{K}
\newcommand{\noise}{\boldsymbol{\epsilon}}
\newcommand{\bernoulliparam}{\beta}
\newcommand{\noisestd}{\zeta}
\newcommand{\identitymatrix}{\mathbf{I}}
\newcommand{\amplitudeveci}[1]{w_#1}
\newcommand{\normaldistrib }{\mathcal{N}}
\newcommand{\densityfunc}{\phi}
\newcommand{\proba}{\mathrm{Pr}}
\newcommand{\eg}{\textit{e.g.}}
\newcommand{\elops}{\textsc{El\scalebox{1}[0.8]{0}ps}}
\newcommand{\lobnb}{\textsc{L\scalebox{1}[0.8]{0}bnb}}
\newcommand{\oa}{\textsc{Oa}}
\newcommand{\mosek}{\textsc{Mosek}}
\newcommand{\cplex}{\textsc{Cplex}}
\newcommand{\lolearn}{\textsc{L0Learn}}
\DeclareMathOperator*{\argmin}{argmin}
\DeclareMathOperator{\closure}{cl}
\DeclareMathOperator{\conv}{conv}
\DeclareMathOperator{\domFuncOp}{dom}
\newcommand{\dom}[1]{\domFuncOp{#1}}
\DeclareMathOperator{\interiorOp}{int}
\newcommand{\interior}[1]{\interiorOp({#1})}
\DeclareMathOperator{\prox}{prox}
\DeclareMathOperator{\sign}{sign}
\newcommand{\1}{\mathbf{1}}
\newcommand{\0}{\mathbf{0}}
\newcommand{\intervint}[2]{\llbracket#1,#2\rrbracket}
\newcommand{\LB}[1]{\tilde{#1}}
\newcommand{\UB}[1]{\bar{#1}}
\newcommand{\card}[1]{|#1|}
\newcommand{\transp}[1]{#1^{\mathrm{T}}}
\newcommand{\opt}[1]{#1^{\star}}
\newcommand{\conj}[1]{#1^{\star}}
\newcommand{\biconj}[1]{#1^{\star\star}}
\newcommand{\abs}[1]{|#1|}
\newcommand{\icvx}{\eta}
\newcommand{\norm}[2]{\|#1\|_#2}
\newcommand{\pospart}[1]{[#1]_+}
\newcommand{\grad}{\nabla}
\newcommand{\subdiff}{\partial}
\newcommand{\separable}[2]{#1_{#2}}
\newcommand{\cstprox}{\gamma}
\newcommand{\ball}{\mathcal{B}}
\newcommand{\defeq}{\triangleq}
\newcommand{\inlinedefeq}{\triangleq}
\newcommand{\genericfunc}{\omega}
\newcommand{\aGenericOneDfunction}{\omega}
\newacronym{bnb}{BnB}{Branch-and-Bound}
\newacronym{mip}{MIP}{Mixed Integer Programming}
\newacronym{sos}{SOS}{Special Ordered Set}
\newacronym{bfs}{BFS}{Breadth-First-Search}
\newacronym{dfs}{DFS}{Depth-First-Search}
\newacronym{snr}{SNR}{Signal-to-Noise Ratio}
\newacronym{atp}{ATP}{Arbitrage Theory of Pricing}
\newacronym{map}{MAP}{maximum \textit{a posteriori}}
  \newcommand{\AddNote}[1]{}
  \newcommand{\AddTodo}[1]{}
  \newcommand{\ProvideEditionMacros}[2]{%
    \expandafter\NewDocumentCommand\csname Add#1\endcsname{gg}{}%
    \expandafter\NewDocumentCommand\csname Rem#1\endcsname{gg}{}%
    \expandafter\NewDocumentCommand\csname Sup#1\endcsname{gg}{}%
  }
  \newcommand{\AddNote}[1]{\textcolor{red}{[Note: #1]}}
  \newcommand{\AddTodo}[1]{\textcolor{red}{[Todo: #1]}}
  \newcommand{\ProvideEditionMacros}[2]{%
    \expandafter\NewDocumentCommand\csname Add#1\endcsname{gg}{\textcolor{#2}{##1}}%
    \expandafter\NewDocumentCommand\csname Rem#1\endcsname{gg}{\textcolor{#2}{[#1: {\footnotesize ##1}]}}%
    \expandafter\NewDocumentCommand\csname Sup#1\endcsname{gg}{\textcolor{#2}{\st{##1}}}%
  }
  \newcommand{\pageInRef}[1]{}
  \newcommand{\sectionInRef}[1]{\textcolor{orange}{Sec.}~#1}
  \newcommand{\equationInRef}[1]{\textcolor{orange}{Eq.}~(#1)}
  \newcommand{\definitionInRef}[1]{\textcolor{orange}{Def.}~#1}
  \newcommand{\theoremInRef}[1]{\textcolor{orange}{Th.}~#1}
  \newcommand{\propositionInRef}[1]{\textcolor{orange}{Prop.}~#1}
  \newcommand{\corollaryInRef}[1]{\textcolor{orange}{Cor.}~#1}
  \newcommand{\exampleInRef}[1]{\textcolor{orange}{Ex.}~#1}
  \newcommand{\chapterInRef}[1]{\textcolor{orange}{Ch.~#1}}
  \newcommand{\chaptersInRef}[2]{\textcolor{orange}{Ch.~#1 and~#2}}
  \newcommand{\pageInRef}[1]{}
  \newcommand{\sectionInRef}[1]{{Section}~#1}
  \newcommand{\equationInRef}[1]{{Equation~(#1)}}
  \newcommand{\definitionInRef}[1]{{Definition~#1}}
  \newcommand{\theoremInRef}[1]{{Theorem~#1}}
  \newcommand{\propositionInRef}[1]{{Proposition~#1}}
  \newcommand{\corollaryInRef}[1]{{Corollary~#1}}
  \newcommand{\exampleInRef}[1]{{Example~#1}}
  \newcommand{\chapterInRef}[1]{{Chapter~#1}}
  \newcommand{\chaptersInRef}[2]{{Chapters~#1 and~#2}}
\begin{document}

\maketitle

\begin{abstract}
  We present a generic Branch-and-Bound procedure designed to solve $\ell_0$-penalized optimization problems. Existing approaches primarily focus on quadratic losses and construct relaxations using ``Big-M'' constraints and/or $\ell_2$-norm penalties. In contrast, our method accommodates a broader class of loss functions and allows greater flexibility in relaxation design through a general penalty term, encompassing existing techniques as special cases. We establish theoretical results ensuring that all key quantities required for the Branch-and-Bound implementation admit closed-form expressions under the general blanket assumptions considered in our work. 
  Leveraging this framework, we introduce \elops, an open-source \textsc{Python} solver with a plug-and-play workflow that enables user-defined losses and penalties in $\ell_0$-penalized problems.
  Through extensive numerical experiments, we demonstrate that \elops{} achieves state-of-the-art performance on classical instances and extends computational feasibility to previously intractable ones.
  \keywords{Sparse modeling \and $\ell_0$-penalized problems \and Branch-and-bound \and Mixed-integer programming}
  \subclass{90C25 \and 90C57 \and 90C90}
\end{abstract}


\section{Introduction}
\label{sec:introduction}

Over the past decades, significant attention has been devoted to solving problems of the form
\begin{equation}
    \label{prob:prob}
    \opt{\pobj} = \min_{\pv \in \kR^{\pdim}} \ \datafunc(\dic\pv) + \sum_{\idxentry=1}^{\pdim}\regfunc(\pvi{\idxentry})
\end{equation}
where \(\datafunc:\kR^{\ddim}\to\rextendedRealLine\) is a data fitting term such that 
\begin{enumerate}[label={\textnormal{(H0)}}, topsep=12pt,parsep=0pt,leftmargin=40pt]
    \item \textit{\(\datafunc\) is closed, convex, differentiable, lower-bounded and \(\0 \in \interior{\dom{\datafunc}}\)},
    \label{assumption:f}
\end{enumerate}
and \(\kfuncdef{\regfunc}{\kR}{\rextendedRealLine}\) is a regularizer imposing some structure on the solutions. 
In this paper, we concentrate on sparsity-inducing regularizers expressed as
\begin{equation}
    \label{eq:regfunc}
    \regfunc(\pvi{}) = \reg\norm{\pvi{}}{0} + \pertfunc(\pvi{})
\end{equation}
where 
\begin{align}
    \label{eq:l0-norm}
    \|\pvi{}\|_0
    &=
    \begin{cases}
        0 & \mbox{if \(\pvi{}=0\)}\\
        1 & \mbox{if \(\pvi{}\neq 0\)}
    \end{cases}
\end{align}
is the so-called ``\(\ell_0\)-norm'' and \(\kfuncdef{\pertfunc}{\kR}{\rextendedRealLine}\) is a function with some desirable properties, see hypotheses \ref{assumption:zero-minimized}-\ref{assumption:even} below. 

Problem \eqref{prob:prob} has been of interest in many fields of applied mathematics in recent years, including machine learning~\cite{bach2011convex}, signal processing~\cite{soussen2011bernoulli} and inverse problems~\cite{tropp2010computational}. 
Unfortunately, minimization problems involving the \(\ell_0\)-norm are in general NP-hard~\cite{chen2014complexity}. 
Over the last decades, various research avenues have consequently focused on developing greedy heuristic procedures \cite{blumensath2009iterative,chen1989orthogonal,Herzet_eusipco10,needell2009cosamp} or addressing convex \cite{bach2011convex,tibshirani1996regression,zou2005regularization} and non-convex \cite{fan2009network,soubies2015continuous,zhang2010nearly} approximations of this problem. 
Recently, the landmark paper \cite{bertsimas2016best} underscored the possibility of developing tractable procedures to address non-trivial instances of \eqref{prob:prob} by using modern tools from discrete optimization. 
In particular, the authors showed that \(\ell_0\)-norm problems can be reformulated under the \gls{mip} framework and be addressed by off-the-shelf solvers \cite{kronqvist2019review}.

These solvers, however, are designed as general-purpose optimization tools capable of handling a broad range of \gls{mip} problems beyond \eqref{prob:prob}.
While they leverage sophisticated \gls{bnb} strategies and cutting-plane methods, their generic nature implies that they do not exploit the specific structure of \eqref{prob:prob}.
As a result, their computational efficiency can be severely hampered when applied to medium- to large-scale instances typically encountered in machine learning and signal processing applications. 

To overcome these limitations, researchers have developed problem-specific implementations tailored to \eqref{prob:prob}, aiming for more efficient algorithms capable of handling larger problem instances, see \eg{}, \cite{atamturk2020safe,ben2021global,guyard2022node,hazimeh2021sparse,mhenni2020sparse,samain2022techniques}.
Rather than addressing the problem in full generality, these approaches leverage particular choices of the data fitting function \(\datafunc\) and the regularization term \(\pertfunc\) to exploit the problem structure and design specialized \gls{bnb} algorithmic strategies. 
More specifically, contributions \cite{atamturk2020safe,ben2021global,guyard2022node,mhenni2020sparse,samain2022techniques}  
concentrated on data fitting terms of the form:
\begin{equation}
    \datafunc(\wv)= \tfrac{1}{2}\norm{\obs - \wv}{2}^2    
    \qquad \forall \wv\in\kR^{\ddim}
\end{equation}
and considered one of the following regularizers: 
\begin{subequations}
    \begin{alignat}{4}
        \label{eq:bigM}
        \pertfunc(\pvi{}) &= \icvx(\abs{\pvi{}} \leq \bigM) &\qquad&\forall \pvi{} \in \kR\\
        \label{eq:sql2norm}
        \pertfunc(\pvi{}) &= \tfrac{\sigma}{2}\pvi{}^2  &\qquad&\forall \pvi{} \in \kR
    \end{alignat}
\end{subequations}
where \(\bigM\), \(\regtwo\) are positive parameters and \(\icvx\) denotes the convex indicator, as defined in \cite[\exampleInRef{1.25}]{Bauschke2017}.
A linear combination of \eqref{eq:bigM}-\eqref{eq:sql2norm} was also considered in \cite{hazimeh2021sparse}.  

A central element in the success of these  methods lies in the fact that several key quantities needed for the implementation of \gls{bnb} algorithms admit simple analytical expressions in the particular setups considered in these papers.
For example, the authors in \cite{ben2021global,pilanci2015sparse} showed that
\begin{subequations}
    \begin{align}
        \label{eq:relax-bigM}
        \biconj{\regfunc}(\pvi{})&= 
        \begin{cases}
            \frac{\reg}{\bigM}|\pvi{}| & \mbox{if \(|\pvi{}|\leq \bigM\)}\\
            +\infty & \mbox{otherwise}    
        \end{cases} \\
        \label{eq:relax-sql2norm}
        \biconj{\regfunc}(\pvi{})&= 
        \begin{cases}
            \sqrt{2\reg\sigma}|\pvi{}| & \mbox{if \(\pvi{}^2\leq \tfrac{2\reg}{\sigma}\)}\\
            \frac{\sigma}{2}\pvi{}^2+\reg & \mbox{otherwise}    
        \end{cases}
    \end{align}
\end{subequations}
are convex lower-bounds on \(\regfunc\) when \(\pertfunc\) is defined by \eqref{eq:bigM} and \eqref{eq:sql2norm}, respectively.
In these works, the authors also derived closed-form expressions for other key quantities appearing in the efficient optimization of convex relaxations involving \(\biconj{\regfunc}\), namely its convex conjugate, subdifferential and/or proximal operator.\footnote{We refer the reader to \Cref{sec:bnb} for a detailed discussion on the practical interest of these quantities in the implementation of \gls{bnb} procedures.} 

While these results have led to efficient solving procedures for specific instances of \eqref{prob:prob}, they remain limited in scope. In practice, a broader range of data fitting terms \(\datafunc\) and regularization functions \(\pertfunc\) arise in applications \cite{tillmann2024cardinality}.
To date, no general framework is available to systematically handle these diverse settings, restricting the applicability of existing methods to the few instances mentioned above.

\paragraph{Contributions.}
In this paper, we consider a methodological framework allowing to derive fast numerical solution to instances of problem \eqref{prob:prob} involving functions $\datafunc$ and \(\pertfunc\) verifying a set of blanket assumptions.
In particular, $\datafunc$ is supposed to verify \ref{assumption:f} and $\pertfunc$ to satisfy the following hypotheses:\footnote{Assumption \ref{assumption:even} can be relaxed in our derivations, but is retained to enhance the readability of our exposition.} 
\begin{enumerate}[label=\textnormal{(H\arabic*)},itemsep=4pt,topsep=12pt,parsep=0pt,leftmargin=40pt]
    \item \label{assumption:zero-minimized}
        \textit{\(\pertfunc(\pvi{}) \geq \pertfunc(0)=0\) and \(\dom\pertfunc\cap\kR+\setminus\{0\}\neq\emptyset\)}.
     \item \label{assumption:closed}    
        \textit{\(\pertfunc\) is closed}. 
    \item \label{assumption:convex}
        \textit{\(\pertfunc\) is convex.}              
    \item \label{assumption:coercive}
        \textit{\(\pertfunc\) is coercive.}
    \item \label{assumption:even}
        \textit{\(\pertfunc\) is even.} 
\end{enumerate}
Our contributions are twofold.

First, we provide a systematic strategy to efficiently compute the largest proper, closed and convex function that lower-bound \(\regfunc\), as well as its convex conjugate, subdifferential, and proximal operator. 
In particular, we show that under 
\ref{assumption:zero-minimized}-\ref{assumption:closed}-\ref{assumption:convex}-\ref{assumption:coercive}-\ref{assumption:even}, 
all these quantities admit simple closed-form expressions only depending on three key parameters:
\begin{align}
    \label{eq:pertslope}
    \pertslope &\defeq  \sup \kset{\bvi{}\in\kR+}{\conj{\pertfunc}(\bvi{}) \leq \reg} \\
    \label{eq:pertlimit}
    \pertlimit &\defeq 
    \begin{cases}
        \sup \kset{\bvi{}\in\kR+}{\bvi{} \in \subdiff\conj{\pertfunc}(\pertslope)} &\text{ if } \subdiff\conj{\pertfunc}(\pertslope) \neq \emptyset \\
        +\infty &\text{ otherwise}
    \end{cases}
    \\
    \label{eq:pertrightlope}
    \pertrightlope &\defeq
    \begin{cases}
     \sup\kset{\bvi{}\in\kR+}{\bvi{} \in \subdiff\pertfunc(\pertlimit)} & \mbox{if \(\pertlimit<+\infty\)}\\
     +\infty &\mbox{otherwise,}
     \end{cases}
\end{align}
where $\subdiff$ denotes the subdifferential operator.
As outlined in \Cref{table:examples:intro}, the quantities \(\pertslope\), \(\pertlimit\), and \(\pertrightlope\) admit simple closed-form expressions for many choices of \(\pertfunc\) of practical interest. 
Interestingly, the proposed characterization unifies and extends previous results in the literature.
For instance, we show in \Cref{prop:regfunc-biconj} that the tightest proper, closed, and convex lower bound on \(\regfunc\) -- namely, its biconjugate function -- can be expressed as:
\begin{equation}
    \label{eq:expr-g**}
    \forall \pvi{}\in\kR:\ 
    \biconj{\regfunc}(\pvi{}) =
     \begin{cases}
        \pertslope\abs{\pvi{}} & \text{if} \ \abs{\pvi{}} \leq \pertlimit \\
        \pertfunc(\pvi{}) + \reg & \text{if} \ \abs{\pvi{}} \geq \pertlimit
        .
    \end{cases}
\end{equation}
In particular, this formulation directly recovers the previously known results \eqref{eq:relax-bigM}-\eqref{eq:relax-sql2norm} as special cases when \(\pertfunc\) is defined according to \eqref{eq:bigM}-\eqref{eq:sql2norm}. 
Given the generality of our assumptions on \(\datafunc\) and \(\pertfunc\), our framework thus enables the efficient implementation of \gls{bnb} algorithms for a much wider range of problem instances than was previously feasible.

\begin{table}[!t]
    \centering
    \small
    \begin{tabular}{llccc} 
        \toprule
        Function 
        & \(\pertslope\) & \(\pertlimit\)& \(\pertrightlope\) \\ [0.5ex]
        \midrule
        \(\pertfunc(\pvi{}) = \icvx(\abs{\pvi{}} \leq \bigM)\) & \(\tfrac{\reg}{\bigM}\) & \(\bigM\) & \(+\infty\) \\ [1ex]
        \(\pertfunc(\pvi{}) = \sigma\abs{\pvi{}}\) & \(\sigma\) & \(+\infty\) & \(+\infty\) \\ [1ex]
        \(\pertfunc(\pvi{}) = \tfrac{\sigma}{p}\abs{\pvi{}}^p\) with $p>1$ & \(\sigma(\tfrac{p\reg}{(p-1)\sigma})^{\tfrac{p-1}{p}}\) & \((\tfrac{p\reg}{(p-1)\sigma})^{\tfrac{1}{p}}\) & \(\sigma(\tfrac{p\reg}{(p-1)\sigma})^{\tfrac{p-1}{p}}\) \\ [1ex]
        \(\pertfunc(\pvi{}) = \sigma\abs{\pvi{}} + \tfrac{\sigma'}{2}\pvi{}^2\) & \(\sigma+\sqrt{2\reg\sigma'}\) & \(\sqrt{\tfrac{2\reg}{\sigma'}}\) & $\sigma + \sqrt{2\reg\sigma'}$ \\ [1ex]
        \(\pertfunc(\pvi{}) = \sigma\abs{\pvi{}} + \icvx(\abs{\pvi{}} \leq \bigM)\) & \(\sigma + \tfrac{\reg}{\bigM}\) & \(\bigM\) & $+\infty$ \\ [1ex]
        \(\pertfunc(\pvi{}) = \tfrac{\sigma}{2}\pvi{}^2 + \icvx(\abs{\pvi{}} \leq \bigM)\) with \(\reg < \tfrac{\sigma}{2}\bigM^2\) & \(\sqrt{2\reg\sigma}\) & \(\sqrt{\tfrac{2\reg}{\sigma}}\) & $\sqrt{2\reg\sigma}$ \\ [1ex]
        \(\pertfunc(\pvi{}) = \tfrac{\sigma}{2}\pvi{}^2 + \icvx(\abs{\pvi{}} \leq \bigM)\) with \(\reg \geq \tfrac{\sigma}{2}\bigM^2\) & \(\tfrac{\reg}{\bigM}+\tfrac{\sigma\bigM}{2}\) & \(\bigM\) & $+\infty$ \\ [1ex]
        \bottomrule
    \end{tabular}
    \caption{Examples of expression of \(\pertslope\), \(\pertlimit\), \(\pertrightlope\).  
    The parameters \(\bigM\), \(\sigma\) and \({\sigma'}\) are taken positive.}
    \label{table:examples:intro}
\end{table}

Second, we introduce \elops, an open-source \textsc{Python} toolbox\footnote{The code is available in open-access at \href{https://github.com/TheoGuyard/El0ps}{\texttt{https://github.com/TheoGuyard/El0ps}}.} implementing a state-of-the-art solving procedure for problems of the form \eqref{prob:prob}  satisfying assumptions \ref{assumption:f}-\ref{assumption:zero-minimized}-\ref{assumption:closed}-\ref{assumption:convex}-\ref{assumption:coercive}-\ref{assumption:even}. This toolbox provides a flexible and efficient alternative to commercial solvers, allowing users to handle various loss functions and regularization terms while benefiting from significant computational gains. 
Through numerical simulations, we demonstrate that the proposed solver can accelerate the resolution of \eqref{prob:prob} by several orders of magnitude across various standard problems in machine learning and signal processing. 

\paragraph{Outline.}
The rest of the paper is organized as follows. 
In \Cref{subsec:notations}, we define the notational conventions used in our derivations. 
In \Cref{sec:bnb}, we remind the main principles of \gls{bnb} algorithms and 
discuss several ingredients playing a central role in their implementation. 
In \Cref{sec:implementation}, we show that the main quantities needed for the efficient implementation of \gls{bnb} procedures for problem \eqref{prob:prob} admit simple closed-form expressions under our blanket assumptions.  
In \Cref{sec:numerics}, we evaluate the performance of our toolbox, \elops{}, which leverages our mathematical results to provide an efficient and flexible \gls{bnb} solver tailored to problem \eqref{prob:prob}. All the technical details of our derivations are postponed to the appendices.


\section{Notations and Conventions} \label{subsec:notations}
 
Classical letters (\eg,~\(x\)) represent scalars, boldface lowercase (\eg,~\(\pv\)) letters represent vectors and boldface uppercase (\eg,~\(\dic\)) letters represent matrices.
\(\0\) and \(\1\) respectively denote the all-zeros and all-ones vectors whose dimension is usually  clear from the context. 
The notation $\ktranspose{\pv}$ stands for the transpose of vector $\pv$ and $\odot$ is used to represent the element-wise product between two vectors. 
The \(\idxentry\)-th entry of a vector \(\pv\) is denoted by \(\pvi{\idxentry}\) and \(\pv_{\setidx}\) corresponds to the restriction of \(\pv\) to its elements indexed by \(\setidx\). The $\ddim$-dimensional identity matrix is denoted $\mathbf{I}_\ddim$. 

Calligraphic letters (\eg,~\(\setidx\)) are used to define sets, \(\card{\cdot}\) denotes their cardinality and \(\conv(\cdot)\) their convex hull. Given two integers \(a,b\in\kN\), the notation \(\intervint{a}{b}\) represents the set of all integers between \(a\) and \(b\). 
Given \(\epsilon>0\) and \(\pv\in\kR^\pdim\), we define 
\begin{align}
	\ball(\pv,\epsilon)
	&\defeq 
	\kset{\pv'\in\kR^\pdim}{\|\pv'-\pv\|_2<\epsilon}
	.
\end{align}
\(\icvx(\cdot)\) (resp. $1(\cdot)$) denotes the indicator function, which is equal to \(0\) (resp. $1$) if the condition in parentheses is satisfied and \(+\infty\) (resp. $0$) otherwise. 
The positive-part function is defined as \(\pospart{\pvi{}} \inlinedefeq \max(\pvi{},0)\) and is applied coordinate-wise to vectors.
Given some function \(\kfuncdef{\genericfunc}{\kR^\pdim}{\extendedRealLine}\), we let \(\conj{\genericfunc}(\bv) \inlinedefeq \sup_{\pv} \transp{\bv}\pv - \genericfunc(\pv)\) denote its convex conjugate.~Similarly, the convex biconjugate function of \(\genericfunc\) is denoted \(\biconj{\genericfunc} \inlinedefeq \conj{(\conj{\genericfunc})}\). We let \(\dom \genericfunc \inlinedefeq \kset{\pv\in\kR^\pdim}{\genericfunc(\pv)<+\infty}\) be the domain of \(\genericfunc\). 
The subdifferential set of \(\genericfunc\) at some \(\pv\in\dom \genericfunc\) is written\footnote{Following~\cite{beck2017first}, we only define the subdifferential of a proper function at points of the domain. } 
\begin{align}
	\subdiff\genericfunc(\pv) \defeq \kset{\dv\in\kR^\pdim}{\forall \pv'\in\kR^\pdim:\ \genericfunc(\pv') \geq \genericfunc(\pv) + \transp{\dv}(\pv' - \pv)}	
\end{align}
and any \(\dv \in \subdiff\genericfunc(\pv)\) is referred to as a subgradient.
The proximal operator of \(\genericfunc\) is denoted \(\prox_{\genericfunc}(\bv) \inlinedefeq \argmin_{\pv} \tfrac{1}{2}\norm{\pv - \bv}{2}^2 + \genericfunc(\pv)\). 

Given some sequence \(\{\pv^{(k)}\}_{k\in\kN}\), we note \(\lim_{k \rightarrow +\infty}\pv^{(k)}=\pv\) if the sequence converges to some limit point \(\pv\). 
Given some function \(\kfuncdef{\genericfunc}{\kR^\pdim}{\rextendedRealLine}\), we note \(\lim_{\pv' \rightarrow \pv}\genericfunc{(\pv')}=b\) if for any sequence \(\{\pv^{(k)}\}_{k\in\kN}\) converging to \(\pv\), the sequence \(\{\genericfunc(\pv^{(k)})\}_{k\in\kN}\) converges to \(b\). 
For the one-dimensional case \(\pdim=1\), we note \(\lim_{\pvi{}'\uparrow \pvi{}}\genericfunc(\pvi{})=b\), if for any scalar sequence \(\{\pvi{}^{(k)}\}_{k\in\kN}\) converging to \(\pvi{}\) and such that \(\pvi{}^{(k)}<\pvi{}\), we have that \(\{\genericfunc(\pvi{}^{(k)})\}_{k\in\kN}\) converges to \(b\). 
The notion of continuity used throughout the paper has to be understood in the sense of \cite[\chapterInRef{2}]{beck2017first}: a proper function \(\kfuncdef{\genericfunc}{\kR^\pdim}{\rextendedRealLine}\) is said to be continuous on \(\dom \genericfunc\) if for any sequence \(\{\pv^{(k)}\}_{k\in\kN}\subset\dom\genericfunc\) such that \(\lim_{k \rightarrow +\infty}\pv^{(k)}=\pv \in\dom \genericfunc\), we have \(\lim_{k \rightarrow +\infty}\genericfunc(\pv^{(k)})=\genericfunc(\pv)\). Given some subset \(\mathcal{X}\subseteq \kR^\pdim\), we let \(\mathrm{cl}(\mathcal{X})\) and \(\mathrm{int}(\mathcal{X})\) respectively denote the closure and the interior of \(\mathcal{X}\) with respect to the standard metric topology on \(\kR^\pdim\). 

Finally, we use the following conventions throughout the paper: 
\textit{i)} \(0 \cdot \infty = \infty \cdot 0 = 0\) for \(\infty \in \{-\infty,+\infty\}\),
\textit{ii)} \(\infty \leq \infty\) for \(\infty \in \{-\infty,+\infty\}\), 
\textit{iii)} \(\inf\kset{\pvi{}}{\pvi{} \in \emptyset} = +\infty\) and \(\sup\kset{\pvi{}}{\pvi{} \in \emptyset} = -\infty\).


\section{Some Key Quantities for BnB Implementation}
\label{sec:bnb}

\glsreset{bnb}\gls{bnb} refers to a family of algorithms designed to find the minimizers of an optimization problem to machine precision. 
In \Cref{sec:bnb:pruning-bounding}, we first remind the main principles of \gls{bnb} algorithms tailored to problem \eqref{prob:prob}.
We refer the reader to \cite[\chapterInRef{5}]{Locatelli:2013xr} for a thorough presentation of this type of algorithm. 
From \Cref{sec:bnb:lower-bounding,sec:bnb:numerical-tools,sec:bnb:dual-bounds}, we then discuss a set of functions and operators playing a central role in the \gls{bnb} implementation.

\subsection{Pruning and Bounding}
\label{sec:bnb:pruning-bounding}

The crux of \gls{bnb} procedures consists in identifying subsets of \(\kR^{\pdim}\) which cannot contain any minimizer of \eqref{prob:prob}.
As shown in \Cref{proof:existence-minimizer}, such a minimizer always exists in the setup considered in this paper. 
To do so, this family of methods constructs a decision tree in which each node corresponds to a particular subset of \(\kR^{\pdim}\). 
In our context, a tree node is identified by two disjoint subsets \(\setzero\) and \(\setone\) of \(\intervint{1}{\pdim}\).  
The goal at some node \(\nodeSymb \inlinedefeq{} (\setzero,\setone)\) is to detect whether a solution to \eqref{prob:prob} can be attained within the set 
\begin{align}
    \label{eq:region}
    \node{\mathcal{X}}
    \defeq \kset{\pv\in\kR^\pdim}{\subzero{\pv} = \0, \ \subone{\pv} \neq \0}
    .
\end{align} 
Letting 
\begin{equation}
    \label{eq:node_problem}
    \node{\pobj} \defeq \inf_{\pv \in \node{\pset}} 
    \ \datafunc(\dic\pv) + \sum_{\idxentry=1}^{\pdim}\regfunc(\pvi{\idxentry})
\end{equation}
be the smallest objective value achievable
over \(\node{\pset}\), we have that 
this subset does not contain any minimizer of problem \eqref{prob:prob} provided that
\begin{align}
    \label{eq:pruning condition}
    \node{\pobj} > \opt{\pobj}
    .
\end{align}
If inequality \eqref{eq:pruning condition} is satisfied, we can therefore safely discard \(\node{\pset}\) from the search space of the optimization problem without altering its solution. This operation is commonly referred to as \textit{``pruning''}.

Unfortunately, \(\node{\pobj}\) and \(\opt{\pobj}\) are NP-hard to evaluate and 
inequality \eqref{eq:pruning condition} is therefore of little practical interest.
This issue can nevertheless be circumvented by finding some tractable lower bound \(\node{\LB{\pobj}}\) on \(\node{\pobj}\) and upper bound \(\UB{\pobj}\) on \(\opt{\pobj}\). 
Pruning condition \eqref{eq:pruning condition} can then be relaxed as
\begin{align}
    \label{eq:relaxed_pruning_condition}
    \node{\LB{\pobj}} > \UB{\pobj} 
    .
\end{align}
The computation of \(\node{\LB{\pobj}}\) and \(\UB{\pobj}\) is commonly named \textit{``bounding''}.

On the one hand, finding an upper bound \(\UB{\pobj}\) is an easy task since the value of the objective function in \eqref{eq:node_problem} at any feasible point constitutes an upper bound on \(\opt{\pobj}\).
Several methods to construct relevant candidates have been proposed, see \eg{}, \cite{hazimeh2021sparse,mhenni2020sparse,pilanci2015sparse,tropp2010computational}. 
On the other hand, constructing a lower bound on \(\node{\pobj}\) is usually more computationally involved. A standard approach consists in minimizing a convex lower bound on a modified version of the objective function in \eqref{eq:node_problem}. 
We elaborate on this point in the next subsections. 
More specifically, we emphasize that the quantities \(\biconj\regfunc\), \(\subdiff\biconj\regfunc\), \(\prox_{\cstprox\biconj{\regfunc}}\) and \(\conj\regfunc\) play a central role in the construction and the minimization of these convex lower bounds. 

\subsection{Convex Relaxation}
\label{sec:bnb:lower-bounding}

In this section, we present a standard strategy to construct some lower bound \(\node{\LB{\pobj}}\). 
We first note that the infimum in \eqref{eq:node_problem} can be expressed as a minimum after a slight modification of the problem: 
\begin{proposition}
    \label{prop:prob-node}
    Under hypotheses \ref{assumption:f}-\ref{assumption:zero-minimized}-\ref{assumption:closed}-\ref{assumption:convex}-\ref{assumption:coercive}, we have
    \begin{equation}
        \label{prob:prob-node}
        \node{\pobj} = \min_{\pv \in \kR^{\pdim}} \datafunc(\dic\pv) + \sum_{\idxentry=1}^{\pdim} \node{\separable{\regfunc}{\idxentry}}(\pvi{\idxentry})
    \end{equation}
    where
    \begin{equation} \label{eq:regfunc-node} 
        \node{\separable{\regfunc}{\idxentry}}(\pvi{}) = 
        \begin{cases}
            \icvx(\pvi{} = 0) &\text{if} \ \idxentry \in \setzero \\
            \pertfunc(\pvi{}) + \reg &\text{if} \ \idxentry \in \setone \\
            \regfunc(\pvi{}) &\text{otherwise}.
        \end{cases}
    \end{equation}
\end{proposition}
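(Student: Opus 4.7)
The plan is to introduce the relaxed objective
$f(\pv) \defeq \datafunc(\dic\pv) + \sum_{\idxentry=1}^{\pdim} \node{\separable{\regfunc}{\idxentry}}(\pvi{\idxentry})$, denote by $\node{\pobj}'$ the right-hand side of \eqref{prob:prob-node}, first establish that $\node{\pobj}'$ is attained by some $\opt{\pv}$, and then prove the equality $\node{\pobj} = \node{\pobj}'$ through two opposite inequalities. For the existence of a minimizer, I would check that $f$ is proper (the vector $\pv=\0$ lies in its effective domain because $\0 \in \interior\dom\datafunc$ by \ref{assumption:f} and $0 \in \dom\pertfunc$ by \ref{assumption:zero-minimized}), bounded below (using $\inf\datafunc > -\infty$ and $\pertfunc\geq 0$), and lower semicontinuous (each summand is lsc: $\datafunc$ by \ref{assumption:f}, $\pertfunc$ by \ref{assumption:closed}, and the $\ell_0$ penalty and the indicators $\icvx(\pvi{\idxentry}=0)$ trivially). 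Coercivity of $f$ on the subspace $\{\pv : \pv_{\setzero}=\0\}$ follows from \ref{assumption:coercive}: along any escape sequence contained in this subspace, some component $\pvi{\idxentry}$ with $\idxentry \notin \setzero$ must diverge, forcing $\pertfunc(\pvi{\idxentry}) \to +\infty$ while the remaining terms stay bounded below. Weierstrass's theorem then provides a minimizer $\opt{\pv}$ of $f$.

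For the inequality $\node{\pobj}' \leq \node{\pobj}$, I would simply observe that $f$ agrees pointwise with the objective of \eqref{eq:node_problem} on $\node{\pset}$: for $\idxentry \in \setzero$ both contributions vanish, for $\idxentry \in \setone$ the constraint $\pvi{\idxentry} \neq 0$ yields $\regfunc(\pvi{\idxentry}) = \reg + \pertfunc(\pvi{\idxentry}) = \node{\separable{\regfunc}{\idxentry}}(\pvi{\idxentry})$, and for $\idxentry \in \setnone$ the two expressions coincide by definition. Since $\node{\pset}$ is contained in the feasible set of the relaxed problem, passing to infima gives the claim.

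The reverse inequality is obtained through a perturbation argument. Let $T \defeq \{\idxentry \in \setone : \opt{\pvi{\idxentry}} = 0\}$. If $T = \emptyset$, then $\opt{\pv} \in \node{\pset}$, the two objectives agree, and $\node{\pobj} \leq f(\opt{\pv}) = \node{\pobj}'$. Otherwise, I would set $\pv(\epsilon) \defeq \opt{\pv} + \epsilon \sum_{\idxentry \in T} \mathbf{e}_{\idxentry}$, where $\mathbf{e}_\idxentry$ is the $\idxentry$-th canonical basis vector. For every sufficiently small $\epsilon > 0$, the point $\pv(\epsilon)$ lies in $\node{\pset}$ since the $\setzero$-components remain unchanged and all $\setone$-components become nonzero. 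Using continuity of $\pertfunc$ at $0$ (a consequence of \ref{assumption:zero-minimized}, \ref{assumption:convex} and \ref{assumption:even}, which together imply $0 \in \interior\dom\pertfunc$, on which a real-valued convex function is continuous) together with continuity of $\datafunc$ at $\dic\opt{\pv}$ (from differentiability per \ref{assumption:f}), the objective of \eqref{eq:node_problem} evaluated at $\pv(\epsilon)$ converges to $f(\opt{\pv}) = \node{\pobj}'$ as $\epsilon \to 0^+$. Since $\pv(\epsilon) \in \node{\pset}$, its objective value is bounded below by $\node{\pobj}$, and passing to the limit yields $\node{\pobj} \leq \node{\pobj}'$.

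The main delicate point I expect is justifying the continuity of $\datafunc$ at $\dic\opt{\pv}$: differentiability of a convex function is defined only on the interior of its effective domain, so \ref{assumption:f} must be read as implicitly requiring $\dom\datafunc$ to be open, placing the finite-value point $\dic\opt{\pv}$ inside $\interior\dom\datafunc$ and yielding the desired continuity of $\datafunc\circ\dic$ at $\opt{\pv}$. Once this subtle point is settled the remainder of the argument is routine.
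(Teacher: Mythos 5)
Your strategy is correct in outline but follows a genuinely different route from the paper's. In \Cref{proof:equivalence-problems-l0}, the paper partitions \(\node{\pset}\) over all ``completed'' nodes \(\nu'=(\setzero',\setone')\) with \(\setzero'\cup\setone'=\intervint{1}{\pdim}\), compares the corresponding objectives through a dedicated lemma, and then, for each completed node (whose objective is convex), transfers the minimum over \(\closure(\pset^{\nu'})\) to the infimum over \(\pset^{\nu'}\) by moving along a segment toward a suitably built point of \(\pset^{\nu'}\) and invoking \cite[Proposition~9.14]{Bauschke2017}. You instead take a minimizer \(\opt{\pv}\) of the relaxed problem and perturb its vanishing \(\setone\)-coordinates directly; this bypasses the leaf-node decomposition and the comparison lemma, and your two inequalities together with the Weierstrass existence argument form the right skeleton.

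Two justifications, however, need repair. First, you obtain continuity of \(\pertfunc\) at \(0\) from \ref{assumption:even}, which is not among the hypotheses of \Cref{prop:prob-node}. The fact you actually need, \(\pertfunc(\epsilon)\to\pertfunc(0)=0\) as \(\epsilon\downarrow 0\), already follows from \ref{assumption:zero-minimized}-\ref{assumption:closed}-\ref{assumption:convex}: \(\dom{\pertfunc}\) contains an interval \(\kintervcc{0}{\pvi{+}}\) with \(\pvi{+}>0\), and a one-dimensional proper, closed, convex function is continuous on its domain (\Cref{cor:continuity}). Second, and more seriously, your fixed-direction perturbation needs \(\datafunc(\dic\pv(\epsilon))\to\datafunc(\dic\opt{\pv})\), and you secure this by re-reading \ref{assumption:f} as requiring \(\dom{\datafunc}\) to be open. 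That is an additional assumption rather than a consequence of \ref{assumption:f}: if \(\dic\opt{\pv}\) lies on the boundary of \(\dom{\datafunc}\) and the direction \(\dic\sum_{\idxentry\in T}\mathbf{e}_{\idxentry}\) points outward, then \(\datafunc(\dic\pv(\epsilon))=+\infty\) for every \(\epsilon>0\) and the limiting argument collapses. The gap can be closed under the stated hypotheses by trading the coordinate perturbation for a segment: choose \(\delta\in\dom{\pertfunc}\cap\kR+\setminus\{0\}\) small enough that the vector \(\overline{\pv}\), equal to \(\delta\) on \(T\) and \(0\) elsewhere, satisfies \(\dic\overline{\pv}\in\interior{\dom{\datafunc}}\) (possible since \(\0\in\interior{\dom{\datafunc}}\) by \ref{assumption:f}). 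Then \(\pv_\alpha\defeq(1-\alpha)\opt{\pv}+\alpha\overline{\pv}\in\node{\pset}\) for all \(\alpha\in\kintervoo{0}{1}\), the \(\ell_0\) contribution of the free coordinates is constant along the segment, and the remaining part of the objective is convex in \(\alpha\) and finite at \(\alpha=0\) and \(\alpha=1\); the chord inequality then bounds the objective of \eqref{eq:node_problem} at \(\pv_\alpha\) by \(f(\opt{\pv})+\alpha C\) for a finite constant \(C\), whence \(\node{\pobj}\le f(\opt{\pv})=\node{\pobj}'\). This is, in essence, the segment device the paper itself employs at completed nodes.
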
    
We refer the reader to \Cref{proof:equivalence-problems-l0} for a proof of this result. 
Problem \eqref{prob:prob-node} corresponds to a continuous extension of \eqref{eq:node_problem} where the function \(\regfunc(\pvi{})+\icvx(\pvi{}\neq 0)\), appearing implicitly in \eqref{eq:node_problem}, is continuously prolonged to \(\pvi{}=0\) for any \(\idxentry\in\setone\). 

A common strategy to find some tractable lower bound on \(\node{\pobj}\) then consists in minimizing a convex relaxation of the cost function in \eqref{prob:prob-node}. 
Since \(\datafunc\) is convex according to \ref{assumption:f}, finding such a convex relaxation can be achieved by identifying the tightest convex lower bound on \(\node{\separable{\regfunc}{\idxentry}}\), denoted \(\node{\separable{\relaxregfunc}{\idxentry}}\) in the sequel.
In the framework considered in this paper, the latter coincides with the convex biconjugate of \(\node{\separable{\regfunc}{\idxentry}}\).\footnote{\label{footnote:g**=tightest-convex-lower-bound}This is a consequence of~\cite[\propositionInRef{13.45}]{Bauschke2017} since \(\node{\separable{\regfunc}{\idxentry}}\) is lower-bounded under \ref{assumption:zero-minimized}.}
More precisely, standard properties of the convex biconjugate function \cite[\propositionInRef{13.30} and \theoremInRef{13.37}]{Bauschke2017} yield that 
\begin{equation} \label{eq:relax-gi}
    \node{\separable{\relaxregfunc}{\idxentry}}(\pvi{}) =
    \begin{cases}
        \icvx(\pvi{} = 0) &\text{if} \ \idxentry \in \setzero \\
        \pertfunc(\pvi{}) + \reg &\text{if} \ \idxentry \in \setone \\
        \biconj{\regfunc}(\pvi{}) &\text{otherwise.}
    \end{cases}            
\end{equation}
As a consequence, a lower bound on \(\node{\pobj}\) can be found by solving the following convex optimization problem:
\begin{equation} \label{prob:relax-node}
    \node{\robj} = \min_{\pv \in \kR^{\pdim}} 
    \datafunc(\dic\pv) + \sum_{\idxentry=1}^{\pdim}\node{\separable{\relaxregfunc}{\idxentry}}(\pvi{\idxentry})
    .
\end{equation}  
We show in \Cref{proof:existence-minimizer-relax-node} that a minimizer to this problem always exists and elaborate on standard strategies to solve this type of convex optimization problems in the next subsections. 
For now, we observe that the evaluation of \(\node{\separable{\relaxregfunc}{\idxentry}}\)  is simple as soon as the biconjugate function \(\biconj\regfunc\) admits a simple closed-form expression.
In the next section, we will show that \(\biconj\regfunc\) relates in a simple manner to \({\pertfunc}\) through the parameters \((\pertslope,\pertlimit,\pertrightlope)\) advertised in the introduction.
In particular, \Cref{prop:regfunc-biconj} in \Cref{sec:characterisation-g**} suggests that both functions can be evaluated with similar numerical complexity. 

\subsection{Dual relaxation}
\label{sec:bnb:dual-bounds}

A classical strategy to improve the running time of \gls{bnb} algorithms is to avoid solving the relaxed problem \eqref{prob:relax-node} to machine precision.
Unfortunately, this early-stopping strategy may result in lower bounds which are no longer valid and can thus potentially alter pruning decisions. 
To ensure correctness of the \gls{bnb} procedure while using an early-stopping strategy, a standard approach consists in considering the Fenchel-Rockafellar dual \cite[\definitionInRef{15.19}]{Bauschke2017} associated with \eqref{prob:relax-node}, that is 
\begin{equation} 
    \label{prob:dual-node}
    \node{\dobj} = \max_{\dv \in \kR^{\ddim}} \ -\conj{\datafunc}(-\dv) - \sum_{\idxentry=1}^{\pdim} \conj{(\node{\separable{\relaxregfunc}{\idxentry}})}(\ktranspose{\atom{}}_\idxentry \dv{})
    .
\end{equation}
Notably, we have from \cite[\propositionInRef{13.23} and \propositionInRef{13.30}]{Bauschke2017} that 
\begin{equation}
    \label{eq:conj-relaxregfunc}
    \conj{(\node{\separable{\relaxregfunc}{\idxentry}})}(\bvi{}) = 
    \left\{
        \begin{array}{ll}
            0 & \text{if} \ \idxentry \in \setzero \\
            \conj{\pertfunc}(\bvi{}) - \reg & \text{if} \ \idxentry \in \setone 
            \\
            \conj{\regfunc}(\bvi{}) &\text{otherwise.} 
        \end{array}
    \right.
\end{equation}
The couple of problems \eqref{prob:relax-node}-\eqref{prob:dual-node} at least satisfies weak duality, that is \(\node{\robj} \geq \node{\dobj}\) \cite[\propositionInRef{15.21.i}]{Bauschke2017}.
Hence, evaluating the objective of \eqref{prob:dual-node} at any \(\dv \in \kR^{\ddim}\) necessarily yields a valid lower bound on \(\node{\robj}\), and thus on \(\node{\pobj}\). 
One can therefore always stop the solving procedure addressing \eqref{prob:relax-node} before reaching machine accuracy, choose some ``judicious'' dual point\footnote{A typical choice is \(\dv \in -\subdiff\datafunc(\dic\hat{\pv})\), where \(\hat{\pv}\) is the last iterate of the numerical procedure addressing \eqref{prob:relax-node}. This is motivated by primal-dual optimality conditions \cite[\theoremInRef{19.1}]{Bauschke2017}.} and evaluate the objective of \eqref{prob:dual-node} to set the value of the lower-bound.
This ensures the validity of the \gls{bnb} process, even though \eqref{prob:relax-node} has not been solved exactly.

Recent works~\cite{atamturk2020safe,guyard2022node,deza2022safe,pmlr-v235-guyard24a} also highlighted that dual relaxation can be of interest to allow the simultaneous pruning of several regions with negligible computational overhead, thereby yielding significant computational speedup in various setups related to machine learning and signal processing.
These contributions leverage (either explicitly or implicitly) a nesting property in the definition of the dual function at different nodes of the \gls{bnb} tree resulting from the closed-form expression of \(\conj{\regfunc}\), see~\cite[\propositionInRef{2}]{pmlr-v235-guyard24a}.

We notice from \eqref{prob:dual-node}-\eqref{eq:conj-relaxregfunc} that the evaluation of the dual function requires the knowledge of \(\conj{\datafunc}\), \(\conj{\pertfunc}\) and \(\conj{\regfunc}\). In fact, \(\conj{\datafunc}\) and \(\conj{\pertfunc}\) admit simple closed-form expressions for a wide variety of convex functions \(\datafunc\) and \(\regfunc\), see \cite[\sectionInRef{4.4}]{beck2017first}.
In the next section, we will show that \(\conj{\regfunc}\) depends in simple way on \(\conj{\pertfunc}\). 
In particular, \Cref{prop:regfunc-conj} in \Cref{sec:characterisation-g*} emphasizes that \(\conj{\regfunc}\) and \(\conj{\pertfunc}\) can be evaluated with the same computational complexity. 

\subsection{Numerical Tools for Solving Relaxations}
\label{sec:bnb:numerical-tools}

Implementing the bounding step via the lower bounds presented in \Cref{sec:bnb:lower-bounding,sec:bnb:dual-bounds} necessitates access to the optimum values of \eqref{prob:relax-node} and \eqref{prob:dual-node}. As closed-form solutions are typically unavailable for these optimization problems, numerical procedures are requisite for their evaluation. 
Many methods addressing this kind of composite optimization problem have been proposed in the literature over the past decades~\cite{beck2009fast,bertrand2022beyond,boyd2011distributed,lee2006efficient,massias2017safe,wright2015coordinate}. 
They mostly require access to first-order information about the functions \(\datafunc\), \(\pertfunc\) and \(\biconj{\regfunc}\) when addressing \eqref{prob:relax-node}, and about the functions \(\conj{\datafunc}\), \(\conj{\pertfunc}\) and \(\conj{\regfunc}\) when addressing \eqref{prob:dual-node}, specifically their subdifferential and proximal operators.
In general, the operators associated with \(\datafunc\), \(\pertfunc\), \(\conj{\datafunc}\) and \(\conj{\pertfunc}\) can be obtained in closed-form from standard convex analysis results, see~\cite[\chaptersInRef{4}{6}]{beck2017first}. 
Hence, the application of standard optimization methods tailored to \eqref{prob:relax-node} and \eqref{prob:dual-node} mainly depends on the ability to characterize these operators for \(\biconj{\regfunc}\) and \(\conj{\regfunc}\).
In the next section, we show that under our blanket assumptions, these operators have simple closed-form expressions in terms of the parameters \(\pertslope{}\), \(\pertlimit{}\), and \(\pertrightlope{}\), see \Cref{prop:subdiff-gbiconj,prop:operator-prox-gbiconj} in \Cref{sec:characterisation-g**} and \Cref{prop:subdiff-conjugate of g,prop:operator-prox-gstar} in \Cref{sec:characterisation-g*}.


\section{Mathematical Characterization of the Key Ingredients}
\label{sec:implementation}

In the previous section, we highlighted that efficiently implementing \gls{bnb} solvers for problem~\eqref{prob:prob} is feasible whenever the functions \(\conj{\regfunc{}}\) and \(\biconj{\regfunc{}}\) as well as their subdifferential and proximal operators can be evaluated in a tractable manner. 
This section aims to show that, under our blanket assumptions, these quantities have simple closed-form expressions in terms of parameters \(\pertslope\), \(\pertlimit\), and \(\pertrightlope\) introduced in \eqref{eq:pertslope}-\eqref{eq:pertrightlope}.
We structure our exposition by distinguishing between quantities related to \(\biconj{\regfunc{}}\) in \Cref{sec:characterisation-g**}, and those associated with \(\conj{\regfunc{}}\) in \Cref{sec:characterisation-g*}.

\subsection{Characterization of \texorpdfstring{\(\biconj{\regfunc}\)}{\expandafter\regfunc**}}
\label{sec:characterisation-g**}

We first elaborate on the expression of the biconjugate function \(\biconj{\regfunc}\) and its subdifferential/proximal operators.
The next result provides a closed-form expression of the convex biconjugate of $\regfunc$ as a function of parameters $\pertslope$ and $\pertlimit$: 
\begin{proposition} \label{prop:regfunc-biconj}
    If~\ref{assumption:zero-minimized}-\ref{assumption:closed}-\ref{assumption:convex}-\ref{assumption:even} hold, then \(\dom{\biconj{\regfunc}}=\dom{\pertfunc}\) and 
    \begin{equation} \label{eq:regfunc-biconj} 
        \forall \pvi{} \in \kR:\
        \biconj{\regfunc}(\pvi{}) =
        \begin{cases}
            \pertslope\abs{\pvi{}} & \text{if} \ \abs{\pvi{}} \leq \pertlimit \\
            \pertfunc(\pvi{}) + \reg & \text{if} \ \abs{\pvi{}} \geq \pertlimit.
        \end{cases}       
    \end{equation}
\end{proposition}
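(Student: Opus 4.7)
The plan is to prove \eqref{eq:regfunc-biconj} by a two-step Legendre-Fenchel computation: first obtain a closed-form expression for $\conj{\regfunc}$ governed by $\pertslope$, then recover $\biconj{\regfunc}=\conj{(\conj{\regfunc})}$ by a second conjugation and identify the transition controlled by $\pertlimit$. This mirrors the hierarchical structure of the definitions \eqref{eq:pertslope}-\eqref{eq:pertlimit}.

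In the first step, I would compute $\conj{\regfunc}$ from its definition by splitting the supremum in $\conj{\regfunc}(\bvi{})=\sup_{\pvi{}}[\bvi{}\pvi{}-\reg\norm{\pvi{}}{0}-\pertfunc(\pvi{})]$ according to $\pvi{}=0$ (contributing $0$ by \ref{assumption:zero-minimized}) and $\pvi{}\neq 0$. Since \ref{assumption:zero-minimized}, \ref{assumption:convex} and \ref{assumption:even} together force $\dom\pertfunc$ to contain a symmetric interval around the origin, $\pertfunc$ is continuous at $0$ and the supremum over $\pvi{}\neq 0$ coincides with $\conj{\pertfunc}(\bvi{})$; this yields $\conj{\regfunc}(\bvi{})=\pospart{\conj{\pertfunc}(\bvi{})-\reg}$. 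Using evenness of $\conj{\pertfunc}$, its monotonicity on $[0,+\infty)$, and its continuity on $\dom\conj{\pertfunc}$ (a classical property of closed convex functions of one variable), combined with definition \eqref{eq:pertslope}, I then obtain
\begin{equation*}
    \conj{\regfunc}(\bvi{})=
    \begin{cases}
        0 & \text{if } \abs{\bvi{}}\leq\pertslope,\\
        \conj{\pertfunc}(\bvi{})-\reg & \text{if } \abs{\bvi{}}>\pertslope.
    \end{cases}
\end{equation*}
I would also note that \ref{assumption:zero-minimized} ensures $\pertslope<+\infty$: picking any $\pvi{}^\star>0$ in $\dom\pertfunc$, one has $\conj{\pertfunc}(\bvi{})\geq\bvi{}\pvi{}^\star-\pertfunc(\pvi{}^\star)\to+\infty$ as $\bvi{}\to+\infty$.

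For the second step, fix $\pvi{}\geq 0$ (the general case follows by evenness) and split the supremum defining $\biconj{\regfunc}(\pvi{})$ along $\abs{\bvi{}}\leq\pertslope$ and $\bvi{}>\pertslope$; the half $\bvi{}<-\pertslope$ is dominated since $\pvi{}\geq 0$. The first piece evaluates to $\pertslope\pvi{}$, while the second equals $\reg+\sup_{\bvi{}>\pertslope}[\pvi{}\bvi{}-\conj{\pertfunc}(\bvi{})]$. Since $\pertfunc=\biconj{\pertfunc}$ under \ref{assumption:closed}-\ref{assumption:convex}, the unrestricted supremum of the concave map $\bvi{}\mapsto\pvi{}\bvi{}-\conj{\pertfunc}(\bvi{})$ equals $\pertfunc(\pvi{})$ with maximizer set $\subdiff\pertfunc(\pvi{})$. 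The Fenchel-Young reciprocity $\bvi{}\in\subdiff\pertfunc(\pvi{})\iff\pvi{}\in\subdiff\conj{\pertfunc}(\bvi{})$ and the definition \eqref{eq:pertlimit} of $\pertlimit$ then separate two regimes. When $\pvi{}>\pertlimit$, every subgradient of $\pertfunc$ at $\pvi{}$ strictly exceeds $\pertslope$, so the restricted supremum equals $\pertfunc(\pvi{})$ and $\biconj{\regfunc}(\pvi{})=\max\{\pertslope\pvi{},\pertfunc(\pvi{})+\reg\}=\pertfunc(\pvi{})+\reg$, the latter dominating by Fenchel's inequality $\pvi{}\pertslope\leq\pertfunc(\pvi{})+\conj{\pertfunc}(\pertslope)\leq\pertfunc(\pvi{})+\reg$. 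When $\pvi{}\leq\pertlimit$, the concave map is nonincreasing on $[\pertslope,+\infty)$ past the rightmost maximizer (which lies at $\pertslope$ or to its left), so the restricted supremum equals $\pvi{}\pertslope-\conj{\pertfunc}(\pertslope)$ by continuity at $\pertslope$; combined with $\conj{\pertfunc}(\pertslope)=\reg$, valid whenever $\pertslope$ is interior to $\dom\conj{\pertfunc}$, this yields $\biconj{\regfunc}(\pvi{})=\pertslope\pvi{}$. Agreement of the two pieces at $\pvi{}=\pertlimit$ follows from the Fenchel equality $\pertslope\pertlimit=\pertfunc(\pertlimit)+\conj{\pertfunc}(\pertslope)=\pertfunc(\pertlimit)+\reg$, which holds since $\pertslope\in\subdiff\pertfunc(\pertlimit)$ by construction of $\pertlimit$.

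The main technical obstacle is the careful treatment of the boundary situation where $\pertslope$ lies on the boundary of $\dom\conj{\pertfunc}$. There one may have $\conj{\pertfunc}(\pertslope)<\reg$ strictly, but the subdifferential $\subdiff\conj{\pertfunc}(\pertslope)$ becomes an unbounded half-line, so \eqref{eq:pertlimit} forces $\pertlimit=+\infty$; moreover $\conj{\pertfunc}\equiv+\infty$ on $(\pertslope,+\infty)$, hence the restricted supremum in the second step is $-\infty$ and the formula collapses globally to $\biconj{\regfunc}(\pvi{})=\pertslope\abs{\pvi{}}$, matching \eqref{eq:regfunc-biconj} since only the first branch applies. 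The domain equality $\dom\biconj{\regfunc}=\dom\pertfunc$ finally follows from the piecewise formula: the first branch $\pertslope\abs{\pvi{}}$ is finite whenever $\abs{\pvi{}}\leq\pertlimit$, and when $\pertlimit<+\infty$ the Fenchel reciprocity applied to $\pertslope\in\subdiff\pertfunc(\pertlimit)$ gives $\pertlimit\in\dom\pertfunc$, so $[-\pertlimit,\pertlimit]\subseteq\dom\pertfunc$ by convexity of the domain; on the remainder the second branch coincides with $\pertfunc+\reg$, whose domain is exactly $\dom\pertfunc$.
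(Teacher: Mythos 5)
Your route --- conjugating the closed form $\conj{\regfunc}=\pospart{\conj{\pertfunc}-\reg}$ directly and splitting the supremum at $\pertslope$ --- is a legitimate alternative to the paper's organization (domain identity from the sandwich $\pertfunc\le\regfunc\le\pertfunc+\reg$, then the two branches via the subdifferential of $\conj{\regfunc}$ with an explicit interior/boundary/outside-domain case analysis), but as written it has a genuine gap in the regime $\abs{\pvi{}}>\pertlimit$. Your justification ``every subgradient of $\pertfunc$ at $\pvi{}$ strictly exceeds $\pertslope$, so the restricted supremum equals $\pertfunc(\pvi{})$'' tacitly assumes that the unrestricted supremum $\sup_{\bvi{}}\,[\pvi{}\bvi{}-\conj{\pertfunc}(\bvi{})]$ is attained, i.e.\ that $\subdiff\pertfunc(\pvi{})\neq\emptyset$. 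This fails when $\pvi{}\notin\dom{\pertfunc}$ (easy to patch: the supremum over $\bvi{}\le\pertslope$ is bounded by $\pvi{}\pertslope$, so the restricted supremum must be $+\infty$), and, more seriously, at boundary points of $\dom{\pertfunc}$ where $\pertfunc$ is finite but the subdifferential is empty (infinite one-sided slope). There you need an additional argument --- e.g.\ upper semicontinuity of $\bvi{}\mapsto\pvi{}\bvi{}-\conj{\pertfunc}(\bvi{})$ and its decay as $\bvi{}\to-\infty$ show the supremum over $\left]-\infty,\pertslope\right]$ is attained, and attainment of the \emph{global} supremum at a point $\le\pertslope$ would produce a subgradient contradicting $\pvi{}>\pertlimit$; the paper instead handles this case by a limiting argument along interior points. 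This is exactly the case the paper isolates, and your proof skips it.

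A second gap concerns the domain identity. You read $\dom{\biconj{\regfunc}}=\dom{\pertfunc}$ off the piecewise formula, but that only works when $\pertlimit<+\infty$. When $\pertlimit=+\infty$ (e.g.\ $\pertfunc=\sigma\abs{\cdot}$, your boundary case $\pertslope=\domb$, or $\pertfunc\equiv 0$) the formula gives $\dom{\biconj{\regfunc}}=\kR$, and you still owe the fact that $\dom{\pertfunc}=\kR$; this is true under the hypotheses but not free: an unbounded $\subdiff\conj{\pertfunc}(\pertslope)$ yields, by Fenchel reciprocity, arbitrarily large points of $\dom{\pertfunc}$ (hence $\dom{\pertfunc}=\kR$ by evenness and convexity), while an empty $\subdiff\conj{\pertfunc}(\pertslope)$ forces $\pertslope=\domb<+\infty$ and then $\pertfunc=\biconj{\pertfunc}\le\domb\abs{\cdot}$. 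The paper obtains the domain equality uniformly in one line from $\pertfunc=\biconj{\pertfunc}\le\biconj{\regfunc}\le\pertfunc+\reg$; you should either adopt that argument or add the missing case. The remaining imprecisions (attainment of the suprema defining $\pertslope$ and $\pertlimit$, the ``rightmost maximizer'' phrasing when $\pvi{}=\pertlimit$) are fixable via closedness of the relevant sets and do not affect the conclusion.
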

A proof of this result is available in \Cref{proof:regfunc-biconj}. 
\Cref{fig:relaxregfunc} illustrates the result stated in \Cref{prop:regfunc-biconj} for some particular choices of function \(\pertfunc\) along with the correspond values of $\pertslope{}$ and $\pertlimit{}$. 

As outlined in \Cref{table:examples:intro}, the parameters $\pertslope{}$ and $\pertlimit$ can be evaluated in closed-form for many functions $\pertfunc{}$ encountered in the literature. 
Interestingly, when $\pertfunc{}$ is defined as in \eqref{eq:bigM}-\eqref{eq:sql2norm}, one recovers results previously published in the literature \cite{atamturk2020safe,ben2021global,guyard2022node,mhenni2020sparse,samain2022techniques} as particular cases of \Cref{prop:regfunc-biconj}. 
However, our result extends beyond these works by providing the closed-form expression of a convex lower bound on \(\regfunc{}\) for any function \(\pertfunc{}\) satisfying \ref{assumption:zero-minimized}-\ref{assumption:closed}-\ref{assumption:convex}-\ref{assumption:even}. As a byproduct of our derivations, we also point out that this bound is the largest proper, closed and convex function that lower-bound \(\regfunc\), as it corresponds to its biconjugate function, see \Cref{footnote:g**=tightest-convex-lower-bound} in \Cref{sec:bnb:lower-bounding}.

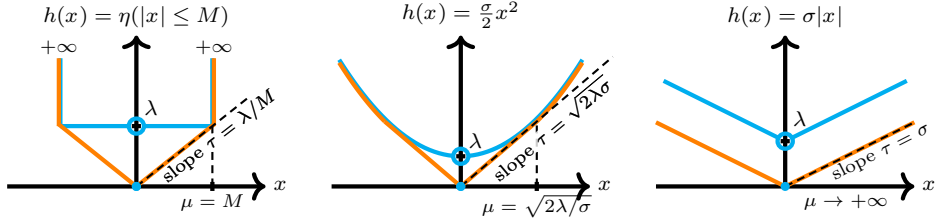
\begin{figure}[t]
    \centering
    \begin{tikzpicture}
    \begin{scope}[xscale=2,yscale=2]
        \node (origin) at (0,0) {};
        \draw[ultra thick,->] (-0.85,0) -- (0.85, 0);
        \draw[ultra thick,->] (0,0) -- (0, 1);
        \draw[ultra thick] (-0.03,0.4) -- (0.03,0.4);
        \node[above right] at (0,0.42) {$\reg$};
        \node[right] at (0.85,0) {$\pvi{}$};
        \node[above] at (0,1) {\small{$\pertfunc(\pvi{}) = \icvx(\abs{\pvi{}} \leq \bigM)$}};
        \draw[cyan,ultra thick] (-0.5,0.4) plot[domain=0.05:0.5] (\x,0.4);
        \draw[cyan,ultra thick] (-0.5,0.4) plot[domain=-0.5:-0.05] (\x,0.4) node {};
        \draw[cyan,ultra thick] (0,0.4) circle (0.05);
        \draw[cyan,ultra thick] (0.5,0.4) -- (0.5,0.85);
        \draw[cyan,ultra thick] (-0.5,0.4) -- (-0.5,0.85);
        \draw[orange,ultra thick] (0, 0) -- (0.515, 0.415);
        \draw[orange,ultra thick] (0.51,0.4) -- (0.51,0.85);
        \draw[orange,ultra thick] (0, 0) -- (-0.515, 0.415);
        \draw[orange,ultra thick] (-0.51,0.4) -- (-0.51,0.85);
        \fill[cyan] (0,0) circle (0.03);
        \node[above] at (-0.51,0.82) {\scriptsize{$+\infty$}};
        \node[above] at (0.51,0.82) {\scriptsize{$+\infty$}};
        \draw[ultra thick] (0.5,-0.03) -- (0.5,0.03);
        \node[below] at (0.5,0) {\scriptsize{$\pertlimit=\bigM$}};
        \draw[thick,densely dashed] (0.5,0) -- (0.5,0.4);
        \draw[thick,densely dashed] (0, 0) -- (0.75, 0.6);
        \node[rotate=38,anchor=center,fill=white,fill opacity=0.9,inner sep=-1] at (0.55,0.33) {\scriptsize{slope $\pertslope = \reg/\bigM$}};
        \node[rotate=38,anchor=center] at (0.55,0.33) {\scriptsize{slope $\pertslope = \reg/\bigM$}};
        \fill[cyan] (0,0) circle (0.03);
    \end{scope}
    \begin{scope}[shift={(0.35\linewidth,0)},xscale=2,yscale=2]
        \node (origin) at (0,0) {};
        \draw[ultra thick,->] (-0.85,0) -- (0.85, 0);
        \draw[ultra thick,->] (0,0) -- (0, 1);
        \draw[ultra thick] (-0.03,0.2) -- (0.03,0.2);
        \node[above right] at (0,0.2) {$\reg$};
        \node[right] at (0.85,0) {$\pvi{}$};
        \node[above] at (0,1) {\small{$\pertfunc(\pvi{}) = \tfrac{\regone}{2}\pvi{}^2$}};
        \draw[cyan,ultra thick] (-0.5,0.2) plot[domain=0.05:0.8] (\x,0.2+\x^2);
        \draw[cyan,ultra thick] (-0.5,0.2) plot[domain=-0.8:-0.05] (\x,0.2-\x^2) node {};
        \draw[cyan,ultra thick] (0,0.2) circle (0.05);
        \draw[orange,ultra thick] (0, 0) -- (0.5, 0.43);
        \draw[orange,ultra thick] (0.5, 0.43) plot[domain=0.5:0.8] (\x,0.2+\x^2-0.02) node {};
        \draw[orange,ultra thick] (0, 0) -- (-0.5, 0.43);
        \draw[orange,ultra thick] (-0.5, 0.43) plot[domain=-0.8:-0.5] (\x,0.2-\x^2-0.02) node {};
        \fill[cyan] (0,0) circle (0.03);
        \draw[thick,densely dashed] (0, 0) -- (1, 0.86);
        \node[below] at (0.5,0) {\scriptsize{$\pertlimit = \sqrt{2\reg/\regone}$}};
        \draw[ultra thick] (0.5,-0.03) -- (0.5,0.03);
        \draw[thick,densely dashed] (0.5,0) -- (0.5,0.42);
        \node[rotate=40,anchor=center,fill=white,fill opacity=0.8,inner sep=-1] at (0.6,0.38) {\scriptsize{slope $\pertslope = \sqrt{2\reg\regone}$}};
        \node[rotate=40,anchor=center] at (0.6,0.38) {\scriptsize{slope $\pertslope = \sqrt{2\reg\regone}$}};
        \fill[cyan] (0,0) circle (0.03);
    \end{scope}
    \begin{scope}[shift={(0.7\linewidth,0)},xscale=2,yscale=2]
        \node (origin) at (0,0) {};
        \draw[ultra thick,->] (-0.85,0) -- (0.85, 0);
        \draw[ultra thick,->] (0,0) -- (0, 1);
        \draw[ultra thick] (-0.03,0.3) -- (0.03,0.3);
        \node[above right] at (0,0.35) {$\reg$};
        \node[right] at (0.85,0) {$\pvi{}$};
        \node[above] at (0,1) {\small{$\pertfunc(\pvi{}) = \regone\abs{\pvi{}}$}};
        \draw[cyan,ultra thick] (-0.5,0.3) plot[domain=0.05:0.8] (\x,0.3+0.5*abs{\x});
        \draw[cyan,ultra thick] (-0.5,0.3) plot[domain=-0.8:-0.05] (\x,0.3-0.5*\x) node {};
        \draw[cyan,ultra thick] (0,0.3) circle (0.05);
        \draw[orange,ultra thick] (0, 0) -- (0.85, 0.425);
        \draw[orange,ultra thick] (0, 0) -- (-0.85, 0.425);
        \draw[thick,densely dashed] (0, 0) -- (0.85, 0.425);
        \node[below] at (0.4,0) {\scriptsize{$\pertlimit \rightarrow +\infty$}};
        \node[rotate=27,anchor=center] at (0.63,0.23) {\scriptsize{slope $\pertslope = \regone$}};
        \fill[cyan] (0,0) circle (0.03);
    \end{scope}
\end{tikzpicture}
    \caption{Graph of the functions \(\regfunc\) (blue) and \(\biconj{\regfunc}\) (orange) for different choices of function \(\pertfunc\).}
    \label{fig:relaxregfunc}
\end{figure}

As shown by \Cref{lem:pertlimite > 0} in \Cref{app:characterization_of_conj_pertfunc}, the quantity \(\pertlimit\) is always strictly positive under our working assumptions.
Consequently, we deduce from \Cref{prop:regfunc-biconj} that \(\biconj{\regfunc}\) behaves like a rescaled absolute-value function over the non-empty interval \(\kintervcc{-\pertlimit}{\pertlimit}\).
Elsewhere, we have \(\biconj{\regfunc{}} = \regfunc{}\).\footnote{The functions \(\biconj{\regfunc{}}\) and \(\regfunc{}\) may differ for any \(\pvi{} \neq 0\), as in the third example of \Cref{fig:relaxregfunc}.} 
These observations lead to the following characterization of the subdifferential of $\biconj{\regfunc{}}$:
\begin{proposition}
    \label{prop:subdiff-gbiconj}
    If~\ref{assumption:zero-minimized}-\ref{assumption:closed}-\ref{assumption:convex}-\ref{assumption:even} hold, then\footnote{We remind the reader that $\dom{\biconj{\regfunc}}=\dom{\pertfunc{}}$ from \Cref{prop:regfunc-biconj}.}
        \begin{equation} \label{eq:operators-biconj-subdiff}
            \forall \pvi{}\in\dom{\biconj{\regfunc}}:\ 
              \subdiff\biconj{\regfunc}(\pvi{})
            =
            \begin{cases}
                [-\pertslope,\pertslope]   & \mbox{if \(|\pvi{}|=0\)}\\
                \sign(\pvi{})\, \{\pertslope\}  & \mbox{if \(|\pvi{}|\in\kintervoo{0}{\pertlimit}\)}\\
                \sign(\pvi{})\,[\pertslope,\pertrightlope] & \mbox{if \(|\pvi{}|=\pertlimit\)}\\
                \subdiff\pertfunc(\pvi{}) & \mbox{if \(|\pvi{}|>\pertlimit\)}
            \end{cases}
        \end{equation}
\end{proposition}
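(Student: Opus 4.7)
The plan is to leverage the explicit piecewise formula for \(\biconj{\regfunc}\) established in \Cref{prop:regfunc-biconj} and to compute the subdifferential on each piece by one-dimensional convex analysis. By \ref{assumption:even}, \(\pertfunc\) is even, and both branches of the formula in \Cref{prop:regfunc-biconj} are invariant under \(\pvi{}\leftrightarrow -\pvi{}\), so \(\biconj{\regfunc}\) is itself even. This yields \(\subdiff\biconj{\regfunc}(-\pvi{})=-\subdiff\biconj{\regfunc}(\pvi{})\), so it suffices to derive \eqref{eq:operators-biconj-subdiff} for \(\pvi{}\geq 0\), the case \(\pvi{}<0\) following by symmetry.

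For \(\pvi{}\) lying in the relative interior of one of the branches, the argument is straightforward. On \(\kintervoo{-\pertlimit}{\pertlimit}\), \(\biconj{\regfunc}\) coincides with the rescaled absolute value \(\pertslope\abs{\pvi{}}\) on a whole neighborhood; standard convex analysis then yields \(\subdiff\biconj{\regfunc}(0)=[-\pertslope,\pertslope]\) and \(\subdiff\biconj{\regfunc}(\pvi{})=\{\sign(\pvi{})\pertslope\}\) for \(0<\abs{\pvi{}}<\pertlimit\). For \(\abs{\pvi{}}>\pertlimit\), the second branch gives \(\biconj{\regfunc}=\pertfunc+\reg\) on an open neighborhood of \(\pvi{}\); the two functions therefore share their one-sided derivatives at \(\pvi{}\) (the constant shift being irrelevant), so \(\subdiff\biconj{\regfunc}(\pvi{})=\subdiff\pertfunc(\pvi{})\).

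The main technical point is the transition \(\pvi{}=\pertlimit\), which only matters when \(\pertlimit<+\infty\). Since \(\biconj{\regfunc}\) is a proper closed convex function of one variable, its subdifferential at \(\pertlimit\) is the closed interval of its one-sided derivatives \cite[\theoremInRef{3.16}]{beck2017first}:
\begin{equation*}
    \subdiff\biconj{\regfunc}(\pertlimit)=\bigl[\,\biconj{\regfunc}{}'_-(\pertlimit),\ \biconj{\regfunc}{}'_+(\pertlimit)\,\bigr].
\end{equation*}
The left derivative is that of the linear piece \(\pvi{}\mapsto \pertslope\pvi{}\) at \(\pertlimit\), hence \(\biconj{\regfunc}{}'_-(\pertlimit)=\pertslope\). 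Matching of the two branches at the junction (\ie, \(\pertslope\pertlimit=\pertfunc(\pertlimit)+\reg\) from \Cref{prop:regfunc-biconj}) implies that the right-difference quotients of \(\biconj{\regfunc}\) and of \(\pertfunc\) agree as \(\pvi{}\downarrow\pertlimit\), hence \(\biconj{\regfunc}{}'_+(\pertlimit)=\pertfunc'_+(\pertlimit)\).

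It remains to identify \(\pertfunc'_+(\pertlimit)\) with the quantity \(\pertrightlope\) defined in \eqref{eq:pertrightlope}. Under \ref{assumption:zero-minimized} and \ref{assumption:even}, \(\pertfunc\) attains its minimum at \(0\) and is non-decreasing on \(\kR^+\); combined with \Cref{lem:pertlimite > 0} ensuring \(\pertlimit>0\), this forces every element of \(\subdiff\pertfunc(\pertlimit)\) to be non-negative. The supremum in \eqref{eq:pertrightlope} therefore reduces to \(\sup\subdiff\pertfunc(\pertlimit)\), which equals \(\pertfunc'_+(\pertlimit)\) by the standard identification of the largest subgradient of a one-dimensional convex function with its right derivative (the case \(\pertrightlope=+\infty\) corresponding to \(\pertfunc'_+(\pertlimit)=+\infty\)). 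This yields \(\subdiff\biconj{\regfunc}(\pertlimit)=[\pertslope,\pertrightlope]\), and extending by evenness closes the argument.
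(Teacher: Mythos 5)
Your proof is correct and takes essentially the same route as the paper's: reduce to \(\pvi{}\geq 0\) by evenness, read the subdifferential off the closed-form expression of \(\biconj{\regfunc}\) from \Cref{prop:regfunc-biconj} on each piece, and settle the kink at \(\pertlimit\) by showing the left derivative equals \(\pertslope\) and the right derivative equals \(\partial_+\pertfunc(\pertlimit)\), which is then identified with \(\pertrightlope\) via the non-negativity of the subgradients of \(\pertfunc\) at \(\pertlimit>0\). The only differences are cosmetic: the paper isolates the case where \(\pertfunc\) vanishes identically on \(\kR+\) and writes out explicit difference-quotient sequences, whereas you treat everything uniformly through local identification of the convex pieces.
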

A proof of \Cref{prop:subdiff-gbiconj} is provided in \Cref{proof:subdiff-g**}.
Interestingly, this result highlights that the subdifferential of \(\biconj{\regfunc{}}\) admits a simple analytical expression in terms of parameters \(\pertslope{}\), \(\pertlimit{}\), and \(\pertrightlope{}\).
Specifically, for \(|\pvi{}| < \pertlimit{}\), the subdifferential corresponds to that of a rescaled absolute-value function, while for \(|\pvi{}| > \pertlimit{}\), the subdifferential of \(\biconj{\regfunc{}}\) coincides with that of \(\pertfunc{}\).
This shows that the subdifferential of \(\biconj{\regfunc{}}\) can be evaluated with similar numerical complexity as that of \(\pertfunc{}\).

A similar conclusion can be drawn for the proximal operator of $\biconj{\regfunc{}}$:
\begin{proposition} \label{prop:operator-prox-gbiconj}
    Let \(\cstprox > 0\).
    If \ref{assumption:zero-minimized}-\ref{assumption:closed}-\ref{assumption:convex}-\ref{assumption:even} hold, then
    \begin{equation} \label{eq:prop:operator-prox-gbiconj}
        \forall \pvi{}\in \kR:\ 
        \prox_{\cstprox\biconj{\regfunc}}(\pvi{}) = 
        \begin{cases}
            0 & \text{if} \ \abs{\pvi{}} \in [0,\cstprox\pertslope] \\
            \pvi{} - \cstprox\pertslope\sign(\pvi{}) & \text{if} \ \abs{\pvi{}} \in \ ]\cstprox\pertslope, \cstprox\pertslope + \pertlimit] \\
            \prox_{\cstprox\pertfunc}(\pvi{}) & \text{if} \ \abs{\pvi{}} \in \ ]\cstprox\pertslope + \pertlimit,+\infty[
            .
        \end{cases}
    \end{equation}
\end{proposition}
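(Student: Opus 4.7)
The plan is to exploit the first-order optimality condition characterizing the prox and then split into cases according to the location of the minimizer $u^\star$, using the closed-form subdifferential formula from \Cref{prop:subdiff-gbiconj}.

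I would start by noting that $\biconj{\regfunc}$ is proper, closed and convex under \ref{assumption:zero-minimized}-\ref{assumption:closed}-\ref{assumption:convex}-\ref{assumption:even}: closed and convex by construction, and proper because \ref{assumption:zero-minimized} yields $\biconj{\regfunc}(0)\leq\regfunc(0)=0$. The proximal mapping of $\cstprox\biconj{\regfunc}$ is therefore single-valued and $u^\star \defeq \prox_{\cstprox\biconj{\regfunc}}(\pvi{})$ is uniquely characterized by the inclusion
\begin{equation*}
\pvi{} - u^\star \in \cstprox\,\subdiff\biconj{\regfunc}(u^\star).
\end{equation*}
Evenness of $\biconj{\regfunc}$, inherited from \ref{assumption:even} via \Cref{prop:regfunc-biconj}, gives $\prox_{\cstprox\biconj{\regfunc}}(-\pvi{}) = -\prox_{\cstprox\biconj{\regfunc}}(\pvi{})$, so I would restrict to $\pvi{}\geq 0$ and seek $u^\star\geq 0$.

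Next I would partition the analysis according to the four regimes of $\subdiff\biconj{\regfunc}(u^\star)$ exposed by \Cref{prop:subdiff-gbiconj}. If $u^\star=0$, the inclusion reduces to $\pvi{}\in\cstprox[-\pertslope,\pertslope]$, namely $\pvi{}\in[0,\cstprox\pertslope]$. If $u^\star\in(0,\pertlimit)$, the inclusion collapses to the equation $\pvi{}=u^\star+\cstprox\pertslope$, producing $u^\star=\pvi{}-\cstprox\pertslope$ and $\pvi{}\in(\cstprox\pertslope,\cstprox\pertslope+\pertlimit)$. If $u^\star=\pertlimit$, the inclusion reads $\pvi{}-\pertlimit\in\cstprox[\pertslope,\pertrightlope]$, i.e., $\pvi{}\in[\cstprox\pertslope+\pertlimit,\cstprox\pertrightlope+\pertlimit]$. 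Finally, if $u^\star>\pertlimit$, the inclusion becomes $\pvi{}-u^\star\in\cstprox\,\subdiff\pertfunc(u^\star)$, which is precisely the optimality condition for $u^\star=\prox_{\cstprox\pertfunc}(\pvi{})$.

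The main obstacle will be merging the last two cases into the single expression $\prox_{\cstprox\pertfunc}(\pvi{})$ announced in the third branch of~\eqref{eq:prop:operator-prox-gbiconj}. I would argue that when $u^\star=\pertlimit$, one has $\pvi{}-u^\star\in\cstprox[\pertslope,\pertrightlope]\subseteq\cstprox\,\subdiff\pertfunc(\pertlimit)$, where the inclusion $[\pertslope,\pertrightlope]\subseteq\subdiff\pertfunc(\pertlimit)$ follows from the definition~\eqref{eq:pertrightlope} of $\pertrightlope$ together with convexity of $\subdiff\pertfunc(\pertlimit)$ and the Fenchel inversion rule applied to $\pertlimit\in\subdiff\conj{\pertfunc}(\pertslope)$ to place $\pertslope$ itself in $\subdiff\pertfunc(\pertlimit)$. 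This identifies $u^\star=\pertlimit$ with $\prox_{\cstprox\pertfunc}(\pvi{})$ on $[\cstprox\pertslope+\pertlimit,\cstprox\pertrightlope+\pertlimit]$. For $\pvi{}>\cstprox\pertrightlope+\pertlimit$ (if this regime is non-empty), monotonicity of the proximal operator of the convex function $\cstprox\pertfunc$ forces $u^\star>\pertlimit$, which aligns with the fourth case. Restoring the sign via the symmetry argument then produces~\eqref{eq:prop:operator-prox-gbiconj} for general $\pvi{}\in\kR$.
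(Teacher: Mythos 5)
Your proof is correct, but it follows a genuinely different route from the paper. The paper's argument is a two-liner: since \(\conj{\regfunc}\) is proper, closed and convex, the extended Moreau decomposition gives \(\prox_{\cstprox\biconj{\regfunc}}(\pvi{}) = \pvi{} - \cstprox\prox_{\cstprox^{-1}\conj{\regfunc}}(\cstprox^{-1}\pvi{})\), and the three branches of \eqref{eq:prop:operator-prox-gbiconj} then drop out of the already-established formula for \(\prox_{\cstprox\conj{\regfunc}}\) in \Cref{prop:operator-prox-gstar} (with the same decomposition applied to \(\pertfunc\) to recover \(\prox_{\cstprox\pertfunc}\) in the last branch). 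You instead work primally, from Fermat's rule \(\pvi{}-u^\star\in\cstprox\subdiff\biconj{\regfunc}(u^\star)\) together with the subdifferential characterization of \Cref{prop:subdiff-gbiconj}, and classify \(\pvi{}\) according to the regime of \(u^\star\). This is more work but buys independence from \Cref{prop:operator-prox-gstar}: the only dual fact you need is \(\pertslope\in\subdiff\pertfunc(\pertlimit)\) (via \(\pertlimit\in\subdiff\conj{\pertfunc}(\pertslope)\), i.e.\ \Cref{lemma:properties-related-to-mu}, and conjugate-subgradient inversion), which, with the definition of \(\pertrightlope\), gives \([\pertslope,\pertrightlope]\cap\kR\subseteq\subdiff\pertfunc(\pertlimit)\) and lets you absorb the boundary case \(u^\star=\pertlimit\) into the \(\prox_{\cstprox\pertfunc}\) branch exactly as you describe. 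A few steps deserve tightening rather than repair: the restriction to \(u^\star\geq0\) for \(\pvi{}\geq0\) should be justified (oddness of the prox from \Cref{cor:even} plus its monotonicity, or symmetry of the subdifferential inclusion); the interval \(\sign(\pvi{})[\pertslope,\pertrightlope]\) must be read with an implicit intersection with \(\kR\) when \(\pertrightlope=+\infty\), and the case \(\pertlimit=+\infty\) (including \(\pertfunc\equiv0\), where \(\pertslope=0\)) simply makes the last two regimes empty; finally, the closing appeal to monotonicity of \(\prox_{\cstprox\pertfunc}\) is cleaner as an elimination argument: your first three cases show that \(u^\star\leq\pertlimit\) forces \(\pvi{}\leq\cstprox\pertrightlope+\pertlimit\), so any \(\pvi{}>\cstprox\pertrightlope+\pertlimit\) must land in the regime \(u^\star>\pertlimit\), where the inclusion is exactly the optimality condition identifying \(u^\star=\prox_{\cstprox\pertfunc}(\pvi{})\).
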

A proof of \Cref{prop:operator-prox-gbiconj} is available  in \Cref{proof:operator-prox-g**}. 
As with our previous results, the proximal operator of \(\biconj{\regfunc{}}\) only depends on the parameters \(\pertslope{}\) and \(\pertlimit{}\). It can be directly computed from the expression in \Cref{prop:operator-prox-gbiconj}, provided that the proximal operator of \(\pertfunc{}\) is known.

\subsection{Characterization of \texorpdfstring{\(\conj{\regfunc}\)}{\expandafter\regfunc*}}
\label{sec:characterisation-g*}

In this subsection, we provide simple closed-form expressions for the conjugate function \(\conj{\regfunc}\) and its subdifferential/proximal operators. 
Our first result reads as follows:
\begin{proposition} \label{prop:regfunc-conj}
    If~\ref{assumption:zero-minimized}-\ref{assumption:closed}-\ref{assumption:convex} hold, then $\dom{\conj{\regfunc{}}}=\dom{\conj{\pertfunc{}}}$ and
    \begin{equation} \label{eq:regfunc-conj}
        \forall \bvi{}\in\kR:\quad \conj{\regfunc}(\bvi{}) = \pospart{\conj{\pertfunc}(\bvi{}) - \reg}
        .
    \end{equation}
    If~\ref{assumption:even} moreover holds, then \(\conj{\regfunc}(\bvi{}) = 0\) \text{if and only if} \(\abs{\bvi{}} \leq \pertslope\).
\end{proposition}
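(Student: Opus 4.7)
The plan is to evaluate $\conj{\regfunc}(\bvi{})=\sup_{\pvi{}\in\kR} \bvi{}\pvi{}-\regfunc(\pvi{})$ directly from the definition, exploiting that $\reg\|\pvi{}\|_0$ takes only two values. Splitting the supremum between $\pvi{}=0$ and $\pvi{}\neq 0$ and using $\pertfunc(0)=0$ from~\ref{assumption:zero-minimized}, one obtains
\[
\conj{\regfunc}(\bvi{}) = \max\left\{0,\ \sup_{\pvi{}\neq 0} \bvi{}\pvi{} - \pertfunc(\pvi{}) - \reg\right\}.
\]
The first statement of the proposition follows once the punctured supremum is identified with $\conj{\pertfunc}(\bvi{})$, since $\max\{0,x-\reg\}=\pospart{x-\reg}$ and this quantity is finite iff $x$ is, which yields the domain equality $\dom{\conj{\regfunc}}=\dom{\conj{\pertfunc}}$.

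The central step is therefore to show $\sup_{\pvi{}\neq 0} \bvi{}\pvi{} - \pertfunc(\pvi{}) = \conj{\pertfunc}(\bvi{})$. The ``$\leq$'' direction is immediate. For the converse, applying the same split to $\conj{\pertfunc}(\bvi{})$ itself gives $\conj{\pertfunc}(\bvi{}) = \max\{0,\sup_{\pvi{}\neq 0}\bvi{}\pvi{}-\pertfunc(\pvi{})\}$, so it suffices to show the punctured supremum is at least $0$. I would pick some $\pvi{}'>0$ with $\pertfunc(\pvi{}')<+\infty$, which exists by~\ref{assumption:zero-minimized}, and set $\pvi{}^{(k)}=\pvi{}'/k$. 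Convexity~\ref{assumption:convex} combined with $\pertfunc\geq 0$ and $\pertfunc(0)=0$ yields $0 \leq \pertfunc(\pvi{}^{(k)}) \leq \pertfunc(\pvi{}')/k\to 0$, so $\bvi{}\pvi{}^{(k)} - \pertfunc(\pvi{}^{(k)})\to 0$, which forces $\sup_{\pvi{}\neq 0} \bvi{}\pvi{} - \pertfunc(\pvi{}) \geq 0$. Plugging back delivers $\conj{\regfunc}(\bvi{})=\pospart{\conj{\pertfunc}(\bvi{})-\reg}$ together with the advertised domain equality.

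For the second assertion, the formula just proved reduces $\conj{\regfunc}(\bvi{})=0$ to $\conj{\pertfunc}(\bvi{})\leq\reg$. Under~\ref{assumption:even}, $\pertfunc{}$ is even, hence so is $\conj{\pertfunc}{}$, so this sublevel set is symmetric about the origin. Moreover~\ref{assumption:zero-minimized} gives $\conj{\pertfunc}(0)=-\inf\pertfunc=0$ and $\conj{\pertfunc}(\bvi{})\geq \bvi{}\cdot 0-\pertfunc(0)=0$ for all $\bvi{}$. Convexity of $\conj{\pertfunc}{}$ then shows it is non-decreasing on $[0,+\infty)$ (write any $\bvi{}_1\in[0,\bvi{}_2]$ as a convex combination of $0$ and $\bvi{}_2$, giving $\conj{\pertfunc}(\bvi{}_1)\leq (\bvi{}_1/\bvi{}_2)\conj{\pertfunc}(\bvi{}_2)\leq\conj{\pertfunc}(\bvi{}_2)$). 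Since $\conj{\pertfunc}{}$ is closed (hence lower semicontinuous) as the supremum of affine functions, the sublevel set $\{\bvi{}\geq 0:\conj{\pertfunc}(\bvi{})\leq\reg\}$ is a closed interval containing $0$, and monotonicity forces it to equal $[0,\pertslope{}]$. Evenness then gives $\conj{\pertfunc}(\bvi{})\leq\reg \iff |\bvi{}|\leq\pertslope{}$.

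The main subtlety is the sup-split argument in the second paragraph: one must verify that removing $\pvi{}=0$ does not decrease the full supremum. The short convex-interpolation estimate above is the cleanest way to secure this under the blanket assumptions, without invoking heavier continuity machinery for convex functions on $\kR$.
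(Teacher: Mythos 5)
Your proof is correct, and while it uses the same initial decomposition as the paper (splitting the supremum defining \(\conj{\regfunc}\) at \(\pvi{}=0\) and using \(\pertfunc(0)=0\)), it secures the key steps by different means. For the punctured supremum, the paper invokes its topological \Cref{lemma:infima open set = infima closed set} (built on a result of Bauschke--Combettes) applied to \(\pvi{}\mapsto\pertfunc(\pvi{})-\bvi{}\pvi{}\), which requires \(\pertfunc\) proper, closed and convex with non-singleton domain; you instead show directly that \(\sup_{\pvi{}\neq 0}\bvi{}\pvi{}-\pertfunc(\pvi{})\geq 0\) via the sequence \(\pvi{}'/k\) and the convex-interpolation bound \(0\leq\pertfunc(\pvi{}'/k)\leq\pertfunc(\pvi{}')/k\), an elementary argument that does not even use \ref{assumption:closed}. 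For the equivalence \(\conj{\regfunc}(\bvi{})=0\iff\abs{\bvi{}}\leq\pertslope\), the paper routes through \Cref{prop:pertslope in domain} (establishing \(\pertslope\in\dom{\conj{\pertfunc}}\) and \(\conj{\pertfunc}(\pertslope)\leq\reg\) via continuity of \(\conj{\pertfunc}\) on its domain and a monotone-sequence argument), whereas you observe that \(\kset{\bvi{}\in\kR+}{\conj{\pertfunc}(\bvi{})\leq\reg}\) is a closed convex set containing \(0\) (closedness of \(\conj{\pertfunc}\) being automatic for a conjugate) and hence equals \(\kintervcc{0}{\pertslope}\), with evenness and monotonicity finishing the claim; this is shorter and self-contained, the trade-off being that the paper's heavier \Cref{prop:pertslope in domain} is reused in several later proofs (e.g., those concerning \(\pertlimit\)), so the factorization pays off elsewhere even though your route is leaner for this proposition alone.
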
    
A proof of \Cref{prop:regfunc-conj} is available in \Cref{proof:regfunc-conj}. 
The relationship between $\conj{\regfunc}$ and $\conj{\pertfunc}$ emphasized in this result in turn leads to the following simple characterization of the subdifferential and proximal operators:
\begin{proposition} \label{prop:subdiff-conjugate of g}
    Assume~{\ref{assumption:zero-minimized}-\ref{assumption:closed}-\ref{assumption:convex}-\ref{assumption:even}} hold.
    If \(\pertfunc\) is identically zero, then 
    \begin{equation}\label{eq:subdiff-g*-h=0} 
        \dom{\conj{\regfunc}}=\{0\} \ \text{and} \ \subdiff\conj{\regfunc}(0) = \kR.
    \end{equation}
   If \(\pertfunc\) is not identically zero, then 
    \begin{equation} \label{eq:subdiff-g*}
        \forall \bvi{} \in \dom{\conj{\regfunc}}:\ 
        \subdiff\conj{\regfunc}(\bvi{}) = 
        \begin{cases}
            \{0\} & \text{if} \ \abs{\bvi{}} < \pertslope \\
            \sign(\bvi{})\, [0,\pertlimit] \cap\kR  & \text{if} \ \abs{\bvi{}} = \pertslope \\
            \subdiff\conj{\pertfunc}(\bvi{}) & \text{if} \ \abs{\bvi{}} > \pertslope.
        \end{cases}
    \end{equation}
\end{proposition}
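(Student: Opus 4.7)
My plan is to leverage the Fenchel--Moreau correspondence applied to $\biconj{\regfunc}$. Since $\biconj{\regfunc}$ is closed and convex by construction, and proper because $\biconj{\regfunc}(0)=\regfunc(0)=0$ while $\biconj{\regfunc}\geq 0$ (both following from \Cref{prop:regfunc-biconj} and \ref{assumption:zero-minimized}), we have $\conj{(\biconj{\regfunc})}=\conj{\regfunc}$ and the standard equivalence
\begin{equation*}
\pvi{}\in\subdiff\conj{\regfunc}(\bvi{}) \iff \bvi{}\in\subdiff\biconj{\regfunc}(\pvi{}).
\end{equation*}
Combined with the explicit description of $\subdiff\biconj{\regfunc}$ in \Cref{prop:subdiff-gbiconj}, the proof reduces, for each $\bvi{}$, to collecting the $\pvi{}$'s satisfying the right-hand membership.

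I first dispatch the degenerate case $\pertfunc\equiv 0$. Here $\pertslope=0$ and $\pertlimit=+\infty$, so \Cref{prop:subdiff-gbiconj} yields $\subdiff\biconj{\regfunc}(\pvi{})=\{0\}$ for every $\pvi{}\in\kR$. The equivalence above then reads $\pvi{}\in\subdiff\conj{\regfunc}(\bvi{})$ iff $\bvi{}=0$, which gives $\dom{\conj{\regfunc}}=\{0\}$ and $\subdiff\conj{\regfunc}(0)=\kR$, i.e.\ \eqref{eq:subdiff-g*-h=0}.

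For the non-degenerate case, I argue that $\pertslope>0$ whenever $\pertfunc$ is not identically zero, a simple by-product of \ref{assumption:zero-minimized}-\ref{assumption:closed}-\ref{assumption:convex} obtained along the lines of \Cref{lem:pertlimite > 0}. By evenness of $\conj{\regfunc}$, it is enough to treat $\bvi{}\geq 0$. Scanning the four branches of \Cref{prop:subdiff-gbiconj} gives three sub-cases. When $0\leq \bvi{}<\pertslope$, only the first branch (at $\pvi{}=0$) applies since the interval $[-\pertslope,\pertslope]$ contains $\bvi{}$, yielding $\subdiff\conj{\regfunc}(\bvi{})=\{0\}$. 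When $\bvi{}=\pertslope$, three branches qualify: $\pvi{}=0$, $\pvi{}\in(0,\pertlimit)$, and $\pvi{}=\pertlimit$ (via $\pertslope\in[\pertslope,\pertrightlope]$); their union is $[0,\pertlimit]\cap\kR$, and negative $\pvi{}$'s are ruled out by evenness of $\biconj{\regfunc}$ together with $\pertslope>0$. When $\bvi{}>\pertslope$, only branches with $|\pvi{}|\geq\pertlimit$ may contribute, and I identify the resulting set with $\subdiff\conj{\pertfunc}(\bvi{})$ by applying Fenchel--Moreau to the closed proper convex $\pertfunc$.

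The main technical hurdle will be the case $\bvi{}=\pertslope$, which requires correctly merging three branches and handling the boundary $\pertlimit=+\infty$ via the $\cap\kR$ convention. The case $\bvi{}>\pertslope$ also requires a careful comparison: using monotonicity of one-dimensional subdifferentials together with the identity $\pertslope\in\subdiff\pertfunc(\pertlimit)$ (a consequence of the very definitions of $\pertslope$ and $\pertlimit$), one shows that for $\bvi{}>\pertslope$, any $\pvi{}\in\subdiff\conj{\pertfunc}(\bvi{})$ automatically satisfies $|\pvi{}|\geq\pertlimit$, so that the candidates coming from the third and fourth branches of \Cref{prop:subdiff-gbiconj} exactly reproduce $\subdiff\conj{\pertfunc}(\bvi{})$.
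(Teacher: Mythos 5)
Your proposed route is mathematically natural, but inside this paper it is circular. You take \Cref{prop:subdiff-gbiconj} (the description of \(\subdiff\biconj{\regfunc}\)) as an available ingredient and invert it via the conjugate-subgradient equivalence \(\pvi{}\in\subdiff\conj{\regfunc}(\bvi{})\iff\bvi{}\in\subdiff\biconj{\regfunc}(\pvi{})\). However, the paper proves \Cref{prop:subdiff-gbiconj} from the closed-form expression of \(\biconj{\regfunc}\) in \Cref{prop:regfunc-biconj}, and the proof of \Cref{prop:regfunc-biconj} itself invokes \Cref{prop:subdiff-conjugate of g} repeatedly (it uses \(\pvi{}\in\subdiff\conj{\regfunc}(\pertslope)\) for \(\pvi{}\in[0,\pertlimit]\), and the identity \(\subdiff\conj{\regfunc}(\bvi{})=\subdiff\conj{\pertfunc}(\bvi{})\) for \(\bvi{}>\pertslope\)). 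The intended logical order is the opposite of yours: the paper first characterizes \(\conj{\regfunc}\) and its subdifferential directly from \Cref{prop:regfunc-conj}, writing \(\conj{\regfunc}=\max\{0,\conj{\pertfunc}-\reg\}\) and applying the max-rule for subdifferentials together with the level-set description of \(\conj{\pertfunc}\) (\Cref{lemma:partition-int-dom}), treating \(\bvi{}=0\), \(\bvi{}=\pertslope\) and the boundary point \(\bvi{}=\domb\) of \(\dom{\conj{\pertfunc}}\) separately; the results on \(\biconj{\regfunc}\) are then deduced afterwards. To make your argument non-circular you would have to supply independent proofs of \Cref{prop:regfunc-biconj,prop:subdiff-gbiconj}, which your proposal does not do.

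Setting the circularity aside, the inversion argument itself would need a few repairs to be complete. In the sub-cases \(\abs{\bvi{}}<\pertslope\) and \(\abs{\bvi{}}=\pertslope\) you assert which branches of \Cref{prop:subdiff-gbiconj} "apply" but never exclude the fourth branch, i.e.\ you must show that \(\bvi{}\in\subdiff\pertfunc(\pvi{})\) is impossible for \(\abs{\pvi{}}>\pertlimit\) when \(\bvi{}\leq\pertslope\); this follows from \(\pertslope\in\subdiff\pertfunc(\pertlimit)\) and the monotonicity of one-dimensional subdifferentials (\Cref{lemma:subdiff-convex-fun-1D-is-nondecreasing}), but it has to be said. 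Likewise, for \(\bvi{}>\pertslope\) your claim that any \(\pvi{}\in\subdiff\conj{\pertfunc}(\bvi{})\) satisfies \(\abs{\pvi{}}\geq\pertlimit\) silently assumes \(\subdiff\conj{\pertfunc}(\pertslope)\neq\emptyset\) and \(\pertlimit<+\infty\); when \(\pertlimit=+\infty\) you must argue instead that every \(\bvi{}>\pertslope\) lies outside \(\dom{\conj{\regfunc}}=\dom{\conj{\pertfunc}}\) (infinite right slope of \(\conj{\pertfunc}\) at \(\pertslope\)), so the third case of \eqref{eq:subdiff-g*} is vacuous there. These are fixable, but together with the dependency issue they mean the proposal cannot stand as a proof of the proposition in its present form.
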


\begin{proposition} \label{prop:operator-prox-gstar}
    Let \(\cstprox > 0\). 
    If \ref{assumption:zero-minimized}-\ref{assumption:closed}-\ref{assumption:convex}-\ref{assumption:even} hold, then 
    \begin{equation} \label{eq:prop:operator-prox-gstar}
        \forall \bvi{} \in \kR:
        \prox_{\cstprox\conj{\regfunc}}(\bvi{}) = 
        \begin{cases}
            \bvi{}  
            &\text{if} \ \abs{\bvi{}} \in [0,\pertslope] \\
            \pertslope\sign(\bvi{}) 
            &\text{if} \ \abs{\bvi{}} \in \ ]\pertslope, \pertslope+\cstprox\pertlimit] \\
            \prox_{\cstprox\conj{\pertfunc}}(\bvi{}) 
            &\text{if} \ \abs{\bvi{}} \in \ ]\pertslope+\cstprox\pertlimit,+\infty[.
        \end{cases}
    \end{equation}
\end{proposition}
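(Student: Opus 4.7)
The plan is to derive the formula from the first-order optimality condition characterizing the proximal operator: $\wvi{}^\star = \prox_{\cstprox\conj{\regfunc}}(\bvi{})$ if and only if $\bvi{} - \wvi{}^\star \in \cstprox\,\subdiff\conj{\regfunc}(\wvi{}^\star)$, which is valid because $\conj{\regfunc}$ is proper, closed, and convex as a Fenchel conjugate. Since $\pertfunc$ is even by \ref{assumption:even}, \eqref{eq:regfunc-conj} implies $\conj{\regfunc}$ is even too, and the strong convexity of the squared-distance term in the proximal objective guarantees that $\wvi{}^\star$ is the unique minimizer and shares the sign of $\bvi{}$. It therefore suffices to insert each of the three regimes of $\subdiff\conj{\regfunc}$ given by \Cref{prop:subdiff-conjugate of g} into the optimality inclusion and solve for $\wvi{}^\star$ as a function of $\bvi{}$.

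I will first dispatch the degenerate case $\pertfunc \equiv 0$ using \eqref{eq:subdiff-g*-h=0}: $\dom{\conj{\regfunc}}=\{0\}$ and $\subdiff\conj{\regfunc}(0)=\kR$ force $\wvi{}^\star=0$ for every $\bvi{}$. A direct check shows that in this case $\pertslope=0$ and $\pertlimit=+\infty$, so the three branches of \eqref{eq:prop:operator-prox-gstar} all collapse to $0$, matching the answer.

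For the general case ($\pertfunc\not\equiv 0$), I will split on the magnitude of $\wvi{}^\star$. When $|\wvi{}^\star|<\pertslope$, the subdifferential reduces to $\{0\}$, so the inclusion yields $\wvi{}^\star=\bvi{}$, consistent with the first branch for $|\bvi{}|<\pertslope$. When $|\wvi{}^\star|=\pertslope$, the inclusion rewrites as $\bvi{}-\pertslope\sign(\bvi{})\in\cstprox\sign(\bvi{})\,[0,\pertlimit]\cap\kR$, i.e., $|\bvi{}|-\pertslope=\cstprox t$ for some $t\in[0,\pertlimit]\cap\kR$, yielding $\wvi{}^\star=\pertslope\sign(\bvi{})$ precisely when $|\bvi{}|\in[\pertslope,\pertslope+\cstprox\pertlimit]$, with the convention $\cstprox\cdot(+\infty)=+\infty$ from \Cref{subsec:notations} absorbing the case $\pertlimit=+\infty$. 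When $|\wvi{}^\star|>\pertslope$, the inclusion reads $\bvi{}-\wvi{}^\star\in\cstprox\,\subdiff\conj{\pertfunc}(\wvi{}^\star)$, which is exactly the optimality condition for $\wvi{}^\star=\prox_{\cstprox\conj{\pertfunc}}(\bvi{})$, giving the third branch.

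The main obstacle I anticipate is gluing the three regimes together coherently. Specifically, showing that $|\bvi{}|>\pertslope+\cstprox\pertlimit$ forces $|\prox_{\cstprox\conj{\pertfunc}}(\bvi{})|>\pertslope$ (so that the third branch is actually selected and not absorbed into the middle one) will require the monotonicity of the proximal operator of a closed convex function together with the defining property of $\pertlimit$ from \eqref{eq:pertlimit} as the largest element of $\subdiff\conj{\pertfunc}(\pertslope)$. Dually, the boundary equality $|\bvi{}|=\pertslope+\cstprox\pertlimit$ must be shown to belong to the middle regime in both the finite and infinite $\pertlimit$ cases, which will follow from closedness of the graph of $\subdiff\conj{\pertfunc}$ under \ref{assumption:closed}. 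Uniqueness of $\wvi{}^\star$, guaranteed by strong convexity, then ensures that these three consistent choices exhaust the answer.
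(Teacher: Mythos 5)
Your proposal is correct in substance and relies on the same core ingredients as the paper's proof: reduction to \(\bvi{}\ge 0\) by evenness, the characterization of \(\prox_{\cstprox\conj{\regfunc}}(\bvi{})\) through Fermat's condition \(\cstprox^{-1}(\bvi{}-\wvi{}^\star)\in\subdiff\conj{\regfunc}(\wvi{}^\star)\), the three-regime description of \(\subdiff\conj{\regfunc}\) in \Cref{prop:subdiff-conjugate of g}, and a separate treatment of \(\pertfunc\equiv 0\). The difference is one of orientation: you partition according to the magnitude of the output \(\wvi{}^\star\) and invert the inclusion, gluing the regimes by uniqueness of the prox, whereas the paper partitions on the input \(\bvi{}\) and verifies the claimed value directly in each range; its only delicate case, \(\abs{\bvi{}}>\pertslope+\cstprox\pertlimit\), is closed via firm non-expansiveness of \(\prox_{\cstprox\conj{\regfunc}}\) (using \(\prox_{\cstprox\conj{\regfunc}}(\pertslope+\cstprox\pertlimit)=\pertslope\) from the middle case) together with a short contradiction, after which \(\subdiff\conj{\regfunc}=\subdiff\conj{\pertfunc}\) beyond \(\pertslope\) yields the third branch. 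Your sketched closing of that same step does work: when \(\pertlimit<+\infty\), \Cref{lemma:properties-related-to-mu} gives \(\pertlimit\in\subdiff\conj{\pertfunc}(\pertslope)\), hence \(\prox_{\cstprox\conj{\pertfunc}}(\pertslope+\cstprox\pertlimit)=\pertslope\), and monotonicity of the prox plus the maximality of \(\pertlimit\) force \(\prox_{\cstprox\conj{\pertfunc}}(\bvi{})>\pertslope\) for \(\bvi{}>\pertslope+\cstprox\pertlimit\); when \(\pertlimit=+\infty\) the third branch is vacuous. Alternatively, once your regime-B analysis establishes that \(\abs{\wvi{}^\star}=\pertslope\) occurs exactly for \(\abs{\bvi{}}\in[\pertslope,\pertslope+\cstprox\pertlimit]\), coverage of the remaining inputs by the third regime follows from uniqueness alone, so the gluing you flag as the main obstacle is less delicate than you fear. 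Two small repairs: sign-sharing of \(\wvi{}^\star\) with \(\bvi{}\) does not follow from strong convexity but from evenness of \(\conj{\regfunc}\) (the prox is odd and nondecreasing), or can simply be read off the inclusion in regime B since the right-hand side carries the sign of \(\wvi{}^\star\); and the boundary \(\abs{\bvi{}}=\pertslope+\cstprox\pertlimit\) needs no extra graph-closedness argument, because \(\pertlimit\in\subdiff\conj{\regfunc}(\pertslope)\) is already encoded in \Cref{prop:subdiff-conjugate of g}.
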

We refer the reader to \Cref{proof:subdiff-g*,proof:operator-prox-g*} for a proof of these results.  
A similar conclusion can be drawn as in the previous subsection: once the key parameters \(\pertslope{}\) and \(\pertlimit{}\) have been determined, the subdifferential and proximal operators of \(\conj{\regfunc}\) can be directly obtained from those of \(\conj{\pertfunc}\).
\Cref{prop:subdiff-conjugate of g,prop:operator-prox-gstar} thus provide a simple and valuable way of deriving the subdifferential and proximal operators of \(\conj{\regfunc}\) for any function \(\pertfunc{}\) that satisfies our working assumptions with a tractable expression for these operators.


\section{Numerical Experiments}
\label{sec:numerics}

In this section, we assess the performance of \elops{}, a \textsc{Python} toolbox accompanying this paper.
Leveraging the theoretical developments presented in \Cref{sec:bnb,sec:implementation}, it implements a generic \gls{bnb} solver tailored to any instance of problem \eqref{prob:prob} verifying assumptions \ref{assumption:f}-\ref{assumption:zero-minimized}-\ref{assumption:closed}-\ref{assumption:convex}-\ref{assumption:coercive}-\ref{assumption:even}.
Its code is available at 
\begin{center}
    \href{https://github.com/TheoGuyard/El0ps}{\texttt{https://github.com/TheoGuyard/El0ps}}
\end{center}
We refer the reader to the paper \cite{guyard2025el0ps} accompanying our toolbox for an in-depth description of its implementation. 
Some supplementary material to our numerical experiments is also provided in \Cref{sec:supp-numerics}.

\subsection{Concurrent Solvers}

Throughout this section, we compare \elops{} with several state-of-the-art methods previously proposed in the literature.
\begin{itemize}
    \item First, we consider \cplex{} \cite{cplex2009v12} and \mosek{} \cite{mosek}, two commercial solvers able to address a wide variety of \glsplural{mip}. \cplex{} supports linear and quadratic expressions whereas \mosek{} can additionally handle conic ones. Both solvers are implemented in \textsc{C/C++} and leverage various techniques such as branch-and-bound, cutting planes, heuristics, and presolving to efficiently explore the solution space.
    These solvers can handle instances of problem \eqref{prob:prob} for particular expressions of the function \(\pertfunc{}\) through the \gls{mip} formulations proposed in \cite{bourguignon2015exact,hazimeh2021sparse,pilanci2015sparse} and reminded in \Cref{sec:supp-numerics:mip}. 
    \item Second, we consider the \oa{} method introduced in \cite{bertsimas2021unified} aiming to enhance \gls{mip} solvers. Instead of directly tackling the entire \gls{mip} formulation of the problem, it considers a sequence of piecewise linear approximations.
    Since no publicly available implementation of this procedure exists, we use our own \textsc{Python} implementation, see \Cref{sec:supp-numerics:oa}. 
    \item Finally, we consider \lobnb{} \cite{hazimeh2022l0learn}, a specialized \gls{bnb} solver for problem \eqref{prob:prob} implemented in \textsc{Python}. The latter can handle instances of \eqref{prob:prob} where $\datafunc$ is a least-squares function and where $\pertfunc{}(\pvi{})=\icvx(|\pvi{}| \leq \bigM{}) + \tfrac{\regone}{2}\pvi{}^2$ for some $\bigM \in [0,+\infty]$ and $\regone\geq 0$, with at least $\bigM < +\infty$ or $\regone > 0$.
    When restricted to this setup, \elops{} and \lobnb{} implement a similar \gls{bnb} backbone and bounding strategy, but differ in the way they explore regions in the feasible space. Moreover, \elops{} implements additional acceleration strategies proposed in \cite{pmlr-v235-guyard24a,samain2022techniques}.
\end{itemize}
We note that \elops{} can handle all instances of problem \eqref{prob:prob} that were previously addressed by existing methods in the literature.
Moreover, the new theoretical contributions presented in this paper allow to tackle instances beyond these setups, thereby expanding the scope of application of problem \eqref{prob:prob}. We illustrate the versatility and efficiency of \elops{} below by addressing several problems of interest in the fields of machine learning and signal processing. 
 
\subsection{Machine Learning: Feature Selection Tasks}
\label{sec:numerics:regpath}

In this experiment, we focus on feature selection tasks \cite[\sectionInRef{2.3}]{li2017feature} that arise in machine learning applications and require solving instances of problem \eqref{prob:prob}.
Specifically, we consider three distinct feature selection problems associated with a given function $\datafunc$ and, for each, select two datasets providing a feature matrix \(\dic \in \kR^{\ddim\times\pdim}\) and a target vector \(\obs \in \kR^{\ddim}\) or \(\obs \in \{-1,+1\}^{\ddim}\):

\begin{itemize}
    \item \textit{Least-squares regression} with the loss $\datafunc(\dic\pv) = \tfrac{1}{2}\norm{\obs - \dic\pv}{2}^2$:
    We use the \textsc{Riboflavin} \cite{buhlmann2014high} and \textsc{Bctcga} \cite{liu2018integrated} datasets where $\obs \in \kR^{\ddim}$ that are related to vitamin production and cancer screening, respectively.
    \item \textit{Logistic binary classification} with the loss $\datafunc(\dic\pv) = \transp{\1}\log(\1 + \exp(-\obs \odot \dic\pv))$:  
    We use the \textsc{Colon cancer} \cite{alon1999broad} and \textsc{Leukemia} \cite{golub1999molecular} datasets where \(\obs \in \{-1,+1\}^{\ddim}\) that are both related to cancer screening.  
    \item \textit{SVM binary classification} with the loss $\datafunc(\dic\pv) = \norm{\pospart{\1 - \obs \odot \dic\pv}}{2}^2$:
    We use the \textsc{Breast cancer} \cite{chang2011libsvm} and \textsc{Arcene} \cite{guyon2003design} datasets where \(\obs \in \{-1,+1\}^{\ddim}\) that are related to tumor categorization and DNA analysis, respectively.
\end{itemize}
The dimensions of each dataset are provided in \Cref{table:numerics:regpath-datasets}.
For each family of problems, we consider the following two penalty functions:
 \begin{subequations}
	\begin{alignat}{4}
        \label{eq:numerics:regpath:bigm-l2norm}
		\pertfunc(\pvi{}) &= \tfrac{\regone}{2}\pvi{}^2 &&+ \icvx(\abs{\pvi{}} \leq \bigM) &&\quad\forall \pvi{} \in \kR \\
		\label{eq:numerics:regpath:bigm-l1norm}
		\pertfunc(\pvi{}) &= \regone\abs{\pvi{}} &&+ \icvx(\abs{\pvi{}} \leq \bigM) &&\quad\forall \pvi{} \in \kR 
	\end{alignat}
\end{subequations}
where \(\regone>0\) and \(\bigM>0\).
These choices are motivated by the statistical properties of the resulting solutions \cite[\sectionInRef{4}]{dedieu2021learning}.

\begin{table}[t]
    \begin{center}
    \begin{tabular}{lccc}
        \toprule
        Dataset & $\ddim$ & $\pdim$ \\ 
        \midrule
        \textsc{Riboflavin} & 71 & 4,088 \\
        \textsc{Bctcga} & 536 & 17,322 \\
        \textsc{Colon cancer} & 62 & 2,000 \\
        \textsc{Leukemia} & 38 & 7,129 \\
        \textsc{Breast cancer} & 44 & 7,129 \\
        \textsc{Arcene} & 100 & 10,000 \\
        \bottomrule
    \end{tabular}
    \caption{Description of the dimensions of the datasets used in our numerical simulations.}
    \label{table:numerics:regpath-datasets}
    \end{center}   
\end{table}

Our experiments are conducted as follows.
For each dataset, we first calibrate the parameters \(\regone\) and \(\bigM\) by using the cross-validation procedure detailed in \Cref{sec:supp-numerics:calibration}.
We then construct a regularization path \cite{friedman2010regularization}, meaning that we solve the resulting instance of problem \eqref{prob:prob} for different values of \(\reg\) to generate a pool of solutions with varying sparsity levels. The process starts at some value \(\regmax\) such that the solution to \eqref{prob:prob} is the all-zero vector.\footnote{For example, it can be shown that any $\lambda$ satisfying \(\norm{\transp{\dic}\grad\datafunc(\0)}{\infty} \leq \pertslope\) (where \(\pertslope\) depends on $\lambda$, see \Cref{table:examples:intro}) leads to an instance of problem \eqref{prob:prob} for which the all-zero vector is a minimizer.}
Next, we iterate over a grid of 20 logarithmically spaced values of \(\reg\), ranging from \(\reg_{\max}\) to \(10^{-2} \times \regmax\). The process terminates either when the entire grid has been explored or when no solver completes within 10 minutes for some \(\reg\). 
The solution obtained for a given \(\reg\) is used as a warm start for the next one considered in the regularization path.

In \Cref{fig:numerics:regpath}, we show the average time required by each method to solve the problem for each value of \(\reg\) during the regularization path construction.\footnote{To ensure a reliable assessment of the computational performance of the numerical procedures, we ran each procedure 10 times on the same problem instance and averaged the results. This approach minimizes the influence of external variability, such as system load fluctuations, providing a more robust estimate of the actual running time.} 
We note that \lobnb{} is only tailored to address the least-square regression problem with penalty function \eqref{eq:numerics:regpath:bigm-l2norm}. The performance of this solver (orange curve) thus only appears in the two ``top-left'' figures. Moreover, \cplex{}  cannot handle logistic data loss and its performance (blue curve) is only available for the least-squares regression and SVM binary classification problems. 
 
We observe that \elops{} allows for substantial time savings as compared to the generic solvers \cplex{} and \mosek.
As far as our simulation setups are concerned, we noticed an acceleration factor varying between two and three in average, sometimes reaching up to four orders of magnitude.   
We note that, prior to this work, all the problem instances considered here could only be solved by generic solvers, with the exception of the least-square regression problem with penalty function \eqref{eq:numerics:regpath:bigm-l2norm}. 
\elops{} thus paves the way to the application of $\ell_0$-based regularizers to a wider range of machine-learning problems.
We also observe that \elops{} outperforms the specialized \gls{bnb} solver \lobnb{} on the instances it can handle. On average, our solver achieves an acceleration factor ranging from one to two orders of magnitude. 

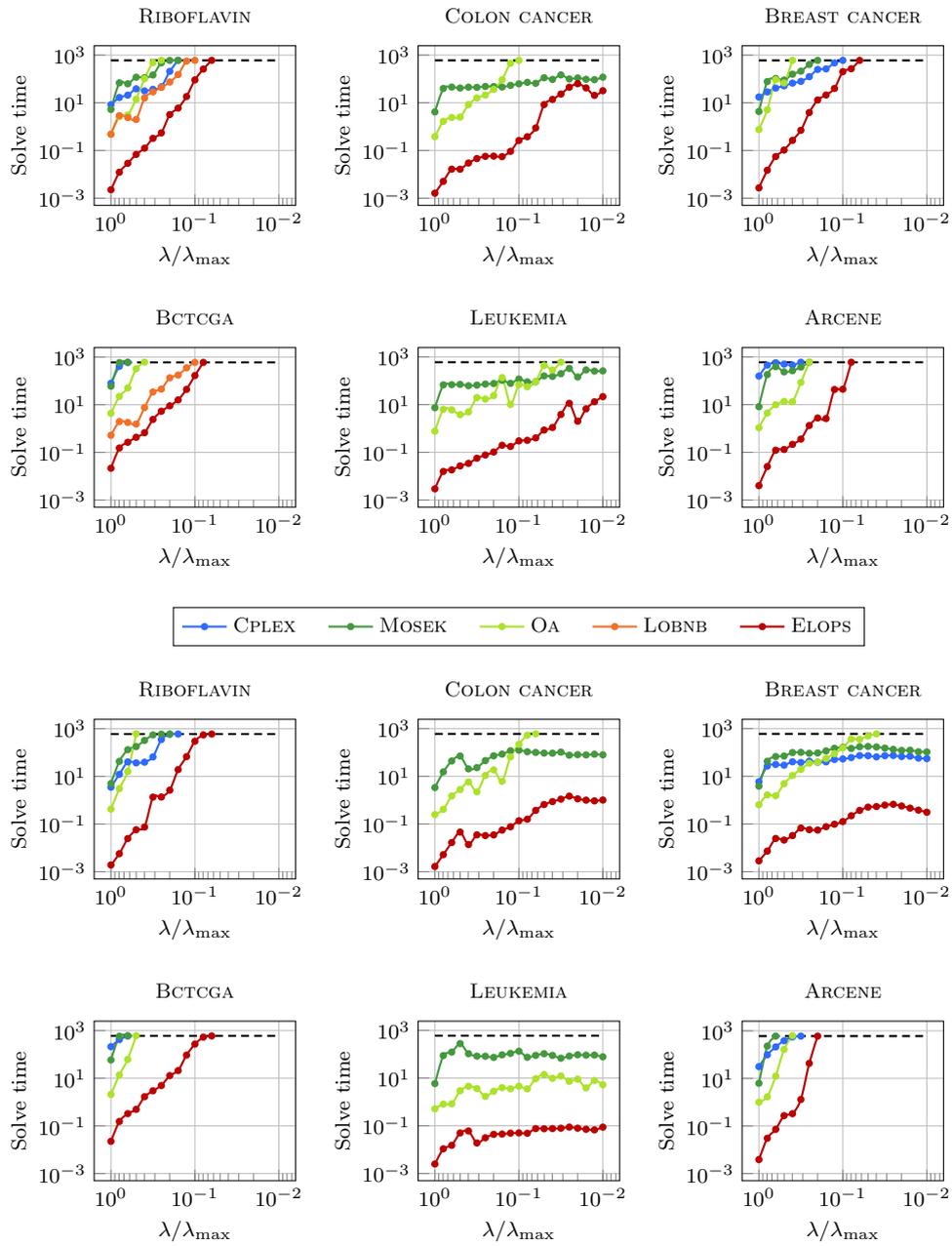
\begin{figure}[!t]
    \centering
    \pgfplotscreateplotcyclelist{cycle_list_regpath}{
    cplexcolor, smooth, thick, mark=*, mark options={scale=0.5}\\     
    mosekcolor, smooth, thick, mark=*, mark options={scale=0.5}\\
    oacolor, smooth, thick, mark=*, mark options={scale=0.5}\\
    l0bnbcolor, smooth, thick, mark=*, mark options={scale=0.5}\\
    el0pscolor, smooth, thick, mark=*, mark options={scale=0.5}\\
    black, smooth, thick, densely dashed\\
}

\begin{tikzpicture}
    \begin{groupplot}[
        group style     = {
            group size      = 3 by 2,
            vertical sep    = 55pt,
            horizontal sep  = 47pt,
        },
        height          = 3.75cm,
        width           = 4.3cm,
        ytick pos       = left,
        xtick pos       = bottom,
        grid            = major,
        xlabel          = $\reg/\regmax$,
        xmode           = log,
        x dir           = reverse,
        ylabel          = Solve time,
        ymode           = log,
        ymin            = 0.0005,
        ytick           = {0.001,0.1,10,1000},
        ylabel style    = {at={(-0.3,0.5)}},
        cycle list name = cycle_list_regpath,
    ]


        \nextgroupplot[
            title           = \textsc{Riboflavin},
            legend to name  = regpath_legend,
            legend style    = {
                legend columns = 6,
                /tikz/every even column/.style = {column sep=10pt},
            },
        ]
        \foreach \solver in {cplex,mosek,mosek_oa,l0bnb,el0ps,max}{
            \addplot table[
                x       = lgrid, 
                y       = \solver_solve_time,
                col sep = comma,
            ] {img/regpath/regpath-dataset=riboflavin-datafit=Leastsquares-penalty=BigmL2norm.csv};
        }

        \addlegendentry{\textsc{Cplex}}
        \addlegendentry{\textsc{Mosek}}
        \addlegendentry{\textsc{Oa}}
        \addlegendentry{\textsc{Lobnb}}
        \addlegendentry{\textsc{Elops}}


        \nextgroupplot[title = \textsc{Colon cancer}]
        \foreach \solver in {cplex,mosek,mosek_oa,l0bnb,el0ps,max}{
            \addplot table[
                x       = lgrid, 
                y       = \solver_solve_time,
                col sep = comma,
            ] {img/regpath/regpath-dataset=colon-cancer-datafit=Logistic-penalty=BigmL2norm.csv};
        }


        \nextgroupplot[title = \textsc{Breast cancer}]
        \foreach \solver in {cplex,mosek,mosek_oa,l0bnb,el0ps,max}{
            \addplot table[
                x       = lgrid, 
                y       = \solver_solve_time,
                col sep = comma,
            ] {img/regpath/regpath-dataset=breast-cancer-datafit=Squaredhinge-penalty=BigmL2norm.csv};
        }


        \nextgroupplot[title = \textsc{Bctcga}]
        \foreach \solver in {cplex,mosek,mosek_oa,l0bnb,el0ps,max}{
            \addplot table[
                x       = lgrid, 
                y       = \solver_solve_time,
                col sep = comma,
            ] {img/regpath/regpath-dataset=bctcga-datafit=Leastsquares-penalty=BigmL2norm.csv};
        }


        \nextgroupplot[title = \textsc{Leukemia}]
        \foreach \solver in {cplex,mosek,mosek_oa,l0bnb,el0ps,max}{
            \addplot table[
                x       = lgrid, 
                y       = \solver_solve_time,
                col sep = comma,
            ] {img/regpath/regpath-dataset=leukemia-datafit=Logistic-penalty=BigmL2norm.csv};
        }


        \nextgroupplot[title = \textsc{Arcene}]
        \foreach \solver in {cplex,mosek,mosek_oa,l0bnb,el0ps,max}{
            \addplot table[
                x       = lgrid, 
                y       = \solver_solve_time,
                col sep = comma,
            ] {img/regpath/regpath-dataset=arcene-datafit=Squaredhinge-penalty=BigmL2norm.csv};
        }

    \end{groupplot}
    \path (group c1r2.south east) -- node[below,yshift=-1.25cm]{\ref*{regpath_legend}} (group c3r2.south west);
\end{tikzpicture}

\vspace*{0.25cm}

\begin{tikzpicture}
    \begin{groupplot}[
        group style     = {
            group size      = 3 by 2,
            vertical sep    = 55pt,
            horizontal sep  = 47pt,
        },
        height          = 3.75cm,
        width           = 4.3cm,
        ytick pos       = left,
        xtick pos       = bottom,
        grid            = major,
        xlabel          = $\reg/\regmax$,
        xmode           = log,
        x dir           = reverse,
        ylabel          = Solve time,
        ymode           = log,
        ymin            = 0.0005,
        ytick           = {0.001,0.1,10,1000},
        ylabel style    = {at={(-0.3,0.5)}},
        cycle list name = cycle_list_regpath,
    ]


        \nextgroupplot[title = \textsc{Riboflavin}]
        \foreach \solver in {cplex,mosek,mosek_oa,l0bnb,el0ps,max}{
            \addplot table[
                x       = lgrid, 
                y       = \solver_solve_time,
                col sep = comma,
            ] {img/regpath/regpath-dataset=riboflavin-datafit=Leastsquares-penalty=BigmL1norm.csv};
        }


        \nextgroupplot[title = \textsc{Colon cancer}]
        \foreach \solver in {cplex,mosek,mosek_oa,l0bnb,el0ps,max}{
            \addplot table[
                x       = lgrid, 
                y       = \solver_solve_time,
                col sep = comma,
            ] {img/regpath/regpath-dataset=colon-cancer-datafit=Logistic-penalty=BigmL1norm.csv};
        }


        \nextgroupplot[title = \textsc{Breast cancer}]
        \foreach \solver in {cplex,mosek,mosek_oa,l0bnb,el0ps,max}{
            \addplot table[
                x       = lgrid, 
                y       = \solver_solve_time,
                col sep = comma,
            ] {img/regpath/regpath-dataset=breast-cancer-datafit=Squaredhinge-penalty=BigmL1norm.csv};
        }


        \nextgroupplot[title = \textsc{Bctcga}]
        \foreach \solver in {cplex,mosek,mosek_oa,l0bnb,el0ps,max}{
            \addplot table[
                x       = lgrid, 
                y       = \solver_solve_time,
                col sep = comma,
            ] {img/regpath/regpath-dataset=bctcga-datafit=Leastsquares-penalty=BigmL1norm.csv};
        }


        \nextgroupplot[title = \textsc{Leukemia}]
        \foreach \solver in {cplex,mosek,mosek_oa,l0bnb,el0ps,max}{
            \addplot table[
                x       = lgrid, 
                y       = \solver_solve_time,
                col sep = comma,
            ] {img/regpath/regpath-dataset=leukemia-datafit=Logistic-penalty=BigmL1norm.csv};
        }


        \nextgroupplot[title = \textsc{Arcene}]
        \foreach \solver in {cplex,mosek,mosek_oa,l0bnb,el0ps,max}{
            \addplot table[
                x       = lgrid, 
                y       = \solver_solve_time,
                col sep = comma,
            ] {img/regpath/regpath-dataset=arcene-datafit=Squaredhinge-penalty=BigmL1norm.csv};
        }

    \end{groupplot}
\end{tikzpicture}
    \caption{Average solving times of different methods for regularization path construction. Dots represent the considered values of $\reg$. The black dashed line indicates the maximum time budget allowed for solving a problem instance. Top chart: penalty \eqref{eq:numerics:regpath:bigm-l2norm}. Bottom chart: penalty \eqref{eq:numerics:regpath:bigm-l1norm}.}
    \label{fig:numerics:regpath}
\end{figure}

\subsection{Signal Processing: Bernoulli Mixtures Models}
\label{sec:numerics:mixtures}

A standard problem in signal processing consists in recovering some unknown vector 
$\groundtruth\in\kR^\pdim$ from partial/noisy observations: 
\begin{equation}
    \label{eq:numerics:mixtures:model}
    \obs = \dic\groundtruth + \noise
\end{equation}
where $\obs \in \kR^{\ddim}$, $\dic \in \kR^{\ddim\times\pdim}$ and $\noise \sim \normaldistrib (\0,\noisestd\identitymatrix_{\ddim})$ is a gaussian noise with variance $\noisestd > 0$. 
A widely explored assumption in the literature considers the components of \(\groundtruth\) as independent realizations of a Bernoulli mixture model, that is 
\begin{equation}
    \label{eq:numerics:mixtures:mixture}
    \groundtruthi{\idxentry} = \bvi{\idxentry} \amplitudeveci{\idxentry}
\end{equation}
where $\bvi{\idxentry} \in \{0,1\}$ is governed by a Bernoulli distribution with $\proba(\bvi{\idxentry}=1)=\bernoulliparam$ and  $\amplitudeveci{\idxentry} \in \kR^{\pdim}$ follows some law admitting a density function $\kfuncdef{\densityfunc}{\kR}{\kR+}$. 
This type of problems for example occurs in compressive sensing \cite[\sectionInRef{II}]{soussen2011bernoulli},\cite[\sectionInRef{2}]{Herzet_eusipco10}, 
electro encephalography reconstruction \cite[\sectionInRef{3.3}]{tourneret2013sparse},
Bayesian inference \cite[\sectionInRef{3.4.4}]{rover2021weakly},
microscopy applications \cite[\sectionInRef{3}]{dobigeon2009hierarchical} or 
magnetic resonance imaging \cite{chaari2014hierarchical}. \Cref{table:numerics:mixtures} gives the densities $\densityfunc$ considered in these works, along with their mathematical expressions (up to a normalization factor).

\begin{table}[!t]
    \setlength{\tabcolsep}{7pt}
    \begin{center}
    \begin{tabular}{ccl}
        \toprule
        Works & Distribution &  \multicolumn{1}{c}{$\densityfunc$} \\ 
        \midrule
        \cite{soussen2011bernoulli,Herzet_eusipco10}& \textsc{Normal}  & $\exp(-\tfrac{\pvi{}^2}{2\gamma^2})$ \\
        \cite{tourneret2013sparse}& \textsc{Laplace}  & $\exp(-\tfrac{\abs{\pvi{}}}{\gamma})$ \\
        \cite{dobigeon2009hierarchical}
        & \textsc{Exponential}  & $\exp(-\tfrac{\abs{\pvi{}}}{\gamma})\1(\pvi{} \geq 0)$ \\
        \cite{rover2021weakly}
        & \textsc{Half-Normal}  & $\exp(-\tfrac{\pvi{}^2}{2\gamma^2})\1(\pvi{} \geq 0)$ \\
        \cite{chaari2014hierarchical}& \textsc{Gauss-Laplace}  & $\exp(-\tfrac{\abs{\pvi{}}}{\gamma'}-\tfrac{\pvi{}^2}{2\gamma^2})$ \\
        \bottomrule
    \end{tabular}
    \caption{Densities $\densityfunc$ appearing in the Bayesian models considered in different works.}
    \label{table:numerics:mixtures}
    \end{center}
\end{table}

Given this probabilistic model, it can be shown (see \eg{}, \cite[\sectionInRef{II}]{soussen2011bernoulli}) that any \gls{map} estimate of $\groundtruth$ corresponds to a solution of problem \eqref{prob:prob} with the following definitions: 
\begin{subequations}
    \begin{alignat}{4}
        \datafunc(\dic\pv) &= \tfrac{1}{2}\norm{\obs - \dic\pv}{2}^2 &&\quad\forall \pv \in \kR^{\pdim}  \\
        \pertfunc(\pvi{}) &= -\noisestd^2\log \densityfunc(\pvi{}) &&\quad\forall \pvi{} \in \kR  \\
        \reg &= \noisestd^2\log(\tfrac{1 - \bernoulliparam}{\bernoulliparam})
        .
    \end{alignat}
\end{subequations}

Hereafter, we illustrate the ability of different procedures to solve instances of this problem for various choices of $\densityfunc$. 
More specifically, for each density listed in \Cref{table:numerics:mixtures}, 
we generate 100 instances of problem \eqref{prob:prob} and assess the number of instances that each solver can solve to optimality within a given time budget. 
The parameters $\obs$ and $\dic$ defining each problem instance are generated randomly as follows. 
Each row of matrix $\dic \in \kR^{\ddim\times\pdim}$ is drawn independently from a multivariate normal distribution $\normaldistrib(\0,\corrmat)$, where $\corrmatel_{ij}=\corrparam^{\abs{i-j}}$ for some $\corrparam \in [0,1]$. 
For large values of \(\corrparam{}\), the resulting matrix \(\dic\) has thus highly-correlated columns, making problem \eqref{prob:prob} particularly challenging.
$\obs \in \kR^{\ddim}$ obeys model \eqref{eq:numerics:mixtures:model} with the distributions on $\groundtruth$ and $\noise$ previously described. In our simulations, we consider the following values $(\ddim,\pdim,\bernoulliparam, \corrparam, \gamma,\gamma')= (500,1000,0.01,0.9,1,1)$. The noise standard deviation is set to $\noisestd{}=\|\dic\groundtruth{}\|_2/\sqrt{10\ddim{}}$ to have a signal-to-noise ratio equal to $10$.
 
\Cref{fig:numerics:mixtures} reports the proportion of problem instances solved to optimality by the considered solvers as a function of the time budget expressed in seconds.
We first note that \elops{} is the only procedure able to address the problems involving 
the \textsc{Laplace} and the \textsc{Exponential} densities. 
This explains why only \elops{} performance is displayed in the corresponding figures. We notice that for these setups,  \elops{} is able to solve any problem instance in less than 100 seconds. 
In the \textsc{Gaussian}, \textsc{Laplace}, and \textsc{Half-normal} setups, \elops{} significantly outperforms the other solvers.
Notably, it achieves at least one order of magnitude speedup in solving time for a given target proportion of instances solved, with this gain reaching up to two orders of magnitude in most cases.

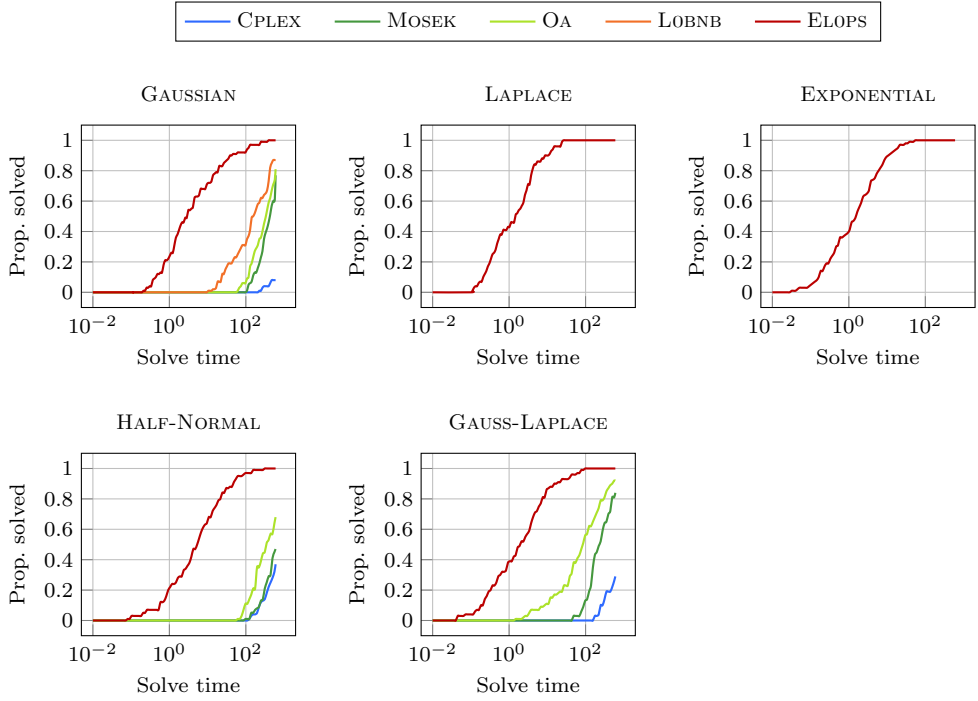
\begin{figure}[!t]
    \centering
    \pgfplotscreateplotcyclelist{cycle_list_mixtures}{
    cplexcolor, smooth, thick\\    
    mosekcolor, smooth, thick\\
    oacolor, smooth, thick\\
    l0bnbcolor, smooth, thick\\
    el0pscolor, smooth, thick\\
}

\begin{tikzpicture}
    \begin{groupplot}[
        group style     = {
            group size      = 3 by 2,
            vertical sep    = 55pt,
            horizontal sep  = 47pt,
        },
        height          = 4.0cm,
        width           = 4.4cm,
        ytick pos       = left,
        xtick pos       = bottom,
        grid            = major,
        xlabel          = Solve time,
        xmode           = log,
        ylabel          = Prop. solved,
        ytick           = {0,20,...,100},
        yticklabel      = {\pgfmathparse{\tick/100}\pgfmathprintnumber{\pgfmathresult}},
        ymin            = -10,
        ymax            = 110,
        xmax            = 2000,
        xmin            = 0.005,
        cycle list name = cycle_list_mixtures,
    ]


        \nextgroupplot[
            title           = \textsc{Gaussian},
            legend to name  = mixtures_legend,
            legend style    = {
                legend columns = 6,
                /tikz/every even column/.style = {column sep=10pt},
            },
        ]
        \foreach \solver in {cplex,mosek,mosek_oa,l0bnb,el0ps}{
            \addplot table[
                x       = tgrid, 
                y       = \solver,
                col sep = comma,
            ] {img/mixtures/mixtures-k=10-m=500-n=1000-r=0.90-s=10.00-distrib=gaussian-scale=1.csv};
        }

        \addlegendentry{\cplex}
        \addlegendentry{\mosek}
        \addlegendentry{\oa}
        \addlegendentry{\lobnb}
        \addlegendentry{\elops}


        \nextgroupplot[title = \textsc{Laplace}]
        \foreach \solver in {cplex,mosek,mosek_oa,l0bnb,el0ps}{
            \addplot table[
                x       = tgrid, 
                y       = \solver,
                col sep = comma,
            ] {img/mixtures/mixtures-k=10-m=500-n=1000-r=0.90-s=10.00-distrib=laplace-scale=1.csv};
        }


        \nextgroupplot[title = \textsc{Exponential}]
        \foreach \solver in {cplex,mosek,mosek_oa,l0bnb,el0ps}{
            \addplot table[
                x       = tgrid, 
                y       = \solver,
                col sep = comma,
            ] {img/mixtures/mixtures-k=10-m=500-n=1000-r=0.90-s=10.00-distrib=exponential-scale=1.csv};
        }


        \nextgroupplot[title = \textsc{Half-Normal }]
        \foreach \solver in {cplex,mosek,mosek_oa,l0bnb,el0ps}{
            \addplot table[
                x       = tgrid, 
                y       = \solver,
                col sep = comma,
            ] {img/mixtures/mixtures-k=10-m=500-n=1000-r=0.90-s=10.00-distrib=half-gaussian-scale=1.csv};
        }


        \nextgroupplot[title = \textsc{Gauss-Laplace}]
        \foreach \solver in {cplex,mosek,mosek_oa,l0bnb,el0ps}{
            \addplot table[
                x       = tgrid, 
                y       = \solver,
                col sep = comma,
            ] {img/mixtures/mixtures-k=10-m=500-n=1000-r=0.90-s=10.00-distrib=gauss-laplace-scale1=1-scale2=1.csv};
        }

    \end{groupplot}

    \path (group c1r1.north east) -- node[above,yshift=1cm]{\ref*{mixtures_legend}} (group c3r1.north west);
\end{tikzpicture}
    \caption{Proportion of the 100 instances solved within a given time budget expressed in seconds, with a maximum time limit of 10 minutes. A higher curve indicates a better performance.}
    \label{fig:numerics:mixtures}
\end{figure}


\section{Conclusion}
\label{sec:conclusion}

In this paper, we proposed a generic \glsreset{bnb}\gls{bnb} framework for solving $\ell_0$-penalized optimization problems under a set of general assumptions on the loss and penalty functions. Our theoretical analysis establishes closed-form expressions for all key quantities required in the \gls{bnb} process, ensuring efficient relaxations and tractable evaluations of subdifferentials and proximal operators. These results not only unify existing approaches but also significantly extend the range of problems that can be efficiently solved using \gls{bnb} methods.

To put our analytical findings into practice, we introduced \elops, an open-source \textsc{Python} toolbox that implements state-of-the-art \gls{bnb} strategies. Our numerical experiments demonstrated that \elops{} substantially outperforms both commercial and specialized solvers, achieving speedups of up to several orders of magnitude. Additionally, it enables the resolution of problem instances that were previously computationally infeasible, expanding the applicability of $\ell_0$-penalized models in machine learning and signal processing.


\begin{acknowledgements}
    Part of this work has been carried out when Th\'{e}o Guyard was affiliated to INSA Rennes, IRMAR - CNRS UMR 6625, France and funded by the ANR-11-LABX-0020.
\end{acknowledgements}

\clearpage
\appendix


\newcommand{\domrightbound}{\pvi{+}}
\newcommand{\domleftbound}{\pvi{-}}

\section{Technical Material} \label{app:technical-mat}
This appendix presents technical material that will be used in our subsequent derivations.

\subsection{Subdifferential of one-dimensional proper convex functions}

In this paragraph, we discuss some properties of the subdifferential of one-dimensional convex proper functions \(\kfuncdef{\genericfunc}{\kR}{\rextendedRealLine}\). 
First, we remind the concepts of left and right derivatives of the function \(\genericfunc\).
\begin{definition}\label{definition:left-right-derivatives}
    Let \(\kfuncdef{\genericfunc}{\kR}{\rextendedRealLine}\) be a proper function and \(\pvi{}\in\dom{\genericfunc}\).
    The left and right derivatives of \(\genericfunc\) at some \(\pvi{} \in \dom\genericfunc\) are defined as
    \begin{align}
        \label{eq:def-left-deriv}
        \partial_-\genericfunc(\pvi{}) &\defeq \lim_{\varepsilon\uparrow 0} \frac{\genericfunc(\pvi{}+\varepsilon)-\genericfunc(\pvi{})}{\varepsilon} \\
        \label{eq:def-right-deriv}
        \partial_+\genericfunc(\pvi{}) &\defeq \lim_{\varepsilon\downarrow 0} \frac{\genericfunc(\pvi{}+\varepsilon)-\genericfunc(\pvi{})}{\varepsilon}
        ,
    \end{align}
    provided that the limit exists in \(\extendedRealLine\).
\end{definition}
With this definition, the following result holds: 
\begin{lemma} 
    \label{lemma:properties-left-right-derivatives V2}
    Let \(\kfuncdef{\genericfunc}{\kR}{\rextendedRealLine}\) be a one-dimensional proper and convex function.
    Then, the left and right derivatives \(\partial_-\genericfunc\), \(\partial_+\genericfunc\) obey the following properties: 
    \begin{enumerate}[label=\textit{\alph*.},topsep = 3pt]
        \item \label{item-lemma:properties-left-right-derivatives:limits exists on domain}
        Both \(\partial_-\genericfunc(\pvi{})\) and \(\partial_+\genericfunc(\pvi{})\) exists in \(\extendedRealLine\) \textnormal{\cite[\propositionInRef{17.2.(i)}]{Bauschke2017}}.
        \item \label{item-lemma:properties-left-right-derivatives:left derivative leq right derivative}
        For all \(\pvi{}\in\dom{\genericfunc}\),  we have \(\partial_-\genericfunc(\pvi{}) \leq \partial_+\genericfunc(\pvi{})\)
        \textnormal{\cite[\propositionInRef{17.16.(i)}]{Bauschke2017}}.
        \item \label{item-lemma:properties-left-right-derivatives:finite on interior of the domain}
        For all \(\pvi{}\in\interior{\dom{\genericfunc}}\), we have \(-\infty < \partial_-\genericfunc(\pvi{}) \leq \partial_+\genericfunc(\pvi{}) < +\infty\) 
        \textnormal{\cite[\propositionInRef{17.2.(vi)}]{Bauschke2017}}.\footnote{
            The result derives from the fact that the interior of \(\dom{\genericfunc}\) is a subset of its core when $\genericfunc$ is convex, according to \cite[\propositionInRef{8.2} and \equationInRef{6.11}]{Bauschke2017}.
        }
        \item \label{item-lemma:properties-left-right-derivatives:partial omega+ <= partial omega-}
        For all \(\pvi{},\pvi{}'\in\dom{\genericfunc}\), we have \(\pvi{} < \pvi{}' \implies \partial_+\genericfunc(\pvi{}) \leq \partial_-\genericfunc(\pvi{}')\)
        \textnormal{\cite[\propositionInRef{17.16.(iii)}]{Bauschke2017}}.
    \end{enumerate}
\end{lemma}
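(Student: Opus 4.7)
My plan is to note first that each of the four claims is a classical fact about one-sided derivatives of proper convex functions on the real line, for which the statement already supplies explicit pointers in \cite{Bauschke2017}; in a concise writeup I would simply invoke those references. For a self-contained derivation I would base every argument on the three-chord inequality: if $\omega$ is convex and $a<b<c$ all lie in $\dom{\omega}$, then
\begin{equation}
\label{eq:three-chord-plan}
\frac{\omega(b)-\omega(a)}{b-a} \;\leq\; \frac{\omega(c)-\omega(a)}{c-a} \;\leq\; \frac{\omega(c)-\omega(b)}{c-b},
\end{equation}
which follows by writing $b$ as a convex combination of $a$ and $c$ and rearranging. An equivalent reformulation I will use repeatedly is that, for any fixed $\pvi{}\in\dom{\omega}$, the secant-slope map $\varepsilon \mapsto [\omega(\pvi{}+\varepsilon)-\omega(\pvi{})]/\varepsilon$ is non-decreasing on $\kR\setminus\{0\}$.

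With \eqref{eq:three-chord-plan} in hand, item~\textit{a.} will follow because a monotone function on a half-line admits a limit at the endpoint in $\extendedRealLine$, so the quantities defined by \eqref{eq:def-left-deriv} and \eqref{eq:def-right-deriv} are well defined. For item~\textit{b.}, I will apply \eqref{eq:three-chord-plan} with $a=\pvi{}+\varepsilon_-$, $b=\pvi{}$, $c=\pvi{}+\varepsilon_+$ for arbitrary $\varepsilon_-<0<\varepsilon_+$ small enough that $a,c\in\dom{\omega}$; every left secant slope is then bounded above by every right secant slope at $\pvi{}$, and passing to the limits $\varepsilon_-\uparrow 0$ and $\varepsilon_+\downarrow 0$ yields $\partial_-\omega(\pvi{})\leq\partial_+\omega(\pvi{})$.

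For item~\textit{c.}, I will fix $\pvi{}\in\interior{\dom{\omega}}$ and pick $\delta>0$ with $[\pvi{}-\delta,\pvi{}+\delta]\subset\dom{\omega}$; monotonicity of the secant-slope map then forces $\partial_+\omega(\pvi{}) \leq [\omega(\pvi{}+\delta)-\omega(\pvi{})]/\delta < +\infty$ and $\partial_-\omega(\pvi{}) \geq [\omega(\pvi{})-\omega(\pvi{}-\delta)]/\delta > -\infty$ (the bounds being finite since $\omega$ is real-valued on its domain), and combining with item~\textit{b.} closes the chain. For item~\textit{d.}, given $\pvi{}<\pvi{}'$ in $\dom{\omega}$, I will choose $\varepsilon>0$ and $\varepsilon'<0$ small enough that $\pvi{}+\varepsilon \leq \pvi{}'+\varepsilon'$, apply \eqref{eq:three-chord-plan} twice to obtain $[\omega(\pvi{}+\varepsilon)-\omega(\pvi{})]/\varepsilon \leq [\omega(\pvi{}'+\varepsilon')-\omega(\pvi{}')]/\varepsilon'$, and let $\varepsilon\downarrow 0$ and $\varepsilon'\uparrow 0$ to conclude $\partial_+\omega(\pvi{})\leq \partial_-\omega(\pvi{}')$. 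I do not anticipate any substantive obstacle: the only point requiring mild care is that at boundary points of $\dom{\omega}$ a one-sided derivative may equal $\pm\infty$, but the paper's conventions on extended arithmetic keep every inequality stated in the lemma meaningful.
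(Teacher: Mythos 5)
Your argument is correct, but note that the paper does not actually prove this lemma: it is recalled as known material, with each item attributed directly to \cite[Propositions 17.2 and 17.16]{Bauschke2017} and a footnote handling the interior-versus-core point needed for item \textit{c}. Your first option (simply invoking those references) therefore coincides exactly with the paper's treatment, while your self-contained derivation from the three-chord inequality is a genuinely different, more elementary route: it rests only on the monotonicity of the secant-slope map $\varepsilon \mapsto [\genericfunc(\pvi{}+\varepsilon)-\genericfunc(\pvi{})]/\varepsilon$, which immediately gives existence of the one-sided limits (item \textit{a}), the ordering $\partial_-\genericfunc(\pvi{}) \leq \partial_+\genericfunc(\pvi{})$ (item \textit{b}), finiteness on $\interior{\dom{\genericfunc}}$ by sandwiching with chords to $\pvi{}\pm\delta$ (item \textit{c}), and item \textit{d} by passing both one-sided derivatives through the fixed finite chord slope between $\pvi{}$ and $\pvi{}'$. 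The citation route keeps the appendix lean and leans on a standard reference; your route makes the lemma self-contained and makes visible why the extended-real conventions suffice. If you write it out in full, the one point to make explicit rather than gesture at is the boundary case in items \textit{b} and \textit{d}: when $\pvi{}$ is an endpoint of $\dom{\genericfunc}$ there is no admissible $\varepsilon_-$ (or $\varepsilon_+$) to feed into the three-chord inequality, and you must instead observe that all secant slopes on the missing side equal $-\infty$ (respectively $+\infty$), so the claimed inequality holds trivially in $\extendedRealLine$; likewise, in item \textit{c} the finiteness of the bounding chords should be justified by noting that $[\pvi{}-\delta,\pvi{}+\delta]\subseteq\dom{\genericfunc}$ forces $\genericfunc$ to be real-valued there.
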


For the sake of readability, we also reproduce the characterization of the subdifferential of a proper convex function defined on the real line provided in \cite[\propositionInRef{17.16.(ii)}]{Bauschke2017}: 
\begin{lemma} \label{lemma:subdiff-convex-fun-1D}
    Let \(\kfuncdef{\genericfunc}{\kR}{\rextendedRealLine}\) be a proper 
    convex function.
    For all \(\pvi{}\in\dom{\genericfunc}\), we have
    \begin{equation}
        \subdiff \genericfunc(\pvi{}) = [\partial_-\genericfunc(\pvi{}),\partial_{+}\genericfunc(\pvi{})]\cap \kR.
    \end{equation} 
\end{lemma}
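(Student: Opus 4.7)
The plan is to establish the two inclusions defining the set equality. Since the statement is essentially the one-variable specialization of the subgradient inequality, the natural approach is to go through the monotonicity of the difference quotient, which is the classical convexity fact underpinning both directions.

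For the forward inclusion $\subdiff\genericfunc(\pvi{}) \subseteq [\partial_-\genericfunc(\pvi{}),\partial_+\genericfunc(\pvi{})]\cap\kR$, I would take an arbitrary $u \in \subdiff\genericfunc(\pvi{})$, which by definition is a real number satisfying $\genericfunc(\pvi{}') \geq \genericfunc(\pvi{}) + u(\pvi{}'-\pvi{})$ for all $\pvi{}'\in\kR$. Rearranging this inequality to isolate the difference quotient, for $\pvi{}' > \pvi{}$ one obtains $\frac{\genericfunc(\pvi{}')-\genericfunc(\pvi{})}{\pvi{}'-\pvi{}} \geq u$, and passing to the limit $\pvi{}'\downarrow\pvi{}$ (which exists in $\extendedRealLine$ by \itemInRef{\ref{item-lemma:properties-left-right-derivatives:limits exists on domain}} of \Cref{lemma:properties-left-right-derivatives V2}) yields $\partial_+\genericfunc(\pvi{}) \geq u$. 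Symmetrically, for $\pvi{}' < \pvi{}$ one divides by the negative quantity $\pvi{}'-\pvi{}$ to get $\frac{\genericfunc(\pvi{}')-\genericfunc(\pvi{})}{\pvi{}'-\pvi{}} \leq u$, and taking $\pvi{}'\uparrow\pvi{}$ gives $\partial_-\genericfunc(\pvi{}) \leq u$. Membership in $\kR$ is automatic from the definition of the subdifferential.

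For the reverse inclusion, I would take $u\in [\partial_-\genericfunc(\pvi{}),\partial_+\genericfunc(\pvi{})]\cap\kR$ and verify the subgradient inequality at every $\pvi{}'\in\kR$. The key ingredient is the classical fact that, for a convex function, the secant slope $\pvi{}'\mapsto \frac{\genericfunc(\pvi{}')-\genericfunc(\pvi{})}{\pvi{}'-\pvi{}}$ is non-decreasing on $(\pvi{},+\infty)$ and on $(-\infty,\pvi{})$, with one-sided limits $\partial_+\genericfunc(\pvi{})$ and $\partial_-\genericfunc(\pvi{})$ at $\pvi{}$, respectively. Thus for $\pvi{}' > \pvi{}$ the monotonicity gives $\frac{\genericfunc(\pvi{}')-\genericfunc(\pvi{})}{\pvi{}'-\pvi{}} \geq \partial_+\genericfunc(\pvi{}) \geq u$, and multiplying by $\pvi{}'-\pvi{} > 0$ recovers the subgradient inequality; for $\pvi{}' < \pvi{}$ the same monotonicity yields $\frac{\genericfunc(\pvi{}')-\genericfunc(\pvi{})}{\pvi{}'-\pvi{}} \leq \partial_-\genericfunc(\pvi{}) \leq u$, and multiplying by the negative quantity $\pvi{}'-\pvi{}$ reverses the inequality into the required form. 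The case $\pvi{}'=\pvi{}$ is trivial.

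The main obstacle I foresee is mainly bookkeeping: one must take care when $\partial_-\genericfunc(\pvi{})=-\infty$ or $\partial_+\genericfunc(\pvi{})=+\infty$ (which can occur at boundary points of $\dom\genericfunc$), since then the intersection with $\kR$ genuinely restricts the interval. In those boundary cases, the forward inclusion argument degenerates (the inequality $\partial_+\genericfunc(\pvi{})\geq u$ becomes vacuous on the $+\infty$ side), and one must verify that the required finiteness of $u$ is not lost. Apart from this, the argument is routine and, since the lemma merely reproduces \cite[\propositionInRef{17.16.(ii)}]{Bauschke2017}, a proof can simply cite that reference rather than be rewritten in full.
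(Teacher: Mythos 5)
Your proposal is correct: the paper gives no proof of this lemma at all, stating explicitly that it merely reproduces \cite[Proposition~17.16(ii)]{Bauschke2017}, which is exactly the option you note at the end, and your two-inclusion argument via the monotone difference quotient is the standard proof of that cited result (finiteness of $u$ being built into the definition of the subdifferential, so the boundary cases you worry about cause no loss).
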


We finally highlight the following consequence of~\Cref{lemma:properties-left-right-derivatives V2}.\ref{item-lemma:properties-left-right-derivatives:partial omega+ <= partial omega-} and \Cref{lemma:subdiff-convex-fun-1D}: 
\begin{lemma} 
    \label{lemma:subdiff-convex-fun-1D-is-nondecreasing}
    Let \(\kfuncdef{\genericfunc}{\kR}{\rextendedRealLine}\) be a proper convex function.
    For all \(\pvi{}\in\dom{\genericfunc}\) and \(\pvi{}'\in\dom{\genericfunc}\), we have\footnote{
        We remind the reader that \(\sup \emptyset = -\infty\) and \(\inf \emptyset = +\infty\) by convention.
    }
    \begin{equation}
        \pvi{} < \pvi{}' 
        \implies
        \sup\, \subdiff \genericfunc(\pvi{})
        \leq
        \inf\, \subdiff \genericfunc(\pvi{}')
        .
    \end{equation}
\end{lemma}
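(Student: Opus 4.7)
The plan is to assemble the claim by chaining three ingredients: the interval description of the subdifferential from \Cref{lemma:subdiff-convex-fun-1D}, the trivial bounds on $\sup$ and $\inf$ of such an intersection, and the monotonicity of one-sided derivatives from \Cref{lemma:properties-left-right-derivatives V2}.\ref{item-lemma:properties-left-right-derivatives:partial omega+ <= partial omega-}.

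\textbf{Step 1.} First I would dispose of the degenerate cases. If $\subdiff\genericfunc(\pvi{})=\emptyset$, then by the convention $\sup\emptyset=-\infty$ the conclusion holds trivially; likewise if $\subdiff\genericfunc(\pvi{}')=\emptyset$ we use $\inf\emptyset=+\infty$. So we may assume both subdifferentials are nonempty.

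\textbf{Step 2.} Applying \Cref{lemma:subdiff-convex-fun-1D} at $\pvi{}$ and $\pvi{}'$ gives
\begin{equation*}
    \subdiff\genericfunc(\pvi{}) = [\partial_-\genericfunc(\pvi{}),\partial_+\genericfunc(\pvi{})]\cap\kR,
    \qquad
    \subdiff\genericfunc(\pvi{}') = [\partial_-\genericfunc(\pvi{}'),\partial_+\genericfunc(\pvi{}')]\cap\kR.
\end{equation*}
Since every element of a set of the form $[a,b]\cap\kR$ is at most $b$ and at least $a$ (interpreting extended inequalities via the paper's conventions), I immediately get
\begin{equation*}
    \sup\subdiff\genericfunc(\pvi{}) \leq \partial_+\genericfunc(\pvi{})
    \qquad\text{and}\qquad
    \inf\subdiff\genericfunc(\pvi{}') \geq \partial_-\genericfunc(\pvi{}').
\end{equation*}

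\textbf{Step 3.} Since $\pvi{}<\pvi{}'$ and both points lie in $\dom\genericfunc$, \Cref{lemma:properties-left-right-derivatives V2}.\ref{item-lemma:properties-left-right-derivatives:partial omega+ <= partial omega-} yields $\partial_+\genericfunc(\pvi{}) \leq \partial_-\genericfunc(\pvi{}')$. Chaining the three inequalities delivers
\begin{equation*}
    \sup\subdiff\genericfunc(\pvi{}) \leq \partial_+\genericfunc(\pvi{}) \leq \partial_-\genericfunc(\pvi{}') \leq \inf\subdiff\genericfunc(\pvi{}'),
\end{equation*}
which is exactly the claim.

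There is no real obstacle here: the only thing to watch is consistency of the extended-value comparisons (notably when one of $\partial_\pm\genericfunc$ equals $\pm\infty$), but this is absorbed by the conventions $\infty\leq\infty$, $\sup\emptyset=-\infty$, and $\inf\emptyset=+\infty$ fixed at the end of \Cref{subsec:notations}.
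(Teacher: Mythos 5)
Your proof is correct and follows exactly the route the paper intends: the lemma is stated there as a direct consequence of \Cref{lemma:subdiff-convex-fun-1D} (interval form of the subdifferential) and \Cref{lemma:properties-left-right-derivatives V2}.\ref{item-lemma:properties-left-right-derivatives:partial omega+ <= partial omega-} (monotonicity of one-sided derivatives), which is precisely the chain $\sup\subdiff\genericfunc(\pvi{})\leq\partial_+\genericfunc(\pvi{})\leq\partial_-\genericfunc(\pvi{}')\leq\inf\subdiff\genericfunc(\pvi{}')$ you spell out, with the empty-set conventions handling degenerate cases.
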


\subsection{Topological result}

The next result provides a variant of~\cite[\propositionInRef{11.1.(iv)}]{Bauschke2017}.
\begin{lemma} 
    \label{lemma:infima open set = infima closed set}
    If \(\kfuncdef{\aGenericOneDfunction}{\kR}{\rextendedRealLine}\) be a proper, closed and convex function. If \(\dom{\aGenericOneDfunction}\) is not reduced to a singleton, we have 
    \begin{equation}
        \label{eq:lemma:infima open set = infima closed set}
        \forall \pvi{0}\in\kR:\ 
        \inf_{
            \pvi{}\in\kR\setminus\{\pvi{0}\}
        }
        \aGenericOneDfunction(\pvi{})
        =
        \inf_{
            \pvi{}\in\kR
        }
        \aGenericOneDfunction(\pvi{})
        .
        \end{equation}
\end{lemma}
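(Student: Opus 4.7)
My plan is to establish the two inequalities separately. The inequality
\begin{equation*}
  \inf_{\pvi{}\in\kR\setminus\{\pvi{0}\}} \aGenericOneDfunction(\pvi{}) \;\geq\; \inf_{\pvi{}\in\kR} \aGenericOneDfunction(\pvi{})
\end{equation*}
is immediate, since the left-hand infimum is taken over a subset. For the reverse inequality, denote $\aGenericOneDfunction^\star \defeq \inf_{\pvi{}\in\kR} \aGenericOneDfunction(\pvi{})$; it suffices to exhibit, for every $\varepsilon>0$, a point $\pvi{}\neq\pvi{0}$ with $\aGenericOneDfunction(\pvi{}) \leq \aGenericOneDfunction^\star + \varepsilon$ (interpreting the inequality in $\extendedRealLine$ when $\aGenericOneDfunction^\star=-\infty$).

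I would split the argument into two cases according to the value of $\aGenericOneDfunction(\pvi{0})$. First, if $\aGenericOneDfunction(\pvi{0}) > \aGenericOneDfunction^\star$ (which covers the possibilities $\pvi{0}\notin\dom{\aGenericOneDfunction}$ and $\aGenericOneDfunction^\star=-\infty$), then by definition of the infimum every minimizing sequence must eventually consist of points distinct from $\pvi{0}$, and the restricted infimum again equals $\aGenericOneDfunction^\star$. The substantive case is therefore when $\pvi{0}\in\dom{\aGenericOneDfunction}$ and $\aGenericOneDfunction(\pvi{0}) = \aGenericOneDfunction^\star$; in particular $\aGenericOneDfunction^\star\in\kR$ since $\aGenericOneDfunction$ is proper.

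In this second case, I would use the non-singleton hypothesis to pick $\pvi{1}\in\dom{\aGenericOneDfunction}\setminus\{\pvi{0}\}$, which is possible because $\dom{\aGenericOneDfunction}$ is non-empty (properness) and contains at least two elements (hypothesis). Convexity of $\aGenericOneDfunction$ then gives, for every $t\in(0,1]$,
\begin{equation*}
  \aGenericOneDfunction\bigl(\pvi{0} + t(\pvi{1}-\pvi{0})\bigr) \;\leq\; (1-t)\,\aGenericOneDfunction(\pvi{0}) + t\,\aGenericOneDfunction(\pvi{1}) \;=\; \aGenericOneDfunction^\star + t\bigl(\aGenericOneDfunction(\pvi{1}) - \aGenericOneDfunction^\star\bigr).
\end{equation*}
Since $\aGenericOneDfunction(\pvi{1})<+\infty$ and $\aGenericOneDfunction^\star\in\kR$, the right-hand side tends to $\aGenericOneDfunction^\star$ as $t\downarrow 0$, while the points $\pvi{0}+t(\pvi{1}-\pvi{0})$ remain distinct from $\pvi{0}$ for every $t>0$. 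Passing to the infimum over such $t$ yields $\inf_{\pvi{}\neq\pvi{0}} \aGenericOneDfunction(\pvi{}) \leq \aGenericOneDfunction^\star$ and concludes the proof.

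The argument is elementary and I do not expect any real obstacle: the only ingredient beyond the trivial inequality is the convexity bound, whose validity hinges on the existence of an auxiliary point $\pvi{1}\in\dom{\aGenericOneDfunction}$ with $\pvi{1}\neq\pvi{0}$ — exactly what the non-singleton hypothesis supplies. Note that closedness of $\aGenericOneDfunction$ is not actually used in the derivation.
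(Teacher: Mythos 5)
Your proof is correct, but it follows a genuinely different route from the paper. The paper's proof splits the punctured line as \(\kR\setminus\{\pvi{0}\}=\kintervoo{-\infty}{\pvi{0}}\cup\kintervoo{\pvi{0}}{+\infty}\), uses the non-singleton hypothesis to guarantee that one of these open half-lines meets \(\dom{\aGenericOneDfunction}\), and then invokes \cite[\propositionInRef{11.1.(iv)}]{Bauschke2017} to show the infimum over that open half-line equals the infimum over its closed counterpart — this is the step where properness, closedness and convexity are all consumed. You instead argue directly: the inequality \(\geq\) is trivial by set inclusion, and for the reverse you distinguish whether \(\pvi{0}\) attains the infimum. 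When it does not (including \(\pvi{0}\notin\dom{\aGenericOneDfunction}\) and the unbounded-below case), approximate minimizers automatically avoid \(\pvi{0}\); when it does, you take \(\pvi{1}\in\dom{\aGenericOneDfunction}\setminus\{\pvi{0}\}\) (this is exactly where the non-singleton hypothesis enters, the same spot as in the paper) and let the convexity inequality along the segment \(\pvi{0}+t(\pvi{1}-\pvi{0})\), \(t\downarrow 0\), drive the values to the minimum through points distinct from \(\pvi{0}\). Your argument is more elementary and self-contained, avoids the external citation, and — as you correctly note — shows that closedness is not actually needed for this lemma; the paper only needs it because the quoted proposition requires lower semicontinuity. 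What the paper's route buys is brevity given that \cite{Bauschke2017} is already the standing reference, but your version is a sound and arguably cleaner alternative.
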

\begin{proof}
    If \(\pvi{0}\notin \dom{\aGenericOneDfunction}\), the result immediately follows from the fact that \(\aGenericOneDfunction\) is proper.         
    Now, assume that \(\pvi{0}\in \dom{\aGenericOneDfunction}\). First, we have: 
    \begin{equation}
        \label{eq:proof lemma:infima open set = infima closed set:splitting subpb in two convex sets - B}
        \inf_{
            \pvi{}\in\kR\setminus\{\pvi{0}\}
        } \aGenericOneDfunction(\pvi{})
        =
        \min\kbrace{
            \inf_{
                \pvi{}\in]-\infty, \pvi{0}[
            } \aGenericOneDfunction(\pvi{})
            ,
            \inf_{
                \pvi{}\in]\pvi{0}, +\infty[
            } \aGenericOneDfunction(\pvi{})
        }
        .
    \end{equation}
    Since \(\dom{\aGenericOneDfunction}\) is not reduced to a singleton, then 
    \(\dom{\aGenericOneDfunction}\cap]-\infty, \pvi{0}[\) and 
    \(\dom{\aGenericOneDfunction}\cap]\pvi{0}, +\infty[\) cannot be empty simultaneously. 
    Without loss of generality, assume that \(\dom{\aGenericOneDfunction}\cap]\pvi{0}, +\infty[\neq\emptyset\). 
    Then, since \(\aGenericOneDfunction\) is proper, closed and convex, invoking \cite[\propositionInRef{11.1.(iv)}]{Bauschke2017} yields 
    \begin{equation}
        \label{eq:proof lemma:infima open set = infima closed set:subpb equals whole pb - B}
        \inf_{
            \pvi{}\in]\pvi{0}, +\infty[
        } \aGenericOneDfunction(\pvi{})
        =
        \inf_{
            \pvi{}\in[\pvi{0}, +\infty[
        } \aGenericOneDfunction(\pvi{})
        .
    \end{equation}
    One finally obtains~\eqref{eq:lemma:infima open set = infima closed set} by plugging \eqref{eq:proof lemma:infima open set = infima closed set:subpb equals whole pb - B} into~\eqref{eq:proof lemma:infima open set = infima closed set:splitting subpb in two convex sets - B} and noting that \(]-\infty,\pvi{0}[\cup[\pvi{0},+\infty[=\kR\).
\end{proof}


\section{Some Properties of 
    \texorpdfstring{\(\regfunc\)}{g}, 
    \texorpdfstring{\(\conj\regfunc\)}{g*}, 
    \texorpdfstring{\(\biconj\regfunc\)}{g**},
    \texorpdfstring{\(\conj\pertfunc\)}{h*} and 
    \texorpdfstring{\(\biconj\pertfunc\)}{h**}%
} \label{app:property-fun}
This appendix gathers a series of elementary results related to the functions \(\pertfunc\), \(\conj\pertfunc\), \(\biconj\pertfunc\), \(\regfunc\), \(\conj\regfunc\) and \(\biconj\regfunc\) that will be extensively used in the proofs of the main results of the paper.
~\\

Our first lemma outlines some properties of the function \(\pertfunc\) and \(\regfunc\), from which we derive several corollaries regarding the functions \(\conj\pertfunc\), \(\biconj\pertfunc\), \(\regfunc\), \(\conj\regfunc\) and \(\biconj\regfunc\).
\begin{lemma} \label{lemma:g-even-nonneg}
    The function \(\regfunc\) defined in \eqref{eq:regfunc}:
    \begin{enumerate}[label=\textit{\alph*.},topsep = 3pt]
        \item verifies \(\regfunc(\pvi{}) \geq \regfunc(0) = 0\) under \ref{assumption:zero-minimized},
        \label{item-lemma:zero-minimized}
        \item is closed under \ref{assumption:closed},
        \label{item-lemma:g-closed}
        \item is even under \ref{assumption:even}.
        \label{item-lemma:g-even}
    \end{enumerate} 
\end{lemma}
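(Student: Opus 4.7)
The proof is essentially a direct verification that the three properties of $\pertfunc$ transfer to $\regfunc = \reg\|\cdot\|_0 + \pertfunc$, so I plan to handle the three items separately and largely in parallel.

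For \ref{item-lemma:zero-minimized}, I first evaluate $\regfunc(0) = \reg\|0\|_0 + \pertfunc(0) = 0$ using the definition \eqref{eq:l0-norm} and the assumption $\pertfunc(0) = 0$ from \ref{assumption:zero-minimized}. For any $\pvi{} \neq 0$, I write $\regfunc(\pvi{}) = \reg + \pertfunc(\pvi{})$; since $\reg > 0$ and $\pertfunc(\pvi{}) \geq 0$ by \ref{assumption:zero-minimized}, this is strictly positive, which proves the minimization property. (The second part of \ref{assumption:zero-minimized} guaranteeing that $\dom\pertfunc$ contains a nonzero point plays no role here; it only guarantees that $\regfunc$ is not reduced to a point-mass at $0$, which is implicitly needed for later results but not for this lemma.)

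For \ref{item-lemma:g-closed}, closedness of $\regfunc$ is equivalent to lower semicontinuity, so I would verify the latter directly. The map $\pvi{} \mapsto \reg\|\pvi{}\|_0$ is itself lower semicontinuous: on any sequence $\{\pvi{}^{(k)}\}_{k\in\kN}$ converging to $\pvi{}$, if $\pvi{} \neq 0$ then eventually $\pvi{}^{(k)} \neq 0$ so $\|\pvi{}^{(k)}\|_0 = 1 = \|\pvi{}\|_0$, while if $\pvi{} = 0$ the inequality $\liminf_k \|\pvi{}^{(k)}\|_0 \geq 0 = \|\pvi{}\|_0$ is trivial. Combined with lower semicontinuity of $\pertfunc$ under \ref{assumption:closed} and the liminf-of-sums inequality (which applies because both summands are bounded below thanks to \ref{item-lemma:zero-minimized} just proved), one gets $\liminf_k \regfunc(\pvi{}^{(k)}) \geq \regfunc(\pvi{})$, as required.

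For \ref{item-lemma:g-even}, I observe that the $\ell_0$-norm as defined in \eqref{eq:l0-norm} is even by inspection, and $\pertfunc$ is even by \ref{assumption:even}, so $\regfunc(-\pvi{}) = \reg\|-\pvi{}\|_0 + \pertfunc(-\pvi{}) = \reg\|\pvi{}\|_0 + \pertfunc(\pvi{}) = \regfunc(\pvi{})$. I do not anticipate any genuine obstacle in this proof; the only subtlety worth spelling out carefully is the liminf-of-sums step in part \ref{item-lemma:g-closed}, where one must ensure the two summands are not simultaneously $+\infty$ and $-\infty$, which is guaranteed here since both are bounded below by $0$.
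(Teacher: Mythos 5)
Your proof is correct and follows essentially the same route as the paper: item \textit{a.} by evaluating \(\regfunc\) at \(0\) and using \(\pertfunc\geq 0\) together with \(\reg>0\), and item \textit{c.} by evenness of both summands. The only difference is in item \textit{b.}, where the paper simply cites that the \(\ell_0\)-norm is closed and that a sum of closed functions is closed, whereas you verify lower semicontinuity of the sum directly (correctly noting that both summands are bounded below, so the liminf-of-sums step is safe); this is the same argument with the cited lemmas unpacked, not a genuinely different approach.
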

\begin{proof}
    First, we have \(\pertfunc(\pvi{})\geq 0\)  and \(\|\pvi{}\|_0\geq 0\) with equality if \(\pvi{}=0\) from~\ref{assumption:zero-minimized} and~\eqref{eq:l0-norm}, respectively. Consequently, we have \(\regfunc(\pvi{}) \geq \regfunc(0) = 0\) since \(\reg > 0\) which yields item~\ref{item-lemma:zero-minimized}
    Second, the function \(\pertfunc\) is closed from \ref{assumption:closed} and \(\|\cdot\|_0\) is closed from \cite[\exampleInRef{2.11}]{beck2017first}. Since \(\reg > 0\), we deduce from \cite[\theoremInRef{2.7.(b)}]{beck2017first} that \(\regfunc\) is closed, which yields item~\ref{item-lemma:g-closed}
    Finally, the function \(\|\cdot\|_0\) is even by definition and \(\pertfunc\) is even from \ref{assumption:even}. Therefore, the function \(\regfunc\) corresponding to their linear combination is even, which yields item~\ref{item-lemma:g-even}
\end{proof}
\begin{corollary} \label{cor:nonneg}
    Under~\ref{assumption:zero-minimized}, any function \(\genericfunc\in\{\pertfunc,\conj\pertfunc, \biconj\pertfunc,\conj\regfunc, \biconj\regfunc\}\) verifies \(\conj\genericfunc(\pvi{})\geq \conj\genericfunc(0)=0\). 
\end{corollary}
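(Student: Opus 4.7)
The plan is to rely on a simple self-reproducing property of the convex conjugate: \emph{if a function $\aGenericOneDfunction\colon\kR\to\rextendedRealLine$ satisfies $\aGenericOneDfunction(\pvi{})\geq\aGenericOneDfunction(0)=0$ for every $\pvi{}\in\kR$, then $\conj{\aGenericOneDfunction}$ inherits the same property.} This reduces to a two-line verification. On one hand, $\conj{\aGenericOneDfunction}(0)=\sup_{\pvi{}}-\aGenericOneDfunction(\pvi{})=-\inf_{\pvi{}}\aGenericOneDfunction(\pvi{})=0$ by the base assumption. On the other hand, for any $\bvi{}\in\kR$, the definition of the supremum gives $\conj{\aGenericOneDfunction}(\bvi{})=\sup_{\pvi{}}(\bvi{}\pvi{}-\aGenericOneDfunction(\pvi{}))\geq\bvi{}\cdot 0-\aGenericOneDfunction(0)=0$, so that $\conj{\aGenericOneDfunction}(\bvi{})\geq\conj{\aGenericOneDfunction}(0)=0$.

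Equipped with this observation, I would first establish the two base cases. Under \ref{assumption:zero-minimized}, the property $\aGenericOneDfunction(\pvi{})\geq\aGenericOneDfunction(0)=0$ holds for $\aGenericOneDfunction=\pertfunc$ directly, and \Cref{lemma:g-even-nonneg} (item \ref{item-lemma:zero-minimized}) yields the same for $\aGenericOneDfunction=\regfunc$. The statement of the corollary is then obtained by a chain of iterations of the key observation. Applying it to $\pertfunc$ gives the desired conclusion for $\genericfunc=\pertfunc$ and simultaneously shows that $\conj\pertfunc$ satisfies the base property. A second application to $\conj\pertfunc$ handles $\genericfunc=\conj\pertfunc$ and transfers the base property to $\biconj\pertfunc$, and a third application to $\biconj\pertfunc$ covers $\genericfunc=\biconj\pertfunc$. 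Two analogous iterations starting from $\regfunc$, applied to $\conj\regfunc$ and then to $\biconj\regfunc$, dispatch the remaining cases $\genericfunc\in\{\conj\regfunc,\biconj\regfunc\}$.

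Since the whole argument rests solely on the definition of the convex conjugate together with the preservation of nonnegativity with a zero minimum at the origin, there is no substantive technical hurdle; the statement is essentially a bookkeeping corollary of the preceding lemma. The only point that deserves some care is tracing the chain of conjugations accurately, so that each of the five functions in $\{\pertfunc,\conj\pertfunc,\biconj\pertfunc,\conj\regfunc,\biconj\regfunc\}$ is identified as the conjugate of a function that has already been established to satisfy the base property.
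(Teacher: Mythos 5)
Your proposal is correct and takes essentially the same route as the paper: establish the base property \(\genericfunc(\pvi{})\geq\genericfunc(0)=0\) for \(\pertfunc\) via \ref{assumption:zero-minimized} and for \(\regfunc\) via \Cref{lemma:g-even-nonneg}, then propagate it through successive conjugations. The only difference is that the paper simply cites \cite[\propositionInRef{13.22}]{Bauschke2017} for the preservation step that you verify in two lines from the definition of the conjugate.
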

\begin{proof}
    The result immediately follows from \ref{assumption:zero-minimized}, \Cref{lemma:g-even-nonneg}.\ref{item-lemma:zero-minimized}~and~\cite[\propositionInRef{13.22}]{Bauschke2017}.
\end{proof}
\begin{corollary} \label{remark:properness}
    Under~\ref{assumption:zero-minimized}, any function \(\genericfunc\in\{\pertfunc,\conj\pertfunc, \biconj\pertfunc,\regfunc,\conj\regfunc, \biconj\regfunc\}\) is proper. 
\end{corollary}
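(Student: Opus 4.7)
The plan is to verify, for each of the six functions in the list, the two conditions defining properness: the function must never take the value $-\infty$, and it must not be identically $+\infty$. My strategy is to exhibit, in every case, both a uniform lower bound of $0$ and the value $0$ attained at the origin, which together imply properness.

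First I would dispatch the two ``base'' functions $\pertfunc$ and $\regfunc$ directly. For $\pertfunc$, assumption \ref{assumption:zero-minimized} yields $\pertfunc(\pvi{}) \geq \pertfunc(0) = 0$ for every $\pvi{} \in \kR$, which simultaneously rules out $-\infty$ and shows that $\dom{\pertfunc}$ is nonempty. For $\regfunc$, I would simply invoke \Cref{lemma:g-even-nonneg}.\ref{item-lemma:zero-minimized}, which gives the analogous bound $\regfunc(\pvi{}) \geq \regfunc(0) = 0$.

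Next I would handle the conjugate and biconjugate functions. For $\conj{\pertfunc}$, $\biconj{\pertfunc}$, and $\biconj{\regfunc}$, \Cref{cor:nonneg} applied with $\genericfunc \in \{\pertfunc, \conj{\pertfunc}, \conj{\regfunc}\}$, respectively, directly provides the desired bound $\omega(\pvi{}) \geq \omega(0) = 0$, from which properness is immediate. The only function in the set not literally covered by \Cref{cor:nonneg} is $\conj{\regfunc}$ itself, since $\regfunc$ was not included in that corollary's statement. For this last case, I would simply re-run the one-line argument used to prove \Cref{cor:nonneg}: combining \Cref{lemma:g-even-nonneg}.\ref{item-lemma:zero-minimized} (namely $\regfunc \geq \regfunc(0) = 0$) with \cite[\propositionInRef{13.22}]{Bauschke2017} yields $\conj{\regfunc}(\bvi{}) \geq \conj{\regfunc}(0) = 0$, giving properness of $\conj{\regfunc}$.

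I do not expect any real obstacle here: the corollary is a mechanical consequence of \Cref{lemma:g-even-nonneg} and \Cref{cor:nonneg}. The only minor subtlety worth flagging is that $\conj{\regfunc}$ falls just outside the scope of \Cref{cor:nonneg} and must be handled by a direct application of the underlying conjugation inequality rather than by citing \Cref{cor:nonneg} verbatim.
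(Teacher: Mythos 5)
Your proof is correct and takes essentially the same route as the paper: properness is deduced from the bound \(\genericfunc \geq \genericfunc(0)=0\), supplied by \ref{assumption:zero-minimized} for \(\pertfunc\), by \Cref{lemma:g-even-nonneg} for \(\regfunc\), and by \Cref{cor:nonneg} (i.e., the conjugation inequality of Bauschke--Combettes) for the conjugate and biconjugate functions. Your separate treatment of \(\conj{\regfunc}\) is harmless but not really necessary, since \Cref{cor:nonneg} already covers it (apply it with \(\genericfunc=\biconj{\regfunc}\), whose conjugate is \(\conj{\regfunc}\)), which is exactly how the paper invokes it.
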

\begin{proof}
    Any function \(\kfuncdef{\genericfunc}{\kR}{\extendedRealLine}\) verifying \(\genericfunc(\pvi{})\geq \genericfunc(0)=0\) is proper since it is not equal to \(+\infty\) everywhere and does not take on the value \(-\infty\). 
    Hence, the function \(\pertfunc\) is proper from \ref{assumption:zero-minimized}, and \(\conj\pertfunc, \biconj\pertfunc,\regfunc, \conj\regfunc, \biconj\regfunc\) are proper as well in view of \Cref{lemma:g-even-nonneg}.\ref{item-lemma:zero-minimized} and \Cref{cor:nonneg}.
\end{proof}
\begin{corollary}
    \label{cor:subdiff properties}
    Under~\ref{assumption:zero-minimized}, any function \(\genericfunc\in\{\pertfunc, \conj\pertfunc, \biconj\pertfunc,\regfunc, \conj{\regfunc}, \biconj{\regfunc}\}\) verifies 
    \begin{align}
        &0\in\partial\genericfunc(0) \label{eq:cor:0 in subdiff} \\
        \forall \pvi{}\in\dom{\genericfunc}\cap
        &\kR+\setminus\{0\} \, :\ \inf\, \partial\genericfunc(\pvi{}) \geq0
        \label{eq:cor:nonnegative subdiff}
        .
    \end{align}
\end{corollary}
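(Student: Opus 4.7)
The plan is to derive both conclusions directly from the subgradient inequality, exploiting the fact that, for every function in the list, $0$ is a global minimizer with value $0$.

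For the first claim, I would first consolidate the minimizer property: by \Cref{lemma:g-even-nonneg}.\ref{item-lemma:zero-minimized} the function $\regfunc$ satisfies $\regfunc(\pvi{}) \geq \regfunc(0)=0$ for all $\pvi{}\in\kR$, and by \Cref{cor:nonneg} the same holds for each of $\pertfunc,\conj\pertfunc,\biconj\pertfunc,\conj\regfunc,\biconj\regfunc$. Hence, for any $\genericfunc$ in the list and any $\pvi{}\in\kR$,
\begin{equation*}
    \genericfunc(\pvi{}) \geq \genericfunc(0) = \genericfunc(0) + 0\cdot(\pvi{}-0),
\end{equation*}
which is exactly the subgradient inequality at $0$ with subgradient equal to $0$. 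This yields $0 \in \subdiff\genericfunc(0)$, proving \eqref{eq:cor:0 in subdiff}.

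For the second claim, I would work directly from the definition of the subdifferential rather than invoke \Cref{lemma:subdiff-convex-fun-1D-is-nondecreasing}, since $\regfunc$ is not convex in general (the $\ell_0$-norm spoils convexity) and thus the monotonicity lemma does not apply to $\regfunc$ uniformly. Fix $\genericfunc$ in the list and $\pvi{}\in\dom{\genericfunc}\cap\kR+\setminus\{0\}$. If $\subdiff\genericfunc(\pvi{})=\emptyset$, then by the convention $\inf\emptyset=+\infty$ recalled in \Cref{subsec:notations} the inequality $\inf\subdiff\genericfunc(\pvi{})\geq 0$ holds vacuously. Otherwise, pick any $\dvi{}\in\subdiff\genericfunc(\pvi{})$ and apply the subgradient inequality at the point $0$:
\begin{equation*}
    \genericfunc(0) \geq \genericfunc(\pvi{}) + \dvi{}\,(0-\pvi{}),
\end{equation*}
which rearranges, using $\genericfunc(0)=0$ and $\genericfunc(\pvi{})\geq 0$, into $\dvi{}\,\pvi{}\geq \genericfunc(\pvi{}) \geq 0$. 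Dividing by $\pvi{}>0$ yields $\dvi{}\geq 0$. Since this holds for every element of $\subdiff\genericfunc(\pvi{})$, we conclude $\inf\subdiff\genericfunc(\pvi{})\geq 0$, which is \eqref{eq:cor:nonnegative subdiff}.

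There is no serious obstacle here: the whole argument rests on two facts that have already been established earlier in the appendix, namely that $\genericfunc(0)=0$ is the minimum value of $\genericfunc$ (\Cref{lemma:g-even-nonneg}.\ref{item-lemma:zero-minimized} and \Cref{cor:nonneg}) and on the definition of the subdifferential. The only subtlety worth flagging explicitly is that the argument for \eqref{eq:cor:nonnegative subdiff} must not rely on convexity of $\genericfunc$, since $\regfunc$ is not convex; this is why I would favour the direct subgradient inequality evaluated at $0$ over the monotonicity statement of \Cref{lemma:subdiff-convex-fun-1D-is-nondecreasing}, together with a one-line remark about the empty-subdifferential case using the convention from \Cref{subsec:notations}.
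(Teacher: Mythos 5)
Your proof is correct, and for the second claim it takes a genuinely different route from the paper. For \eqref{eq:cor:0 in subdiff} you essentially unfold Fermat's optimality condition by hand: you write the subgradient inequality at \(0\) with slope \(0\), after noting that \(0\) is a global minimizer with value \(0\) via \Cref{lemma:g-even-nonneg} and \Cref{cor:nonneg}; the paper does the same but cites Fermat's rule (together with properness from \Cref{remark:properness}), so these are the same argument. For \eqref{eq:cor:nonnegative subdiff}, the paper handles the empty-subdifferential case by the same convention and then, for \(\bvi{}\in\subdiff\genericfunc(\pvi{})\), chains \(0\le\partial_+\genericfunc(0)\le\partial_-\genericfunc(\pvi{})\le\bvi{}\), using the interval description of the subdifferential (\Cref{lemma:subdiff-convex-fun-1D}) and the monotonicity of one-sided derivatives (\Cref{lemma:properties-left-right-derivatives V2}), both of which are stated for proper \emph{convex} functions. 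You instead evaluate the defining inequality of \(\dvi{}\in\subdiff\genericfunc(\pvi{})\) at the point \(0\), getting \(\dvi{}\pvi{}\ge\genericfunc(\pvi{})\ge 0\) and hence \(\dvi{}\ge 0\) after dividing by \(\pvi{}>0\). Your version is more elementary and, as you flag yourself, dispenses with convexity entirely, needing only that \(0\) minimizes \(\genericfunc\) with value \(0\). That is a real advantage here: the corollary is stated under \ref{assumption:zero-minimized} alone and the list includes \(\regfunc\) (non-convex because of the \(\ell_0\) term, and \(\pertfunc\) itself need not be convex under \ref{assumption:zero-minimized} only), so the paper's appeal to the convexity-based lemmas does not strictly apply to every member of the list, whereas your direct subgradient argument covers all of them uniformly.
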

\begin{proof}
    Applying \Cref{lemma:g-even-nonneg}.\ref{item-lemma:zero-minimized} and \Cref{cor:nonneg}, we have that \(0\) is a minimizer of \(\genericfunc\) under~\ref{assumption:zero-minimized}.
    Since \(\genericfunc\) is also proper from \Cref{remark:properness}, the property~\eqref{eq:cor:0 in subdiff} then follows from Fermat's optimality condition~\cite[\theoremInRef{16.3}]{Bauschke2017}. 
    Now, let \(\pvi{}\in\dom{\genericfunc}\) be such that \(\pvi{}>0\).
    If \(\partial\genericfunc(\pvi{})=\emptyset\) then the property \eqref{eq:cor:nonnegative subdiff} trivially holds since \(\inf \emptyset = +\infty\) by convention. 
    Otherwise, if there exists some \(\bvi{}\in\partial\genericfunc(\pvi{})\), then
    \begin{equation}
        0 \leq \partial_+\genericfunc(0) \leq \partial_-\genericfunc(\pvi{}) \leq \bvi{}
    \end{equation}
    where we have used \Cref{lemma:subdiff-convex-fun-1D} and the facts that \(0\in\partial\genericfunc(0)\) and \(\bvi{}\in\partial\genericfunc(\pvi{})\) to obtain the first and third inequalities.
    The second inequality follows from \Cref{lemma:properties-left-right-derivatives V2}.\ref{item-lemma:properties-left-right-derivatives:partial omega+ <= partial omega-} since \(0<\pvi{}\) by assumption.
    Property \eqref{eq:cor:nonnegative subdiff} follows by noting that \(\inf\partial\genericfunc(\pvi{}) = \partial_-\genericfunc(\pvi{})\) from \Cref{lemma:subdiff-convex-fun-1D}.
\end{proof}
\begin{corollary} \label{cor:proper convex closed}
    Under \ref{assumption:zero-minimized}, any function \(\genericfunc\in\{\conj\pertfunc, \biconj\pertfunc,\conj\regfunc, \biconj\regfunc\}\) is proper, closed and convex.
\end{corollary}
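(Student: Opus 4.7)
The plan is to observe that properness has already been established by \Cref{remark:properness}, so only closedness and convexity remain to be shown. For these, I would rely on the fact that every Fenchel conjugate is a pointwise supremum of affine (hence closed and convex) functions, so it is automatically convex and lower semi-continuous regardless of the properties of the base function. The cleanest reference for this fact in the paper's framework is \cite[\propositionInRef{13.11}]{Bauschke2017}, which guarantees that \(\conj{\genericfunc}\) is convex and lower semi-continuous for any function \(\genericfunc\) from \(\kR\) to \(\rextendedRealLine\).

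Concretely, I would argue case by case. First, \(\conj{\pertfunc}\) and \(\conj{\regfunc}\) are Fenchel conjugates by definition, so the Bauschke-Combettes proposition directly yields convexity and lower semi-continuity. Second, \(\biconj{\pertfunc}\) and \(\biconj{\regfunc}\) are, again by definition, the Fenchel conjugates of \(\conj{\pertfunc}\) and \(\conj{\regfunc}\) respectively, so the same argument applies to them. Combined with \Cref{remark:properness}, which supplies properness under \ref{assumption:zero-minimized}, each of the four functions in \(\{\conj{\pertfunc},\biconj{\pertfunc},\conj{\regfunc},\biconj{\regfunc}\}\) is simultaneously proper, convex and lower semi-continuous, which is exactly the definition of a closed proper convex function adopted in \cite[\chapterInRef{2}]{beck2017first}.

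There is essentially no obstacle in this proof: the only subtlety is to make sure that the properness obtained through \Cref{remark:properness} is genuinely needed, since Fenchel conjugation on its own does not prevent the improper value \(-\infty\) from appearing when the base function fails to admit a continuous affine minorant. Invoking \Cref{remark:properness} sidesteps this issue by providing properness without further work. Hence the corollary reduces to a straightforward concatenation of a previously established result and a standard textbook property of conjugation, and I would keep the written proof to a couple of lines.
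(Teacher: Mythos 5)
Your proof is correct and follows essentially the same route as the paper: properness is taken from \Cref{remark:properness}, and closedness/convexity follow from the standard fact that any Fenchel conjugate (hence also each biconjugate, being a conjugate of a conjugate) is closed and convex — the paper cites \cite[\theoremInRef{4.3}]{beck2017first} for this where you cite the analogous Bauschke--Combettes result. No substantive difference.
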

\begin{proof}
    We have already proved in \Cref{remark:properness} that any \(\genericfunc\in\{\conj{\pertfunc}, \biconj{\pertfunc}, \conj{\regfunc}, \biconj{\regfunc}\}\) is proper under \ref{assumption:zero-minimized}.
    Closedness and convexity properties then follow from \cite[\theoremInRef{4.3}]{beck2017first}.
\end{proof}

We next provide a lemma characterizing regarding the domain of the conjugate associated one-dimensional proper, closed and convex functions minimized at the origin, as well as several byproducts regarding properties of the functions \(\pertfunc\), \(\conj\pertfunc\), \(\biconj\pertfunc\), \(\regfunc\), \(\conj\regfunc\) and \(\biconj\regfunc\).
\begin{lemma}
    \label{lemma:V2:nonzero implies support strictly includes 0}
    Let \(\kfuncdef{\aGenericOneDfunction}{\kR}{\rextendedRealLine}\) be a proper, closed, and convex function such that \(\aGenericOneDfunction\geq\aGenericOneDfunction(0)=0\). 
    Then,
    \begin{equation} \label{eq:lemma:V2:nonzero implies support strictly includes 0}
        \exists\pvi{}\in\kR+: \ \aGenericOneDfunction(\pvi{})\neq0
        \implies 
        \dom{\conj\aGenericOneDfunction}\cap\kR+\setminus\{0\}\neq\emptyset
        .
    \end{equation}
 \end{lemma}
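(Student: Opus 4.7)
My plan is to produce an explicit $u \in \kR+\setminus\{0\}$ with $\conj{\aGenericOneDfunction}(u) < +\infty$, using only the convexity of $\aGenericOneDfunction$ and the hypothesis that $\aGenericOneDfunction$ is minimized at $0$ with value $0$.

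First, I would fix a witness $\pvi{0} \in \kR+$ with $\aGenericOneDfunction(\pvi{0}) \neq 0$. Since $\aGenericOneDfunction(0)=0$ and $\aGenericOneDfunction\geq 0$, this forces $\pvi{0} > 0$ and $\aGenericOneDfunction(\pvi{0}) > 0$. Define the candidate slope
\begin{equation*}
    \bvi{0} \defeq \frac{\aGenericOneDfunction(\pvi{0})}{\pvi{0}} > 0,
\end{equation*}
and pick any $\bvi{} \in\, ]0,\bvi{0}[$ (for concreteness, $\bvi{} = \bvi{0}/2$). I claim $\bvi{} \in \dom{\conj{\aGenericOneDfunction}}$, which settles the statement.

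To establish this I would bound the Legendre-Fenchel expression $\conj{\aGenericOneDfunction}(\bvi{}) = \sup_{\pvi{}\in\kR}\bvi{}\pvi{}-\aGenericOneDfunction(\pvi{})$ by splitting the real line into three regions:
\begin{itemize}
    \item On $]{-\infty},0]$: since $\bvi{}>0$ and $\aGenericOneDfunction\geq 0$, we have $\bvi{}\pvi{}-\aGenericOneDfunction(\pvi{})\leq 0$.
    \item On $[0,\pvi{0}]$: using $\aGenericOneDfunction\geq 0$ again, $\bvi{}\pvi{}-\aGenericOneDfunction(\pvi{})\leq \bvi{}\pvi{0}$.
    \item On $]\pvi{0},+\infty[$: here I would invoke convexity. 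Writing $\pvi{0} = \tfrac{\pvi{0}}{\pvi{}}\pvi{}+(1-\tfrac{\pvi{0}}{\pvi{}})\cdot 0$ for $\pvi{}>\pvi{0}$, convexity and $\aGenericOneDfunction(0)=0$ give $\aGenericOneDfunction(\pvi{0})\leq\tfrac{\pvi{0}}{\pvi{}}\aGenericOneDfunction(\pvi{})$, so $\aGenericOneDfunction(\pvi{})\geq \bvi{0}\pvi{}$, hence $\bvi{}\pvi{}-\aGenericOneDfunction(\pvi{})\leq (\bvi{}-\bvi{0})\pvi{}\leq 0$.
\end{itemize}
Combining the three bounds yields $\conj{\aGenericOneDfunction}(\bvi{})\leq \bvi{}\pvi{0}<+\infty$, so $\bvi{}\in\dom{\conj{\aGenericOneDfunction}}\cap\kR+\setminus\{0\}$, which is the desired conclusion.

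There is no real obstacle: the properness/closedness hypotheses are there to ensure $\conj{\aGenericOneDfunction}$ is well-defined as a proper function (so that membership in its domain is meaningful via \Cref{cor:proper convex closed}), but they are not needed in the computation itself. The only subtlety is the convexity-based lower bound $\aGenericOneDfunction(\pvi{})\geq \bvi{0}\pvi{}$ on $[\pvi{0},+\infty[$, which is precisely the one-dimensional monotonicity of secant slopes through the minimum. Taking $\bvi{}$ strictly below $\bvi{0}$ is the reason this linear growth is enough to make $\bvi{}\pvi{}-\aGenericOneDfunction(\pvi{})$ eventually non-positive, thereby controlling the supremum.
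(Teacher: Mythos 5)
Your proof is correct in substance, but it takes a genuinely different route from the paper's. The paper argues by contraposition: assuming \(\dom{\conj{\aGenericOneDfunction}}\cap\kR+\setminus\{0\}=\emptyset\), it uses \(\conj{\aGenericOneDfunction}\geq\conj{\aGenericOneDfunction}(0)=0\) to conclude that \(\dom{\conj{\aGenericOneDfunction}}\cap\kR+=\{0\}\), computes \(\biconj{\aGenericOneDfunction}=0\) on \(\kR+\) by restricting the supremum defining the biconjugate to \(\dom{\conj{\aGenericOneDfunction}}\), and then invokes \(\aGenericOneDfunction=\biconj{\aGenericOneDfunction}\) for proper, closed, convex functions --- which is precisely where closedness enters. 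You instead exhibit an explicit dual witness: the secant-slope inequality \(\aGenericOneDfunction(\pvi{})\geq\bvi{0}\pvi{}\) for \(\pvi{}\geq\pvi{0}\) (monotonicity of slopes through the minimum at \(0\)) yields \(\conj{\aGenericOneDfunction}(\bvi{})\leq\bvi{}\pvi{0}<+\infty\) for every \(\bvi{}\in\kintervoo{0}{\bvi{0}}\). Your route is more elementary (no biconjugation theorem), does not use closedness at all --- so it establishes the lemma under slightly weaker hypotheses --- and it gives quantitative information: a whole interval of points of \(\dom{\conj{\aGenericOneDfunction}}\) together with an explicit bound on \(\conj{\aGenericOneDfunction}\) there. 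The paper's version, in exchange, is shorter given the Fenchel--Moreau machinery it already cites elsewhere.

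One point to tighten: the witness \(\pvi{0}\) is only known to satisfy \(\aGenericOneDfunction(\pvi{0})\in\kintervoc{0}{+\infty}\), since \(\aGenericOneDfunction\) maps into \(\rextendedRealLine\); consider \(\aGenericOneDfunction(\pvi{})=\icvx(\abs{\pvi{}}\leq 1)\), for which every nonzero value on \(\kR+\) equals \(+\infty\). In that case \(\bvi{0}=+\infty\), so the concrete choice \(\bvi{}=\bvi{0}/2\) and the manipulation \((\bvi{}-\bvi{0})\pvi{}\) are not literally meaningful. The fix is immediate: take any real \(\bvi{}\) with \(0<\bvi{}<\bvi{0}\) (for instance \(\bvi{}=\tfrac{1}{2}\min(1,\bvi{0})\)), and note that when \(\aGenericOneDfunction(\pvi{0})=+\infty\) the same convexity inequality forces \(\aGenericOneDfunction(\pvi{})=+\infty\) for all \(\pvi{}>\pvi{0}\), so the third region contributes \(-\infty\) and the bound \(\conj{\aGenericOneDfunction}(\bvi{})\leq\bvi{}\pvi{0}\) still holds. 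With that one-line adjustment your argument is complete.
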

\begin{proof}
    We prove the statement by showing the contrapositive:
    \begin{equation} \label{eq:lemma:V2:contrapositive nonzero implies support strictly includes 0}
        \dom{\conj\aGenericOneDfunction}\cap\kR+\setminus\{0\}=\emptyset
        \implies 
        \forall\pvi{}\in\kR+ :\;\aGenericOneDfunction(\pvi{}) = 0.
    \end{equation}
    Assume that \(\dom{\conj\aGenericOneDfunction}\cap\kR+\setminus\{0\}=\emptyset\).
    Since \(\aGenericOneDfunction\) satisfies \(\aGenericOneDfunction\geq \aGenericOneDfunction(0)\), then \(\conj{\aGenericOneDfunction}\geq\conj{\aGenericOneDfunction}(0)=0\) by~\cite[\propositionInRef{13.22}]{Bauschke2017}, which implies that \(\dom{\conj{\aGenericOneDfunction}}\cap\kR+ = \{0\}\).
    The implication in~\eqref{eq:lemma:V2:contrapositive nonzero implies support strictly includes 0} then follows from the combination of the following two arguments. 
    First, we have
    \begin{equation*}
        \forall \pvi{}\in\kR+:\
        \biconj\aGenericOneDfunction (\pvi{})
        = \sup_{\bvi{}\in\dom{\conj{\aGenericOneDfunction}}}\bvi{}\pvi{} - \conj{\aGenericOneDfunction}(\bvi{})
        = \sup_{\bvi{}\in\dom{\conj{\aGenericOneDfunction}}\cap\kR+}\bvi{}\pvi{} - \conj{\aGenericOneDfunction}(\bvi{})
        = 0
    \end{equation*}
    where the first equality follows from~\cite[\theoremInRef{13.10.(iv)}]{Bauschke2017},
    the second from the fact that \(\bvi{}\pvi{} - \conj{\aGenericOneDfunction}(\bvi{})\leq -\conj{\aGenericOneDfunction}(0)\) for all \(\pvi{}\in\kR+\) and \(\bvi{}\in\kR-\) since \(\conj{\aGenericOneDfunction}\geq\conj{\aGenericOneDfunction}(0)\), and the last one from \(\conj{\aGenericOneDfunction}(0)=0\) and \(\dom{\conj{\aGenericOneDfunction}}\cap\kR+ = \{0\}\).
    Second, we have from \cite[\theoremInRef{4.8}]{beck2017first} that \(\aGenericOneDfunction=\biconj\aGenericOneDfunction\) since \(\aGenericOneDfunction\) is assumed to be proper, closed and convex.
\end{proof}
\begin{corollary} \label{cor:V2:0 in interior of domh*}
    If~\ref{assumption:zero-minimized}-\ref{assumption:closed}-\ref{assumption:convex} hold and \(\pertfunc\) is not identically zero on \(\kR+\), then there exists \(\bvi{+}>0\) such that \([0,\bvi{+}]\subseteq \dom{\conj{\pertfunc}}\).
\end{corollary}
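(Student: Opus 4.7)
The plan is to derive this corollary directly from \Cref{lemma:V2:nonzero implies support strictly includes 0} applied to $\pertfunc$, and then exploit the convexity of $\dom{\conj{\pertfunc}}$ to pass from a single nonzero point to a whole interval $[0,\bvi{+}]$.

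First, I would check that $\pertfunc$ satisfies the hypotheses of \Cref{lemma:V2:nonzero implies support strictly includes 0}. Under \ref{assumption:zero-minimized}, we have $\pertfunc(0)=0$ and $\pertfunc \geq 0$; by \ref{assumption:closed} and \ref{assumption:convex}, $\pertfunc$ is closed and convex; and \Cref{remark:properness} guarantees that $\pertfunc$ is proper. Moreover, the assumption that $\pertfunc$ is not identically zero on $\kR+$ means that there exists some $\pvi{} \in \kR+$ with $\pertfunc(\pvi{}) \neq 0$. Hence \Cref{lemma:V2:nonzero implies support strictly includes 0} applies and yields
\begin{equation*}
    \dom{\conj{\pertfunc}} \cap \kR+ \setminus \{0\} \neq \emptyset,
\end{equation*}
so there exists $\bvi{+} > 0$ such that $\bvi{+} \in \dom{\conj{\pertfunc}}$.

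Next, I would observe that $0 \in \dom{\conj{\pertfunc}}$ as well: by \Cref{cor:nonneg}, $\conj{\pertfunc}(0) = 0 < +\infty$. Since $\conj{\pertfunc}$ is convex (in fact proper, closed and convex, by \Cref{cor:proper convex closed}), its effective domain $\dom{\conj{\pertfunc}}$ is a convex subset of $\kR$. In particular, the convex hull of $\{0, \bvi{+}\}$, namely the segment $[0,\bvi{+}]$, is entirely contained in $\dom{\conj{\pertfunc}}$. This yields the claimed inclusion.

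There is no real obstacle here: the corollary is essentially a one-line repackaging of \Cref{lemma:V2:nonzero implies support strictly includes 0} combined with the convexity of the domain of a convex function. The only subtlety worth making explicit in the write-up is the appeal to \Cref{remark:properness} to justify that $\pertfunc$ is proper (so that the lemma genuinely applies), and to \Cref{cor:nonneg} to secure $0 \in \dom{\conj{\pertfunc}}$, which is needed to extend the conclusion from a single point $\bvi{+}$ to the full interval $[0,\bvi{+}]$.
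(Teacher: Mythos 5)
Your proposal is correct and follows essentially the same route as the paper's proof: invoke \Cref{lemma:V2:nonzero implies support strictly includes 0} with \(\aGenericOneDfunction=\pertfunc\) (properness via \Cref{remark:properness}) to get a point \(\bvi{+}>0\) in \(\dom{\conj{\pertfunc}}\), then use \(0\in\dom{\conj{\pertfunc}}\) from \Cref{cor:nonneg} and the convexity of \(\dom{\conj{\pertfunc}}\) (via \Cref{cor:proper convex closed}) to conclude \([0,\bvi{+}]\subseteq\dom{\conj{\pertfunc}}\). No gaps.
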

\begin{proof}
    Hypotheses~\ref{assumption:zero-minimized}-\ref{assumption:closed}-\ref{assumption:convex} and \Cref{remark:properness} imply that \(\pertfunc\) is proper, closed, convex and such that \(\pertfunc\geq\pertfunc(0)=0\). 
    Moreover, the hypothesis ``\(\pertfunc\) is not identically zero on \(\kR+\)'' exactly corresponds to the left-hand side of~\eqref{eq:lemma:V2:nonzero implies support strictly includes 0}.
    Applying \Cref{lemma:V2:nonzero implies support strictly includes 0} with \(\aGenericOneDfunction=\pertfunc\), we then obtain $\dom{\conj{\pertfunc}}\cap\kR+\setminus\{0\}\neq\emptyset$. 
    Hence, there exists \(\bvi{+}\in\dom{\conj{\pertfunc}}\) such that \(\bvi{+} >0\).
    Since \(\conj{\pertfunc}\) is convex under~\ref{assumption:zero-minimized} from \Cref{cor:proper convex closed}, \(\dom{\conj{\pertfunc}}\) is a convex set~\cite[\propositionInRef{8.2}]{Bauschke2017}. 
    We thus obtain the result by noting that \(0\in\dom{\conj{\pertfunc}}\) under \ref{assumption:zero-minimized} from \Cref{cor:nonneg}, which implies that \([0,\bvi{+}] \subseteq \dom{\conj{\pertfunc}}\) due to the convexity of \(\dom{\conj{\pertfunc}}\). 
\end{proof}
\begin{corollary} \label{cor:continuity}
    Under \ref{assumption:zero-minimized}-\ref{assumption:closed}-\ref{assumption:convex}, \(\pertfunc\) is continuous on its domain. 
    Under \ref{assumption:zero-minimized},  any function \(\genericfunc\in\{\conj\pertfunc, \biconj\pertfunc,\conj\regfunc, \biconj\regfunc\}\) is continuous on its domain. 
\end{corollary}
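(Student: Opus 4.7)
The plan is to reduce the statement to a single claim, namely that any proper, closed, convex function \(\kfuncdef{\aGenericOneDfunction}{\kR}{\rextendedRealLine}\) is continuous on \(\dom{\aGenericOneDfunction}\), and then verify that this claim applies to each of the functions listed.

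First, I would check the reduction. For \(\pertfunc\): hypothesis~\ref{assumption:zero-minimized} together with \Cref{remark:properness} gives properness, \ref{assumption:closed} gives closedness, and \ref{assumption:convex} gives convexity. For any \(\genericfunc\in\{\conj{\pertfunc},\biconj{\pertfunc},\conj{\regfunc},\biconj{\regfunc}\}\), \Cref{cor:proper convex closed} directly yields that \(\genericfunc\) is proper, closed, and convex whenever \ref{assumption:zero-minimized} holds. So in every case the reduction hypotheses are met.

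It remains to prove the one-dimensional continuity result. Fix such a proper, closed, convex \(\aGenericOneDfunction\) and a sequence \(\{\pvi{}^{(k)}\}_{k\in\kN}\subseteq\dom{\aGenericOneDfunction}\) with \(\pvi{}^{(k)}\to\bar\pvi{}\in\dom{\aGenericOneDfunction}\); the goal is to show \(\aGenericOneDfunction(\pvi{}^{(k)})\to\aGenericOneDfunction(\bar\pvi{})\). If \(\dom{\aGenericOneDfunction}\) is a singleton, continuity is trivial, so assume it is a nondegenerate interval. If \(\bar\pvi{}\in\interior{\dom{\aGenericOneDfunction}}\), continuity at \(\bar\pvi{}\) follows from the classical fact that a proper convex function on \(\kR\) is continuous on the interior of its domain, see e.g.\ \cite[Theorem~2.22]{beck2017first}. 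The only subtle case is when \(\bar\pvi{}\) is a boundary point of \(\dom{\aGenericOneDfunction}\) that nevertheless lies in \(\dom{\aGenericOneDfunction}\); this is the main obstacle. There, closedness of \(\aGenericOneDfunction\) gives lower semi-continuity, hence
\begin{equation*}
    \liminf_{k\to+\infty}\aGenericOneDfunction(\pvi{}^{(k)}) \geq \aGenericOneDfunction(\bar\pvi{}).
\end{equation*}
For the reverse inequality, I would pick any \(\pvi{}_0\in\interior{\dom{\aGenericOneDfunction}}\) lying on the side of \(\bar\pvi{}\) opposite to the tail of the sequence, so that for \(k\) large enough \(\pvi{}^{(k)}\) lies on the segment joining \(\bar\pvi{}\) and \(\pvi{}_0\); convexity then yields the chord bound
\begin{equation*}
    \aGenericOneDfunction(\pvi{}^{(k)}) \leq \tfrac{|\pvi{}^{(k)}-\pvi{}_0|}{|\bar\pvi{}-\pvi{}_0|}\,\aGenericOneDfunction(\bar\pvi{}) + \tfrac{|\pvi{}^{(k)}-\bar\pvi{}|}{|\bar\pvi{}-\pvi{}_0|}\,\aGenericOneDfunction(\pvi{}_0),
\end{equation*}
whose right-hand side tends to \(\aGenericOneDfunction(\bar\pvi{})\) as \(k\to+\infty\). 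Combining both inequalities proves \(\aGenericOneDfunction(\pvi{}^{(k)})\to\aGenericOneDfunction(\bar\pvi{})\), and applying this to \(\pertfunc\), \(\conj{\pertfunc}\), \(\biconj{\pertfunc}\), \(\conj{\regfunc}\) and \(\biconj{\regfunc}\) gives the corollary. \qed
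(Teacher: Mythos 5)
Your proof follows essentially the same route as the paper's: the reduction (properness via \Cref{remark:properness}, closedness and convexity via \ref{assumption:closed}-\ref{assumption:convex} for \(\pertfunc\) and via \Cref{cor:proper convex closed} for \(\conj\pertfunc,\biconj\pertfunc,\conj\regfunc,\biconj\regfunc\)) is identical, and the one-dimensional fact you then establish is exactly what the paper outsources to \cite[\theoremInRef{2.22}]{beck2017first}, your lower-semicontinuity-plus-chord-bound argument being a correct self-contained substitute for that citation. The only blemish is a wording slip: the auxiliary point \(\pvi{0}\in\interior{\dom{\aGenericOneDfunction}}\) must lie on the \emph{same} side of \(\bar{\pvi{}}\) as the domain (hence as the sequence), not ``opposite to the tail''; with that reading, \(\pvi{}^{(k)}\) eventually lies on the segment between \(\pvi{0}\) and \(\bar{\pvi{}}\) and your chord estimate and limit go through.
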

\begin{proof}
    We have from~\cite[\theoremInRef{2.22}]{beck2017first} that any proper, closed, convex function defined on \(\kR\) is continuous on its domain. 
    Now, the function \(\pertfunc\) verifies these hypotheses from~\ref{assumption:zero-minimized}-\ref{assumption:closed}-\ref{assumption:convex} and \Cref{remark:properness}. Similarly, any \(\genericfunc\in\{\conj\pertfunc, \biconj\pertfunc,\conj\regfunc, \biconj\regfunc\}\) satisfies these conditions under~\ref{assumption:zero-minimized} according to \Cref{cor:proper convex closed}.
\end{proof}

Our last technical lemma provides a result on the monotonicity of one-dimensional convex functions minimized at the origin, from which properties of the functions \(\conj\pertfunc\), \(\biconj\pertfunc\), \(\conj\regfunc\) and \(\biconj\regfunc\) are derived.
\begin{lemma} \label{lemma:monoticity}
	Let \(\kfuncdef{\genericfunc}{\kR}{\extendedRealLine}\) be a convex function verifying \(\genericfunc(\bvi{})\geq \genericfunc(0)=0\).
    Then \(\genericfunc\) is non-decreasing on \(\dom{\genericfunc}\cap\kR+\).~Moreover, if \(\bvi{}\in\dom{\genericfunc}\cap\kR+\) is such that \(\genericfunc{(\bvi{})}>0\), then \(\genericfunc\) is strictly increasing on \(\dom{\genericfunc} \cap [\bvi{},+\infty[\). 
\end{lemma}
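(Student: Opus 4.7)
The plan is to prove both claims by a single convex-combination argument applied to the triple $\{0, b_1, b_2\}$ with $0 \leq b_1 < b_2$.

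For the non-decreasing part, I would take any $b_1, b_2 \in \dom{\genericfunc}\cap\kR+$ with $b_1 < b_2$ and write $b_1$ as a convex combination of the endpoints $0$ and $b_2$, namely $b_1 = (1 - b_1/b_2)\cdot 0 + (b_1/b_2)\cdot b_2$, where the coefficient $t \defeq b_1/b_2$ lies in $[0,1)$. Since $\genericfunc$ is convex and $\genericfunc(0)=0$, convexity gives
\begin{equation*}
\genericfunc(b_1) \leq (1-t)\genericfunc(0) + t\genericfunc(b_2) = t\, \genericfunc(b_2).
\end{equation*}
Combining this with $\genericfunc(b_2)\geq 0$ and $t\in[0,1)$ yields $\genericfunc(b_1)\leq \genericfunc(b_2)$, which establishes the monotonicity claim.

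For the strict-increase part, I would first observe that any $\bvi{}\in\dom{\genericfunc}\cap\kR+$ satisfying $\genericfunc(\bvi{})>0$ must verify $\bvi{}>0$, since $\bvi{}=0$ would force $\genericfunc(\bvi{})=\genericfunc(0)=0$. Then, for any $b_1, b_2 \in \dom{\genericfunc}\cap[\bvi{},+\infty[$ with $b_1 < b_2$, the non-decreasing property already proved guarantees $\genericfunc(b_1)\geq \genericfunc(\bvi{})>0$, and in particular $b_1\geq \bvi{}>0$ so the ratio $b_2/b_1>1$ is well-defined. Reusing the same convex-combination inequality $\genericfunc(b_1)\leq (b_1/b_2)\genericfunc(b_2)$, I rearrange to obtain
\begin{equation*}
\genericfunc(b_2) \geq \tfrac{b_2}{b_1}\,\genericfunc(b_1) > \genericfunc(b_1),
\end{equation*}
where the strict inequality follows from $b_2/b_1>1$ together with $\genericfunc(b_1)>0$.

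Since the whole argument relies only on convexity of $\genericfunc$ on the line, the non-negativity $\genericfunc\geq \genericfunc(0)=0$, and the single identity $b_1 = (b_1/b_2)\cdot b_2 + (1 - b_1/b_2)\cdot 0$, there is no real obstacle. The only delicate point is making sure that $b^*>0$ is automatic in the second part (to avoid division by zero when writing $b_2/b_1$); this is handled by the observation $\genericfunc(\bvi{})>0\Rightarrow \bvi{}>0$ noted above, after which the same one-line convexity inequality provides strict monotonicity for free.
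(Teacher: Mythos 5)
Your proposal is correct and follows essentially the same route as the paper's proof: both write the smaller point as the convex combination $\tfrac{b_1}{b_2}\,b_2 + (1-\tfrac{b_1}{b_2})\cdot 0$, use $\genericfunc(0)=0$ and $\genericfunc\geq 0$ to get $\genericfunc(b_1)\leq \tfrac{b_1}{b_2}\genericfunc(b_2)\leq\genericfunc(b_2)$, and obtain strictness from $\genericfunc(b_1)>0$ (propagated from $\genericfunc(\bvi{})>0$ via the first part). Your explicit remark that $\genericfunc(\bvi{})>0$ forces $\bvi{}>0$ just makes precise a point the paper leaves implicit; no gap.
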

\begin{proof}
	If \(\dom{\genericfunc}\cap\kR+=\{0\}\), the result is trivial.
    We thus assume that \(\dom{\genericfunc}\cap\kR+\neq \{0\}\) in the rest of the proof. 
	Let \(\bvi{}\in\dom{\genericfunc}\) and \(\bvi{}'\in\dom{\genericfunc}\) such that \(0\leq \bvi{}<\bvi{}'\). 
	Since \(\genericfunc\) is convex, we have from \cite[\propositionInRef{8.4}]{Bauschke2017} that 
	\begin{align}
		\genericfunc(\bvi{})
		= \genericfunc(\tfrac{\bvi{}}{\bvi{}'} \bvi{}'+ (1-\tfrac{\bvi{}}{\bvi{}'})\, 0 )
		\leq \tfrac{\bvi{}}{\bvi{}'}\,\genericfunc(\bvi{}') + (1-\tfrac{\bvi{}}{\bvi{}'})\,\genericfunc(0)
		= \tfrac{\bvi{}}{\bvi{}'}\,\genericfunc(\bvi{}')
		\leq \genericfunc(\bvi{}')
	\end{align}
	where the second equality follows from \(\genericfunc(0)=0\) and the second inequality from the fact that \(\genericfunc(\bvi{}')\geq 0\) and \(\bvi{}<\bvi{}'\). 
	This proves the first part of the statement. Now, the last inequality is strict as soon as \(\genericfunc(\bvi{})>0\), which gives the second part of the statement. 
\end{proof}
\begin{corollary}\label{cor:monoticity}
    Let \(\genericfunc\in\{\conj\pertfunc, \biconj\pertfunc,\conj\regfunc, \biconj\regfunc\}\). 
    If \ref{assumption:zero-minimized} holds, 
    then \(\genericfunc\) is non-decreasing on \(\dom{\genericfunc} \cap \kR+\) and \(\genericfunc\) is strictly increasing on \(\dom{\genericfunc} \cap \kintervco{\bvi{}}{+\infty}\) for any \(\bvi{}\in\dom{\genericfunc}\cap\kR+\) such that \(\genericfunc{(\bvi{})}>0\). 
	If~\ref{assumption:convex} moreover holds, these statements also hold for \(\genericfunc=\pertfunc\).
\end{corollary}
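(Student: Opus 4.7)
The plan is to derive the corollary as an immediate application of the preceding \Cref{lemma:monoticity} to each function in the list. The latter has two hypotheses: convexity of \(\genericfunc\) and the inequality \(\genericfunc \geq \genericfunc(0) = 0\). I will therefore verify these hypotheses for each candidate function, then invoke \Cref{lemma:monoticity} verbatim.

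First, consider any \(\genericfunc\in\{\conj\pertfunc, \biconj\pertfunc, \conj\regfunc, \biconj\regfunc\}\) and assume \ref{assumption:zero-minimized} holds. \Cref{cor:nonneg} gives \(\genericfunc(\bvi{}) \geq \genericfunc(0) = 0\) for all \(\bvi{}\in\kR\), while \Cref{cor:proper convex closed} ensures that \(\genericfunc\) is convex (in fact, proper and closed). Hence the hypotheses of \Cref{lemma:monoticity} are met, and I will directly apply it to conclude that \(\genericfunc\) is non-decreasing on \(\dom{\genericfunc}\cap\kR+\) and strictly increasing on \(\dom{\genericfunc}\cap\kintervco{\bvi{}}{+\infty}\) whenever \(\bvi{}\in\dom{\genericfunc}\cap\kR+\) satisfies \(\genericfunc(\bvi{})>0\).

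Second, for the case \(\genericfunc = \pertfunc\), I assume both \ref{assumption:zero-minimized} and \ref{assumption:convex}. The former directly gives \(\pertfunc(\pvi{}) \geq \pertfunc(0) = 0\) for all \(\pvi{}\in\kR\), and the latter yields convexity of \(\pertfunc\). The hypotheses of \Cref{lemma:monoticity} are thus again satisfied, and the same conclusion follows.

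There is no real obstacle here since the result is essentially a bookkeeping corollary: it merely packages \Cref{lemma:monoticity} together with the properties already established in \Cref{cor:nonneg} and \Cref{cor:proper convex closed}. The only minor point to be careful about is not invoking more than \ref{assumption:zero-minimized} in the first part of the statement (since closedness and convexity of the conjugate/biconjugate functions already follow from this single hypothesis by \Cref{cor:proper convex closed}), and only bringing in \ref{assumption:convex} for the additional statement about \(\pertfunc\).
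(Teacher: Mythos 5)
Your proposal is correct and follows essentially the same route as the paper: the paper's proof also verifies the hypotheses of \Cref{lemma:monoticity} via \Cref{cor:nonneg} and \Cref{cor:proper convex closed} for \(\genericfunc\in\{\conj\pertfunc, \biconj\pertfunc,\conj\regfunc, \biconj\regfunc\}\) under \ref{assumption:zero-minimized}, and via \ref{assumption:zero-minimized} together with \ref{assumption:convex} for \(\genericfunc=\pertfunc\). Your remark about not invoking more than \ref{assumption:zero-minimized} in the first part matches the paper's handling exactly.
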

\begin{proof}
    \Cref{lemma:monoticity} can be applied with \(\genericfunc\in\{\conj\pertfunc, \biconj\pertfunc,\conj\regfunc, \biconj\regfunc\}\) under \ref{assumption:zero-minimized} from \Cref{cor:nonneg,cor:proper convex closed}.
    It can also be applied with \(\genericfunc=\pertfunc\) under \ref{assumption:zero-minimized} and \ref{assumption:convex}.
    These two observations directly give the desired result.
\end{proof}

We conclude by observing the following property.
\begin{corollary} \label{cor:even}
    Under~\ref{assumption:even}, any function \(\genericfunc\in\{\conj\pertfunc, \biconj\pertfunc,\conj\regfunc, \biconj\regfunc\}\) is even.
\end{corollary}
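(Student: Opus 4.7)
The plan is to rely on a single elementary fact: the Fenchel conjugate of an even function is even. Once this is established, the corollary reduces to a short chain of applications to the four functions in the list.

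First, I would record the lemma: for any proper function $\kfuncdef{\aGenericOneDfunction}{\kR}{\rextendedRealLine}$ that is even, the conjugate $\conj{\aGenericOneDfunction}$ is even. The argument is a one-line change of variables,
\begin{equation}
    \conj{\aGenericOneDfunction}(-\bvi{})
    = \sup_{\pvi{}\in\kR} \{(-\bvi{})\pvi{} - \aGenericOneDfunction(\pvi{})\}
    = \sup_{\pvi{}'\in\kR} \{\bvi{}\pvi{}' - \aGenericOneDfunction(-\pvi{}')\}
    = \sup_{\pvi{}'\in\kR} \{\bvi{}\pvi{}' - \aGenericOneDfunction(\pvi{}')\}
    = \conj{\aGenericOneDfunction}(\bvi{}),
\end{equation}
where we set $\pvi{}'=-\pvi{}$ and used the evenness of $\aGenericOneDfunction$ at the third equality. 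No additional hypotheses on $\aGenericOneDfunction$ are required, beyond what is needed to make the suprema well-defined.

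Second, I would apply this lemma twice to the function $\pertfunc$ itself. Hypothesis \ref{assumption:even} states that $\pertfunc$ is even, so the lemma directly gives that $\conj{\pertfunc}$ is even. Applying the lemma once more to the even function $\conj{\pertfunc}$ yields that $\biconj{\pertfunc}=\conj{(\conj{\pertfunc})}$ is even as well.

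Third, I would do the same for $\regfunc$. By \Cref{lemma:g-even-nonneg}.\ref{item-lemma:g-even}, the function $\regfunc=\reg\|\cdot\|_0+\pertfunc$ is even under \ref{assumption:even}, so the lemma yields that $\conj{\regfunc}$ is even and, applied again, that $\biconj{\regfunc}$ is even. This covers the four cases of the statement. There is no real obstacle here: the only subtlety is to note that the lemma does not need closedness or convexity of $\aGenericOneDfunction$, so we can invoke it for $\pertfunc$ and $\regfunc$ directly without appealing to the stronger assumptions \ref{assumption:zero-minimized}-\ref{assumption:closed}-\ref{assumption:convex} used elsewhere in \Cref{app:property-fun}.
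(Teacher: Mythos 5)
Your proposal is correct and follows essentially the same route as the paper: the paper invokes a textbook result (Bauschke--Combettes, Proposition~13.21) for the fact that the conjugate of an even function is even, applies it to \(\pertfunc\) and to \(\regfunc\) (whose evenness comes from the same \Cref{lemma:g-even-nonneg}.\ref{item-lemma:g-even} you cite), and iterates for the biconjugates. The only difference is that you prove the elementary conjugation fact by a direct change of variables instead of citing it, which is perfectly fine.
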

\begin{proof}
    This property is a direct consequence of \cite[\propositionInRef{13.21}]{Bauschke2017} since \(\pertfunc\) is even under \ref{assumption:even}, and so is \(\regfunc\) according to \Cref{lemma:g-even-nonneg}.\ref{item-lemma:g-even}
\end{proof}


\section{Some Properties of \texorpdfstring{\(\pertslope\)}{tau} and \texorpdfstring{\(\pertlimit\)}{mu}}

In this appendix, we exhibit some properties of the parameters \(\pertslope\), \(\pertlimit\) and \(\pertrightlope\) defined in \eqref{eq:pertslope}, \eqref{eq:pertlimit} and \eqref{eq:pertrightlope}. 
In our statements, we use the following short-hand notation 
\begin{equation} 
    \label{eq:domb}
    \domb \ \defeq\ \sup\ \kset{\bvi{}\in\kR+}{\bvi{} \in \dom{\conj{\pertfunc}}}
\end{equation}
to simplify our exposition.
\Cref{remark:special case pertfunc is identically zero} considers the particular case where \(\pertfunc\) is identically zero. 
\Cref{sec:beta} is dedicated to the study of some properties of \(\domb\). 
Finally, \Cref{sec:tau,sec:mu} provide some useful results about parameters \(\pertslope\) and \(\pertlimit\), respectively.

\subsection{Particular case of \texorpdfstring{\(\pertfunc\)}{\pertfunc}}
\label{remark:special case pertfunc is identically zero}

In the case where \(\pertfunc\) is identically zero on \(\kR+\) and satisfies~\ref{assumption:zero-minimized}, we have the following result: 
\begin{lemma}\label{lemma:case-id-zero}
    Assume \ref{assumption:zero-minimized} holds. If \(\pertfunc(\pvi{})=0\) for all \(\pvi{}\in\kR+\), then 
    \begin{align}\label{eq:h*-case-id-zero}
        \forall \bvi{}\in\kR+: \conj{\pertfunc}(\bvi{}) = \icvx(\bvi{}=0)
    \end{align}
    and 
    \begin{subequations}
        \begin{align}
        	\domb &\,=\, 0\\
            \pertslope &\,=\, 0 \\
            \pertlimit &\,=\, +\infty \\
            \pertrightlope & \,=\, +\infty
            .
        \end{align}
    \end{subequations}    
\end{lemma}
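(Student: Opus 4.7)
The plan is to compute $\conj{\pertfunc}$ directly from its definition on $\kR+$, and then simply read off the values of $\domb$, $\pertslope$, $\pertlimit$, and $\pertrightlope$ in that order.

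First I would evaluate $\conj{\pertfunc}(\bvi{}) = \sup_{\pvi{} \in \kR} \bvi{}\pvi{} - \pertfunc(\pvi{})$ on $\kR+$. For any $\bvi{} > 0$, the hypothesis $\pertfunc \equiv 0$ on $\kR+$ gives $\kR+ \subseteq \dom{\pertfunc}$, and restricting the supremum to $\pvi{} \geq 0$ already yields
\[
\conj{\pertfunc}(\bvi{}) \,\geq\, \sup_{\pvi{} \geq 0} \bvi{} \pvi{} \,=\, +\infty.
\]
For $\bvi{} = 0$, assumption \ref{assumption:zero-minimized} gives $\conj{\pertfunc}(0) = -\inf_{\pvi{}} \pertfunc(\pvi{}) = -\pertfunc(0) = 0$. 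This establishes \eqref{eq:h*-case-id-zero}. From this computation, $\domb = 0$ and $\pertslope = 0$ are immediate: $\dom{\conj{\pertfunc}} \cap \kR+ = \{0\}$, and since $\reg > 0$ the sublevel set $\kset{\bvi{} \in \kR+}{\conj{\pertfunc}(\bvi{}) \leq \reg}$ also reduces to $\{0\}$.

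To handle $\pertlimit$, I need to describe $\subdiff\conj{\pertfunc}(\pertslope) = \subdiff\conj{\pertfunc}(0)$. By definition, $\dvi{} \in \subdiff\conj{\pertfunc}(0)$ iff $\conj{\pertfunc}(\bvi{}') \geq \dvi{} \bvi{}'$ for all $\bvi{}' \in \kR$. The inequality holds automatically for $\bvi{}' \geq 0$ in view of the values computed above; the only non-trivial case is $\bvi{}' < 0$. This is the main subtlety of the proof, since without \ref{assumption:even} the hypothesis places no direct constraint on $\pertfunc$ (and hence $\conj{\pertfunc}$) on the negative half-line. The resolution is to invoke \Cref{cor:nonneg}, which under \ref{assumption:zero-minimized} ensures $\conj{\pertfunc}(\bvi{}') \geq 0$ everywhere; combined with $\dvi{}\bvi{}' \leq 0$ whenever $\dvi{} \geq 0$ and $\bvi{}' \leq 0$, this shows $\kR+ \subseteq \subdiff\conj{\pertfunc}(0)$. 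Consequently $\pertlimit = +\infty$, and $\pertrightlope = +\infty$ follows immediately from the second branch of \eqref{eq:pertrightlope}.
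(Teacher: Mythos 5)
Your proposal is correct and follows essentially the same route as the paper's proof: compute \(\conj{\pertfunc}\) on \(\kR+\), read off \(\domb=\pertslope=0\), and establish \(\kR+\subseteq\subdiff\conj{\pertfunc}(0)\) by handling the negative half-line through the nonnegativity of \(\conj{\pertfunc}\) (\Cref{cor:nonneg}), which yields \(\pertlimit=\pertrightlope=+\infty\). The only (immaterial) difference is that for \(\bvi{}>0\) you bound \(\conj{\pertfunc}(\bvi{})\) from below by restricting the supremum to \(\kR+\), whereas the paper shows the full supremum equals the supremum over \(\kR+\) before evaluating it.
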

\begin{proof}
    If \ref{assumption:zero-minimized} holds, we have 
    \begin{align}
        \forall \bvi{}\in\kR+: 
        \conj{\pertfunc}(\bvi{}) 
        = \sup_{\pvi{}\in\kR} \bvi{}\pvi{}-\pertfunc(\pvi{})
        =\sup_{\pvi{}\in\kR+} \bvi{}\pvi{}-\pertfunc(\pvi{})
        =\sup_{\pvi{}\in\kR+} \bvi{}\pvi{} 
        = \eta(\bvi{}=0),
    \end{align}
    where the second equality follows from \ref{assumption:zero-minimized} and the third from the fact that \(\pertfunc(\pvi{})=0\) for all \(\pvi{}\in\kR+\) by assumption. 
	Now, we observe the following properties.
	\begin{itemize}
		\item First, we have \(\kset{\bvi{}\in\kR+}{\conj{\pertfunc}(\bvi{})\leq\reg}=\dom{\conj{\pertfunc}}\cap\kR+=\{0\}\) according to \eqref{eq:h*-case-id-zero}. If follows that \(\domb=0\) and \(\pertslope=0\) from their definitions given in \eqref{eq:domb} and \eqref{eq:pertslope}, respectively.
		\item Second, let \(\pvi{}\geq0\). We notice that
		\begin{equation} 
			\label{eq:proof lemma:case-id-zero:subdiff at 0 part 1}
				\forall \bvi{}\in\kR-:\quad
				\conj{\pertfunc}(0) + \pvi{}(\bvi{} - 0) \leq \conj{\pertfunc}(0) \leq \conj{\pertfunc}(\bvi{})
			\end{equation}
		where the second equality holds under~\ref{assumption:zero-minimized} in view of \Cref{cor:nonneg}. 
		Since \(\dom{\conj{\pertfunc}}\cap\kR+=\{0\}\),  the inequality \(\conj{\pertfunc}(0) + \pvi{}(\bvi{} - 0) \leq \conj{\pertfunc}(\bvi{})\) is trivially satisfied for all \(\bvi{}>0\) 
		and we therefore obtain that \(\pvi{}\in\partial\conj{\pertfunc}(0)\).  
		Consequently, we deduce that \(\kR+\subseteq\partial\conj{\pertfunc}(0)\), and it follows from definition~\eqref{eq:pertlimit} that
		\(\pertlimit=+\infty\).
		\item Finally, we obtain \(\pertrightlope=+\infty\) by definition \eqref{eq:pertrightlope} since  \(\pertlimit=+\infty\).
	\end{itemize}
\end{proof}

\subsection{Some properties of \texorpdfstring{\(\domb\)}{beta}}
\label{sec:beta}

This section presents several properties of the parameter \(\domb\) defined in \eqref{eq:domb}.
We first establish its non-negativeness.

\begin{lemma} \label{lemma:domb result}
	If~\ref{assumption:zero-minimized}-\ref{assumption:closed}-\ref{assumption:convex} hold, then \(\domb\geq 0\) with equality if and only if  \(\pertfunc\) is identically zero on \(\kR+\). 
\end{lemma}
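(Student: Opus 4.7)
The plan is to deduce this lemma directly from three previously established results, so the argument is essentially a bookkeeping exercise with no serious obstacle.

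First, I would establish the non-negativity $\domb\geq 0$. Under \ref{assumption:zero-minimized}, \Cref{cor:nonneg} gives $\conj{\pertfunc}(0)=0$, so $0\in\dom{\conj{\pertfunc}}\cap\kR+$. Consequently, the set $\kset{\bvi{}\in\kR+}{\bvi{}\in\dom{\conj{\pertfunc}}}$ is non-empty and contains $0$, which immediately yields $\domb\geq 0$ from the definition \eqref{eq:domb}.

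Next, I would split the equivalence into its two implications. The ``if'' direction is already contained in \Cref{lemma:case-id-zero}: when $\pertfunc$ is identically zero on $\kR+$, that lemma explicitly asserts $\domb=0$.

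For the ``only if'' direction, I would argue by contrapositive. Suppose $\pertfunc$ is \emph{not} identically zero on $\kR+$. Then hypotheses \ref{assumption:zero-minimized}-\ref{assumption:closed}-\ref{assumption:convex} combined with \Cref{cor:V2:0 in interior of domh*} provide some $\bvi{+}>0$ such that $[0,\bvi{+}]\subseteq\dom{\conj{\pertfunc}}$. In particular $\bvi{+}\in\dom{\conj{\pertfunc}}\cap\kR+$ with $\bvi{+}>0$, so $\domb\geq\bvi{+}>0$ from \eqref{eq:domb}. Contrapositively, $\domb=0$ forces $\pertfunc$ to vanish identically on $\kR+$, completing the equivalence.

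The only subtlety worth mentioning is making sure the proof does not rely circularly on \Cref{lemma:case-id-zero} and \Cref{cor:V2:0 in interior of domh*} — but inspection shows both rest only on the general properties of conjugates gathered in \Cref{app:property-fun} and on \Cref{lemma:V2:nonzero implies support strictly includes 0}, so the chain of implications is sound.
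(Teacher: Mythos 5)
Your proof is correct and follows essentially the same route as the paper: non-negativity from \Cref{cor:nonneg} (since \(0\in\dom{\conj{\pertfunc}}\)), the zero case from \Cref{lemma:case-id-zero}, and strict positivity in the non-trivial case from \Cref{cor:V2:0 in interior of domh*}. Your extra remark on non-circularity is accurate but not needed, as both auxiliary results rest only on the material in \Cref{app:property-fun}.
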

\begin{proof}
	Since \(0\in\dom{\conj{\pertfunc}}\) under~\ref{assumption:zero-minimized} from \Cref{cor:nonneg}, we have \(\domb\geq 0\). 
	Moreover, if \(\pertfunc\) is identically zero on \(\kR+\) and that \ref{assumption:zero-minimized} holds, \Cref{lemma:case-id-zero} leads to \(\domb=0\). Conversely, if \(\pertfunc\) is not identically zero on \(\kR+\) and that \ref{assumption:zero-minimized}-\ref{assumption:closed}-\ref{assumption:convex} hold, we have \(\domb>0\) as a direct consequence of \Cref{cor:V2:0 in interior of domh*}.
\end{proof}

The next lemma provides some refined results connecting \(\domb\) to \(\dom{\conj\pertfunc}\): 
\begin{lemma} \label{lemma:dom-domb}
	If~\ref{assumption:zero-minimized}-\ref{assumption:closed}-\ref{assumption:convex} hold, then
	\begin{align}
		\label{eq:proof lemma:dom-domb:[0,domb[ subset domh*}
		\dom{\conj\pertfunc} &\supseteq \kintervco{0}{\domb}
		\\
		\closure(\dom{\conj{\pertfunc}})\cap \kR+  &= [0,\domb]\cap \kR+
		\label{eq:cl(dom-h*)}
		\\
	 	\kparen{\interior{\dom{\conj{\pertfunc}}}\setminus\{0\}}\cap \kR+ &=\,  ]0,\domb[ 
	 	.
	 	\label{eq:int(dom-h*)}
	 \end{align}
\end{lemma}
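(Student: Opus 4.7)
The plan is to derive all three assertions from two elementary ingredients: first, the convexity of $\dom{\conj{\pertfunc}}$, guaranteed by \Cref{cor:proper convex closed} since $\conj{\pertfunc}$ is convex under \ref{assumption:zero-minimized}; and second, the fact that $0\in\dom{\conj\pertfunc}$, established in \Cref{cor:nonneg}. Throughout, the ``supremum'' definition \eqref{eq:domb} of $\domb$ is used to extract approximating points in $\dom{\conj{\pertfunc}}\cap\kR+$ that exceed any prescribed value below $\domb$.

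For the first inclusion \eqref{eq:proof lemma:dom-domb:[0,domb[ subset domh*}, I would dispatch the trivial case $\domb=0$ (where the set $\kintervco{0}{\domb}$ is empty) and then, for $\domb>0$, pick any $\bvi{}\in\kintervco{0}{\domb}$, use the definition of $\domb$ as a supremum to find $\bvi{}'\in\dom{\conj\pertfunc}\cap\kR+$ with $\bvi{}<\bvi{}'\leq\domb$, and invoke convexity of $\dom{\conj\pertfunc}$ on the segment $[0,\bvi{}']$ to conclude that $\bvi{}\in\dom{\conj\pertfunc}$.

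For the closure equality \eqref{eq:cl(dom-h*)}, the inclusion ``$\supseteq$'' follows by taking closures in \eqref{eq:proof lemma:dom-domb:[0,domb[ subset domh*} and noting that $\closure(\kintervco{0}{\domb})=[0,\domb]\cap\kR+$ whether $\domb$ is finite or $+\infty$. For the converse, I would take $\bvi{}\in\closure(\dom{\conj\pertfunc})\cap\kR+$ with approximating sequence $\bvi{}^{(k)}\in\dom{\conj\pertfunc}$ and handle the issue that the $\bvi{}^{(k)}$ need not lie in $\kR+$: define $\bvi{}_+^{(k)}\defeq\pospart{\bvi{}^{(k)}}$, observe each $\bvi{}_+^{(k)}$ is a convex combination of $0$ and $\bvi{}^{(k)}$ (both in $\dom{\conj\pertfunc}$), hence itself in $\dom{\conj\pertfunc}\cap\kR+$, and note that $\bvi{}_+^{(k)}\to\bvi{}$ since $\bvi{}\geq 0$. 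By \eqref{eq:domb} each $\bvi{}_+^{(k)}\leq\domb$, giving $\bvi{}\leq\domb$ in the limit. For the interior equality \eqref{eq:int(dom-h*)}, the inclusion ``$\subseteq$'' is immediate: if $\bvi{}>0$ lies in $\interior{\dom{\conj\pertfunc}}$, a one-sided open neighborhood provides some $\bvi{}'\in\dom{\conj\pertfunc}$ with $\bvi{}'>\bvi{}$, hence $\domb>\bvi{}$. The reverse inclusion uses the same ``sandwich'' argument as for the first part: for $\bvi{}\in\kintervoo{0}{\domb}$, choose $\bvi{}'\in\dom{\conj\pertfunc}\cap\kR+$ with $\bvi{}<\bvi{}'$, and conclude by convexity that the open interval $\kintervoo{0}{\bvi{}'}\subseteq\dom{\conj\pertfunc}$ is an open neighborhood of $\bvi{}$.

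The only genuine subtlety I anticipate is in \eqref{eq:cl(dom-h*)}: sequences in $\dom{\conj\pertfunc}$ approaching a limit $\bvi{}\geq 0$ may oscillate through negative values, so one cannot directly control them by the supremum in \eqref{eq:domb}. The positive-part projection, validated by convexity together with $0\in\dom{\conj\pertfunc}$, is the tool that removes this obstruction.
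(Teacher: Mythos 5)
Your proposal is correct in substance and rests on the same two pillars as the paper's proof — convexity of \(\dom{\conj{\pertfunc}}\) plus \(0\in\dom{\conj{\pertfunc}}\) (\Cref{cor:proper convex closed,cor:nonneg}) combined with the supremum definition \eqref{eq:domb} — and your treatments of \eqref{eq:proof lemma:dom-domb:[0,domb[ subset domh*} and of the lower inclusions in \eqref{eq:cl(dom-h*)}--\eqref{eq:int(dom-h*)} match the paper's. The local differences: for the upper bound in \eqref{eq:cl(dom-h*)} you pass through approximating sequences and the positive-part projection \(\pospart{\bvi{}^{(k)}}\), whereas the paper simply notes that \(\dom{\conj{\pertfunc}}\subseteq\kintervoc{-\infty}{\domb}\cap\kR\) (positive elements by definition of the supremum, negative ones trivially since \(\domb\geq0\)), so its closure remains in this closed half-line — this disposes of the ``oscillating sequence'' subtlety you flag without any projection; conversely, for the ``\(\subseteq\)'' part of \eqref{eq:int(dom-h*)} your neighborhood argument (any \(\bvi{}>0\) in the interior admits some \(\bvi{}'\in\dom{\conj{\pertfunc}}\) with \(\bvi{}'>\bvi{}\), hence \(\bvi{}<\domb\)) is clean and even avoids the paper's case split on \(\domb<+\infty\) versus \(\domb=+\infty\). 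One small slip: your claim that \(\closure(\kintervco{0}{\domb})=[0,\domb]\cap\kR+\) ``whether \(\domb\) is finite or \(+\infty\)'' fails when \(\domb=0\), since then the left-hand side is \(\closure(\emptyset)=\emptyset\) while the right-hand side is \(\{0\}\); and \(\domb=0\) does occur under \ref{assumption:zero-minimized}-\ref{assumption:closed}-\ref{assumption:convex}, precisely when \(\pertfunc\) vanishes identically on \(\kR+\) (see \Cref{lemma:domb result} and \Cref{lemma:case-id-zero}). You excluded this degenerate case for \eqref{eq:proof lemma:dom-domb:[0,domb[ subset domh*} but not for \eqref{eq:cl(dom-h*)}; the fix is immediate (\(0\in\dom{\conj{\pertfunc}}\) already gives \(\{0\}\subseteq\closure(\dom{\conj{\pertfunc}})\cap\kR+\)), and the paper sidesteps the issue by settling the identically-zero case upfront via \Cref{lemma:case-id-zero} and then working under \(\domb>0\) throughout.
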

\begin{proof}
	We first note from \Cref{lemma:case-id-zero} that \eqref{eq:proof lemma:dom-domb:[0,domb[ subset domh*}-\eqref{eq:int(dom-h*)} hold when \(\pertfunc\) is identically zero on \(\kR+\).
	We thus consider the case where \(\pertfunc\) is not identically zero on \(\kR+\) in the rest of the proof.
	In this case, we have \(\domb>0\) from \Cref{lemma:domb result}. Now, the properties claimed can be proved as followed.
	\begin{itemize}
		\item We note that \(\conj\pertfunc\) is convex from \Cref{cor:proper convex closed} under \ref{assumption:zero-minimized}. Hence, \(\dom{\conj\pertfunc}\) is a convex subset of \(\kR\) according to \cite[\propositionInRef{8.2}]{Bauschke2017}. 
		Since \(0\in\dom{\conj\pertfunc}\) under~\ref{assumption:zero-minimized} by \Cref{cor:nonneg}, we thus have that \(\kintervcc{0}{\bvi{}}\subseteq \dom{\conj{\pertfunc}}\) for any \(\bvi{}\in \dom{\conj{\pertfunc}}\).
		Property \eqref{eq:proof lemma:dom-domb:[0,domb[ subset domh*} then follows from the definition of \(\domb\) as a supremum of \(\dom{\conj{\pertfunc}}\).
		\item Since \ref{assumption:zero-minimized}-\ref{assumption:closed}-\ref{assumption:convex} hold, property \eqref{eq:proof lemma:dom-domb:[0,domb[ subset domh*} implies that \(\closure(\kintervco{0}{\domb})\subseteq\closure(\dom{\conj\pertfunc})\).
		Moreover, since \(\kR+\) is a closed set and contains \(\kintervco{0}{\domb}\), it also contains its closure, which gives \(\closure(\kintervco{0}{\domb})\subseteq\closure(\dom{\conj{\pertfunc}})\cap \kR+\).
		Since \(\kintervco{0}{\domb}\neq \emptyset\) as \(\domb>0\), one observes that \(\closure(\kintervco{0}{\domb})=\kintervcc{0}{\domb}\cap \kR+\) by distinguishing between the cases ``\(\domb\in \kR+\)'' and ``\(\domb=+\infty\)''. 
		In view of our previous inclusion, this leads to \(\kintervcc{0}{\domb}\cap\kR+\subseteq\closure(\dom{\conj{\pertfunc}})\cap \kR+\).
		Conversely, we note that
		\begin{equation} \label{eq:proof lemma:dom-domb:[0,domb[ superset domh*}
			\dom{\conj{\pertfunc}} \subseteq \kintervoc{-\infty}{\domb}\cap\kR
		\end{equation}
		by definition of \(\domb\).
		This leads to \(\closure(\dom{\conj{\pertfunc}}) \subseteq \closure(\kintervoc{-\infty}{\domb}\cap \kR)=\kintervoc{-\infty}{\domb}\cap \kR\).
		Hence, \(\closure(\dom{\conj\pertfunc}) \cap \kR+ \subseteq \kintervoc{-\infty}{\domb}\cap\kR\cap\kR+=\kintervcc{0}{\domb}\cap\kR+\) which leads to the property \eqref{eq:cl(dom-h*)}.
		\item By considering the interior of the two sides of the inclusion in~\eqref{eq:proof lemma:dom-domb:[0,domb[ subset domh*}, we obtain \(\kintervoo{0}{\domb}\subseteq\interior{\dom{\conj\pertfunc}}\). 
		Since \(0\notin \kintervoo{0}{\domb}\), this leads to \(\kintervoo{0}{\domb}\subseteq \interior{\dom{\conj\pertfunc}}\setminus\{0\} \).
		Finally, as \(\kintervoo{0}{\domb}\subseteq\kR+\), we end up with \(\kintervoo{0}{\domb}\subseteq\interior{\dom{\conj\pertfunc}}\cap\kR+\). 
		Conversely, if $\domb=+\infty$, inclusion \eqref{eq:proof lemma:dom-domb:[0,domb[ superset domh*} trivially reduces to \(\dom{\conj\pertfunc}\subseteq\kR\), so that \(\interior{\dom{\conj\pertfunc}}\setminus\{0\}\subseteq\kR+\setminus\{0\}=\kintervoo{0}{+\infty}\). 
		Moreover, if $\domb<+\infty$, inclusion \eqref{eq:proof lemma:dom-domb:[0,domb[ superset domh*} leads to  \(\dom{\conj\pertfunc}\subseteq\kintervoc{-\infty}{\domb}\). 
		Hence \(\interior{\dom{\conj\pertfunc}}\subseteq\interior{\kintervoc{-\infty}{\domb}}=\kintervoo{-\infty}{\domb}\). 
		This leads to the property \eqref{eq:int(dom-h*)} by excluding \(0\) from the sets in both sides of the inclusion and taking the intersection with \(\kR+\).
	\end{itemize}
\end{proof}

\subsection{Some properties of \texorpdfstring{\(\pertslope\)}{tau}}
\label{sec:tau}

This section presents several properties of the parameter \(\pertslope\) defined in \eqref{eq:pertslope}. 
We first establish its finiteness:
\begin{proposition} \label{lemma:pertslope finite}
    If \ref{assumption:zero-minimized}-\ref{assumption:closed}-\ref{assumption:convex} hold, then \(\pertslope\in\kR+\).
\end{proposition}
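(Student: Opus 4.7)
The plan is to prove the two inclusions $\pertslope \geq 0$ and $\pertslope < +\infty$ separately; both follow directly from \ref{assumption:zero-minimized} without needing the full strength of \ref{assumption:closed}-\ref{assumption:convex}.

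For nonnegativity, I would observe that under \ref{assumption:zero-minimized} we have $\conj{\pertfunc}(0) = 0$ (by \Cref{cor:nonneg}), so that $0$ belongs to the set $\{\bvi{} \in \kR+ : \conj{\pertfunc}(\bvi{}) \leq \reg\}$ since $\reg > 0$. Consequently $\pertslope \geq 0$ by the supremum definition in \eqref{eq:pertslope}.

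For finiteness, the key tool is the trivial Fenchel-Young-type lower bound on $\conj{\pertfunc}$ coming from a single feasible point. By \ref{assumption:zero-minimized}, the set $\dom\pertfunc\cap\kR+\setminus\{0\}$ is nonempty, so there exists $\pvi{0} > 0$ with $\pertfunc(\pvi{0}) < +\infty$ (and $\pertfunc(\pvi{0}) \geq 0$ by the same assumption). Then for every $\bvi{} \in \kR+$,
\begin{equation}
    \conj{\pertfunc}(\bvi{}) \;=\; \sup_{\pvi{}\in\kR}\,\bvi{}\pvi{} - \pertfunc(\pvi{}) \;\geq\; \bvi{}\pvi{0} - \pertfunc(\pvi{0}).
\end{equation}
Hence any $\bvi{} \in \kR+$ satisfying $\conj{\pertfunc}(\bvi{}) \leq \reg$ must verify $\bvi{}\pvi{0} - \pertfunc(\pvi{0}) \leq \reg$, i.e.,
\begin{equation}
    \bvi{} \;\leq\; \frac{\reg + \pertfunc(\pvi{0})}{\pvi{0}}.
\end{equation}
Taking the supremum over all such $\bvi{}$ gives $\pertslope \leq (\reg + \pertfunc(\pvi{0}))/\pvi{0} < +\infty$, which combined with $\pertslope \geq 0$ yields $\pertslope \in \kR+$.

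There is no real obstacle: the only subtlety is to extract the witness $\pvi{0} > 0$ from \ref{assumption:zero-minimized} (rather than using the trivial point $\pvi{} = 0$, which gives no useful bound). The hypotheses \ref{assumption:closed} and \ref{assumption:convex}, although stated in the proposition, are not strictly required for the bound itself—they only guarantee that $\conj{\pertfunc}$ is a well-behaved proper, closed and convex function (via \Cref{cor:proper convex closed}), ensuring $\pertslope$ is the supremum of a well-defined nonempty set in $\kR+$.
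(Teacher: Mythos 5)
Your proof is correct, and it takes a genuinely shorter route than the paper's. The paper first uses \Cref{cor:nonneg,cor:monoticity} to establish the upper bound \(\conj{\pertfunc}(\bvi{}) \leq \reg + \icvx(\bvi{}\in[0,\pertslope[)\) on \(\kR+\), then conjugates both sides of this inequality; that step needs \Cref{lemma:infima open set = infima closed set} to replace the half-open interval by a closed one and, crucially, the identity \(\biconj{\pertfunc}=\pertfunc\), which is exactly where \ref{assumption:closed} and \ref{assumption:convex} enter. This produces the global affine minorant \(\pertfunc(\pvi{}) \geq \pospart{\pvi{}}\pertslope - \reg\), into which the witness \(\pvi{+}>0\) from \ref{assumption:zero-minimized} is finally plugged to get \(\pertslope \leq (\reg + \pertfunc(\pvi{+}))/\pvi{+}\). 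You bypass the biconjugation entirely: evaluating the supremum defining \(\conj{\pertfunc}\) at the single witness \(\pvi{0}>0\) gives \(\conj{\pertfunc}(\bvi{}) \geq \bvi{}\pvi{0}-\pertfunc(\pvi{0})\), which bounds every element of the set in \eqref{eq:pertslope} by \((\reg+\pertfunc(\pvi{0}))/\pvi{0}\); the nonnegativity part (\(0\) lies in the set because \(\conj{\pertfunc}(0)=0\leq\reg\)) is identical in both proofs. So you reach the same final bound with fewer tools and, as you observe, without \ref{assumption:closed}-\ref{assumption:convex}. The one imprecision is your closing aside: properness, closedness and convexity of \(\conj{\pertfunc}\) already follow from \ref{assumption:zero-minimized} alone (\Cref{cor:proper convex closed}), so those two hypotheses are not what makes \(\pertslope\) well defined either — the paper invokes them only because its own argument requires \(\biconj{\pertfunc}=\pertfunc\). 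Since the intermediate minorant \(\pertfunc \geq \pospart{\cdot}\,\pertslope-\reg\) obtained by the paper is not reused elsewhere, your direct Fenchel--Young-at-a-point argument is a clean simplification.
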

\begin{proof}
   	Since \(\conj{\pertfunc}(0)=0\) under~\ref{assumption:zero-minimized} from \Cref{cor:nonneg}, we have \(\pertslope\geq 0\).     	
   	It thus remains to show that \(\pertslope<+\infty\).
   	If \(\pertslope=0\), the result is obviously true.
   	We thus assume hereafter that \(\pertslope>0\).

   	Under~\ref{assumption:zero-minimized}, \Cref{cor:nonneg,cor:monoticity} respectively imply that \(\conj{\pertfunc}(0)=0\) and \(\conj{\pertfunc}\) is non-decreasing on \(\dom{\conj{\pertfunc}}\cap\kR+\).
    Hence, by definition of \(\pertslope\) in \eqref{eq:pertslope}, we have \(\conj{\pertfunc}(\bvi{})\leq \lambda\) for any \(\bvi{}\in [0, \pertslope[\).  
    This leads to 
    \begin{equation} \label{eq:borne-sup-h*}
        \forall \bvi{}\in\kR+:\ \conj{\pertfunc}(\bvi{}) \leq \reg + \icvx(\bvi{}\in[0,\pertslope[)
        .
	\end{equation}
    Taking the conjugate of both sides, we then obtain:  
	\begin{subequations}
		\begin{align}
			\forall \pvi{}\in \kR:\ 
			\biconj{\pertfunc}(\pvi{})
			\,\geq\,&
			\conj{(\reg + \icvx(\bvi{}\in[0,\pertslope[))}(\pvi{})
			\\
			\,=\,&
			\conj{(\reg + \icvx(\bvi{}\in[0,\pertslope]))}(\pvi{})
			\label{eq:conj-ineq}
		\end{align}
	\end{subequations}     
	where the inequality follows from \cite[\propositionInRef{13.16.(ii)}]{Bauschke2017} and the equality follows from \Cref{lemma:infima open set = infima closed set}.
    On the one hand, since \(\pertfunc\) is proper, closed and convex under \ref{assumption:zero-minimized}-\ref{assumption:closed}-\ref{assumption:convex}, we have that the left-hand side of \eqref{eq:conj-ineq} is equal to \(\pertfunc(\pvi{})\) from \cite[\theoremInRef{4.8}]{beck2017first}.
    On the other hand, simple calculations \cite[\exampleInRef{7.9}]{Bauschke2017} show that the right-hand side of~\eqref{eq:conj-ineq} is equal to \([\pvi{}]_+ \pertslope - \reg\), reminding that we use the convention \(0\cdot \infty =0\).  
    These two observations lead to
    \begin{equation}
 		\forall \pvi{} \in \kR:\ 
        \pertfunc(\pvi{}) \geq [\pvi{}]_+ \pertslope - \reg
        . 
    \end{equation}        
    Finally, using the fact that \(\dom \pertfunc\cap\kR+\setminus \{0\}\neq \emptyset\) from \ref{assumption:zero-minimized}, we observe that \(\exists \pvi{+}>0\) such that \(\pertfunc(\pvi{+})\in\kR+\). Therefore, we conclude that \(\pertslope\leq (\reg + \pertfunc(\pvi{+}))/ [\pvi{+}]_+ < +\infty\).
\end{proof}
 
We now state some central results of this section, gathered in the next proposition:
\begin{proposition} \label{prop:pertslope in domain}
	If~\ref{assumption:zero-minimized}-\ref{assumption:closed}-\ref{assumption:convex} hold, the following assertions are verified:
	\begin{enumerate}[label=\textit{\alph*.},topsep = 3pt]
		\item \(0 \leq\, \pertslope \leq \domb\) with \(\pertslope=0 \iff \pertfunc\) is identically zero on \(\kR+\).
		\label{eq:prop:pertslope leq domb}
		\item \(\pertslope \in \dom{\conj{\pertfunc}}\).
		\label{eq:prop:pertslope in domain}
		\item \(\conj{\pertfunc}(\pertslope) \leq \reg\).
		\label{eq:prop:conj of h at pertslope leq reg}
		\item If \(\pertslope<\domb\) then  \(\bvi{}\in\kR+\) and \(\conj{\pertfunc}(\bvi{})=\reg\) \(\iff\) \(\bvi{}=\pertslope\). \label{eq:prop:conj of h at pertslope = domb}
	\end{enumerate}
\end{proposition}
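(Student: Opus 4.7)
My plan is to use the fact that $\conj{\pertfunc}$ is proper, closed, convex, non-decreasing on $\dom{\conj{\pertfunc}}\cap\kR+$, and continuous on its domain (\Cref{cor:nonneg,cor:proper convex closed,cor:continuity,cor:monoticity}), together with the geometric characterization of $\dom{\conj{\pertfunc}}$ provided by \Cref{lemma:dom-domb} and the finiteness of $\pertslope$ established in \Cref{lemma:pertslope finite}. Throughout, I implicitly use $\reg>0$.

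For item (a), $\pertslope\geq 0$ is immediate since $\conj{\pertfunc}(0)=0\leq \reg$ places $0$ in the feasible set of \eqref{eq:pertslope}, and $\pertslope\leq \domb$ follows from the observation that any $\bvi{}\in\kR+$ satisfying $\conj{\pertfunc}(\bvi{})\leq \reg<+\infty$ lies in $\dom{\conj{\pertfunc}}$. One direction of the equivalence comes directly from \Cref{lemma:case-id-zero}; for the converse, if $\pertfunc$ is not identically zero on $\kR+$ then \Cref{cor:V2:0 in interior of domh*} yields some $\bvi{+}>0$ with $\kintervcc{0}{\bvi{+}}\subseteq \dom{\conj{\pertfunc}}$, and continuity of $\conj{\pertfunc}$ at $0$ combined with $\conj{\pertfunc}(0)=0<\reg$ produces some $\bvi{}\in\kintervoc{0}{\bvi{+}}$ with $\conj{\pertfunc}(\bvi{})\leq \reg$, forcing $\pertslope>0$.

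Items (b) and (c) will fall out of a single lower-semicontinuity argument. Since the feasible set in \eqref{eq:pertslope} is non-empty and $\pertslope$ is finite by \Cref{lemma:pertslope finite}, I extract a sequence $\{\bvi{(k)}\}_{k\in\kN}\subset\kR+$ with $\bvi{(k)}\uparrow \pertslope$ and $\conj{\pertfunc}(\bvi{(k)})\leq \reg$. Closedness of $\conj{\pertfunc}$ then gives $\conj{\pertfunc}(\pertslope)\leq \liminf_k \conj{\pertfunc}(\bvi{(k)})\leq \reg<+\infty$, which simultaneously yields $\pertslope\in\dom{\conj{\pertfunc}}$ and $\conj{\pertfunc}(\pertslope)\leq \reg$. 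The trivial case $\pertslope=0$ is handled directly via $\conj{\pertfunc}(0)=0\leq \reg$.

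For item (d), I treat each implication separately under the assumption $\pertslope<\domb$. To obtain $\conj{\pertfunc}(\pertslope)\geq \reg$, I invoke~\eqref{eq:proof lemma:dom-domb:[0,domb[ subset domh*} to extract a sequence $\bvi{(k)}\downarrow \pertslope$ with $\bvi{(k)}>\pertslope$ and $\bvi{(k)}\in\dom{\conj{\pertfunc}}$; the supremum definition forces $\conj{\pertfunc}(\bvi{(k)})>\reg$, and continuity of $\conj{\pertfunc}$ at $\pertslope\in\dom{\conj{\pertfunc}}$ (granted by (b)) passes the inequality to the limit, combining with (c) to give equality. Conversely, any $\bvi{}\in\kR+$ with $\conj{\pertfunc}(\bvi{})=\reg$ satisfies $\bvi{}\leq \pertslope$ by the supremum definition; since $\pertslope<\domb$ rules out the identically-zero scenario, (a) yields $\pertslope>0$, and applying \Cref{cor:monoticity} at $\bvi{}$ (where $\conj{\pertfunc}(\bvi{})=\reg>0$) makes $\conj{\pertfunc}$ strictly increasing on $\dom{\conj{\pertfunc}}\cap\kintervco{\bvi{}}{+\infty}$, so $\bvi{}<\pertslope$ with $\pertslope\in\dom{\conj{\pertfunc}}$ would imply $\conj{\pertfunc}(\pertslope)>\reg$, contradicting (c). The main subtlety lies in item (d): both implications critically exploit $\pertslope\in\dom{\conj{\pertfunc}}$ (from (b)) together with the existence of elements of $\dom{\conj{\pertfunc}}$ strictly above $\pertslope$ (from $\pertslope<\domb$ and \Cref{lemma:dom-domb}), so that the continuity and strict-monotonicity arguments on $\conj{\pertfunc}$ can actually be deployed.
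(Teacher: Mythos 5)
Your proof is correct, and for two of the four items it follows a genuinely different (and leaner) route than the paper. Items \ref{eq:prop:pertslope leq domb} and the forward implication of \ref{eq:prop:conj of h at pertslope = domb} are essentially the paper's arguments: the same use of \Cref{cor:V2:0 in interior of domh*} plus continuity at $0$ for the strict positivity of $\pertslope$, and the same strict-monotonicity contradiction via \Cref{cor:monoticity} and item \ref{eq:prop:conj of h at pertslope leq reg}. Where you diverge is in \ref{eq:prop:pertslope in domain}--\ref{eq:prop:conj of h at pertslope leq reg}: you obtain both at once by taking a sequence in the sublevel set $\kset{\bvi{}\in\kR+}{\conj{\pertfunc}(\bvi{})\leq\reg}$ converging to the finite supremum $\pertslope$ (\Cref{lemma:pertslope finite}) and invoking closedness, i.e.\ lower semicontinuity, of $\conj{\pertfunc}$ (\Cref{cor:proper convex closed}) to get $\conj{\pertfunc}(\pertslope)\leq\reg<+\infty$; this is exactly the observation that sublevel sets of a closed function are closed, and it bypasses the paper's three-case analysis on $\domb$ (whether $\domb\in\dom{\conj{\pertfunc}}$, $\domb=+\infty$, or $\domb<+\infty$, the last of which uses the same lsc idea) for item \ref{eq:prop:pertslope in domain}, as well as its separate monotone-sequence-plus-continuity argument for item \ref{eq:prop:conj of h at pertslope leq reg}. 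Likewise, for the reverse implication of \ref{eq:prop:conj of h at pertslope = domb} the paper constructs a point with $\conj{\pertfunc}(\bvi{\reg})=\reg$ by an intermediate-value argument (non-decreasingness, $\conj{\pertfunc}(0)=0$, continuity on the domain), whereas you approach $\pertslope$ from the right inside $\kintervoo{\pertslope}{\domb}\subseteq\dom{\conj{\pertfunc}}$ (\Cref{lemma:dom-domb}), use the supremum definition to force $\conj{\pertfunc}>\reg$ along the sequence, and pass to the limit by continuity (\Cref{cor:continuity}), sandwiching $\conj{\pertfunc}(\pertslope)=\reg$ with item \ref{eq:prop:conj of h at pertslope leq reg}. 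Your route buys brevity and a single unifying mechanism (lower semicontinuity plus finiteness of $\pertslope$), while the paper's route makes the boundary behaviour at $\domb$ explicit; both correctly handle the degenerate case where $\pertfunc$ vanishes on $\kR+$ (you via \Cref{lemma:case-id-zero} and the direct check at $\pertslope=0$, the paper by treating it upfront), and both exploit, as you rightly flag, that $\pertslope<\domb$ guarantees domain points strictly above $\pertslope$.
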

\begin{proof}
	We first consider the case where \(\pertfunc\) is identically zero on \(\kR+\). 
	We have from \Cref{lemma:case-id-zero} that \(\pertslope=\domb=0\) so that the inequality and the converse part of item \ref{eq:prop:pertslope leq domb} are verified.
	Moreover, since \(\conj{\pertfunc}(0)=0\)
	from \Cref{cor:nonneg}, items \ref{eq:prop:pertslope in domain} and \ref{eq:prop:conj of h at pertslope leq reg} are also satisfied.
	Finally, item \ref{eq:prop:conj of h at pertslope = domb} does not apply since \(\pertslope=\domb\).
	The direct implication in the equivalence stated in item \ref{eq:prop:pertslope leq domb} is shown by contraposition in item \ref{eq:prop:pertslope leq domb} of the proof below.
	In the rest of the proof, we thus consider the case where \(\pertfunc\) is not identically zero on \(\kR+\) and prove the four items separately.
	\begin{itemize}
		\item Item \ref{eq:prop:pertslope leq domb} If \(\pertfunc\) is not identically zero on \(\kR+\) and \ref{assumption:zero-minimized}-\ref{assumption:closed}-\ref{assumption:convex} hold, \Cref{cor:V2:0 in interior of domh*} ensures the existence of \(\bvi{+}>0\) such that \([0, \bvi{+}]\subseteq\dom{\conj{\pertfunc}}\).
		Under~\ref{assumption:zero-minimized}, we also have \(\conj{\pertfunc}(0)=0\) from \Cref{cor:nonneg}  and \(\conj{\pertfunc}\) is continuous on its domain from \Cref{cor:continuity}. Hence, there exists \(\bvi{\reg}>0\) such that:
		\begin{equation}\label{eq:set-inclusion}
			\{0\} 
			\subsetneq \kintervcc{0}{\bvi{\reg}}
			\subseteq \kset{\bvi{}\in\kR+}{\conj{\pertfunc}(\bvi{}) \leq \reg} \subseteq \kset{\bvi{}\in\kR+}{\conj{\pertfunc}(\bvi{}) < +\infty}
			. 	
		\end{equation}
		Taking the supremum of each set in the above inclusion and using the definitions of \(\pertslope\) and \(\domb\) leads to \(0 < \pertslope \leq \domb\).
		We note that the strict inequality also shows (by contraposition) the direct implication in the equivalence stated in item \ref{eq:prop:pertslope leq domb}
		\item Item \ref{eq:prop:pertslope in domain} 
		Our proof leverages the following two ingredients:
		\begin{subequations}
			\begin{align}
				\pertslope &\in \ ]0,\domb] 
				\label{eq:proof prop pertslope domain:ingredient 1} \\
				\dom{\conj{\pertfunc}} &\supseteq [0,\domb[
				\label{eq:proof prop pertslope domain:ingredient 2}
			\end{align}
		\end{subequations}
		where~\eqref{eq:proof prop pertslope domain:ingredient 1} is a rewriting of item~\ref{eq:prop:pertslope leq domb} (when \(\pertfunc\) is not identically zero on \(\kR+\)) and~\eqref{eq:proof prop pertslope domain:ingredient 2} follows from \Cref{lemma:dom-domb}.
		We next distinguish different cases depending on the value of \(\domb\). 
		\begin{itemize}[label=$\bullet$]
			\item Case ``\(\domb\in\dom{\conj\pertfunc}\)''.
			Combining ``\(\domb\in\dom{\conj\pertfunc}\)'' with~\eqref{eq:proof prop pertslope domain:ingredient 2} leads to  \([0,\domb[\cup\{\domb\}=[0,\domb]\subseteq \dom{\conj\pertfunc}\). The result then follows from~\eqref{eq:proof prop pertslope domain:ingredient 1}.
			\item Case ``\(\domb\notin\dom{\conj\pertfunc}\) and \(\domb=+\infty\)''.
			Since \(\pertslope\in\kR+\) from \Cref{lemma:pertslope finite} (that is \(\pertslope\neq +\infty\)), \eqref{eq:proof prop pertslope domain:ingredient 1} reduces to \(\pertslope\in ]0,\domb[\). 
			This proves the result in view of~\eqref{eq:proof prop pertslope domain:ingredient 2}.
			\item Case ``\(\domb\notin\dom{\conj\pertfunc}\) and \(\domb<+\infty\)''.
			Let \(\{\bvi{}^{(k)}\}_{k\in\kN}\subset\dom{\conj\pertfunc}\) be some sequence such that \(\lim_{k\rightarrow+\infty}\bvi{}^{(k)}=\domb\). 
			Since \(\conj\pertfunc\) is closed under~\ref{assumption:zero-minimized} from \Cref{cor:proper convex closed} (and therefore lower semi-continuous, see \cite[\theoremInRef{2.6}]{beck2017first}), we have:
			\begin{equation}
				\liminf_{k\rightarrow+\infty} \ \conj\pertfunc(\bvi{}^{(k)})
				\geq \conj\pertfunc(\domb) = +\infty
			\end{equation}
			where the last equality follows from \(\domb\notin\dom{\conj\pertfunc}\).
			This implies that there exists \(k\in\kN\) such that \(\reg<\conj\pertfunc(\bvi{}^{(k)}) < +\infty\) where the second inequality holds because \(\bvi{}^{(k)}\in\dom{\conj\pertfunc}\). 
			Since \(\conj\pertfunc\) is non-decreasing on \(\dom{\conj{\pertfunc}}\cap\kR+\) from \Cref{cor:monoticity}, we deduce that
			\begin{equation*}
				\forall\bvi{}\in\kR+:\
				\bvi{}\geq \bvi{}^{(k)}
				\;\implies\;
				\conj\pertfunc(\bvi{}) > \reg
				.
			\end{equation*}
			Hence, by using the contrapositive of the latter result, we observe that
			\begin{equation}
				\kset{
					\bvi{}\in\kR+
				}{
					\conj\pertfunc(\bvi{}) \leq \reg
				}
				\subseteq
				[0,\bvi{}^{(k)}[
			\end{equation}
			and we obtain \(\pertslope\leq \bvi{}^{(k)}\) by taking the supremum of each side of the inclusion.
			This leads to \(\pertslope\in[0, \bvi{}^{(k)}]\subset[0,\domb[\subseteq\dom(\conj\pertfunc)\) where the last inclusion holds by~\eqref{eq:proof prop pertslope domain:ingredient 2}.
		\end{itemize}
		\item Item \ref{eq:prop:conj of h at pertslope leq reg} 
		Let \(\{\bvi{}^{(k)}\}_{k\in\kN}\) be a non-decreasing sequence of \([0, \pertslope[\) which converges to \(\pertslope\).
		Using item~\ref{eq:prop:pertslope leq domb}, we note that such a sequence exists since \(\pertslope>0\) as soon as \(\pertfunc\) is not identically zero on \(\kR+\). 
		Since \(\conj{\pertfunc}\) is non-decreasing on \(\dom{\conj{\pertfunc}}\cap\kR+\) from \Cref{cor:monoticity} under~\ref{assumption:zero-minimized}, \(\{\conj{\pertfunc}(\bvi{}^{(k)})\}_{k\in\kN}\) is also non-decreasing. 
		Moreover, we have that \(\conj{\pertfunc}(\bvi{}^{(k)}) \leq \reg\) for all \(k\in\kN\) by definition of \(\pertslope\) in \eqref{eq:pertslope}. 
		Hence, \(\{\conj{\pertfunc}(\bvi{}^{(k)})\}_{k\in\kN}\) is a non-decreasing upper-bounded sequence and thus converges to a limit.
		In addition, this limit satisfies:
		\begin{equation} \label{eq:proof prop:conj of h at pertslope leq reg:ingredient 1}
			\lim_{k\to+\infty} \conj{\pertfunc}(\bvi{}^{(k)}) \leq \reg
			.
		\end{equation}
		Since \(\conj{\pertfunc}\) is continuous on its domain under~\ref{assumption:zero-minimized} from \Cref{cor:continuity} and that \(\pertslope\in\dom{\conj{\pertfunc}}\) by item~\ref{eq:prop:pertslope in domain}, we deduce that
		\begin{equation} \label{eq:proof prop:conj of h at pertslope leq reg:ingredient 2}
			\lim_{k\to+\infty} \conj{\pertfunc}(\bvi{}^{(k)})
			=
			\conj{\pertfunc}(\pertslope)
			.
		\end{equation}
		The result 
		follows by combining~\eqref{eq:proof prop:conj of h at pertslope leq reg:ingredient 1} and~\eqref{eq:proof prop:conj of h at pertslope leq reg:ingredient 2}.
		\item Item \ref{eq:prop:conj of h at pertslope = domb} The two implications can be shown as follows:
		\begin{itemize}
			\item[(\(\Rightarrow\))] Assume that \(\bvi{}\in\kR+\) is such that \(\conj{\pertfunc}(\bvi{})=\reg\). 
			By definition of \(\pertslope\), we must have \(\bvi{}\leq \pertslope\).
			Moreover, under~\ref{assumption:zero-minimized} we have from \Cref{cor:monoticity} that \(\bvi{}'>\bvi{}\) implies \(\conj{\pertfunc}(\bvi{}')>\lambda\).
			Since \(\conj{\pertfunc}(\pertslope)\leq\lambda\) from item~\ref{eq:prop:conj of h at pertslope leq reg}, we thus obtain by contraposition that \(\pertslope \leq \bvi{}\).
			This leads to \(\bvi{}=\pertslope\).
			\item[(\(\Leftarrow\))] Assume that \(\bvi{}=\pertslope\) with \(\pertslope<\domb\).
			Combined with item~\ref{eq:prop:pertslope leq domb}, the assumption ``\(\pertslope<\domb\)'' implies that there exists \(\bvi{+}\in\dom{\conj{\pertfunc}}\) such that \(\pertslope < \bvi{+}\).
			By definition of \(\pertslope\), we must have \(\conj{\pertfunc}(\bvi{+}) > \reg\). 
			Since \(\conj{\pertfunc}(0)=0\) from \Cref{cor:nonneg} and \(\conj{\pertfunc}\) is continuous non-decreasing on its domain from \Cref{cor:continuity,cor:monoticity}, \(\exists\,\bvi{\reg}\leq \bvi{+}\) such that \(\conj{\pertfunc}(\bvi{\reg})=\reg\).  
			Finally, using the same arguments as in the direct part, we must have \(\bvi{\reg}=\pertslope\), which leads to the desired result. 
		\end{itemize}
	\end{itemize}
\end{proof}

\subsection{Properties of \texorpdfstring{\(\pertlimit\)}{mu}}
\label{sec:mu}

This section establishes some properties of parameter \(\pertlimit\) defined in~\eqref{eq:pertlimit}. 
\begin{lemma}\label{lemma:properties-related-to-mu}
	If \ref{assumption:zero-minimized}-\ref{assumption:closed}-\ref{assumption:convex} hold and \(\pertlimit<+\infty\), then 
	\begin{enumerate}[label=\textit{\alph*.},topsep = 3pt]
		\item \label{item:prop:pertilimit in intrrior of domain if finite:subdif result}
		\(\pertlimit\in\partial\conj{\pertfunc}(\pertslope)\).
		\item \label{item:tau-in-interior-dom}
		\(0<\pertslope<\domb\).
		\item \label{item:prop:pertilimit in intrrior of domain if finite:decomposition}
		\(\pertfunc(\pertlimit) = \pertslope\pertlimit - \reg\).
	\end{enumerate}
\end{lemma}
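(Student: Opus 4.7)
The plan is to prove the three items in the order (b), (a), (c), since later items leverage earlier ones. The key idea is that the finiteness hypothesis $\pertlimit<+\infty$ forces $\pertslope$ to lie strictly inside $\dom{\conj\pertfunc}$, after which item (a) follows from the standard characterization of the one-dimensional subdifferential and item (c) follows from the Fenchel--Young equality.

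For item (b), the weak bounds $0\leq \pertslope\leq \domb$ are already established in \Cref{prop:pertslope in domain}. Strict positivity follows by contraposition: if $\pertslope=0$, then the same proposition forces $\pertfunc$ to vanish identically on $\kR+$, whence \Cref{lemma:case-id-zero} gives $\pertlimit=+\infty$, contradicting the hypothesis. For $\pertslope<\domb$, suppose on the contrary that $\pertslope=\domb$. Since $\pertslope\in\kR+$ by \Cref{lemma:pertslope finite}, one has $\domb<+\infty$; moreover $\pertslope\in\dom{\conj\pertfunc}$ by \Cref{prop:pertslope in domain}, so $\conj\pertfunc(\pertslope)<+\infty$ while $\conj\pertfunc(\bvi{})=+\infty$ for every $\bvi{}>\pertslope=\domb$ by definition of $\domb$. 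Consequently the right derivative satisfies $\partial_+\conj\pertfunc(\pertslope)=+\infty$, so by \Cref{lemma:subdiff-convex-fun-1D} the subdifferential $\subdiff\conj\pertfunc(\pertslope)$ is unbounded above. Since $\pertslope>0$ has just been shown, \Cref{cor:subdiff properties} places this subdifferential inside $\kR+$, and the supremum defining $\pertlimit$ in~\eqref{eq:pertlimit} is therefore $+\infty$, contradicting the hypothesis.

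For item (a), the hypothesis $\pertlimit<+\infty$ forces $\subdiff\conj\pertfunc(\pertslope)\neq\emptyset$ by the very definition~\eqref{eq:pertlimit}. Item (b) together with \Cref{lemma:dom-domb} places $\pertslope\in\interior{\dom{\conj\pertfunc}}$, so \Cref{lemma:properties-left-right-derivatives V2} guarantees that both one-sided derivatives of $\conj\pertfunc$ at $\pertslope$ are finite. Hence \Cref{lemma:subdiff-convex-fun-1D} yields the closed bounded interval $\subdiff\conj\pertfunc(\pertslope)=[\partial_-\conj\pertfunc(\pertslope),\partial_+\conj\pertfunc(\pertslope)]$, which lies in $\kR+$ by \Cref{cor:subdiff properties} since $\pertslope>0$. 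The set in definition~\eqref{eq:pertlimit} thus coincides with the full subdifferential, and its supremum $\pertlimit=\partial_+\conj\pertfunc(\pertslope)$ is attained, giving $\pertlimit\in\subdiff\conj\pertfunc(\pertslope)$. Finally for item (c), since $\pertfunc$ is proper, closed and convex under~\ref{assumption:zero-minimized}-\ref{assumption:closed}-\ref{assumption:convex}, one has $\biconj\pertfunc=\pertfunc$, and item (a) gives via the Fenchel--Young equality $\pertfunc(\pertlimit)+\conj\pertfunc(\pertslope)=\pertslope\pertlimit$. Item (b) ensures $\pertslope<\domb$, so \Cref{prop:pertslope in domain} applied with $\bvi{}=\pertslope\in\kR+$ yields $\conj\pertfunc(\pertslope)=\reg$, and the identity in (c) follows by substitution. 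The main technical obstacle is the contradiction argument in item (b): one has to track carefully the boundary behaviour of $\conj\pertfunc$ to certify that $\pertslope$ stays strictly in the interior of its domain.
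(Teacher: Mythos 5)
Your proof is correct, but it reorganizes the lemma and proves item \emph{b} by a genuinely different argument. The paper establishes item \emph{a} first, working directly at \(\pertslope\in\dom{\conj{\pertfunc}}\): it writes \(\subdiff\conj{\pertfunc}(\pertslope)=[\partial_-\conj{\pertfunc}(\pertslope),\partial_+\conj{\pertfunc}(\pertslope)]\cap\kR\), uses \Cref{cor:subdiff properties} to see this set sits in \(\kR+\), and reads \(\pertlimit=\partial_+\conj{\pertfunc}(\pertslope)\in\subdiff\conj{\pertfunc}(\pertslope)\) off the definition \eqref{eq:pertlimit}; it then deduces item \emph{b} \emph{from} item \emph{a}, by noting that \(\subdiff\conj{\pertfunc}(\pertslope)\) is nonempty and bounded and invoking the contrapositive of \cite[\propositionInRef{16.17.(i)}]{Bauschke2017} to conclude \(\pertslope\in\interior{\dom{\conj{\pertfunc}}}\), hence \(\pertslope<\domb\) via \Cref{lemma:dom-domb}. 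You reverse the dependency: you get \emph{b} first by an elementary contradiction (if \(\pertslope=\domb\) then \(\conj{\pertfunc}\) equals \(+\infty\) immediately to the right of \(\pertslope\), so \(\partial_+\conj{\pertfunc}(\pertslope)=+\infty\) and the supremum in \eqref{eq:pertlimit} blows up), and then prove \emph{a} by placing \(\pertslope\in\interior{\dom{\conj{\pertfunc}}}\) through \Cref{lemma:dom-domb} and \Cref{lemma:properties-left-right-derivatives V2}, which gives a compact subdifferential whose supremum is attained. Item \emph{c} is identical in both proofs (Fenchel--Young equality plus \Cref{prop:pertslope in domain}.\emph{d} with \(\pertslope<\domb\)). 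What each route buys: yours stays entirely within the one-dimensional toolbox of the appendix and avoids the boundary-point criterion for bounded subdifferentials; the paper's proof of \emph{a} is shorter because it never needs interiority of \(\pertslope\).

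One step in your item \emph{b} should be tightened. From \(\partial_+\conj{\pertfunc}(\pertslope)=+\infty\) alone you cannot conclude via \Cref{lemma:subdiff-convex-fun-1D} that \(\subdiff\conj{\pertfunc}(\pertslope)\) is unbounded above: the interval \([\partial_-\conj{\pertfunc}(\pertslope),+\infty]\cap\kR\) is unbounded above only when \(\partial_-\conj{\pertfunc}(\pertslope)<+\infty\), and at the right endpoint of \(\dom{\conj{\pertfunc}}\) the left derivative may also equal \(+\infty\) (a vertical tangent), in which case the subdifferential is empty and, by the paper's convention, the supremum over it is \(-\infty\) rather than \(+\infty\). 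The contradiction nonetheless survives, since an empty \(\subdiff\conj{\pertfunc}(\pertslope)\) yields \(\pertlimit=+\infty\) through the ``otherwise'' branch of \eqref{eq:pertlimit}; but you should either add this case split or move to the start of item \emph{b} the observation you only make in item \emph{a}, namely that \(\pertlimit<+\infty\) already forces \(\subdiff\conj{\pertfunc}(\pertslope)\neq\emptyset\), equivalently \(\partial_-\conj{\pertfunc}(\pertslope)<+\infty\).
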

\begin{proof}
	We prove each item separately.
	\begin{itemize}
		\item Item \ref{item:prop:pertilimit in intrrior of domain if finite:subdif result}
		By contraposition of \Cref{lemma:case-id-zero}, \(\pertlimit<+\infty\) implies that \(\pertfunc\) is not identically zero on \(\kR+\), and therefore \(\pertslope>0\) from item~\ref{eq:prop:pertslope leq domb} of \Cref{prop:pertslope in domain}. 
		Moreover, we have from item \ref{eq:prop:pertslope in domain}~of \Cref{prop:pertslope in domain} that \(\pertslope\in\dom{\conj{\pertfunc}}\).
		Since \(\conj{\pertfunc}\) is proper and convex under~\ref{assumption:zero-minimized} by \Cref{cor:proper convex closed}, we have from \Cref{lemma:subdiff-convex-fun-1D} that its (possibly empty) subdifferential at \(\pertslope\) writes 
		\begin{equation}
			\partial \conj{\pertfunc}(\pertslope) = 
			[\partial_-\conj{\pertfunc}(\pertslope), \partial_+\conj{\pertfunc}(\pertslope)] \cap \kR
			.
		\end{equation}
		Since \(\pertslope>0\), we have from property~\eqref{eq:cor:nonnegative subdiff} of \Cref{cor:subdiff properties} that \(\partial_-\conj{\pertfunc}(\pertslope)\geq0\), and therefore \(\partial_+\conj{\pertfunc}(\pertslope)\geq0\).
		In view of the definition of \(\pertlimit\) in~\eqref{eq:pertlimit}, and since \(\pertlimit<+\infty\) by hypothesis, we can conclude that \(\pertlimit=\partial_+\conj{\pertfunc}(\pertslope) \in\partial \conj{\pertfunc}(\pertslope)\).	
		\item Item \ref{item:tau-in-interior-dom} 
		We first recall that \(\conj{\pertfunc}\) is proper, closed and convex under~\ref{assumption:zero-minimized} from  \Cref{cor:proper convex closed}.
		Following the arguments in the proof of item~\ref{item:prop:pertilimit in intrrior of domain if finite:subdif result}, we have that \(\pertslope>0\) which establishes the first part of the result.
		In addition, the latter inequality leads to \(\interior{\dom{\conj{\pertfunc}}}\neq\emptyset\) as it guarantees that \(\interior{\dom{\conj{\pertfunc}}}\) contains the non-empty open interval \(\kintervoo{0}{\pertslope}\) by convexity of \(\dom{\conj{\pertfunc}}\).
		Since \(\pertlimit\) is assumed to be finite, we have that \(\partial\conj{\pertfunc}(\pertslope)\) is non-empty and bounded by item~\ref{item:prop:pertilimit in intrrior of domain if finite:subdif result} and definition of \(\pertlimit\) in~\eqref{eq:pertlimit}.
		The contraposition of~\cite[\propositionInRef{16.17.(i)}]{Bauschke2017} then yields \(\pertslope\in\interior{\dom{\conj{\pertfunc}}}\), and one finally obtains \(\pertslope<\domb\) using~\eqref{eq:int(dom-h*)} in \Cref{lemma:dom-domb}.
		\item Item \ref{item:prop:pertilimit in intrrior of domain if finite:decomposition}
		Since \(\pertfunc\) is proper, closed and convex from \ref{assumption:zero-minimized}-\ref{assumption:closed}-\ref{assumption:convex}, and \(\pertlimit\in\partial\conj{\pertfunc}(\pertslope)\) according to item~\ref{item:prop:pertilimit in intrrior of domain if finite:subdif result}, we have from~\cite[\theoremInRef{4.20}]{beck2017first} that
		\begin{equation}
			\pertfunc(\pertlimit)+\conj{\pertfunc}(\pertslope) = \pertslope\pertlimit
			.
		\end{equation}
		The result then follows from \Cref{prop:pertslope in domain}.\ref{eq:prop:conj of h at pertslope = domb} by noting that \(\pertslope<\domb\) from item~\ref{item:tau-in-interior-dom}
	\end{itemize}
\end{proof}

\begin{proposition} \label{lem:pertlimite > 0}
	If \ref{assumption:zero-minimized}-\ref{assumption:closed}-\ref{assumption:convex} hold, then \(\pertlimit>0\).
\end{proposition}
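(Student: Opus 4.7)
The plan is to proceed by cases, leveraging the machinery already developed in Appendix~C, in particular the dichotomy ``$\pertfunc$ identically zero on $\kR+$'' versus not, and ``$\pertslope < \domb$'' versus ``$\pertslope = \domb$''.

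First, if $\pertfunc$ is identically zero on $\kR+$, I would invoke \Cref{lemma:case-id-zero} directly to obtain $\pertlimit = +\infty > 0$, closing this case trivially. So assume $\pertfunc$ is not identically zero on $\kR+$. By \Cref{prop:pertslope in domain}.\ref{eq:prop:pertslope leq domb}, this gives $0 < \pertslope \leq \domb$, and by \ref{eq:prop:pertslope in domain}, $\pertslope \in \dom{\conj{\pertfunc}}$. I would then split on whether $\subdiff\conj{\pertfunc}(\pertslope)$ is empty: if it is, the definition \eqref{eq:pertlimit} immediately yields $\pertlimit = +\infty > 0$.

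The interesting case is $\subdiff\conj{\pertfunc}(\pertslope) \neq \emptyset$. Here I would use \Cref{lemma:subdiff-convex-fun-1D} to write $\subdiff\conj{\pertfunc}(\pertslope) = [\partial_-\conj{\pertfunc}(\pertslope), \partial_+\conj{\pertfunc}(\pertslope)] \cap \kR$, and apply \Cref{cor:subdiff properties} (property \eqref{eq:cor:nonnegative subdiff}, available because $\pertslope > 0$) to guarantee that every element is non-negative. The goal becomes showing the interval contains a strictly positive element, for which I distinguish two sub-cases.

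If $\pertslope = \domb$, then $\conj{\pertfunc}(\bvi{}) = +\infty$ for all $\bvi{} > \pertslope$ (by definition of $\domb$), which forces $\partial_+\conj{\pertfunc}(\pertslope) = +\infty$. Combined with non-emptiness, this gives $\subdiff\conj{\pertfunc}(\pertslope) = [\partial_-\conj{\pertfunc}(\pertslope),+\infty)$, so $\pertlimit = +\infty$. If instead $\pertslope < \domb$, \Cref{prop:pertslope in domain}.\ref{eq:prop:conj of h at pertslope = domb} yields $\conj{\pertfunc}(\pertslope) = \reg > 0$; since $\conj{\pertfunc}(0) = 0 = \min \conj{\pertfunc}$ by \Cref{cor:nonneg}, $\pertslope$ cannot be a minimizer, so the Fermat condition rules out $0 \in \subdiff\conj{\pertfunc}(\pertslope)$. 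Combined with non-negativity and the fact that the subdifferential is a closed interval, this yields $\partial_-\conj{\pertfunc}(\pertslope) > 0$, and hence $\pertlimit \geq \partial_-\conj{\pertfunc}(\pertslope) > 0$.

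The main obstacle I anticipate is exactly this last argument: a priori $\subdiff\conj{\pertfunc}(\pertslope)$ could reduce to $\{0\}$, which would spoil the lower bound. Overcoming this requires the precise identity $\conj{\pertfunc}(\pertslope) = \reg$ from \Cref{prop:pertslope in domain}.\ref{eq:prop:conj of h at pertslope = domb} (valid only when $\pertslope < \domb$), together with the boundary case $\pertslope = \domb$ handled separately via the blow-up of the right derivative. Once this dichotomy is in place, the rest is a routine bookkeeping of which interval endpoint is positive.
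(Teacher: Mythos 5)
Your argument is correct, but it follows a genuinely different route from the paper's proof. The paper dispenses with the whole case analysis at \(\pertslope\): it notes the claim is trivial when \(\pertlimit=+\infty\), and when \(\pertlimit<+\infty\) it invokes \Cref{lemma:properties-related-to-mu} (which has already established \(\pertlimit\in\subdiff\conj{\pertfunc}(\pertslope)\), \(0<\pertslope<\domb\), and the Fenchel equality \(\pertfunc(\pertlimit)=\pertslope\pertlimit-\reg\)); since \(\pertfunc\geq 0\) and \(\reg>0\), this gives the quantitative bound \(\pertlimit\geq\reg\pertslope^{-1}>0\). You instead analyze \(\subdiff\conj{\pertfunc}(\pertslope)\) directly: dispose of the identically-zero and empty-subdifferential cases, handle \(\pertslope=\domb\) via the blow-up \(\partial_+\conj{\pertfunc}(\pertslope)=+\infty\) (correct, since \(\conj{\pertfunc}=+\infty\) beyond \(\domb\) while \(\conj{\pertfunc}(\pertslope)<+\infty\)), and in the case \(\pertslope<\domb\) combine \(\conj{\pertfunc}(\pertslope)=\reg>0\) from \Cref{prop:pertslope in domain}, Fermat's rule (which excludes \(0\) from the subdifferential because \(\pertslope\) is not a minimizer of \(\conj{\pertfunc}\)), the non-negativity of subgradients from \Cref{cor:subdiff properties}, and the interval structure from \Cref{lemma:subdiff-convex-fun-1D} to force \(\partial_-\conj{\pertfunc}(\pertslope)>0\). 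Both arguments are sound; the paper's buys brevity and an explicit lower bound \(\reg/\pertslope\) by reusing the Fenchel--Young machinery already packaged in \Cref{lemma:properties-related-to-mu}, whereas yours is self-contained at the level of first-order calculus on \(\conj{\pertfunc}\) but costs more case bookkeeping and yields only strict positivity.
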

\begin{proof}
	If \(\pertlimit=+\infty\) then the result trivially holds.
	We thus restrict our attention to the case where \(\pertlimit<+\infty\).
	If \ref{assumption:zero-minimized}-\ref{assumption:closed}-\ref{assumption:convex}  hold and \(\pertlimit<+\infty\), we have that \(\pertfunc(\pertlimit) = \pertslope\pertlimit - \reg\)
	from \Cref{lemma:properties-related-to-mu}.\ref{item:prop:pertilimit in intrrior of domain if finite:decomposition}
	Since \(\pertfunc\geq 0\) from \ref{assumption:zero-minimized}, this leads to \(\pertslope\pertlimit \geq \reg\). Finally, since \(\pertslope>0\) from \Cref{lemma:properties-related-to-mu}.\ref{item:tau-in-interior-dom} and that \(\reg > 0\), we obtain  \(\pertlimit \geq \reg \pertslope^{-1}>0\). 
\end{proof}

\section{Characterization of \texorpdfstring{\(\conj{\pertfunc}\)}{h*}}
\label{app:characterization_of_conj_pertfunc}

In this section, we present a result characterizing the behavior of \(\conj{\pertfunc}\) on \(\kintervoo{0}{\domb}\):
\begin{lemma}\label{lemma:partition-int-dom}
	If~\ref{assumption:zero-minimized}-\ref{assumption:closed}-\ref{assumption:convex} hold and \(\pertfunc\) is not identically zero on \(\kR+\), then 
	\begin{equation} \label{eq:level-set}
		\forall \bvi{}\in]0,\domb[:\ 
		\conj{\pertfunc}({\bvi{}})		
        \begin{cases}
            < \reg  &\mbox{ if  \(\bvi{}\in \kintervoo{0}{\pertslope}\)}\\
            = \reg  &\mbox{ if  \(\bvi{}\in \{\pertslope\}\)}\\ 
            > \reg  &\mbox{ if \(\bvi{}\in \kintervoo{\pertslope}{\domb}\)}
            .
        \end{cases}		
	\end{equation}
\end{lemma}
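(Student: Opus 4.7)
The plan is to handle the three cases in \eqref{eq:level-set} separately, leveraging results already established on the parameter $\pertslope$ and the function $\conj{\pertfunc}$. Since $\pertfunc$ is not identically zero on $\kR+$ and \ref{assumption:zero-minimized}-\ref{assumption:closed}-\ref{assumption:convex} hold, \Cref{prop:pertslope in domain}.\ref{eq:prop:pertslope leq domb} gives $0 < \pertslope \leq \domb$, while items \ref{eq:prop:pertslope in domain} and \ref{eq:prop:conj of h at pertslope leq reg} of the same proposition yield $\pertslope \in \dom{\conj{\pertfunc}}$ and $\conj{\pertfunc}(\pertslope) \leq \reg$. These three facts will serve as the backbone of the argument.

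The two ``extreme'' cases can be dispatched quickly. If $\bvi{}=\pertslope$, the constraint $\bvi{} \in \kintervoo{0}{\domb}$ forces $\pertslope < \domb$, so that \Cref{prop:pertslope in domain}.\ref{eq:prop:conj of h at pertslope = domb} directly yields $\conj{\pertfunc}(\pertslope) = \reg$. If $\bvi{} \in \kintervoo{\pertslope}{\domb}$, I would argue by contradiction from the definition of $\pertslope$ in \eqref{eq:pertslope}: assuming $\conj{\pertfunc}(\bvi{}) \leq \reg$ would place $\bvi{}$ inside the set whose supremum defines $\pertslope$, contradicting $\bvi{} > \pertslope$.

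The remaining case $\bvi{} \in \kintervoo{0}{\pertslope}$ is where I expect the main (though still mild) obstacle: the supremum definition only gives the weak inequality $\conj{\pertfunc}(\bvi{}) \leq \reg$, and strictness has to be earned. My plan is to invoke convexity of $\conj{\pertfunc}$ (which holds under \ref{assumption:zero-minimized} by \Cref{cor:proper convex closed}), combined with the identities $\conj{\pertfunc}(0) = 0$ (from \Cref{cor:nonneg}) and $\conj{\pertfunc}(\pertslope) \leq \reg$. Writing $\bvi{} = (\bvi{}/\pertslope)\,\pertslope + (1 - \bvi{}/\pertslope)\cdot 0$ as a convex combination of two points of $\dom{\conj{\pertfunc}}$, convexity then yields
\begin{equation*}
    \conj{\pertfunc}(\bvi{}) \,\leq\, (\bvi{}/\pertslope)\,\conj{\pertfunc}(\pertslope) \,\leq\, (\bvi{}/\pertslope)\,\reg \,<\, \reg,
\end{equation*}
where the last strict inequality uses $\bvi{} < \pertslope$ together with $\reg > 0$. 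Since all three cases partition the interval $\kintervoo{0}{\domb}$ (once $\pertslope \leq \domb$ is taken into account), this completes the characterization.
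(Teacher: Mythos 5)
Your proposal is correct, and it is complete: the three strict/equality claims are each justified, and you correctly note that the case \(\bvi{}=\pertslope\) can only occur inside \(\kintervoo{0}{\domb}\) when \(\pertslope<\domb\), so that \Cref{prop:pertslope in domain}.\ref{eq:prop:conj of h at pertslope = domb} is applicable there — exactly the point the paper must also be careful about.

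Your route differs from the paper's in how the two strict inequalities are obtained. The paper splits on ``\(\pertslope<\domb\)'' versus ``\(\pertslope=\domb\)'': in the first case it invokes \Cref{prop:pertslope in domain}.\ref{eq:prop:conj of h at pertslope = domb} (the level-set characterization \(\conj{\pertfunc}(\bvi{})=\reg\iff\bvi{}=\pertslope\)) together with the monotonicity of \(\conj{\pertfunc}\) from \Cref{cor:monoticity} to turn the weak inequalities into strict ones; in the second case it uses the strict-increase part of \Cref{cor:monoticity} and \Cref{prop:pertslope in domain}.\ref{eq:prop:conj of h at pertslope leq reg}. You instead split on the position of \(\bvi{}\): for \(\bvi{}\in\kintervoo{0}{\pertslope}\) you get strictness directly from convexity via the interpolation
\(\conj{\pertfunc}(\bvi{})\leq(\bvi{}/\pertslope)\conj{\pertfunc}(\pertslope)\leq(\bvi{}/\pertslope)\reg<\reg\)
(using \(\conj{\pertfunc}(0)=0\), \(\conj{\pertfunc}(\pertslope)\leq\reg\) and \(\reg>0\)), and for \(\bvi{}\in\kintervoo{\pertslope}{\domb}\) you argue by contraposition straight from the supremum defining \(\pertslope\), with no appeal to monotonicity at all. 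Your argument is thus somewhat more self-contained (it bypasses \Cref{cor:monoticity} and avoids the dichotomy on whether \(\conj{\pertfunc}(\bvi{})>0\) that the paper needs in its \(\pertslope=\domb\) case), whereas the paper's proof leans more heavily on the previously established level-set and monotonicity machinery, which makes it shorter given those lemmas. Both are valid; nothing is missing from yours.
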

\begin{proof}	
	Since~\ref{assumption:zero-minimized}-\ref{assumption:closed}-\ref{assumption:convex} hold and \(\pertfunc\) is not identically zero on \(\kR+\), we have that \(0<\pertslope\leq \domb\) from \Cref{prop:pertslope in domain}.\ref{eq:prop:pertslope leq domb}
	In particular, this ensures that \(\kintervoo{0}{\domb}\neq \emptyset\). 
	We next distinguish between the following cases.
	\begin{itemize}
		\item Case \(\pertslope<\domb\): In view of \Cref{prop:pertslope in domain}.\ref{eq:prop:conj of h at pertslope = domb}, we have that
		\(\conj{\pertfunc}(\pertslope)=\reg\) and \(\conj{\pertfunc}(\bvi{})\neq\reg\ \forall \bvi{}\in\kintervoo{0}{\domb}\setminus\{\pertslope\}\). The result is then a direct consequence of \Cref{cor:monoticity}.
		\item Case \(\pertslope=\domb\): In this case, the result particularizes to:  \(\conj{\pertfunc}(\bvi{}) < \reg\) \(\forall \bvi{}\in\kintervoo{0}{\domb}\). Hence, let \(\bvi{}\in ]0,\domb[\) and assume that \(\conj{\pertfunc}(\bvi{})>0\), since otherwise we trivially have \(0=\conj{\pertfunc}(\bvi{})<\reg\).   
		We then have \(\pertfunc(\bvi{})< \pertfunc(\domb)\leq\reg\) where we used the second part of \Cref{cor:monoticity} to obtain the first inequality, and \Cref{prop:pertslope in domain}.\ref{eq:prop:conj of h at pertslope leq reg} to obtain the second one.
	\end{itemize}
\end{proof}


\section{Characterization of \texorpdfstring{\(\conj\regfunc\)}{\expandafter\regfunc*}}  \label{app:proofs-g*}

In this appendix, we give the proofs of results claimed in the main text of the paper regarding the convex conjugate \(\conj\regfunc\) of the function \(\regfunc\) defined in~\eqref{eq:regfunc}-\eqref{eq:l0-norm}.

\subsection{Proof of \texorpdfstring{\Cref{prop:regfunc-conj}}{Proposition~\ref{prop:regfunc-conj}}} \label{proof:regfunc-conj}

From definitions~\eqref{eq:regfunc}-\eqref{eq:l0-norm}, we have that
\begin{align}
    \conj{\regfunc}(\bvi{}) &= \sup_{\pvi{} \in \kR} \bvi{}\pvi{} - \regfunc(\pvi{}) 
    \nonumber\\
        &= \sup_{\pvi{} \in \kR} \bvi{}\pvi{} - \pertfunc(\pvi{}) - \reg\norm{\pvi{}}{0} \nonumber\\
        &= \max \big\{
            \sup_{\pvi{} = 0} \bvi{}\pvi{} - \pertfunc(\pvi{}) - \reg\norm{\pvi{}}{0} , \ 
            \sup_{\pvi{} \neq 0} \bvi{}\pvi{} - \pertfunc(\pvi{}) - \reg\norm{\pvi{}}{0}
        \big\} \nonumber\\
        &= \max \big\{
            0
            , \ 
            \sup_{\pvi{} \neq 0} \bvi{}\pvi{} - \pertfunc(\pvi{}) - \reg 
        \big\}\nonumber
        \\
        &= \max \big\{
            0
            , \ 
            -\kparen{
                \inf_{\pvi{} \neq 0} \pertfunc(\pvi{}) - \bvi{}\pvi{}
            } - \reg 
        \big\}
        \label{eq:proof prop:regfunc-conj:decomposition g*}
        , 
\end{align}
where the fourth equality follows from the fact that \(\pertfunc(0)=0\) by \ref{assumption:zero-minimized}. 
Note that \(\pertfunc\) is proper under~\ref{assumption:zero-minimized} from \Cref{remark:properness} and closed, convex from \ref{assumption:closed}-\ref{assumption:convex}.
Hence, the function \(\aGenericOneDfunction:\pvi{}\mapsto\pertfunc(\pvi{})- \bvi{}\pvi{}\) defined on \(\kR\) is also proper, closed and convex.
Moreover, assumption~\ref{assumption:zero-minimized} ensures that \(\dom\pertfunc\) is not a singleton. 
Applying \Cref{lemma:infima open set = infima closed set} with \(\aGenericOneDfunction(\pvi{})=\pertfunc(\pvi{})-\bvi{}\pvi{}\) and \(\pvi{0}=0\) then leads to
\begin{equation}\label{eq:proof:g*:intermediate-goal}
    \inf_{\pvi{} \neq 0} \pertfunc(\pvi{}) - \bvi{}\pvi{}
    =
    \inf_{\pvi{}\in\kR} \pertfunc(\pvi{}) - \bvi{}\pvi{}
    =
    - \sup_{\pvi{} \in \kR}\ \bvi{}\pvi{} - \pertfunc(\pvi{})
    =
    -\conj\pertfunc(\bvi{})
    .
\end{equation}
Plugging this result into~\eqref{eq:proof prop:regfunc-conj:decomposition g*} leads to \(\conj{\regfunc}(\bvi{}) = \pospart{\conj{\pertfunc}(\bvi{}) - \reg}\). 

It remains to show that under~\ref{assumption:even}, we have
\begin{equation} \label{eq:proof:regfunc-conj:3}
    \conj{\regfunc}(\bvi{})=0
    \iff \abs{\bvi{}} \leq \pertslope
\end{equation}
for all $\bvi{} \in \kR$.
Since \(\conj{\pertfunc}\) is even under~\ref{assumption:even} according to \Cref{cor:even}, it is sufficient to establish equivalency \eqref{eq:proof:regfunc-conj:3} for \(\bvi{}\geq0\).
\begin{itemize}[leftmargin=30pt]
    \item[\((\Rightarrow)\)] We note that \(\kset{\bvi{}\in\kR+}{\pospart{\conj{\pertfunc}(\bvi{}) - \reg} = 0}=\kset{\bvi{}\in\kR+}{\conj{\pertfunc}(\bvi{}) \leq \reg}\).
    The direct implication then follows from the definition of \(\pertslope\) in~\eqref{eq:pertslope}. 
    \item[\((\Leftarrow)\)] Under~\ref{assumption:zero-minimized}, \(\conj{\pertfunc}\) is non-negative and non-decreasing on \(\dom{\conj{\pertfunc}}\cap \kR+\) from \Cref{cor:monoticity,cor:nonneg}.
    Therefore, we have \(0\leq \bvi{}\leq\pertslope \implies 0\leq\conj{\pertfunc}(\bvi{})\leq\conj{\pertfunc}(\pertslope)\). 
    Finally, if \ref{assumption:zero-minimized}-\ref{assumption:closed}-\ref{assumption:convex} hold, we have from \Cref{prop:pertslope in domain}.\ref{eq:prop:conj of h at pertslope leq reg} that \(\conj{\pertfunc}(\pertslope)\leq\reg\).
\end{itemize}

\subsection{Proof of \texorpdfstring{\Cref{prop:subdiff-conjugate of g}}{Proposition~\ref{prop:subdiff-conjugate of g}}} \label{proof:subdiff-g*}

We first emphasize the following direct consequence of the expression of \(\conj{\regfunc}\) in \Cref{prop:regfunc-conj}: 
\begin{corollary} \label{cor:equality domain gstar and hstar}
    If \ref{assumption:zero-minimized}-\ref{assumption:closed}-\ref{assumption:convex} hold, then \(\dom{\conj{\regfunc}} = \dom{\conj{\pertfunc}}\).
\end{corollary}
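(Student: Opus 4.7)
The plan is to derive this domain equality as an immediate one-line consequence of the pointwise identity already established in \Cref{prop:regfunc-conj}. Under hypotheses \ref{assumption:zero-minimized}-\ref{assumption:closed}-\ref{assumption:convex}, that proposition gives
\[
    \forall \bvi{} \in \kR:\ \conj{\regfunc}(\bvi{}) = \pospart{\conj{\pertfunc}(\bvi{}) - \reg}.
\]
So the first step is simply to invoke this identity, which requires no additional work since the working assumptions coincide with those of \Cref{prop:regfunc-conj}.

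Next, I will exploit the fact that the positive-part operation preserves finiteness. Since $\pospart{a} = \max(a,0)$ for any $a \in \rextendedRealLine$ and $\reg$ is a finite positive scalar, one has the equivalence
\[
    \pospart{\conj{\pertfunc}(\bvi{}) - \reg} < +\infty
    \iff
    \conj{\pertfunc}(\bvi{}) - \reg < +\infty
    \iff
    \conj{\pertfunc}(\bvi{}) < +\infty.
\]
Reading this chain of equivalences through the definition of the effective domain of an extended real-valued function, namely $\dom{\genericfunc} = \kset{\pvi{}}{\genericfunc(\pvi{}) < +\infty}$, then yields both inclusions $\dom{\conj{\regfunc}} \subseteq \dom{\conj{\pertfunc}}$ and $\dom{\conj{\pertfunc}} \subseteq \dom{\conj{\regfunc}}$, hence the claimed equality.

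No substantive obstacle is anticipated here: the statement is genuinely a corollary of \Cref{prop:regfunc-conj}, and the only elementary fact that needs to be invoked is the finiteness-preserving behaviour of $\pospart{\cdot}$, which is a trivial property of the $\max$ operation with respect to the convention $\pospart{+\infty} = +\infty$. In particular, none of the finer machinery of \Cref{app:property-fun} involving the parameters $\pertslope$, $\pertlimit$, $\pertrightlope$ or the evenness assumption \ref{assumption:even} is required.
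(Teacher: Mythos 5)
Your proposal is correct and matches the paper's treatment: the paper also derives \Cref{cor:equality domain gstar and hstar} as an immediate consequence of the identity \(\conj{\regfunc}=\pospart{\conj{\pertfunc}-\reg}\) from \Cref{prop:regfunc-conj} (where the domain equality is in fact already part of the statement). Your explicit remark that \(\pospart{\cdot}\) preserves finiteness because \(\reg\) is a finite scalar is exactly the one-line justification the paper leaves implicit.
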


We now turn to the proof of \Cref{prop:subdiff-conjugate of g}.
As an initial remark, we mention that certain parts of the proof involve the parameter \(\domb\), which is defined in \eqref{eq:domb}.
First, let us consider the case where \(\pertfunc\) is identically zero.
Using \Cref{lemma:case-id-zero} and \Cref{cor:even}, we obtain that \(\conj{\pertfunc}(\bvi{}) = \icvx(\bvi{}=0)\) and therefore 
\(\dom{\conj{\regfunc}}=\{0\}\). Standard subdifferential calculus then leads to 
\(\partial\conj{\regfunc}(0)=\kR\), see \eg, \cite[\exampleInRef{3.5}]{beck2017first} with \(S=\{0\}\). 
This corresponds to \eqref{eq:subdiff-g*-h=0}. 
It thus remains to consider the general case where \(\pertfunc\) is not identically zero and show that \eqref{eq:subdiff-g*} holds.
To simplify our reasoning, let us note the following properties: 
\begin{enumerate}[label=\textit{\alph*.}]
    \item Since~\ref{assumption:zero-minimized} holds, \(\conj{\regfunc}\) is proper, closed and convex from \Cref{cor:proper convex closed}.     
    Hence, \(\forall \bvi{}\in\dom{\conj{\regfunc}}\),  \(\subdiff\conj{\regfunc}(\bvi{})\) is a (possibly empty) closed and convex subset of \(\kR\) by \Cref{lemma:subdiff-convex-fun-1D}.
    Moreover, since \(\conj{\regfunc}{}\) is even from \Cref{cor:even} under~\ref{assumption:even}, then \(-\bvi{}\in\dom{\conj{\regfunc}}\) and the definition of a subgradient implies
    \begin{equation} \label{eq:proof prop:subdiff-conjugate of g:symmetry}
        \pvi{}\in\subdiff\conj{\regfunc}(\bvi{}) \iff -\pvi{}\in\subdiff\conj{\regfunc}(-\bvi{})
        .
    \end{equation}
    Hence, it is sufficient to concentrate on the case where \(\bvi{}\in\dom{\conj{\regfunc}}\cap\kR+\).
    \item We deduce from \Cref{lemma:dom-domb} and \Cref{cor:equality domain gstar and hstar} that 
    \begin{equation}
        [0,\domb[ \subseteq \dom{\conj{\regfunc}}\cap\kR+ \subseteq [0,\domb]\cap \kR+.
    \end{equation}
    Hence, it is sufficient to concentrate on the two case \(\bvi{}\in[0,\domb[\) and \(\bvi{}= \domb \in \dom{\conj{\regfunc}}\). 
    We note that \(\domb>0\) from \Cref{lemma:domb result} since we assume that \(\pertfunc\) is not identically zero. The first case is thus never empty. The second case may occur depending on the nature of \(\pertfunc\).
\end{enumerate}

~\\
In view of the above remarks, we thus have two cases to treat.
~\\

\begin{itemize}
    \item Case \(\bvi{}\in\kintervco{0}{\domb}\): 
    Since \(\pertfunc\) is not identically zero and~\ref{assumption:zero-minimized}-\ref{assumption:closed}-\ref{assumption:convex} hold, we have \(\pertslope>0\) from \Cref{prop:pertslope in domain}.\ref{eq:prop:pertslope leq domb} and we can partition \(\kintervco{0}{\domb}\) as
    \begin{equation}
        \kintervco{0}{\domb} \,=\, \kintervco{0}{\pertslope} 
        \, 
        \cup
        \, 
        \{\pertslope\}\setminus\{\domb\}
        \, 
        \cup
        \, 
        \kintervoo{\pertslope}{\domb}
        .
    \end{equation}
    Under \ref{assumption:zero-minimized}-\ref{assumption:closed}-\ref{assumption:convex},  we also have from \Cref{prop:regfunc-conj} that \(\conj{\regfunc}\) is the point-wise maximum of two proper convex functions, namely \(\bvi{} \mapsto 0\) and \(\bvi{} \mapsto \conj{\pertfunc}(\bvi{}) - \reg\) with domain \(\kR\) and \(\dom{\conj{\pertfunc}}=\dom{\conj\regfunc}\) from \Cref{cor:equality domain gstar and hstar}, respectively.
    Combining the result in \cite[\theoremInRef{3.50}]{beck2017first} and \Cref{lemma:partition-int-dom}, we then obtain: 
    \begin{equation}\label{eq:subdiff-g*-interior}
        \forall \bvi{} \in \kintervoo{0}{\domb}:\ 
        \subdiff \conj{\regfunc}(\bvi{}) = 
        \begin{cases}
            \{0\} &\text{if} \ \bvi{} < \pertslope \\
            \conv(\{0\}\cup \subdiff\conj{\pertfunc}(\pertslope)) &\text{if} \ \bvi{} = \pertslope \\
            \subdiff\conj{\pertfunc}(\bvi{})  &\text{if} \ \bvi{} > \pertslope.
        \end{cases}
    \end{equation}
    We next give an alternative expression to the subdifferential for the cases \(\conj{\regfunc}\) at \(\bvi{}=\pertslope\) and will treat the case \(\bvi{}=0\) separately. 
    \begin{itemize}[label=$\bullet$]
        \item Regarding the expression of \(\partial\conj{\regfunc}(\pertslope)\), let us first recall that $0<\pertslope<\domb$ where the second inequality is a consequence of our case hypothesis. Using, \Cref{prop:pertslope in domain}.\ref{eq:prop:pertslope in domain} and \Cref{cor:equality domain gstar and hstar}, we thus obtain that $\pertslope\in\interior{\dom{\conj\regfunc}}$. Since \(\conj{\pertfunc}\) is proper, closed and convex under~\ref{assumption:zero-minimized} by \Cref{cor:proper convex closed}, we have by applying \Cref{lemma:properties-left-right-derivatives V2}.\ref{item-lemma:properties-left-right-derivatives:finite on interior of the domain} together with \Cref{lemma:subdiff-convex-fun-1D} that \(\subdiff\conj{\pertfunc}(\pertslope)\) is a nonempty compact interval of \(\kR\).
        Therefore, the definition of \(\pertlimit\) implies that \(\pertlimit=\partial_+\conj{\pertfunc}(\pertslope)<+\infty\). 
        Finally, since \(\pertslope>0\), item~\eqref{eq:cor:nonnegative subdiff} of \Cref{cor:subdiff properties} implies that \(\partial_-\conj{\pertfunc}(\pertslope)\geq0\). Therefore, we obtain
        \begin{equation}
            \conv(\{0\}\cup\subdiff\conj{\pertfunc}(\pertslope))
            =
            \kintervcc{0}{\pertlimit} = \kintervcc{0}{\pertlimit}\cap \kR
        \end{equation}
        where the last equality follows from \(\pertlimit<+\infty\).
        \item Let now treat the case \(\bvi{}=0\). 
        Since \(\pertslope>0\), there exists \(\bvi{+}\in\kintervoo{0}{\domb}\) such that \(0<\bvi{+}<\pertslope\). We then have from \Cref{lemma:subdiff-convex-fun-1D-is-nondecreasing} that \(\sup\ \partial\conj{\regfunc}(0) \leq \inf \ \partial\conj{\regfunc}(\bvi{+}) = 0\) where the last equality holds in view of \eqref{eq:subdiff-g*-interior}. 
        Using the fact that \(0\in\partial\conj{\regfunc}(0)\) by item~\eqref{eq:cor:0 in subdiff} of \Cref{cor:subdiff properties}, we obtain \(\partial\conj{\regfunc}(0)\cap\kR+=\{0\}\).
        We finally deduce that \(\partial\conj{\regfunc}(0)=\{0\}\) by using~\eqref{eq:proof prop:subdiff-conjugate of g:symmetry}.
    \end{itemize}
    \item Case \(\bvi{}= \domb \in\dom{\conj{\regfunc}}\): We observe that \(0<\beta<+\infty\). The first inequality follows from \Cref{prop:pertslope in domain}.\ref{eq:prop:pertslope leq domb} and the fact that \(\pertfunc\) is not identically zero by hypothesis. The second inequality is a consequence of \(\domb \in\dom{\conj{\regfunc}}\). By definition of \(\domb\), we have \(\bvi{}'\notin\dom{\conj{\regfunc}}=\dom{\conj{\pertfunc}}\) for any \(\bvi{}'>\domb\). Using \Cref{lemma:subdiff-convex-fun-1D} and \Cref{definition:left-right-derivatives}, we then obtain:
    \begin{align} 
        \subdiff\conj{\pertfunc}(\domb) \,=\,& \kintervcc{\partial_-\conj{\pertfunc}(\domb)}{+\infty} \cap \kR 
        \label{eq:form-subdiff-h*-boundary}
        \\
        \subdiff\conj{\regfunc}(\domb) \,=\,& \kintervcc{\partial_-\conj{\regfunc}(\domb)}{+\infty} \cap \kR 
        \label{eq:form-subdiff-g*-boundary}
        .
    \end{align}
    Moreover, since \(\domb\in\dom{\conj{\regfunc}}\), we have that \(\subdiff_-\conj{\regfunc}(\domb)\) is well defined (but not necessarily finite) from \Cref{lemma:properties-left-right-derivatives V2}.\ref{item-lemma:properties-left-right-derivatives:limits exists on domain} 
    Since \(\dom{\conj{\pertfunc}}=\dom{\conj{\regfunc}}\), the same holds for \(\subdiff_-\conj{\pertfunc}(\domb)\).
    We next distinguish between two cases depending on the value of \(\pertslope\). 
    \begin{itemize}[label=$\bullet$]
        \item If \(\pertslope < \domb\), we have for any sequence of scalars \(\{\varepsilon^{(k)}\}_{k\in\kN}\) that converges to \(0\) and such that \(\domb + \varepsilon^{(k)}\in\kintervoo{\pertslope}{\domb}\) \(\forall k\in\kN\):
        \begin{align}
            \partial_-\conj{\regfunc}(\domb)
            \,=\,&
            \lim_{k\to+\infty}
            \frac{
                \conj{\regfunc}(\domb + \varepsilon^{(k)}) - \conj{\regfunc}(\domb)
            }{
                \varepsilon^{(k)}
            }
            \nonumber \\
            \,=\,&
            \lim_{k\to+\infty}
            \frac{
                \conj{\pertfunc}(\domb + \varepsilon^{(k)}) - \reg - \conj{\pertfunc}(\domb) + \reg
            }{
                \varepsilon^{(k)}
            }
            \,=\,
            \partial_-\conj{\pertfunc}(\domb)
            \label{eq:proof form-subdiff-g*-boundary:case beta>tau equality leftderivative at beta}
        \end{align}
        where the second line holds in view of the definition of \(\{\varepsilon^{(k)}\}_{k\in\kN}\) and \Cref{prop:regfunc-conj}. 
        Combining \eqref{eq:form-subdiff-h*-boundary}, \eqref{eq:form-subdiff-g*-boundary} and \eqref{eq:proof form-subdiff-g*-boundary:case beta>tau equality leftderivative at beta}, we thus have \(\subdiff\conj{\regfunc}(\domb)=\subdiff\conj{\pertfunc}(\domb)\) as mentioned in the last case of \eqref{eq:subdiff-g*}. 
        \item If \(\pertslope=\domb\), we need to show that the second case in \eqref{eq:subdiff-g*} holds. 
        On the one hand, using \eqref{eq:form-subdiff-h*-boundary} and the fact that \(\pertslope=\domb\), the definition of \(\pertlimit\) in~\eqref{eq:pertlimit} leads {in both cases ``\(\partial_-\conj{\pertfunc}(\pertslope)\in\kR\)'' and ``\(\partial_-\conj{\pertfunc}(\pertslope)=+\infty\)''} to \(\pertlimit=+\infty\).
        The second case in \eqref{eq:subdiff-g*} thus particularizes to \(\subdiff\conj{\regfunc}(\domb)=\kintervcc{0}{+\infty}\cap\kR\). 
        Taking \eqref{eq:form-subdiff-g*-boundary} into account, it thus remains to show that \(\partial_-\conj{\regfunc}(\domb)=0\) to prove the result. 
        Now, 
        \begin{equation} \label{eq:proof form-subdiff-g*-boundary:case beta=tau equality leftderivative at beta}
            \partial_-\conj{\regfunc}(\domb)
            =
            \lim_{\varepsilon\uparrow 0}
            \frac{
                \conj{\regfunc}(\pertslope + \varepsilon) - \conj{\regfunc}(\pertslope)
            }{
                \varepsilon
            }
            = 
            0
        \end{equation}
       where the last equality follows from the second part of \Cref{prop:regfunc-conj}. 
    \end{itemize}
\end{itemize}

\subsection{Proof of \texorpdfstring{\Cref{prop:operator-prox-gstar}}{Proposition~\ref{prop:operator-prox-gstar}}} \label{proof:operator-prox-g*}

The proximal operator of \(\cstprox\conj{\regfunc}\) is the operator defined for all \(\bvi{}\in\kR\) by
\begin{equation} \label{eq:def-prox-g*}
    \prox_{\cstprox\conj{\regfunc}}(\bvi{}) = \argmin_{\bvi{}' \in \kR} \cstprox\conj{\regfunc}(\bvi{}') + \tfrac{1}{2}(\bvi{} - \bvi{}')^2
    .
\end{equation}
Since \(\conj{\regfunc}\) is proper, closed and convex under~\ref{assumption:zero-minimized} from \Cref{cor:proper convex closed}, the objective function in \eqref{eq:def-prox-g*} admits a unique minimizer for each value of \(\bvi{}\in\kR\), according to \cite[\theoremInRef{6.3}]{beck2017first}. 
Moreover, since \(\conj{\regfunc}\) is proper, this minimizer must verify Fermat's necessary and sufficient optimality condition, that is 
\begin{equation} \label{eq:proof:operator-prox-gstar:corollary second prox theorem-B}
    \kinv{\cstprox}(\bvi{} - \prox_{\cstprox\conj{\regfunc}}(\bvi{})) \in \subdiff\conj{\regfunc}(\prox_{\cstprox\conj{\regfunc}}(\bvi{}))
\end{equation}    
in view of \cite[\theoremInRef{6.39}]{beck2017first}. 
Now, since \(\conj{\regfunc}\) is even under our assumption~\ref{assumption:even} from \Cref{cor:even}, we have \(\subdiff\conj{\regfunc}(-\bvi{})=-\subdiff\conj{\regfunc}(\bvi{})\) and \eqref{eq:proof:operator-prox-gstar:corollary second prox theorem-B} yields:
\begin{equation}
    \prox_{\cstprox\conj{\regfunc}}(-\bvi{}) = - \prox_{\cstprox\conj{\regfunc}}(\bvi{}).
\end{equation}
It is thus sufficient to focus on the case \(\bvi{}\in\kR+\) and distinguish between the following subcases \(\bvi{}\in\kintervcc{0}{\pertslope}\), \(\bvi{}\in\kintervoc{\pertslope}{\pertslope+\cstprox\pertlimit}\cap\kR\), \(\bvi{}\in\kintervoo{\pertslope+\cstprox\pertlimit}{+\infty}\cap\kR\).
\begin{itemize}
    \item Case \(\bvi{}\in\kintervcc{0}{\pertslope}\): If \(\pertfunc\) is not identically zero, we note from the first two cases in~\eqref{eq:subdiff-g*} that \(\kinv{\cstprox}(\bvi{} - \bvi{})=0\in\partial\conj{\regfunc}(\bvi{})\). We thus have that \(\prox_{\cstprox\conj{\regfunc}}(\bvi{}) = \bvi{}\) from~\eqref{eq:proof:operator-prox-gstar:corollary second prox theorem-B}. If \(\pertfunc\) is identically zero, we have from \Cref{lemma:case-id-zero} that \(\pertslope=0\) so that \(\kintervcc{0}{\pertslope}=\{0\}\). In this case, we also have \(0\in\partial\conj{\regfunc}(0)\) from \eqref{eq:subdiff-g*-h=0} and therefore \(\prox_{\cstprox\conj{\regfunc}}(\bvi{}) = \bvi{}\) verifies \eqref{eq:proof:operator-prox-gstar:corollary second prox theorem-B} for \(\bvi{}=0\) as well.
    \item Case \(\bvi{}\in\kintervoc{\pertslope}{\pertslope+\cstprox\pertlimit}\cap\kR\): 
    This is equivalent to \(\kinv{\cstprox}(\bvi{} - \pertslope) \in ]0, \pertlimit]\cap\kR\).
    If \(\pertfunc\) is not identically zero, using the second case in~\eqref{eq:subdiff-g*}, we have that \(\kinv{\cstprox}(\bvi{} - \pertslope) \in \partial\conj{\regfunc}(\pertslope)\). The latter expression is equivalent to \(\prox_{\cstprox\conj{\regfunc}}(\bvi{}) = \pertslope\) in view of~\eqref{eq:proof:operator-prox-gstar:corollary second prox theorem-B}. If \(\pertfunc\) is identically zero, we have from \Cref{lemma:case-id-zero} that \(\pertslope=0\), \(\pertlimit=+\infty\) and therefore \(\kintervoc{\pertslope}{\pertslope+\cstprox\pertlimit}\cap\kR=\kR+\). Since \(\subdiff\conj\regfunc(0)=\kR\) from \eqref{eq:subdiff-g*-h=0}, we have that \(\prox_{\cstprox\conj{\regfunc}}(\bvi{})=0\) verifies \eqref{eq:proof:operator-prox-gstar:corollary second prox theorem-B}.
    \item Case \(\bvi{}\in\kintervoo{\pertslope+\cstprox\pertlimit}{+\infty}\cap\kR\): 
    Since \(\pertslope\in\kR+\) from \Cref{lemma:pertslope finite}, we first mention that this case necessarily implies that \(\pertlimit<+\infty\) since we must have \(\bvi{}\in\kR\). 
    In particular, \(\pertfunc\) cannot be identically zero if this case occurs.     
    Using the firm non-expansivity of proximal operators (see \cite[\theoremInRef{6.42.(a)}]{beck2017first} with \(\mathbf{x}=\bvi{}\) and \(\mathbf{y}=\pertslope+\cstprox\pertlimit\)), we obtain
    \begin{equation}
        (\bvi{}-(\pertslope+\cstprox\pertlimit))(\prox_{\cstprox\conj{\regfunc}}(\bvi{})-\pertslope)\geq \|\prox_{\cstprox\conj{\regfunc}}(\bvi{})-\pertslope\|_2^2
    \end{equation}
    where we used the fact that \(\prox_{\cstprox\conj{\regfunc}}(\pertslope+\cstprox\pertlimit)=\pertslope\) from the previous case. Since \(\bvi{}-(\pertslope+\cstprox\pertlimit)>0\) by definition, this leads to \(\prox_{\cstprox\conj{\regfunc}}(\bvi{})\geq \pertslope\).
    We, assume that \(\prox_{\cstprox\conj{\regfunc}}(\bvi{})=\pertslope\). Then, we have from \eqref{eq:proof:operator-prox-gstar:corollary second prox theorem-B} and \Cref{prop:subdiff-conjugate of g} that \(\kinv{\cstprox}(\bvi{} - \pertslope)\in \kintervcc{0}{\pertlimit}\), which leads to \(\bvi{}\leq \pertslope+\cstprox\pertlimit\). 
    This is in contradiction with our initial hypothesis ``\(\bvi{}> \pertslope+\cstprox\pertlimit\)'' and therefore \(\prox_{\cstprox\conj{\regfunc}}(\bvi{}) > \pertslope\). 
    In view of this strict inequality, we have from \Cref{prop:subdiff-conjugate of g} that
    \begin{equation}
        \partial\conj{\regfunc}(\prox_{\conj{\regfunc}}(\bvi{})) = \partial\conj{\pertfunc}(\prox_{\conj{\regfunc}}(\bvi{})).
    \end{equation}
    Plugging this expression into~\eqref{eq:proof:operator-prox-gstar:corollary second prox theorem-B} then leads to 
    \begin{equation}
        \kinv{\cstprox}(\bvi{} - \prox_{\cstprox\conj{\regfunc}}(\bvi{}))
    \in \cstprox\partial\conj{\pertfunc}(\prox_{\cstprox\conj{\regfunc}}(\bvi{})),
    \end{equation}
    which is equivalent to \(\prox_{\cstprox\conj{\regfunc}}(\bvi{}) = \prox_{\cstprox\conj{\pertfunc}}(\bvi{})\) by virtue of \cite[\theoremInRef{6.39}]{beck2017first}.
\end{itemize}


\section{Characterization of \texorpdfstring{\(\biconj\regfunc\)}{\expandafter\regfunc**}} \label{app:proofs-g**}

In this appendix, we give the proofs of results claimed in the main text of the paper regarding the convex biconjugate \(\biconj\regfunc\) of the function \(\regfunc\) defined in~\eqref{eq:regfunc}-\eqref{eq:l0-norm}.

\subsection{Proof of \texorpdfstring{\Cref{prop:regfunc-biconj}}{Proposition~\ref{prop:regfunc-biconj}} }
\label{proof:regfunc-biconj}

Let first characterize the domain of \(\biconj{\regfunc}\).
By definition of the $\ell_0$-norm given in \eqref{eq:l0-norm}, we observe that \(\pertfunc\leq \regfunc \leq \reg + \pertfunc\).
Applying \cite[\propositionInRef{13.16.(ii)}]{Bauschke2017} twice, we thus obtain that \(\biconj{\pertfunc}\leq \biconj{\regfunc} \leq \reg + \biconj{\pertfunc}\) and immediately deduce that \(\dom{\biconj{\regfunc}}=\dom{\biconj{\pertfunc}}\).
One then concludes that
\begin{equation}
    \dom{\biconj\regfunc}=\dom\pertfunc
\end{equation}
by noting that \(\biconj{\pertfunc}=\pertfunc\) from \cite[\theoremInRef{4.8}]{beck2017first} since \(\pertfunc\) is closed, proper and convex under~\ref{assumption:zero-minimized}-\ref{assumption:closed}-\ref{assumption:convex}.

We can now establish the correctness of the closed-form expression of \(\biconj{\regfunc}\) given in~\eqref{eq:regfunc-biconj}.
As an initial remark, we recall that \(\conj{\pertfunc}\), \(\conj{\regfunc}\) and \(\biconj{\regfunc}\) are proper, closed and convex functions under \ref{assumption:zero-minimized} from \Cref{cor:proper convex closed}. 
Moreover, \(\biconj{\regfunc}\) is even under \ref{assumption:even} from \Cref{cor:even}. 
In the sequel, we thus restrict our attention to \(\pvi{}\in\kR+\). 
We divide our proof into several cases depending on the value of \(\pvi{}\). 
\begin{itemize}
    \item Case \(\pvi{}\in\kintervcc{0}{\pertlimit}\cap \kR\): We have \(\pvi{}\in\partial\conj{\regfunc}(\pertslope)\) from \Cref{prop:subdiff-conjugate of g}. 
    Since \(\conj{\regfunc}\) is proper, closed and convex, we obtain from \cite[\theoremInRef{4.20}]{beck2017first} that 
        \begin{align}\label{eq:expr-g**-x<=mu}
            \biconj{\regfunc}(\pvi{})= \pertslope\pvi{} - \conj{\regfunc}(\pertslope)=\pertslope\pvi{}
        \end{align}
    where the last equality follows from \Cref{prop:regfunc-conj}. This shows the first case in \eqref{eq:regfunc-biconj}.  
    We note that, in view of \Cref{lemma:case-id-zero}, this case also encompasses the scenario where $\pertfunc$ is identically zero, which does therefore not require any special treatment. 
    \item Case \(\pvi{}\geq \pertlimit\): We have to show that \(\biconj{\regfunc}(\pvi{})={\pertfunc}(\pvi{})+\reg\).  
    We note that since our statement assumes \(\pvi{}\in\kR\), \(\pvi{}\geq \pertlimit\) implies that \(\pertlimit<+\infty\) since a contradiction occurs otherwise.
    In particular, \(\pertfunc\) is not identically equal to zero, see \Cref{lemma:case-id-zero}. 
    If \(\pvi{}=\pertlimit\), we have from \eqref{eq:expr-g**-x<=mu} that \(\biconj\regfunc(\pertlimit)=\pertslope\pertlimit\).
    Since \(\pertlimit<+\infty\), we further observe  from item~\ref{item:prop:pertilimit in intrrior of domain if finite:decomposition}  of \Cref{lemma:properties-related-to-mu} that \(\pertfunc(\pertlimit)=\pertslope\pertlimit-\reg\), which shows the result in the case \(\pvi{}=\pertlimit\).
    In the rest of the proof, we thus concentrate on \(\pvi{}> \pertlimit\). 
    We distinguish between three subcases.
    \begin{itemize}[label=\(\bullet\)]
        \item Case \(\pvi{}\in\interior{\dom{\biconj{\regfunc}}}\): 
        Since \(\biconj{\regfunc}\) is proper convex, and \(\pvi{}\) belongs to the interior of the domain, we have from \cite[\theoremInRef{3.14}]{beck2017first} that \(\partial\biconj{\regfunc}(\pvi{})\neq\emptyset\). 
        Moreover, if \(\bvi{}\in\partial\biconj{\regfunc}(\pvi{})\), it follows from~\cite[\theoremInRef{4.20}]{beck2017first} that 
            \begin{align}
                \pvi{} &\in \subdiff\conj{\regfunc}(\bvi{}) \label{eq:cond-FY-A} \\
                \biconj{\regfunc}(\pvi{}) &= \bvi{}\pvi{} - \conj{\regfunc}(\bvi{}) \label{eq:cond-FY-B} 
            \end{align} 
        since \(\conj{\regfunc}\) is proper, closed and convex. 
        \eqref{eq:cond-FY-A} leads to \(\bvi{}>\pertslope\) since otherwise we have from \Cref{prop:subdiff-conjugate of g} that our working assumption ``\(\pvi{} > \pertlimit\)'' is violated. 
        Using the expression of \(\conj{\regfunc}(\bvi{})\) for \(\bvi{}>\pertslope\) in \Cref{prop:regfunc-conj},~\eqref{eq:cond-FY-B} then reduces to
        \begin{equation}
            \biconj{\regfunc}(\pvi{}) = \bvi{}\pvi{} + \reg - \conj{\pertfunc}(\bvi{})
            . 
        \end{equation}
        It thus remains to show that \(\bvi{}\pvi{} - \conj{\pertfunc}(\bvi{})=\pertfunc(\pvi{})\) to prove the result. 
        We first note that \(\partial\conj{\regfunc}(\bvi{})=\partial\conj{\pertfunc}(\bvi{})\) by \Cref{prop:subdiff-conjugate of g} since \(\bvi{}>\pertslope\). Combining this observation with \eqref{eq:cond-FY-A}, we then obtain \(\pvi{}\in\partial\conj{\pertfunc}(\bvi{})\). Since \(\conj{\pertfunc}\) is proper, closed and convex, applying \cite[\theoremInRef{4.20}]{beck2017first} leads to \(\biconj{\pertfunc}(\pvi{}) = \bvi{}\pvi{} - \conj{\pertfunc}(\bvi{})\). 
        We finally obtain the desired result by noticing that \(\biconj{\pertfunc}(\pvi{}) = \pertfunc(\pvi{})\) as \(\pertfunc\) is proper, closed and convex under~\ref{assumption:zero-minimized}-\ref{assumption:closed}-\ref{assumption:convex}. 
        \item Case \(\pvi{}\in\dom{\biconj{\regfunc}}\setminus\interior{\dom{\biconj{\regfunc}}}\): We will use the fact that \(\pertfunc\) and \(\biconj{\regfunc}\) are continuous on their domain under \ref{assumption:zero-minimized}-\ref{assumption:closed}-\ref{assumption:convex} (see \Cref{cor:continuity}), and \(\dom{\biconj{\regfunc}}=\dom\pertfunc\) (see first part of the result). 
        Specifically, let \(\{\pvi{}^{(k)}\}_{k\in\kN}\) be a sequence of non-negative scalars that converges to \(\pvi{}\) and such that \(\pertlimit < \pvi{}^{(k)} < \pvi{}\) for all \(k\in\kN\).
        According to the previous case, such a construction implies that \(\biconj{\regfunc}(\pvi{}^{(k)}) = \pertfunc(\pvi{}^{(k)}) + \reg\)  for all \(k\in\kN\).
        Taking the limit in both sides of the equality and using the continuity of \(\pertfunc\) and \(\biconj{\regfunc}\) over \(\dom{\biconj{\regfunc}}=\dom\pertfunc\), we obtain: 
        \begin{equation}
            \biconj{\regfunc}(\pvi{})
            =
            \lim_{k\to+\infty}
            \biconj{\regfunc}(\pvi{}^{(k)})
            =
            \lim_{k\to+\infty}
            \pertfunc(\pvi{}^{(k)}) + \reg
            =
            \pertfunc(\pvi{}) + \reg.
        \end{equation}
        \item Case \(\pvi{}\notin\dom{\biconj{\regfunc}}\): Since \(\dom{\biconj{\regfunc}}=\dom\pertfunc\), we have that \(\pvi{}\notin\dom\pertfunc\) and therefore \(+\infty=\biconj{\regfunc}(\pvi{})=\pertfunc(\pvi{})+\reg\).
    \end{itemize}
\end{itemize}

\subsection{Proof of \texorpdfstring{\Cref{prop:subdiff-gbiconj}}{Proposition~\ref{prop:subdiff-gbiconj}}} \label{proof:subdiff-g**}

Since \(\biconj{\regfunc}{}\) is even under \ref{assumption:even} from \Cref{cor:even}, the definition of a subgradient implies
\begin{equation} \label{eq:proof prop:subdiff-biconjugate of g:symmetry}
    \bvi{}\in\subdiff\biconj{\regfunc}(\pvi{}) \iff -\bvi{}\in\subdiff\biconj{\regfunc}(-\pvi{})
    .
\end{equation}
In the sequel, we thus concentrate on the case where \(\pvi{}\geq0\). We consider separately the case where \(\pertfunc\) is identically zero on \(\kR+\). 

\paragraph{\(\pertfunc\) is identically zero on \(\kR+\).}
In view of \Cref{prop:regfunc-biconj} and \Cref{lemma:case-id-zero}, we have that \(\biconj{\regfunc}\) is also identically zero on \(\kR+\).
\(\biconj{\regfunc}\) is thus differentiable for any \(\pvi{}>0\) with \(\subdiff\biconj{\regfunc}(\pvi{}) = \{0\}\). 
If \(\pvi{}=0\), one easily shows that \(\partial_+\biconj{\regfunc}(0)=0\) which leads to \(\partial\biconj{\regfunc}(0)\cap\kR+=\{0\}\) in view of \Cref{lemma:subdiff-convex-fun-1D}.
We finally deduce that \(\partial\biconj{\regfunc}(0)=\{0\}\) using~\eqref{eq:proof prop:subdiff-biconjugate of g:symmetry}.
This corresponds to the two first cases in~\eqref{eq:operators-biconj-subdiff} since we have \(\pertslope=0\) and \(\pertlimit=+\infty\) from \Cref{lemma:case-id-zero}.

\paragraph{\(\pertfunc\) is not identically zero on \(\kR+\).} 
We consider \(\pvi{}\in\dom{\biconj{\regfunc}}\) and distinguish between different cases. Before proceeding, let us mention that \(\pertlimit>0\) from \Cref{lem:pertlimite > 0} under \ref{assumption:zero-minimized}-\ref{assumption:closed}-\ref{assumption:convex}.
\begin{itemize}
    \item Case \(\pvi{}=0\):
    Since \(\pertlimit>0\), let us consider a sequence \(\{\varepsilon^{(k)}\}_{k\in\kN}\) converging to \(0\) and such that \(0<\varepsilon^{(k)}<\pertlimit\) for all \(k\in\kN\).
    We then have from \Cref{definition:left-right-derivatives}:  
    \begin{equation}
        \partial_+\biconj{\regfunc}(0)
        =
        \lim_{k\to+\infty}
        \frac{
            \biconj{\regfunc}(\varepsilon^{(k)}) -  \biconj{\regfunc}(0)
        }{
            \varepsilon^{(k)}
        }
        =
        \lim_{k\to+\infty} \pertslope
        =
        \pertslope
        ,
    \end{equation}
    where the second equality follows from \Cref{prop:regfunc-biconj} and the definition of \(\varepsilon^{(k)}\). 
    Using the fact that \(0\in\partial\biconj{\regfunc}(0)\) from \eqref{eq:cor:0 in subdiff} in \Cref{cor:subdiff properties}, we obtain that \(\partial\biconj{\regfunc}(0)\cap\kR+=[0,\pertslope]\).
    This leads to \(\partial\biconj{\regfunc}(0)=[-\pertslope,\pertslope]\) in view of~\eqref{eq:proof prop:subdiff-biconjugate of g:symmetry}.
    \item Case \(0<\pvi{}<\pertlimit\): 
    We have from the second case in \Cref{prop:regfunc-biconj} that \(\biconj{\regfunc}\) is linear with slope \(\pertslope\) on \(]0, \pertlimit[\). 
    Hence \(\biconj{\regfunc}\) is differentiable at \(\pvi{}\) and therefore \(\partial\biconj{\regfunc}(\pvi{}) = \{\nabla\biconj{\regfunc}(\pvi{})\} = \{\pertslope\}\) by~\cite[\theoremInRef{3.33}]{beck2017first}.
    \item Case \(\pertlimit<+\infty\) and \(\pvi{}=\pertlimit\):
    Since \(\pvi{}\in\dom{\biconj{\regfunc}}\), we have that \(\pertlimit\in\dom{\biconj{\regfunc}}\).
    Now \(\biconj{\regfunc}\) is proper and convex by \Cref{cor:proper convex closed}, so we have from \Cref{lemma:subdiff-convex-fun-1D} that \(\subdiff\biconj{\regfunc}(\pertlimit)=\kintervcc{\partial_-\biconj{\regfunc}(\pertlimit)}{\partial_+\biconj{\regfunc}(\pertlimit)}\cap\kR\). We next show that \(\partial_-\biconj{\regfunc}(\pertlimit)=\pertslope\) and \(\partial_+\biconj{\regfunc}(\pertlimit)=\pertrightlope\), as stated in \Cref{prop:subdiff-gbiconj}. 
    \begin{itemize}[label=$\bullet$]
        \item On the one hand, let us consider a sequence \(\{\varepsilon^{(k)}\}_{k\in\kN}\) converging to \(0\) and such that \(-\pertlimit<\varepsilon^{(k)}<0\) for all \(k\in\kN\), which exists since \(\pertlimit>0\). 
        Using \Cref{definition:left-right-derivatives}, we have   
        \begin{equation}
            \partial_-\biconj{\regfunc}(\pertlimit)
            =
            \lim_{k\to+\infty}
            =
            \frac{
                \pertslope(\pertlimit + \varepsilon^{(k)}) -  \pertslope\pertlimit
            }{
                \varepsilon^{(k)}
            }
            =
            \pertslope,        
        \end{equation}
        where the second equality follows from \(\pertlimit + \varepsilon^{(k)}\in \kintervoo{0}{\pertlimit}\) and \Cref{prop:regfunc-biconj}. 
        \item On the other hand, consider a sequence \(\{\varepsilon^{(k)}\}_{k\in\kN}\) converging to \(0\) and such that \(\varepsilon^{(k)}>0\) for all \(k\in\kN\). We have 
        \begin{equation}\label{eq:def-g+-prop4.5}
            \partial_+\biconj{\regfunc}(\pertlimit)
            =
            \lim_{k\to+\infty}
            \frac{
                \pertfunc(\pertlimit + \varepsilon^{(k)}) -  \pertfunc(\pertlimit)
            }{
                \varepsilon^{(k)}
            }
            =
            \partial_+\pertfunc(\pertlimit)
            ,
        \end{equation}
        where the first equality is a consequence of \(\pertlimit + \varepsilon^{(k)}>\pertlimit\) and \Cref{prop:regfunc-biconj}. We note that the limit \(\partial_+\pertfunc(\pertlimit)\in\extendedRealLine\) is always well-defined from \Cref{lemma:properties-left-right-derivatives V2} since \(\pertlimit\in\dom{\biconj{\regfunc}}=\dom{\pertfunc}\). 
        One concludes by noting that \(\partial_+\pertfunc(\pertlimit) = \sup \partial \pertfunc(\pertlimit)\) and therefore exactly corresponds to the definition of \(\pertrightlope\) in~\eqref{eq:pertrightlope} {when \(\pertlimit<+\infty\)}.
    \end{itemize}
    \item Case \(\pertlimit < +\infty\) and \(\pvi{}>\pertlimit\): This case can be treated along the same lines as the previous case. More specifically, considering a sequence \(\{\varepsilon^{(k)}\}_{k\in\kN}\) which converges to \(0\) and such that \(\pertlimit-\pvi{} < \varepsilon^{(k)}<0\) (resp. \(\varepsilon^{(k)}>0\)) for all \(k\in\kN\), we easily obtain from \Cref{definition:left-right-derivatives} and \Cref{prop:regfunc-biconj} that \(\partial_-\biconj{\regfunc}(\pvi{})=\partial_-\pertfunc(\pvi{})\) (resp. \(\partial_+\biconj{\regfunc}(\pvi{})=\partial_+\pertfunc(\pvi{})\)). This leads to \(\subdiff\biconj{\regfunc}(\pvi{})=\subdiff\pertfunc(\pvi{})\) in view of \Cref{lemma:subdiff-convex-fun-1D}.
\end{itemize}

\subsection{Proof of \texorpdfstring{\Cref{prop:operator-prox-gbiconj}}{Proposition~\ref{prop:operator-prox-gbiconj}}}
\label{proof:operator-prox-g**}

We first recall that \(\conj{\regfunc}\) is proper, closed and convex under~\ref{assumption:zero-minimized} from \Cref{cor:proper convex closed}.
Therefore, the convex conjugate of \(\biconj{\regfunc}\) is equal to \(\conj{\regfunc}\)~\cite[\theoremInRef{4.8}]{beck2017first}.
Applying the extended Moreau decomposition in \cite[\theoremInRef{6.45}]{beck2017first}, we then have 
\begin{equation} \label{eq:operators-biconj-prox:proof-1}
    \forall \pvi{}\in\kR:\
    \prox_{\cstprox\biconj{\regfunc}}(\pvi{}) = \pvi{} - \cstprox\prox_{\cstprox^{-1}\conj{\regfunc}}({\cstprox}^{-1}\pvi{})
    .
\end{equation}
The result then directly follows from our characterization of \(\prox_{\cstprox\conj{\regfunc}}\) in \Cref{prop:operator-prox-gstar} which holds under~\ref{assumption:zero-minimized}-\ref{assumption:closed}-\ref{assumption:convex} and~\ref{assumption:even}.
In particular, the third case in~\eqref{eq:prop:operator-prox-gbiconj} follows by noticing that one can decompose the proximal operator of \(\biconj{\pertfunc}\) as in~\eqref{eq:operators-biconj-prox:proof-1} since \(\conj{\pertfunc}\) is also proper, closed and convex under~\ref{assumption:zero-minimized} from \Cref{cor:proper convex closed}.


\newcommand{\objectiveFunction}[1]{F^{#1}}
\newcommand{\rootnode}{\nodeSymb_r}
\newcommand{\aMinimizerInProofi}[1]{\widehat{\pvi{}}_{#1}^\nodeSymb}
\newcommand{\aMinimizerInProof}{\widehat{\pv}^\nodeSymb}
\newcommand{\elementInDomPertfunc}{\pvi{\pertfunc}}

\section{Existence of Minimizers} 

This section gathers the proofs pertaining to the existence of minimizers to optimization problems encountered in the implementation of the \gls{bnb}. 
To simplify our exposition, we first provide the following result which will appear in our subsequent derivations. 
\begin{lemma}\label{lemma:properties-F-node}
    Let \(\nodeSymb=(\setzero,\setone)\) with \(\setzero\cup\setone\subseteq \intervint{1}{\pdim}\) and \(\setzero\cap\setone=\emptyset\). 
    If \ref{assumption:f}-\ref{assumption:zero-minimized}-\ref{assumption:closed}-\ref{assumption:coercive} hold, then 
    the function
    \begin{equation} \label{eq:proof:equivalence-problems-l0:ingredient:szd Fnu}
        \objectiveFunction{\nodeSymb}(\pv)
        \defeq 
        \datafunc(\dic\pv) 
        + \sum_{\idxentry=1}^\pdim  \separable{\regfunc}{\idxentry}^{\nodeSymb}(\pvi{\idxentry})
        ,
    \end{equation}
    where \(\separable{\regfunc}{\idxentry}^{\nodeSymb}\) is defined in~\eqref{eq:regfunc-node}, is proper, closed and coercive. 
    Moreover, if~\ref{assumption:convex} holds and \(\setzero\cup\setone=\intervint{1}{\pdim}\), then \(\objectiveFunction{\nodeSymb}\) is convex.
\end{lemma}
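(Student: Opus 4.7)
The plan is to verify the three defining properties (proper, closed, coercive) one at a time from the separable structure of $\objectiveFunction{\nodeSymb}$, treating each of the three branches of $\separable{\regfunc}{\idxentry}^{\nodeSymb}$ in~\eqref{eq:regfunc-node} individually. The backbone of the argument is that, under \ref{assumption:f}-\ref{assumption:zero-minimized}, every summand of $\objectiveFunction{\nodeSymb}$ is closed and bounded below, so the global qualitative properties of $\objectiveFunction{\nodeSymb}$ are dictated by finitely many one-dimensional pieces.

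For properness, I will evaluate at the origin: \ref{assumption:f} gives $\datafunc(\0) < +\infty$ since $\0 \in \interior{\dom{\datafunc}}$, while inspection of~\eqref{eq:regfunc-node} together with $\pertfunc(0) = 0$ from \ref{assumption:zero-minimized} forces $\separable{\regfunc}{\idxentry}^{\nodeSymb}(0) \in \{0, \reg\}$ for every $\idxentry$. Hence $\objectiveFunction{\nodeSymb}(\0)$ is finite, and the lower-boundedness of $\datafunc$ combined with $\separable{\regfunc}{\idxentry}^{\nodeSymb} \geq 0$ (itself a direct consequence of \ref{assumption:zero-minimized}, the definition of the indicator, and the fact that $\norm{\cdot}{0} \geq 0$) rules out the value $-\infty$. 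Closedness is equally short: $\datafunc \circ \dic$ is closed as the composition of a closed function with a continuous linear map, each $\separable{\regfunc}{\idxentry}^{\nodeSymb}$ is closed (trivially when $\idxentry \in \setzero$, from \ref{assumption:closed} when $\idxentry \in \setone$, and from \Cref{lemma:g-even-nonneg}.\ref{item-lemma:g-closed} in the remaining case), and finite sums of closed functions with values in $\rextendedRealLine$ remain closed.

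The main effort will go into coercivity, which I expect to be the most delicate step. I plan to argue by contradiction: assume a sequence $\{\iter{\pv}{k}\}_{k \in \kN}$ with $\norm{\iter{\pv}{k}}{2} \to +\infty$ along which $\objectiveFunction{\nodeSymb}$ is bounded by some constant $C$. Finiteness of $\objectiveFunction{\nodeSymb}(\iter{\pv}{k})$ first forces $\iter{\pvi{\idxentry}}{k} = 0$ for every $\idxentry \in \setzero$, so the norm must blow up along some coordinate $\idxentry^\star \notin \setzero$ after extraction of a subsequence. Using $\inf \datafunc > -\infty$ and $\separable{\regfunc}{\idxentry}^{\nodeSymb} \geq 0$ for $\idxentry \neq \idxentry^\star$, the bound $\objectiveFunction{\nodeSymb}(\iter{\pv}{k}) \geq \inf \datafunc + \separable{\regfunc}{\idxentry^\star}^{\nodeSymb}(\iter{\pvi{\idxentry^\star}}{k})$ will then combine with the coercivity of $\pertfunc$ from \ref{assumption:coercive} (via the pointwise inequality $\regfunc \geq \pertfunc$, which makes every possible form of $\separable{\regfunc}{\idxentry^\star}^{\nodeSymb}$ coercive) to drive the right-hand side to $+\infty$, contradicting the bound $C$. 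The subtle point is the extraction of a single coordinate along which the norm explodes, which is the standard consequence of the equivalence between coordinate-wise and $\ell_2$ unboundedness in finite dimension.

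For the final convexity statement, the additional hypothesis $\setzero \cup \setone = \intervint{1}{\pdim}$ eliminates the non-convex branch $\regfunc(\pvi{}) = \reg\norm{\pvi{}}{0} + \pertfunc(\pvi{})$ from the possible expressions of $\separable{\regfunc}{\idxentry}^{\nodeSymb}$, leaving only the indicator of $\{0\}$ and the translate $\pertfunc + \reg$, both convex under \ref{assumption:convex}. Combined with convexity of $\datafunc \circ \dic$ from \ref{assumption:f}, this displays $\objectiveFunction{\nodeSymb}$ as a sum of convex functions, hence convex.
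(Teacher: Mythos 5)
Your proposal is correct and follows essentially the same route as the paper: properness via evaluation at the origin together with $\datafunc$ lower-bounded and $\separable{\regfunc}{\idxentry}^{\nodeSymb}\geq 0$, closedness branch by branch, coercivity of each one-dimensional term via $\regfunc\geq\pertfunc$ and \ref{assumption:coercive}, and convexity because the leaf condition $\setzero\cup\setone=\intervint{1}{\pdim}$ removes the $\ell_0$ branch. The only difference is cosmetic: where the paper combines the coercive separable terms with the lower-bounded loss by citing \cite[Corollary~11.16]{Bauschke2017}, you prove that combination directly through a subsequence/contradiction argument, which is a valid elementary substitute.
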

\begin{proof}
    The four statements of the result are proved as follows.
    \begin{itemize}
        \item We establish the properness of \(\objectiveFunction{\nodeSymb}\) by proving that its domain is not empty and that the function cannot take on the value \(-\infty\). 
        Using~\eqref{eq:regfunc-node}, we have 
        \begin{align}
            \objectiveFunction{\nodeSymb}(\0)\leq \datafunc(\0)+\reg |\setone|<+\infty
        \end{align}
        where the strict inequality follows from \(\0 \in \dom{\datafunc}\) by assumption~\ref{assumption:f}. Hence \(\dom{\objectiveFunction{\nodeSymb}}\neq \emptyset\). 
        Moreover, since \(\datafunc\) is proper by \ref{assumption:f} and \(\separable{\regfunc}{\idxentry}^{\nodeSymb}\geq 0\) by definition, we also have that
        \(\objectiveFunction{\nodeSymb}>-\infty\). 
        \item Closedness of \(\objectiveFunction{\nodeSymb}\) is proved by using \cite[\theoremInRef{2.7}]{beck2017first} and showing that all the functions appearing in the right-hand side of \eqref{eq:proof:equivalence-problems-l0:ingredient:szd Fnu} are closed. 
        We first have that \(\datafunc\) is closed by assumption~\ref{assumption:f}. 
        Moreover, \(\node{\separable{\regfunc}{\idxentry}}\) is closed in the three possible subcases:         
        \textit{i)} \(\icvx(\pvi{} = 0)\) is closed since \(\{0\}\) is a closed set~\cite[\propositionInRef{2.3}]{beck2017first};
        \textit{ii)} \(\pertfunc\) is closed by~\ref{assumption:closed};
        \textit{iii)} \(\regfunc\) is closed by \Cref{lemma:g-even-nonneg} under~\ref{assumption:closed}.
        \item Since \(\datafunc\) is lower-bounded by~\ref{assumption:f} and \(\dom{\objectiveFunction{\nodeSymb}}\neq\emptyset\) by properness of \(\objectiveFunction{\nodeSymb}\), it is sufficient to show the coercivity of the functions \(\{\separable{\regfunc}{\idxentry}^{\nodeSymb}\}_{\idxentry=1}^\pdim\) to establish the coercivity of \(\objectiveFunction{\nodeSymb}\) according to \cite[\corollaryInRef{11.16}]{Bauschke2017}. 
        Coercivity of \(\icvx(\pvi{} = 0)\) is trivial; coercivity of \(\pertfunc+\reg\) follows from~\ref{assumption:coercive}; coercivity of \(\regfunc\) follows from \(\regfunc(\pvi{})\geq \pertfunc(\pvi{})\) \(\forall \pvi{}\in\kR\). 
        \item Finally, assume that~\ref{assumption:convex} holds and that \(\setzero\cup\setone=\intervint{1}{\pdim}\).
        Then~\eqref{eq:regfunc-node} simplifies to
        \begin{equation}
            \forall \pvi{}\in\kR:\quad        
            \node{\separable{\regfunc}{\idxentry}}(\pvi{}) = 
            \begin{cases}
                \icvx(\pvi{} = 0) &\text{if} \ \idxentry \in \setzero \\
                \pertfunc(\pvi{}) + \reg &\text{if} \ \idxentry \in \setone
            \end{cases}
        \end{equation}
        which is convex since \(\{0\}\) is a convex set and \(\pertfunc\) is convex from~\ref{assumption:convex}.
        Since \(\datafunc\) is convex from~\ref{assumption:f}, we have that \(\objectiveFunction{\nodeSymb}\) is convex.  
    \end{itemize}
\end{proof}

\subsection{Existence of a Minimizer to \texorpdfstring{\eqref{prob:prob}}{(\ref{prob:prob})}} \label{proof:existence-minimizer}

Letting \(\nodeSymb=(\emptyset,\emptyset)\),  we note that the function \(\objectiveFunction{\nodeSymb}\) defined in \eqref{eq:proof:equivalence-problems-l0:ingredient:szd Fnu} corresponds to the objective function in~\eqref{prob:prob}. 
Since \ref{assumption:f}-\ref{assumption:zero-minimized}-\ref{assumption:closed}-\ref{assumption:coercive} hold, we then obtain from \Cref{lemma:properties-F-node} that \(\objectiveFunction{\nodeSymb}\) is proper, closed and coercive.
We finally have that \(\objectiveFunction{\nodeSymb}\) attains its infimum over \(\kR^{\pdim}\) as a consequence of the Weierstrass theorem for coercive functions~\cite[\theoremInRef{2.14}]{beck2017first}.

\subsection{Proof of \texorpdfstring{\Cref{prop:prob-node}}{Proposition~\ref{prop:prob-node}}}
\label{proof:equivalence-problems-l0}

We first establish a preliminary result.
\begin{lemma}\label{lemma:compararison function fnu}    
    Let \(\nodeSymb=(\setzero,\setone)\) with \(\setzero\cup\setone\subseteq \intervint{1}{\pdim}\) and \(\setzero\cap\setone=\emptyset\). 
    Let moreover \(\nodeSymb'=(\setzero',\setone')\) such that \(\setzero{\subseteq}\setzero'\) and \(\setone{\subseteq}\setone'\).
    If \ref{assumption:zero-minimized} holds, then 
    \begin{equation} \label{eq:lemma:compararison function fnu}
        \forall \pv\in \kR^{\pdim}:\, 
        \objectiveFunction{\nodeSymb}(\pv) \leq \objectiveFunction{\nodeSymb'}(\pv)
        .
    \end{equation}
    Moreover, equality holds if at least one of the following conditions is satisfied:
    \begin{enumerate}[label=\textit{\alph*.}]
        \item \(\pv\in \pset^{\nodeSymb'}\),
        \label{item:lemma:compararison function fnu:equality case}
        \item \(\pv\in\closure(\pset^{\nodeSymb'})\) and \(\pvi{\idxentry}\neq0\) \(\forall\idxentry\in\setone'\setminus\setone\). 
        \label{item:lemma:compararison function fnu:equality case closure}\\
    \end{enumerate}
\end{lemma}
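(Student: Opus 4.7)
The plan is to exploit the separability of the regularizer: since $\datafunc(\dic\pv)$ appears identically in both $\objectiveFunction{\nodeSymb}(\pv)$ and $\objectiveFunction{\nodeSymb'}(\pv)$, the inequality \eqref{eq:lemma:compararison function fnu} reduces to the pointwise claim
\begin{equation*}
    \separable{\regfunc}{\idxentry}^{\nodeSymb}(\pvi{\idxentry}) \leq \separable{\regfunc}{\idxentry}^{\nodeSymb'}(\pvi{\idxentry})
    \quad \text{for every } \idxentry \in \intervint{1}{\pdim},
\end{equation*}
which I would then sum over $\idxentry$. The equality statements would be obtained by identifying which indices contribute a strict inequality and ruling them out under hypotheses \ref{item:lemma:compararison function fnu:equality case}--\ref{item:lemma:compararison function fnu:equality case closure}.

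To verify the per-index inequality, I would perform a case analysis on the location of $\idxentry$ relative to the pair of partitions. The inclusions $\setzero \subseteq \setzero'$ and $\setone \subseteq \setone'$ immediately dispose of the cases $\idxentry \in \setzero$ and $\idxentry \in \setone$: in both, the definition \eqref{eq:regfunc-node} gives exactly the same value for $\separable{\regfunc}{\idxentry}^{\nodeSymb}$ and $\separable{\regfunc}{\idxentry}^{\nodeSymb'}$. The only nontrivial cases are therefore $\idxentry \notin \setzero \cup \setone$, for which $\separable{\regfunc}{\idxentry}^{\nodeSymb}(\pvi{}) = \regfunc(\pvi{})$, split according to whether $\idxentry \in \setzero' \setminus \setzero$, $\idxentry \in \setone' \setminus \setone$, or $\idxentry \notin \setzero' \cup \setone'$ (in the last of which the two terms are equal). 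In subcase $\idxentry \in \setzero' \setminus \setzero$, I would compare $\regfunc(\pvi{\idxentry})$ to $\icvx(\pvi{\idxentry}=0)$: when $\pvi{\idxentry}=0$ both equal zero (using $\regfunc(0)=0$ from \Cref{lemma:g-even-nonneg}.\ref{item-lemma:zero-minimized}), while when $\pvi{\idxentry}\neq0$ the right side is $+\infty$ and $\regfunc(\pvi{\idxentry})\geq 0$ suffices. In subcase $\idxentry \in \setone' \setminus \setone$, the decomposition $\regfunc(\pvi{\idxentry}) = \reg\|\pvi{\idxentry}\|_0 + \pertfunc(\pvi{\idxentry}) \leq \reg + \pertfunc(\pvi{\idxentry})$ (with equality iff $\pvi{\idxentry}\neq 0$) yields the claim.

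For the equality statements, I would revisit the two nontrivial subcases. Under hypothesis \ref{item:lemma:compararison function fnu:equality case}, membership $\pv \in \pset^{\nodeSymb'}$ forces $\pvi{\idxentry}=0$ on $\setzero'$ and $\pvi{\idxentry}\neq 0$ on $\setone'$ (in particular on the ``new'' indices $\setzero'\setminus\setzero$ and $\setone'\setminus\setone$ respectively), which is precisely what makes each of the two loose inequalities collapse to an equality, so the sum equality follows. Under hypothesis \ref{item:lemma:compararison function fnu:equality case closure}, the closure $\closure(\pset^{\nodeSymb'})$ retains the constraint $\pvi{\idxentry}=0$ on $\setzero'$ (handling subcase $\setzero'\setminus\setzero$) but allows zeros on $\setone'$; the supplementary assumption $\pvi{\idxentry}\neq 0$ for all $\idxentry\in\setone'\setminus\setone$ is exactly what is needed to keep the second subcase tight.

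The main obstacle is purely bookkeeping: ensuring the case split is exhaustive and, for part \ref{item:lemma:compararison function fnu:equality case closure}, correctly identifying the closure of $\pset^{\nodeSymb'}$ as $\{\pv \in \kR^{\pdim} : \subzero[']{\pv}=\0\}$, so that the hypothesis on the entries indexed by $\setone' \setminus \setone$ is precisely the one required to recover equality in every index where the inequality could otherwise be strict.
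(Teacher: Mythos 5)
Your proposal is correct and follows essentially the same route as the paper's proof: reduce to the per-index inequality \(\separable{\regfunc}{\idxentry}^{\nodeSymb}(\pvi{\idxentry}) \leq \separable{\regfunc}{\idxentry}^{\nodeSymb'}(\pvi{\idxentry})\), treat the only nontrivial cases \(\idxentry\in\setzero'\setminus\setzero\) and \(\idxentry\in\setone'\setminus\setone\) using \(\regfunc\geq\regfunc(0)=0\) and \(\norm{\pvi{}}{0}\leq 1\), and verify the equality claims a.\ and b.\ via the identification \(\closure(\pset^{\nodeSymb'})=\kset{\pv\in\kR^\pdim}{\pvi{\idxentry}=0\ \forall\idxentry\in\setzero'}\), exactly as the paper does. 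The only difference is cosmetic: the paper phrases the case analysis as non-negativity of the difference \(\separable{\regfunc}{\idxentry}^{\nodeSymb'}-\separable{\regfunc}{\idxentry}^{\nodeSymb}\) and disposes of points outside \(\dom{\separable{\regfunc}{\idxentry}^{\nodeSymb}}\) via the convention \(\infty\leq\infty\), which your direct comparison handles implicitly.
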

\begin{proof} 
    By definition of \(\objectiveFunction{\nodeSymb}\) in \eqref{eq:proof:equivalence-problems-l0:ingredient:szd Fnu}, inequality~\eqref{eq:lemma:compararison function fnu} is verified provided that 
    \begin{equation} \label{eq:proof lemma:compararison function fnu:it remains to show this inequality}
        \forall \idxentry\in\intervint{1}{\pdim},
        \forall \pvi{\idxentry}\in\kR
        :\quad
        \separable{\regfunc}{\idxentry}^{\nodeSymb}(\pvi{\idxentry}) \leq \separable{\regfunc}{\idxentry}^{\nodeSymb'}(\pvi{\idxentry})
        .
    \end{equation}  
    By distinguishing between the cases ``\(\idxentry\in\setzero\)'', ``\(\idxentry\in\setone\)'' and ``\(\idxentry\notin\setzero\cup\setone\)'', one first observes that \(\dom{\separable{\regfunc}{\idxentry}^{\nodeSymb'}} \subseteq \dom{\separable{\regfunc}{\idxentry}^{\nodeSymb}}\).
    Inequality~\eqref{eq:proof lemma:compararison function fnu:it remains to show this inequality} thus trivially holds for all \(\pvi{\idxentry}\notin\dom{\separable{\regfunc}{\idxentry}^{\nodeSymb}}\) as a consequence of the convention ``\(\infty \leq \infty\)''. 
    It remains to establish the inequality for \(\pvi{\idxentry}\in\dom{\separable{\regfunc}{\idxentry}^{\nodeSymb}}\).
    This condition can be shown to be met by verifying that the function
    \begin{equation} \label{eq:proof lemma:compararison function fnu:diff gnu' minus gnu}
        \separable{\regfunc}{\idxentry}^{\nodeSymb'}(\pvi{\idxentry}) - \separable{\regfunc}{\idxentry}^{\nodeSymb}(\pvi{\idxentry})
        =
        \begin{cases}
            \icvx(\pvi{\idxentry} = 0) - \regfunc(\pvi{\idxentry})  &\text{if} \ \idxentry\in\setzero'\setminus\setzero \\
            \reg - \reg\Vert\pvi{\idxentry}\Vert_0 &\text{if} \ \idxentry\in\setone'\setminus\setone \\
            0 & \text{otherwise}
        \end{cases}
    \end{equation}
    is non-negative. 
    If \(\idxentry\in\setzero'\setminus\setzero\), this property holds since \(\separable{\regfunc}{\idxentry}^{\nodeSymb'}(\pvi{\idxentry}) - \separable{\regfunc}{\idxentry}^{\nodeSymb}(\pvi{\idxentry})=+\infty\) as soon as \(\pvi{\idxentry}\neq0\) and \(\regfunc(0)=0\) from item~\ref{item-lemma:zero-minimized} of \Cref{lemma:g-even-nonneg} under~\ref{assumption:zero-minimized}.
    If \(\idxentry\in\setone'\setminus\setone\), the result directly follows from the fact that \(\Vert\pvi{\idxentry}\Vert_0\in\{0,1\}\) for all \(\pvi{\idxentry}\in\kR\).
    The last case is trivial.

    Let us finally focus on some conditions ensuring that equality holds in \eqref{eq:lemma:compararison function fnu}. 
    In view of the inclusions \(\dom{\separable{\regfunc}{\idxentry}^{\nodeSymb'}} \subseteq \dom{\separable{\regfunc}{\idxentry}^{\nodeSymb}}\) established at the beginning of the proof, we note that the equality is trivially verified for \(\pvi{\idxentry}\notin\dom{\separable{\regfunc}{\idxentry}^{\nodeSymb}}\).
    We thus assume in the rest of the proof that \(\pvi{\idxentry}\in\dom{\separable{\regfunc}{\idxentry}^{\nodeSymb}}\). 
    Under this assumption, we have from our previous discussion that establishing equality in~\eqref{eq:lemma:compararison function fnu} is equivalent to showing that the functions in~\eqref{eq:proof lemma:compararison function fnu:diff gnu' minus gnu} are equal to zero \(\forall \idxentry\in\intervint{1}{\pdim}\).
    Let us concentrate on the conditions stated in the lemma:
    \begin{itemize}
        \item Item~\ref{item:lemma:compararison function fnu:equality case} Assume \(\pv\in\pset^{\nodeSymb'}\). We observe that \(\pvi{\idxentry}=0\) \(\forall\idxentry\in\setzero'\setminus\setzero\) and \(\pvi{\idxentry}\neq 0\) \(\forall\idxentry\in\setone'\setminus\setone\). 
        It follows that all the functions in \eqref{eq:proof lemma:compararison function fnu:diff gnu' minus gnu} are equal to zero by using the fact \(\regfunc(0)=0\) and \(\|\pvi{}\|_0=1\) \(\forall\pvi{}\neq 0\). 
        \item Item~\ref{item:lemma:compararison function fnu:equality case closure} 
        Assume \(\pv\in\closure(\pset^{\nodeSymb'})\) and \(\pvi{\idxentry}\neq0\) \(\forall\idxentry\in\setone'\setminus\setone\). We observe that
        \begin{equation}
            \closure(\pset^{\nodeSymb'})
            =
            \kset{
                \pv\in\kR^\pdim
            }{
                \pvi{\idxentry} = 0 \ \forall\idxentry\in\setzero'
            }
            .
        \end{equation}        
        Therefore, the first case in~\eqref{eq:proof lemma:compararison function fnu:diff gnu' minus gnu} 
        is identically equal to zero and the expression simplifies to 
        \begin{equation}
            \separable{\regfunc}{\idxentry}^{\nodeSymb'}(\pvi{\idxentry}) - \separable{\regfunc}{\idxentry}^{\nodeSymb}(\pvi{\idxentry})
            =
            \begin{cases}
                \reg - \reg\Vert\pvi{\idxentry}\Vert_0 &\text{if} \ \idxentry\in\setone'\setminus\setone \\
                0 &\text{otherwise}. 
            \end{cases}
        \end{equation}
        These functions are clearly equal to zero \(\forall \idxentry\in\intervint{1}{\pdim}\) since we assumed \(\pvi{\idxentry}\neq0\) \(\forall\idxentry\in\setone'\setminus\setone\).
    \end{itemize}
\end{proof}

We now turn to the proof of \Cref{prop:prob-node}.
We will proceed by breaking problems~\eqref{eq:node_problem} -- the minimization problem involved in the definition of \(\node{\pobj}\) -- and~\eqref{prob:prob-node} into a series of subproblems which will eventually appear to be equal. 
To that end, let us first notice that for any node\footnote{Remind that a node of the \gls{bnb} decision tree must verify \(\setzero\cup\setone\subseteq \intervint{1}{\pdim}\), \(\setzero\cap\setone=\emptyset\).} \(\nodeSymb=(\setzero,\setone)\) of the \gls{bnb} decision tree, the set \(\pset^{\nodeSymb}\) can be partitioned as follows: 
\begin{equation} \label{eq:proof:equivalence-problems-l0:ingredient:covering pset}
    \pset^{\nodeSymb}
    =
    \bigcup_{\nodeSymb'\in\node{\mathcal{N}}} \pset^{\nodeSymb'}
\end{equation}
where 
\begin{equation}
    \node{\mathcal{N}} \defeq \kset{\nodeSymb'=(\setzero',\setone')}{\setzero\subseteq\setzero',\setone\subseteq\setone', \setzero'\cap\setone'=\emptyset, \setzero'\cup\setone'=\intervint{1}{\pdim}}
    .
\end{equation}
Based on this observation, we next re-express problems~\eqref{eq:node_problem}-\eqref{prob:prob-node} as detailed below, and then show that these reformulations are equivalent.

\paragraph{Equivalent formulation of~\eqref{eq:node_problem}.}
We have
\begin{align} 
    \inf_{\pv \in \pset^{\nodeSymb}}     
    \ \datafunc(\dic\pv) + \sum_{\idxentry=1}^{\pdim}\regfunc(\pvi{\idxentry})
    \,=\,
    & 
    \inf_{\pv \in \pset^{\nodeSymb}}
    \objectiveFunction{\nodeSymb}(\pv)
    \\
    \,=\,
    & 
    \min_{\nodeSymb'\in\node{\mathcal{N}}}
    \kparen{
        \inf_{\pv \in \pset^{\nodeSymb'}} 
        \objectiveFunction{\nodeSymb}(\pv)
    }
    \\
    \,=\,
    & 
    \min_{\nodeSymb'\in\node{\mathcal{N}}}
    \kparen{
        \inf_{\pv \in \pset^{\nodeSymb'}} 
        \objectiveFunction{\nodeSymb'}(\pv)
    }
    \label{eq:proof:equivalence-problems-l0:ingredient:equivalent formulation pb1}
\end{align}
where the first equality is obtained by considering \Cref{lemma:compararison function fnu}.\ref{item:lemma:compararison function fnu:equality case} with \(\nodeSymb=(\emptyset,\emptyset)\) and \(\nodeSymb'=\nodeSymb\); 
the second is due to~\eqref{eq:proof:equivalence-problems-l0:ingredient:covering pset}; 
the third follows from \Cref{lemma:compararison function fnu}.\ref{item:lemma:compararison function fnu:equality case} again.
We note that the minimum in the last two equalities is achieved since \(\node{\mathcal{N}}\) contains a finite number of elements. 

\paragraph{Equivalent formulation of~\eqref{prob:prob-node}.}
First notice that the objective function in~\eqref{prob:prob-node} corresponds to  function \(\objectiveFunction{\nodeSymb}\) in~\eqref{eq:proof:equivalence-problems-l0:ingredient:szd Fnu}.
Second, we observe that any \(\nodeSymb'=(\setzero',\setone')\in\node{\mathcal{N}}\) verifies by definition the hypotheses of \Cref{lemma:compararison function fnu}. Using the latter result, we then obtain:
\begin{equation} \label{eq:proof:equivalence-problems-l0:ingredient:towards equivalent formulation pb2}
    \inf_{\pv \in \kR^{\pdim}}
        \objectiveFunction{\nodeSymb}(\pv)        
    \leq
    \min_{\nodeSymb'\in\node{\mathcal{N}}}
    \kparen{
        \inf_{\pv \in \kR^{\pdim}} \objectiveFunction{\nodeSymb'}(\pv)
    }
    .
\end{equation}
We next show that equality holds in~\eqref{eq:proof:equivalence-problems-l0:ingredient:towards equivalent formulation pb2}. 
To that end, we first notice that all the functions involved in~\eqref{eq:proof:equivalence-problems-l0:ingredient:towards equivalent formulation pb2} 
are proper, closed and coercive under \ref{assumption:f}-\ref{assumption:zero-minimized}-\ref{assumption:closed}-\ref{assumption:coercive} from \Cref{lemma:properties-F-node}.  
Therefore, all infima in~\eqref{eq:proof:equivalence-problems-l0:ingredient:towards equivalent formulation pb2} are finite and attained~\cite[\theoremInRef{2.14}]{beck2017first}.\footnote{In view of our initial remark on the connection between the objective function in~\eqref{prob:prob-node} and \(\objectiveFunction{\nodeSymb}\), this also shows the existence of a minimizer to \eqref{prob:prob-node} and therefore justifies the use of ``\(\min{}\)'' rather than ``\(\inf{}\)'' in our formulation.}
Let \(\pv^\nodeSymb \in \kR^{\pdim}\) be a minimizer to the left-hand side of~\eqref{eq:proof:equivalence-problems-l0:ingredient:towards equivalent formulation pb2}.
To prove the equality in~\eqref{eq:proof:equivalence-problems-l0:ingredient:towards equivalent formulation pb2}, it is sufficient to identify some \(\nodeSymb''\in \mathcal{N}^{\nodeSymb}\) such that \(\objectiveFunction{\nodeSymb''}(\pv^\nodeSymb)=\objectiveFunction{\nodeSymb}(\pv^\nodeSymb)\), since this leads to 
\begin{equation}
    \min_{\nodeSymb'\in\node{\mathcal{N}}}
    \kparen{
        \min_{\pv \in \kR^{\pdim}} \objectiveFunction{\nodeSymb'}(\pv)
    }
    \leq
    \objectiveFunction{\nodeSymb''}(\pv^\nodeSymb)
    =
    \objectiveFunction{\nodeSymb}(\pv^\nodeSymb)
    =
    \min_{\pv \in \kR^{\pdim}} \objectiveFunction{\nodeSymb}(\pv)
    .
\end{equation}
We thus restrict our focus to the construction of a suitable \(\nodeSymb''\) hereafter. 
Specifically, we let        
\begin{subequations}
    \begin{align}
        \setzero'' \,\defeq\,& \kset{\idxentry\in \intervint{1}{\pdim}}{\pvi{\idxentry}^\nodeSymb=0} \setminus \setone \\
        \setone''  \,\defeq\,& \kset{\idxentry\in \intervint{1}{\pdim}}{\pvi{\idxentry}^\nodeSymb\neq0} \cup \setone
    \end{align}
\end{subequations}
and show that \(\nodeSymb''\in \mathcal{N}^{\nodeSymb}\) with \(\objectiveFunction{\nodeSymb''}(\pv^\nodeSymb)=\objectiveFunction{\nodeSymb}(\pv^\nodeSymb)\). 
As a preliminary remark, let us notice that \(\pvi{\idxentry}^\nodeSymb\in\dom{\separable{\regfunc}{\idxentry}^{\nodeSymb}}\)  \(\forall \idxentry\in\intervint{1}{\pdim}\) since the minimum of \(\objectiveFunction{\nodeSymb}\) is finite. 
Hence,
\begin{equation} \label{eq:proof:equivalence-problems-l0:ingredient:towards equivalent formulation pb2:property solution}
    \forall \idxentry\in\setzero:\quad
    \pvi{\idxentry}^\nodeSymb
    =
    0
\end{equation}        
as \(\dom{\separable{\regfunc}{\idxentry}^{\nodeSymb}}=\{0\}\) for all \(\idxentry\in\setzero\) by definition.
\begin{itemize}
    \item First, using~\eqref{eq:proof:equivalence-problems-l0:ingredient:towards equivalent formulation pb2:property solution} and the fact that \(\setzero\cap\setone=\emptyset\), we observe that \(\setzero\subseteq\setzero''\).
    Moreover, the definition of \(\setone''\)  immediately leads to \(\setone\subseteq\setone''\).
    Hence, we have \(\nodeSymb''\in\mathcal{N}^{\nodeSymb}\). 
    \item Second, our observation in~\eqref{eq:proof:equivalence-problems-l0:ingredient:towards equivalent formulation pb2:property solution} ensures that \(\pv^\nodeSymb\in\closure(\pset^{\nodeSymb''})\) and the definition of \(\setone''\) implies that \(\pvi{\idxentry}^{\nodeSymb}\neq0\) for all \(\idxentry\in\setone''\setminus\setone\). We then obtain \(\objectiveFunction{\nodeSymb''}(\pv^\nodeSymb)=\objectiveFunction{\nodeSymb}(\pv^\nodeSymb)\) using \Cref{lemma:compararison function fnu}.\ref{item:lemma:compararison function fnu:equality case closure} with \(\nodeSymb=\nodeSymb''\).
\end{itemize}

\paragraph{Equivalence between~\eqref{eq:node_problem} and~\eqref{prob:prob-node}.}
In view of our reformulation of problems~\eqref{eq:node_problem} and~\eqref{prob:prob-node} above, it is sufficient to show that 
\begin{equation} \label{eq:proof:equivalence-problems-l0:ingredient:targeted equality}
    \forall \nodeSymb'\in\node{\mathcal{N}}:\quad 
    \min_{\pv \in \kR^{\pdim}} \objectiveFunction{\nodeSymb'}(\pv)
    =
    \inf_{\pv \in \pset^{\nodeSymb'}} \objectiveFunction{\nodeSymb'}(\pv)
\end{equation}
to prove the statement of \Cref{prop:prob-node}.
We first remark that when \(\nodeSymb'=(\setzero',\setone')\in\node{\mathcal{N}}\), then
\begin{equation} \label{eq:proof:equivalence-problems-l0:ingredient:first inequality}
    \min_{\pv \in \kR^{\pdim}} \objectiveFunction{\nodeSymb'}(\pv)
    \leq
    \inf_{\pv \in \pset^{\nodeSymb'}} \objectiveFunction{\nodeSymb'}(\pv)
\end{equation}
since \(\pset^{\nodeSymb'}\subseteq\kR^\pdim\). 
It thus remains to show the reverse inequality.
To this end, we observe that
\begin{equation} \label{eq:proof:equivalence-problems-l0:ingredient:inclusion dom Fnuprime closure Xnuprime}
    \dom{\objectiveFunction{\nodeSymb'}}
    \subseteq
    \bigtimes_{\idxentry=1}^\pdim 
    \dom{\separable{\regfunc}{\idxentry}^{\nodeSymb'}}
    \subseteq
    \kset{\pv\in\kR^\pdim}{\pvi{\idxentry}=0 \ \forall\idxentry\in\setzero'}
    =
    \closure(\pset^{\nodeSymb'})
\end{equation}
where the first inclusion holds since \(\objectiveFunction{\nodeSymb'}\) is defined as a sum of functions involving \(\separable{\regfunc}{\idxentry}^{\nodeSymb'}\) and the second inclusion is a consequence of the definition of \(\separable{\regfunc}{\idxentry}^{\nodeSymb'}\) given in~\eqref{eq:regfunc-node}. 
Hence, we deduce that
\begin{equation} \label{eq:proof:equivalence-problems-l0:ingredient:targeted equality reformulation}
    \min_{\pv \in \kR^{\pdim}} \objectiveFunction{\nodeSymb'}(\pv)
    =
    \min_{\pv \in \dom{\objectiveFunction{\nodeSymb'}}} \objectiveFunction{\nodeSymb'}(\pv)
    =
    \min_{\pv \in\closure(\pset^{\nodeSymb'})} \objectiveFunction{\nodeSymb'}(\pv)
    .
\end{equation}
Now, let \(\aMinimizerInProof \in \kR^{\pdim}\) be a minimizer of~\eqref{eq:proof:equivalence-problems-l0:ingredient:targeted equality reformulation}.
In view of~\eqref{eq:proof:equivalence-problems-l0:ingredient:inclusion dom Fnuprime closure Xnuprime}, we have \(\aMinimizerInProof\in\dom{\objectiveFunction{\nodeSymb'}}\cap\closure(\pset^{\nodeSymb'})\).
We next distinguish between two cases.
\begin{itemize}
    \item Case \(\aMinimizerInProof\in\pset^{\nodeSymb'}\): We immediately obtain
    \begin{equation} \label{eq:proof:equivalence-problems-l0:ingredient:second inequality}
        \inf_{
            \pv\in\pset^{\nodeSymb'}
        } \
        \objectiveFunction{\nodeSymb'}(\pv)
        \leq
        \objectiveFunction{\nodeSymb'}(\aMinimizerInProof)
        =
        \min_{\pv \in \kR^{\pdim}} \objectiveFunction{\nodeSymb'}(\pv)
    \end{equation}
    by definition of the infimum, which yields the desired inequality. 
    \item Case \(\aMinimizerInProof\notin\pset^{\nodeSymb'}\): We have \(\aMinimizerInProof\in\closure(\pset^{\nodeSymb'})\setminus\pset^{\nodeSymb'}\). 
    In view of the definition of \(\closure(\pset^{\nodeSymb'})\) and \(\pset^{\nodeSymb'}\), we deduce that there exists at least one \(\idxentry\in\setone'\) such that \(\aMinimizerInProofi{\idxentry}=0\).
    Upon this observation, let define \(\overline{\pv}\in\pset^{\nodeSymb'}\) entry-wise as
    \begin{equation}
        \forall \idxentry\in\intervint{1}{\pdim}:\quad
        \overline{\pvi{}}_\idxentry
        \defeq
        \begin{cases}
            \aMinimizerInProofi{\idxentry} & \text{ if } \idxentry\in\setzero' 
            \\
            \aMinimizerInProofi{\idxentry} & \text{ if } \idxentry\in\setone' \text{ and } \aMinimizerInProofi{\idxentry}\neq 0 \\ 
            \elementInDomPertfunc &\text{ otherwise}
        \end{cases}
    \end{equation}
    where \(\elementInDomPertfunc\) denotes any nonzero element of \(\dom\pertfunc\).\footnote{We note that such an element exists under~\ref{assumption:zero-minimized}.} 
    Letting \(\pv_{\alpha}\defeq(1-\alpha)\aMinimizerInProof + \alpha\overline{\pv}\) for all \(\alpha\in]0,1[\), we have \(\pv_{\alpha}\in \pset^{\nodeSymb'}\). As a byproduct, we deduce that
    \begin{equation} \label{eq:proof:equivalence-problems-l0:ingredient:partial second inequality}
        \inf_{
            \pv\in\pset^{\nodeSymb'}
        } \
        \objectiveFunction{\nodeSymb'}(\pv)
        \leq
        \lim_{
            \substack{
                \alpha\downarrow 0 \\
                \alpha < 1
            }
        } \objectiveFunction{\nodeSymb'}(\pv_{\alpha})
        .
    \end{equation}
    Since \(\objectiveFunction{\nodeSymb'}\) is proper, closed and convex under~\ref{assumption:f}-\ref{assumption:zero-minimized}-\ref{assumption:closed}-\ref{assumption:convex}-\ref{assumption:coercive} from \Cref{lemma:properties-F-node}, we can apply~\cite[\propositionInRef{9.14}]{Bauschke2017} with \((f, x, y) =(\objectiveFunction{\nodeSymb'}, \aMinimizerInProof,\overline{\pv})\) to deduce that the right-hand side of~\eqref{eq:proof:equivalence-problems-l0:ingredient:partial second inequality} is equal to \(\objectiveFunction{\nodeSymb'}(\aMinimizerInProof)\).
    Hence, inequality~\eqref{eq:proof:equivalence-problems-l0:ingredient:second inequality} holds when \(\aMinimizerInProof\notin\pset^{\nodeSymb'}\). 
\end{itemize}

\subsection{Existence of a minimizer to \texorpdfstring{\eqref{prob:relax-node}}{(\ref{prob:relax-node})}}
\label{proof:existence-minimizer-relax-node}

Let \(\nodeSymb=(\setzero,\setone)\) be a node of the \gls{bnb} tree. 
The existence of a minimizer to \eqref{prob:relax-node} under \ref{assumption:f}-\ref{assumption:zero-minimized}-\ref{assumption:closed}-\ref{assumption:convex}-\ref{assumption:coercive} can be shown by applying the Weierstrass' theorem for coercive functions, see \cite[\theoremInRef{2.14}]{beck2017first}. 
To do so, it is sufficient to show that the objective function in \eqref{prob:relax-node} is proper, closed, coercive, and has a non-empty domain.
The proof of this result can be done exactly along the same lines as in the proof of \Cref{lemma:properties-F-node}, 
the only difference here being that the definitions of the functions \(\node{\separable{\regfunc}{\idxentry}}\) and \(\node{\separable{\relaxregfunc}{\idxentry}}\) differ when \(\idxentry\notin\setzero\cup\setone\). 
In the proof at hand, we will thus concentrate specifically on establishing that \(\node{\separable{\relaxregfunc}{\idxentry}}\) is proper, closed, and coercive for \(\idxentry \notin \setzero \cup \setone\). This aspect aside, the remainder of the arguments follow directly and identically from those utilized in the proof of \Cref{lemma:properties-F-node}.

If \(\idxentry\notin\setzero\cup\setone\), then \(\node{\separable{\relaxregfunc}{\idxentry}}=\biconj{\regfunc}\) which is proper and closed under~\ref{assumption:zero-minimized} from \Cref{cor:proper convex closed}.
We also note that \(\regfunc\geq\pertfunc\) by nonnegativity of the \(\ell_0\)-norm so that \(\biconj{\regfunc}\geq \biconj{\pertfunc}\) from~\cite[\propositionInRef{13.16.(ii)}]{Bauschke2017}.
Since \(\biconj{\pertfunc}=\pertfunc\) under~\ref{assumption:zero-minimized}-\ref{assumption:closed}-\ref{assumption:convex} and \(\pertfunc\) is coercive by~\ref{assumption:coercive}, we obtain that \(\biconj{\pertfunc}\) and also coercive which concludes the proof.

\section{Supplementary Material for Numerical Experiments}
\label{sec:supp-numerics}

This section provides additional details on the numerical experiments presented in \Cref{sec:numerics}.

\subsection{Hardware and Software Specifications}
\label{sec:supp-numerics:hardware}

The code used for our experiments is open-sourced at
\begin{center}
    \vspace*{10pt}
    \href{https://github.com/TheoGuyard/l0exp}{\texttt{https://github.com/TheoGuyard/l0exp}}
    \vspace*{10pt}
\end{center}
and all the datasets used are publicly available.
Computations were carried out using the Grid'5000 testbed, supported by a scientific interest group hosted by Inria and including CNRS, RENATER and several universities as well as other organizations.
Experiments were run on a Debian 10 operating system, featuring one Intel Xeon E5-2660 v3 CPU clocked at 2.60 GHz with 16 GB of RAM.
We used \textsc{Python} v3.9.2, \textsc{Mosek} v9.3, \textsc{Cplex} v20.1 and \textsc{L0bnb} v1.0.

\subsection{Mixed-Integer Programming Formulations}
\label{sec:supp-numerics:mip}

Problem \eqref{prob:prob} is tackled by \cplex{} and \mosek{} through a \gls{mip} reformulation instantiated using \textsc{Pyomo} \cite{bynum2021pyomo}.
We use a baseline model expressed as
\begin{equation}
    \label{eq:supp-numerics:mip:baseline}
    \left\{
        \begin{array}{rl}
            \min & \datafunc_{\text{val}} + \regfunc_{\text{val}} \\
            \text{s.t.} & \wv = \dic\pv \\
            & \pv \in \kR^{\pdim}, \ \bv \in \{0,1\}^{\pdim}, \ \wv \in \kR^{\ddim} \\
            & \datafunc_{\text{val}} \in \kR, \ \regfunc_{\text{val}} \in \kR
        \end{array}
    \right.
\end{equation}
and add new linear, quadratic or conic constraints to bind value of the variables $\datafunc_{\text{val}}$ and $\pertfunc_{\text{val}}$ depending on the expression of the functions $\datafunc$ and $\pertfunc$ considered, as specified in \Cref{table:mip-datafunc,table:mip-pertfunc}.
Our implementation is also part of the \elops{} toolbox.\footnote{See \href{https://github.com/TheoGuyard/El0ps/blob/main/src/el0ps/solver/mip.py}{\texttt{https://github.com/TheoGuyard/El0ps/blob/main/src/el0ps/solver/mip.py}}.}

\begin{table}[!t]
    \centering
    \begin{tabular*}{0.95\linewidth}{l|l}
        \toprule
        Function $\datafunc$ & Constraints added to baseline model \eqref{eq:supp-numerics:mip:baseline} \\
        \midrule
        $\datafunc(\wv) = \tfrac{1}{2}\norm{\obs - \wv}{2}^2$ &
        $
        \begin{array}{rl}
            (\datafunc_{\text{val}},1,\mathbf{r}) &\in \mathcal{Q}_{\text{rot2}} \\
            \wv - \obs &= \mathbf{r} \\
            \mathbf{r} &\in \kR^{\ddim}
        \end{array}
        $ \\ \midrule
        $\datafunc(\wv) = \transp{\1}\log(\1 + \exp(-\obs \odot \wv))$ &
        $
        \begin{array}{rl}
            -\transp{\1}\mathbf{s}' &\leq \datafunc_{\text{val}} \\
            \obs \odot \wv + \mathbf{s}' &= \mathbf{s} \\
            \mathbf{r} + \mathbf{r}' &\leq \1 \\
            \forall \idxentry, \ (r_{\idxentry},1,s_{\idxentry}) &\in \mathcal{K}_{\text{exp}} \\
            \forall \idxentry, \ (r_{\idxentry}',1,s_{\idxentry}') &\in \mathcal{K}_{\text{exp}} \\
            (\mathbf{r},\mathbf{r}',\mathbf{s},\mathbf{s}') &\in \kR^{\ddim}
        \end{array}
        $
        \\ \midrule
        $\datafunc(\wv) = \norm{\pospart{\1 - \obs \odot \wv}}{2}^2$ &
        $
        \begin{array}{rl}
            (\datafunc_{\text{val}},\tfrac{1}{2},\mathbf{r}) &\in \mathcal{Q}_{\text{rot2}} \\
            \1 - \obs \odot \wv &\leq \mathbf{r} \\
            \mathbf{r} &\in \kR+^{\ddim}
        \end{array}
        $
        \\ \bottomrule
    \end{tabular*}
    \caption{MIP model specifications for the function $\datafunc$. Vectorial inequalities are taken component-wise. $\mathcal{Q}_{\text{rot}}$ and $\mathcal{K}_{\text{exp}}$ denote Rotated-Quadratic Cone constraints \cite[\sectionInRef{3.1.2}]{aps2020mosek} and Exponential Cone constraints \cite[\sectionInRef{5.1}]{aps2020mosek}, respectively.}
    \label{table:mip-datafunc}
\end{table}

\begin{table}[!t]
    \centering
    \begin{tabular*}{0.95\linewidth}{l|l}
        \toprule
        Function $\pertfunc$ & Constraints added to baseline model \eqref{eq:supp-numerics:mip:baseline} \\ \midrule
        $\pertfunc(\pvi{}) = \regone\abs{\pvi{}}$ & 
        $
        \begin{array}{rl}
            \transp{\1}(\reg\bv + \regone\mathbf{t}) &\leq \regfunc_{\text{val}} \\
            \mathbf{t} \odot \bv &\geq \pv \\
            -\mathbf{t} \odot \bv &\leq \pv \\
            \mathbf{t} &\in \kR+^{\pdim} \\
        \end{array}
        $
        \\ \midrule
        $\pertfunc(\pvi{}) = \tfrac{\regone}{2}\pvi{}^2$ & 
        $
        \begin{array}{rl}
            \transp{\1}(\reg\bv + \regone\mathbf{t}) &\leq \regfunc_{\text{val}} \\
            \forall \idxentry, \ (t_{\idxentry},\bvi{\idxentry},\pvi{\idxentry}) &\in \mathcal{Q}_{\text{rot2}} \\
            \mathbf{t} &\in \kR+^{\pdim} \\
        \end{array}
        $
        \\ \midrule
        $\pertfunc(\pvi{}) = \tfrac{\regone}{2}\pvi{}^2 + \icvx(\abs{\pvi{}} \leq \bigM)$ & 
        $
        \begin{array}{rl}
            \transp{\1}(\reg\bv + \regone\mathbf{t}) &\leq \regfunc_{\text{val}} \\
            \forall \idxentry, \ (t_{\idxentry},\bvi{\idxentry},\pvi{\idxentry}) &\in \mathcal{Q}_{\text{rot2}} \\
            -\bigM\bv &\leq \pv \\
            \bigM\bv &\geq \pv \\
            \mathbf{t} &\in \kR+^{\pdim} \\
        \end{array}
        $
        \\ \midrule
        $\pertfunc(\pvi{}) = \regone\abs{\pvi{}} + \icvx(\abs{\pvi{}} \leq \bigM)$ & 
        $
        \begin{array}{rl}
            \transp{\1}(\reg\bv + \regone\mathbf{t}) &\leq \regfunc_{\text{val}} \\
            \pv &\leq \mathbf{t} \\
            -\pv &\leq \mathbf{t} \\
            \bigM\bv &\geq \pv \\
            -\bigM\bv &\leq \pv \\
            \mathbf{t} &\in \kR+^{\pdim} \\
        \end{array}
        $
        \\ \midrule
        $\pertfunc(\pvi{}) = \regone\abs{\pvi{}} + \icvx(\pvi{} \geq 0)$ & 
        $
        \begin{array}{rl}
            \transp{\1}(\reg\bv + \regone\mathbf{t}) &\leq \regfunc_{\text{val}} \\
            \pv &\leq \mathbf{t} \odot \bv \\
            \pv &\geq \0 \\
            \mathbf{t} &\in \kR+^{\pdim} \\
        \end{array}
        $
        \\ \midrule
        $\pertfunc(\pvi{}) = \tfrac{\regone}{2}\pvi{}^2 + \icvx(\pvi{} \geq 0)$ & 
        $
        \begin{array}{rl}
            \transp{\1}(\reg\bv + \regone\mathbf{t}) &\leq \regfunc_{\text{val}} \\
            \forall \idxentry, \ (t_{\idxentry},\bvi{\idxentry},\pvi{\idxentry}) &\in \mathcal{Q}_{\text{rot2}} \\
            \pv &\geq \0 \\
            \mathbf{t} &\in \kR+^{\pdim} \\
        \end{array}
        $
        \\ \midrule
        $\pertfunc(\pvi{}) = \regone\abs{\pvi{}} + \tfrac{\regone'}{2}\pvi{}^2$ &
        $
        \begin{array}{rl}
            \transp{\1}(\reg \bv + \regone\mathbf{t}^1 + \regone'\mathbf{t}') &\leq \regfunc_{\text{val}} \\
            \forall \idxentry, \ (t_{\idxentry}',\bvi{\idxentry},\pvi{\idxentry}) &\in \mathcal{Q}_{\text{rot2}} \\
            \pv &\leq \mathbf{t} \\
            -\pv &\leq \mathbf{t} \\
            \mathbf{t} &\in \kR+^{\pdim} \\
        \end{array}
        $
        \\ \bottomrule
    \end{tabular*}
    \caption{MIP model specifications for the function $\pertfunc$. Vectorial inequalities are taken component-wise and $\mathcal{Q}_{\text{rot}}$ denotes Rotated-Quadratic Cone constraints \cite[\sectionInRef{3.1.2}]{aps2020mosek}.}
    \label{table:mip-pertfunc}
\end{table}

\subsection{Outer-Approximation Method Implementation}
\label{sec:supp-numerics:oa}

In our numerical experiments, we use our own \textsc{Python} implementation of the \oa{} method specified in \cite[Algorithm~3.1]{bertsimas2021unified}.
We use \textsc{Mosek} as \gls{mip} solver to address the outer-loop problems which are modified in place to model construction cost at each iteration.
Moreover, we use a coordinate-descent method to solve the inner-loop problems with a prox-linear update scheme as specified in \cite[Algorithm~2]{wright2015coordinate}.
Our implementation is also part of the \elops{} toolbox.\footnote{See \href{https://github.com/TheoGuyard/El0ps/blob/main/src/el0ps/solver/oa.py}{\texttt{https://github.com/TheoGuyard/El0ps/blob/main/src/el0ps/solver/oa.py}}.}

\subsection{Hyperparameters Calibration}
\label{sec:supp-numerics:calibration}

In our experiments, we calibrate hyperparameters for  instances of problem \eqref{prob:prob} where the penalty function is given by
\begin{equation}
    \label{eq:supp-numerics:calibration:penalty}
    \pertfunc(\pvi{}) = \reg_p\norm{\pv}{p}^p + \icvx(\abs{\pvi{}} \leq \bigM)
\end{equation}
for some $p \in \{1,2\}$, $\reg_p \in [0,+\infty[$, and $\bigM \in \ ]0,+\infty]$.
To this end, we rely on the \lolearn{} package \cite{hazimeh2022l0learn} which allows selecting some statistically relevant values.
More precisely, we use the cross-validation procedure implemented in the \texttt{cv.fit} function to approximately solve the problem 
\begin{equation}
    \label{eq:supp-numerics:calibration:prob}
    \min_{\pv \in \kR^{\pdim}} \datafunc(\dic\pv) + \reg\norm{\pv}{0} + \reg_p\norm{\pv}{p}^p
\end{equation}
over a grid of values of $\reg$ and $\reg_p$.
We select the hyperparameters leading to the best cross-validation score over 10 random folds.
Moreover, we set $\bigM = 1.5 \norm{\hat{\pv}}{\infty}$ where $\hat{\pv}$ corresponds to the approximate solution associated with these hyperparameters.
\clearpage
\bibliographystyle{spmpsci}
\bibliography{references}

\begin{thebibliography}{10}
\providecommand{\url}[1]{{#1}}
\providecommand{\urlprefix}{URL }
\expandafter\ifx\csname urlstyle\endcsname\relax
  \providecommand{\doi}[1]{DOI~\discretionary{}{}{}#1}\else
  \providecommand{\doi}{DOI~\discretionary{}{}{}\begingroup
  \urlstyle{rm}\Url}\fi

\bibitem{alon1999broad}
Alon, U., Barkai, N., Notterman, D.A., Gish, K., Ybarra, S., Mack, D., Levine,
  A.J.: Broad patterns of gene expression revealed by clustering analysis of
  tumor and normal colon tissues probed by oligonucleotide arrays.
\newblock Proceedings of the National Academy of Sciences \textbf{96}(12),
  6745--6750 (1999)

\bibitem{aps2020mosek}
ApS, M.: Mosek modeling cookbook (2020)

\bibitem{mosek}
ApS, M.: The MOSEK optimization toolbox for MATLAB manual. Version 10.1. (2024)

\bibitem{atamturk2020safe}
Atamturk, A., G{\'o}mez, A.: Safe screening rules for l0-regression from
  perspective relaxations.
\newblock In: Proceedings of the International Conference on Machine Learning,
  pp. 421--430. PMLR (2020)

\bibitem{bach2011convex}
Bach, F., Jenatton, R., Mairal, J., Obozinski, G., et~al.: Convex optimization
  with sparsity-inducing norms.
\newblock Optimization for Machine Learning \textbf{5}, 19--53 (2011)

\bibitem{Bauschke2017}
Bauschke, H.H., Combettes, P.L.: Convex Analysis and Monotone Operator Theory
  in Hilbert Spaces.
\newblock Springer (2017)

\bibitem{beck2017first}
Beck, A.: First-Order Methods in Optimization.
\newblock SIAM (2017)

\bibitem{beck2009fast}
Beck, A., Teboulle, M.: A fast iterative shrinkage-thresholding algorithm for
  linear inverse problems.
\newblock SIAM Journal on Imaging Sciences \textbf{2}(1), 183--202 (2009)

\bibitem{ben2021global}
Ben~Mhenni, R., Bourguignon, S., Ninin, J.: Global optimization for sparse
  solution of least squares problems.
\newblock Optimization Methods and Software \textbf{37}(5), 1740--1769 (2022)

\bibitem{bertrand2022beyond}
Bertrand, Q., Klopfenstein, Q., Bannier, P.A., Gidel, G., Massias, M.: Beyond
  l1: Faster and better sparse models with skglm.
\newblock Proceedings of the Advances in Neural Information Processing Systems
  \textbf{35}, 38950--38965 (2022)

\bibitem{bertsimas2021unified}
Bertsimas, D., Cory-Wright, R., Pauphilet, J.: A unified approach to
  mixed-integer optimization problems with logical constraints.
\newblock SIAM Journal on Optimization \textbf{31}(3), 2340--2367 (2021)

\bibitem{bertsimas2016best}
Bertsimas, D., King, A., Mazumder, R.: Best subset selection via a modern
  optimization lens.
\newblock The Annals of Statistics \textbf{44}(2), 813--852 (2016)

\bibitem{blumensath2009iterative}
Blumensath, T., Davies, M.E.: Iterative hard thresholding for compressed
  sensing.
\newblock Applied and Computational Harmonic Analysis \textbf{27}(3), 265--274
  (2009)

\bibitem{bourguignon2015exact}
Bourguignon, S., Ninin, J., Carfantan, H., Mongeau, M.: Exact sparse
  approximation problems via mixed-integer programming: Formulations and
  computational performance.
\newblock IEEE Transactions on Signal Processing \textbf{64}(6), 1405--1419
  (2015)

\bibitem{boyd2011distributed}
Boyd, S., Parikh, N., Chu, E., Peleato, B., Eckstein, J., et~al.: Distributed
  optimization and statistical learning via the alternating direction method of
  multipliers.
\newblock Foundations and Trends{\textregistered} in Machine learning
  \textbf{3}(1), 1--122 (2011)

\bibitem{buhlmann2014high}
B{\"u}hlmann, P., Kalisch, M., Meier, L.: High-dimensional statistics with a
  view toward applications in biology.
\newblock Annual Review of Statistics and Its Application \textbf{1}, 255--278
  (2014)

\bibitem{bynum2021pyomo}
Bynum, M.L., Hackebeil, G.A., Hart, W.E., Laird, C.D., Nicholson, B.L.,
  Siirola, J.D., Watson, J.P., Woodruff, D.L.: Pyomo--optimization modeling in
  python.
\newblock Springer Science \& Business Media (2021)

\bibitem{chaari2014hierarchical}
Chaari, L., Batatia, H., Dobigeon, N., Tourneret, J.Y.: A hierarchical
  sparsity-smoothness bayesian model for l0 + l1 + l2 regularization.
\newblock In: Proceedings of the International Conference on Acoustics, Speech
  and Signal Processing, pp. 1901--1905. IEEE (2014)

\bibitem{tourneret2013sparse}
Chaari, L., Tourneret, J.Y., Batatia, H.: Sparse bayesian regularization using
  bernoulli-laplacian priors.
\newblock In: Proceedings of the European Signal Processing Conference, pp.
  1--5. IEEE (2013)

\bibitem{chang2011libsvm}
Chang, C.C., Lin, C.J.: Libsvm: a library for support vector machines.
\newblock ACM Transactions on Intelligent Systems and Technology \textbf{2}(3),
  1--27 (2011)

\bibitem{chen1989orthogonal}
Chen, S., Billings, S.A., Luo, W.: Orthogonal least squares methods and their
  application to non-linear system identification.
\newblock International Journal of Control \textbf{50}(5), 1873--1896 (1989)

\bibitem{chen2014complexity}
Chen, X., Ge, D., Wang, Z., Ye, Y.: Complexity of unconstrained l2-lp
  minimization.
\newblock Mathematical Programming \textbf{143}(1), 371--383 (2014)

\bibitem{cplex2009v12}
Cplex, I.I.: V12. 1: User's manual for cplex.
\newblock International Business Machines Corporation \textbf{46}(53), 157
  (2009)

\bibitem{dedieu2021learning}
Dedieu, A., Hazimeh, H., Mazumder, R.: Learning sparse classifiers: Continuous
  and mixed integer optimization perspectives.
\newblock The Journal of Machine Learning Research \textbf{22}(1), 6008--6054
  (2021)

\bibitem{deza2022safe}
Deza, A., Atamt{\"u}rk, A.: Safe screening for logistic regression with l0-l2
  regularization.
\newblock In: arXiv preprint (2022)

\bibitem{dobigeon2009hierarchical}
Dobigeon, N., Hero, A.O., Tourneret, J.Y.: Hierarchical bayesian sparse image
  reconstruction with application to mrfm.
\newblock IEEE Transactions on Image Processing \textbf{18}(9), 2059--2070
  (2009)

\bibitem{fan2009network}
Fan, J., Feng, Y., Wu, Y.: Network exploration via the adaptive lasso and scad
  penalties.
\newblock The Annals of Applied Statistics \textbf{3}(2), 521--541 (2009)

\bibitem{friedman2010regularization}
Friedman, J., Hastie, T., Tibshirani, R.: Regularization paths for generalized
  linear models via coordinate descent.
\newblock Journal of Statistical Software \textbf{33}(1), 1--22 (2010)

\bibitem{golub1999molecular}
Golub, T.R., Slonim, D.K., Tamayo, P., Huard, C., Gaasenbeek, M., Mesirov,
  J.P., Coller, H., Loh, M.L., Downing, J.R., Caligiuri, M.A., et~al.:
  Molecular classification of cancer: class discovery and class prediction by
  gene expression monitoring.
\newblock Science \textbf{286}(5439), 531--537 (1999)

\bibitem{guyard2022node}
Guyard, T., Herzet, C., Elvira, C.: Node-screening tests for the l0-penalized
  least-squares problem.
\newblock In: Proceedings of the International Conference on Acoustics, Speech
  and Signal Processing, pp. 5448--5452. IEEE (2022)

\bibitem{guyard2025el0ps}
Guyard, T., Herzet, C., Elvira, C.: \textsc{El0ps}: An exact l0-regularized
  problems solver.
\newblock arXiv preprint  (2025)

\bibitem{pmlr-v235-guyard24a}
Guyard, T., Herzet, C., Elvira, C., Arslan, A.N.: A new branch-and-bound
  pruning framework for l0-regularized problems.
\newblock In: Proceedings of the International Conference on Machine Learning,
  pp. 48077--48096. PMLR (2024)

\bibitem{guyon2003design}
Guyon, I.: Design of experiments of the nips 2003 variable selection benchmark.
\newblock In: NIPS Workshop on Feature Extraction and Feature Selection, vol.
  253, p.~40 (2003)

\bibitem{hazimeh2022l0learn}
Hazimeh, H., Mazumder, R., Nonet, T.: L0learn: A scalable package for sparse
  learning using l0 regularization.
\newblock Journal of Machine Learning Research \textbf{24}(205), 1--8 (2023)

\bibitem{hazimeh2021sparse}
Hazimeh, H., Mazumder, R., Saab, A.: Sparse regression at scale:
  Branch-and-bound rooted in first-order optimization.
\newblock Mathematical Programming \textbf{196}(1), 347--388 (2022)

\bibitem{Herzet_eusipco10}
Herzet, C., Dr{\'e}meau, A.: Bayesian pursuit algorithms.
\newblock In: Proceedings of the European Signal Processing Conference, pp.
  1474--1478. IEEE (2010)

\bibitem{kronqvist2019review}
Kronqvist, J., Bernal, D.E., Lundell, A., Grossmann, I.E.: A review and
  comparison of solvers for convex minlp.
\newblock Optimization and Engineering \textbf{20}, 397--455 (2019)

\bibitem{lee2006efficient}
Lee, H., Battle, A., Raina, R., Ng, A.Y.: Efficient sparse coding algorithms.
\newblock In: Proceedings of the Advances in Neural Information Processing
  Systems, pp. 801--808 (2006)

\bibitem{li2017feature}
Li, J., Cheng, K., Wang, S., Morstatter, F., Trevino, R.P., Tang, J., Liu, H.:
  Feature selection: A data perspective.
\newblock Computing Surveys \textbf{50}(6), 1--45 (2017)

\bibitem{liu2018integrated}
Liu, J., Lichtenberg, T., Hoadley, K.A., Poisson, L.M., Lazar, A.J., Cherniack,
  A.D., Kovatich, A.J., Benz, C.C., Levine, D.A., Lee, A.V., et~al.: An
  integrated tcga pan-cancer clinical data resource to drive high-quality
  survival outcome analytics.
\newblock Cell \textbf{173}(2), 400--416 (2018)

\bibitem{Locatelli:2013xr}
Locatelli, M., Schoen, F.: Global Optimization: Theory, Algorithms, and
  Applications.
\newblock SIAM (2013)

\bibitem{massias2017safe}
Massias, M., Gramfort, A., Salmon, J.: From safe screening rules to working
  sets for faster lasso-type solvers.
\newblock In: NIPS Workshop on Optimization for Machine Learning (2017)

\bibitem{mhenni2020sparse}
Mhenni, R.B., Bourguignon, S., Mongeau, M., Ninin, J., Carfantan, H.: Sparse
  branch and bound for exact optimization of l0-norm penalized least squares.
\newblock In: Proceedings of the International Conference on Acoustics, Speech
  and Signal Processing, pp. 5735--5739. IEEE (2020)

\bibitem{needell2009cosamp}
Needell, D., Tropp, J.A.: Cosamp: Iterative signal recovery from incomplete and
  inaccurate samples.
\newblock Applied and Computational Harmonic Analysis \textbf{26}(3), 301--321
  (2009)

\bibitem{pilanci2015sparse}
Pilanci, M., Wainwright, M.J., El~Ghaoui, L.: Sparse learning via boolean
  relaxations.
\newblock Mathematical Programming \textbf{151}(1), 63--87 (2015)

\bibitem{rover2021weakly}
R{\"o}ver, C., Bender, R., Dias, S., Schmid, C.H., Schmidli, H., Sturtz, S.,
  Weber, S., Friede, T.: On weakly informative prior distributions for the
  heterogeneity parameter in bayesian random-effects meta-analysis.
\newblock Research Synthesis Methods \textbf{12}(4), 448--474 (2021)

\bibitem{samain2022techniques}
Samain, G., Bourguignon, S., Ninin, J.: Techniques for accelerating
  branch-and-bound algorithms dedicated to sparse optimization.
\newblock Optimization Methods and Software \textbf{39}(1), 4--41 (2024)

\bibitem{soubies2015continuous}
Soubies, E., Blanc-F{\'e}raud, L., Aubert, G.: A continuous exact l0-penalty
  (cel0) for least squares regularized problem.
\newblock SIAM Journal on Imaging Sciences \textbf{8}(3), 1607--1639 (2015)

\bibitem{soussen2011bernoulli}
Soussen, C., Idier, J., Brie, D., Duan, J.: From bernoulli--gaussian
  deconvolution to sparse signal restoration.
\newblock IEEE Transactions on Signal Processing \textbf{59}(10), 4572--4584
  (2011)

\bibitem{tibshirani1996regression}
Tibshirani, R.: Regression shrinkage and selection via the lasso.
\newblock Journal of the Royal Statistical Society Series B: Statistical
  Methodology \textbf{58}(1), 267--288 (1996)

\bibitem{tillmann2024cardinality}
Tillmann, A.M., Bienstock, D., Lodi, A., Schwartz, A.: Cardinality
  minimization, constraints, and regularization: a survey.
\newblock SIAM Review \textbf{66}(3), 403--477 (2024)

\bibitem{tropp2010computational}
Tropp, J.A., Wright, S.J.: Computational methods for sparse solution of linear
  inverse problems.
\newblock Proceedings of the IEEE \textbf{98}(6), 948--958 (2010)

\bibitem{wright2015coordinate}
Wright, S.J.: Coordinate descent algorithms.
\newblock Mathematical programming \textbf{151}(1), 3--34 (2015)

\bibitem{zhang2010nearly}
Zhang, C.H.: Nearly unbiased variable selection under minimax concave penalty.
\newblock The Annals of Statistics \textbf{38}(2), 894--942 (2010)

\bibitem{zou2005regularization}
Zou, H., Hastie, T.: Regularization and variable selection via the elastic net.
\newblock Journal of the Royal Statistical Society Series B: Statistical
  Methodology \textbf{67}(2), 301--320 (2005)

\end{thebibliography}

\end{document}